  \def\CC{{\mathbb C}}
 \def\NN{{\mathbb N}}  
 \def\RR{{\mathbb R}}  \def\TT{{\mathbb T}}
 \def\ZZ{{\mathbb Z}}
  \def\cG{{\cal G}} \def\cM{{\cal M}} 
\def\cB{{\cal B}}   \def\cN{{\cal N}} \def\cT{{\cal T}}
\def\cC{{\cal C}}   \def\cO{{\cal O}} \def\cU{{\cal U}}
\def\cD{{\cal D}}    \def\cV{{\cal V}}
  \def\cK{{\cal K}}  \def\cW{{\cal W}}
\def\cF{{\cal F}}   \def\cR{{\cal R}} \def\cX{{\cal X}}
  \def\cZ{{\cal Z}}
 \def\La{\Lambda} \def\De{\Delta} \def\Om{\Omega}
\def\Ga{\Gamma} 
 \def\ve{\varepsilon} \def\eps{\varepsilon}
\newtheorem{theorem}{Theorem}[section]
\newtheorem{theo}[theorem]{Theorem}
\newtheorem*{theo*}{Theorem}
\newtheorem{proposition}[theorem]{Proposition}
\newtheorem{prop}[theorem]{Proposition}
\newtheorem*{prop*}{Proposition}
\newtheorem{lemma}[theorem]{Lemma}
\newtheorem{lemm}[theorem]{Lemma}
\newtheorem{coro}[theorem]{Corollary}
\newtheorem*{affi}{Claim}
\newtheorem*{theoA}{Theorem A}
\newtheorem*{theoB}{Theorem B}
\newtheorem*{theoB'}{Theorem B'}
\newtheorem*{maintheo}{Main Theorem}
\newtheorem{ques}{Question}
\theoremstyle{definition}
\newtheorem{conj}[theorem]{Conjecture}
\newtheorem{defi}[theorem]{Definition}
\theoremstyle{remark}
\newtheorem{remark}[theorem]{Remark}
\newtheorem{rema}[theorem]{Remark}
\newenvironment{demo}[1][{\noindent Proof}]{\noindent{\bf #1. }}{\hfill$\Box$\medskip}
\def\jac{{\operatorname{det}}} \def\det{{\operatorname{det}}} \def\Det{\operatorname{det} }
\def\diff{\operatorname{Diff}} \def\Diff{\operatorname{Diff} }
\def\diam{{\operatorname{diam}}}
\def\interior{\operatorname{Int} }
\def\Per{\operatorname{Per} }
\def\Lip{\operatorname{Lip} }
\def\orb{\operatorname{orb} }
\def\id{\operatorname{id}}
\def\bar{\overline}
\def\transverse{\,\raise2pt\hbox to1em{\hfil$\top$\hfil}\hskip -1em \hbox to1em{\hfil$\cap$\hfil}\,} 
\title{The $C^1$ generic diffeomorphism has trivial centralizer}
\author{C. Bonatti, S. Crovisier and A. Wilkinson}
\begin{document}

\maketitle
\begin{abstract}
Answering a question of Smale, we prove that the space of $C^1$
diffeomorphisms of a compact manifold contains a residual subset of
diffeomorphisms whose centralizers are trivial. \vskip 2mm

\end{abstract}

\tableofcontents

\newpage
\section{Introduction}

\subsection{The centralizer problem, and its solution in $\Diff^1(M)$} 
It is a basic fact from linear algebra that any two commuting matrices in $GL(n, \CC)$ can be simultaneously triangularized. 
The reason why is that if $A$ commutes with $B$, then $A$ preserves
the generalized eigenspaces of $B$, and vice versa.  Hence the relation $AB=BA$ is quite special
among matrices, and one might not expect it to be satisfied very often.
In fact, it's not hard to see that the generic pair of matrices $A$ and $B$ generate a free group;
any relation in $A$ and $B$ defines a nontrivial subvariety in the space of pairs of matrices, so the set of 
pairs of matrices that satisfy {\em no relation} is {\em residual}: it contains a countable intersection of open-dense
subsets.  On the other hand, the generic $n\times n$ matrix is diagonalizable and commutes 
with an $n$-dimensional space of matrices.  That is, a residual set of matrices have large centralizer.

Consider the same sort of questions, this time for the group of $C^1$ diffeomorphisms $\diff^1(M)$ of a compact manifold
$M$.  If $f$ and $g$ are diffeomorphisms satisfying $fg=gf$, then $g$ preserves the set of orbits of $f$, as well as
all of the smooth and topological dynamical invariants of $f$, and vice versa.  Also in analogy
to the matrix case, an easy transversality argument (written in~\cite[Proposition 4.5]{ghys} for circle homeomorphisms), 
shows that \emph{for a generic $(f_1, \ldots, f_p) \in \left(\Diff^r(M)\right)^p$ with $p\geq 2$ and $r\geq 0$,
the group $\langle f_1, \ldots, f_p\rangle$ is free.}  In contrast with the matrix case, however, the generic
$C^1$ diffeomorphism cannot have a large centralizer.   This is the content of this paper.
Our main result is:
\begin{maintheo} Let $M$ be any closed, connected smooth manifold. There is a residual subset $\cR\subset \diff^1(M)$
such that for any $f\in\cR$, and any $g\in \diff^1(M)$, if $fg=gf$, then $g=f^n$, for some $n\in\ZZ$.
\end{maintheo}

This theorem\footnote{This result has been announced in~\cite{BCW2}.}
 gives an affirmative answer in the $C^1$ topology to the following question, posed by S. Smale.
We fix a closed manifold $M$ and consider the space $\Diff^r(M)$ of  $C^r$ diffeomorphisms
of $M$, endowed with the $C^r$ topology.  The 
{\em centralizer} of  $f\in \Diff^r(M)$ is defined 
as $$Z^r(f):=\{g\in \Diff^r(M): fg=gf\}.$$
Clearly $Z^r(f)$ always contains the cyclic group 
$\langle f\rangle$ of all the powers of $f$.  We 
say that $f$ has {\em trivial centralizer} if $Z^r(f) = \langle f\rangle$. 
Smale asked the following:

\begin{ques}[\cite{Sm1,Sm2}]\label{q.smale} 
Consider the set of $C^r$ diffeomorphisms of a compact manifold $M$ with trivial centralizer.  
\begin{enumerate} 
\item Is this set dense in $\Diff^r(M)$?
\item Is it {\em residual} in $\Diff^r(M)$?  That is, does it contain a dense $G_\delta$?
\item Does it contain an open and dense subset of $\Diff^r(M)$? 
\end{enumerate}
\end{ques}

For the case $r=1$ we now have a complete answer to this question. The theorem above shows that for 
any compact manifold $M$, there is a residual subset of $\Diff^1(M)$ consisting of diffeomorphisms with trivial centralizer,
giving an  affirmative answer to the second (and hence the first) part of Question~\ref{q.smale}.
Recently, with G. Vago, we have also shown:

\begin{theo*}\cite{BCVW}
For any compact manifold $M$, the set of $C^1$ diffeomorphisms
with trivial centralizer does not contain any open and dense subset.
\end{theo*}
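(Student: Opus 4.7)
\begin{demo}[Proof plan]
The plan is to prove the stronger statement that the set $\cN\subset\Diff^1(M)$ of diffeomorphisms with \emph{nontrivial} centralizer is dense in $\Diff^1(M)$. From this the theorem follows immediately: if $\cN$ is dense then no open dense subset of $\Diff^1(M)$ can lie in its complement, which is precisely the set of diffeomorphisms with trivial centralizer. Density of $\cN$ is consistent with the Main Theorem, because $\cN$ is meager (as the complement of a residual set) but meager sets can nonetheless be dense, just as $\QQ$ is dense in $\RR$.

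Fix $f\in\Diff^1(M)$ and $\varepsilon>0$; the goal is to produce $f_1\in\cN$ with $\|f-f_1\|_{C^1}<\varepsilon$. I would first use the Kupka--Smale theorem together with Franks' lemma to perturb $f$ so that it has a hyperbolic periodic sink $p$ of some period $k$ whose derivative $A:=D(f_1)^k(p)$ has a pair of nonreal complex conjugate eigenvalues on a two-dimensional invariant subspace $E^c$; the real Jordan form of $A$ restricted to $E^c$ is a scaled rotation $\lambda R_\theta$. This is a $C^1$-open condition. A further $C^1$-small (but $C^2$-large) perturbation, of the standard kind that replaces a diffeomorphism by its derivative on an arbitrarily small ball, then arranges that $(f_1)^k=A$ on some ball $B$ about $p$ in suitable coordinates.

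The central step is to build a diffeomorphism $g$ commuting with $f_1$ but not a power of $f_1$. Any rotation $R_\phi$ of the $E^c$-direction commutes with $A$, so $R_\phi$ gives a germ at $p$ commuting with $(f_1)^k$ on $B$. Using that the basin $W^s(\orb(p))$ is open and both forward- and backward-invariant under $f_1$, I would propagate this germ dynamically: on a fundamental domain of $(f_1)^k$ contained in $B$, define $g$ as an angular twist in $E^c$ damped smoothly to the identity toward the boundary of the domain; then extend $g$ to the rest of the basin by the relation $(f_1)^k g=g(f_1)^k$, and finally set $g\equiv\id$ outside $W^s(\orb(p))$. Commutativity $f_1g=gf_1$ holds on the basin by construction and trivially off it, because invariance of the basin under $f_1^{\pm 1}$ forces $f_1(x)\notin W^s(\orb(p))$ whenever $x\notin W^s(\orb(p))$. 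Choosing $\phi$ incommensurate with $\theta$ mod $2\pi$ guarantees $g\notin\langle f_1\rangle$.

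The main obstacle is the \emph{smooth} extension, both across $\partial W^s(\orb(p))$ and through the fixed point $p$. A naive dynamical propagation of a constant-angle rotation produces a $g$ commuting with $A$ on $B\setminus\{p\}$ but failing to extend by the identity outside the basin and failing to be $C^1$ at $p$. Damping the rotation angle so that the resulting diffeomorphism decays to the identity near the boundary of the basin, while remaining compatible with both the contraction of $A|_{E^c}$ and the smoothness requirement at $p$, is the heart of the construction and is where the exact linearity of $(f_1)^k$ on $B$ is essentially used. Once this interpolation is carried out carefully, the resulting $g$ is a smooth diffeomorphism of $M$ commuting with $f_1$ and distinct from every iterate of $f_1$, proving $f_1\in\cN$ and therefore the density of $\cN$.
\end{demo}
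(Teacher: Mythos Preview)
The present paper does not prove this theorem: it is quoted from \cite{BCVW}, and the only indication of method given here is the sentence following the statement, namely that \cite{BCVW} ``exhibits a family of $C^\infty$ diffeomorphisms with large centralizer that is $C^1$ dense in a nonempty open subset of $\Diff^1(M)$.'' That is a \emph{local} density statement --- nontrivial centralizers are dense in some specific open set --- which already implies the theorem; you are attempting the stronger global claim that $\cN$ is dense in all of $\Diff^1(M)$.

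Your construction has a genuine gap at the step you yourself flag as ``the heart of the construction,'' and it cannot be repaired along the lines you indicate. Work in the two--dimensional eigenspace $E^c$, where $(f_1)^k=A=\lambda R_\theta$ on a disc. If on a fundamental annulus $D=\{\lambda\le |z|\le 1\}$ you set $g=R_{\phi(|z|)}$ with $\phi$ damped to $0$ at both boundary circles and then propagate by $g\circ A=A\circ g$, conformality of $A$ forces $g(w)=R_{\psi(|w|)}(w)$ with $\psi(\lambda r)=\psi(r)$: the rotation angle is $\log$--periodic in $|w|$. Along any ray through $p$ the difference quotient $g(w)/|w|$ then oscillates without limit as $w\to 0$, so $g$ is not even differentiable at $p$. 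Conversely, the undamped choice $g=R_\phi$ \emph{is} smooth at $p$ --- contrary to what you write, since it simply equals the linear map $R_\phi$ on all of $B$ --- but the outward propagation $f_1^{-jk}\circ R_\phi\circ f_1^{jk}$ is a conjugate of a fixed nontrivial map and has no reason to tend to the identity near $\partial W^s(\orb(p))$. The two requirements, smoothness at $p$ and triviality outside the basin, pull in opposite directions, and for a hyperbolic sink there is no $f_1$--equivariant interpolation between them. (Secondary gaps: your propagation gives only $g(f_1)^k=(f_1)^kg$, not $gf_1=f_1g$; and the approach says nothing about $\dim M=1$.)

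The mechanism in \cite{BCVW} avoids all of this by working in an open set where one can perturb to a periodic point whose return map is the \emph{identity} on a small ball $B$ (compare the set $V_M$ discussed in the appendix to the present paper). Any diffeomorphism compactly supported in the interior of $B$, spread equivariantly over the orbit of $B$, then commutes with $f_1$ --- with no basin to fill and no boundary to cross.
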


This result gives a negative answer to the third part of Question~\ref{q.smale}; 
on any compact manifold, \cite{BCVW} exhibits a family of $C^\infty$ diffeomorphisms with 
large centralizer that is $C^1$ dense in a nonempty
open subset of $\Diff^1(M)$.

The history of Question~\ref{q.smale}  goes back to the work of N. Kopell \cite{Ko}, who
gave a complete answer for $r\geq 2$ and the circle $M=S^1$: the
set of diffeomorphisms with trivial centralizer contains an open and dense subset of $\Diff^r(S^1)$.
For $r\geq 2$ on higher dimensional manifolds, there are partial results with additional
dynamical assumptions, such as hyperbolicity \cite{PY1, PY2, Fi} and partial hyperbolicity \cite{Bu}.
In the $C^1$ setting, Togawa proved that generic Axiom A diffeomorphisms have trivial centralizer.  In an earlier work
\cite{BCW}, we showed that for $\dim(M)\geq 2$,  the $C^1$ generic conservative (volume-preserving or symplectic)
diffeomorphism has trivial centralizer in $\Diff^1(M)$.  A more complete list of previous results can be found in \cite{BCW}.

\subsection{The algebraic structure and the topology of $\Diff^1(M)$} 

These results suggest that the topology of the set of diffeomorphisms with trivial centralizer is complicated and motivate
the following questions.
\begin{ques}
\begin{enumerate}
\item {\em Consider the set of diffeomorphisms whose centralizer is trivial.}
What is its interior?
\item Is it a Borel set?
{\em (See \cite{FRW} for a negative answer to this question in the measurable context.)}
\item {\em The set $\{(f,g) \in \Diff^1(M)\times \Diff^1(M): fg=gf\}$ is closed}.
What is its local topology? For example, is it locally connected?
\end{enumerate}
\end{ques}

Beyond just proving the genericity of diffeomorphisms with trivial centralizer, we find a precise collection
of dynamical properties of $f$ that imply that $\cZ^1(f) = \langle f \rangle$.   As an illustration of  what we
mean by this, consider the Baumslag-Solitar relation $gfg^{-1} = f^n$, where $n>1$ is a fixed integer.  Notice that 
if this relation holds for some $g$, then the periodic points of $f$ cannot be hyperbolic.  
This implies that for the $C^1$ generic $f$, there is {\em no diffeomorphism} $g$ satisfying $gfg^{-1} = f^n$.
In this work, we consider the relation $gfg^{-1} = f$.  The dynamical properties of $f$ that forbid this relation 
for $g\notin \langle f \rangle$ are the Large Derivative (LD) and Unbounded Distortion (UD) properties (described in greater detail in the subsequent sections).  

Hence the Main Theorem illustrates the existence of links between the dynamics of a diffeomorphism and its algebraic 
properties as a element of the group $\Diff^1(M)$.  In that direction, it seems natural to propose 
the following (perhaps naive) generalizations of Question~\ref{q.smale}.
\begin{ques} 
\begin{enumerate}
\item Consider $G=\langle a_1, \dots, a_k\, |\, r_1, \dots, r_m\rangle$
a finitely presented group where $\{a_i\}$ is a generating set and $r_i$ are relations. 
How large is the set of diffeomorphisms $f\in \Diff^r(M)$ such that there is an injective morphism $\rho\colon G\to\Diff^r(M)$ with $\rho(a_1)=f$?
{\em The Main Theorem implies that this set is meagre if $G$ is abelian (or even nilpotent).}
\item Does there exist a diffeomorphism $f\in\Diff^r(M)$ such that, for every $g\in\Diff^r(M)$, the group generated by $f$ and $g$ is either 
$\langle f\rangle$ if $g\in \langle f\rangle$ or  the free product $\langle f\rangle \ast\langle g\rangle$ ? 
\item  Even more, does there exist a diffeomorphism $f\in\Diff^r(M)$ such that, for every  finitely generated group $G\in\Diff^r(M)$ with $f\in G$, there is a subgroup $H\subset G$ such that $G=\langle f\rangle \ast H$ ?
\item If the answer to either of the two previous items is ``yes", then how large are the corresponding sets of diffeomorphisms with these properties? 
\end{enumerate}
\end{ques} 

\subsection{On $C^1$-generic diffeomorphisms}

Our interest in Question~\ref{q.smale} comes also from another motivation. The study of the dynamics of 
diffeomorphisms from the perspective of the $C^1$-topology has made substantial progress 
in the last decade, renewing hope for a reasonable description of 
$C^1$-generic dynamical systems. The elementary question on centralizers considered here
presents a natural challenge for testing the strength of the new tools. 

Our result certainly uses these newer results on $C^1$-generic dynamics, 
in particular those related to  Pugh's closing lemma, Hayashi's connecting lemma, 
the extensions of these techniques incorporating Conley theory,  and the concept of 
\emph{topological tower} introduced in \cite{BC}.
Even so, the Main Theorem is far from a direct consequence 
of these results;  in this subsection, we  focus on the new techniques and tools
we have developed in the process of answering Question~\ref{q.smale}. 

Our proof of the Main Theorem goes back to the property  
which is at the heart of most perturbation results specific to the $C^1$-topology.

\subsubsection{A special property of the $C^1$-topology}

If we focus on a very small neighborhood of an orbit segment under a differentiable map, 
the dynamics will appear to be linear around each point; 
thus locally, iterating a diffeomorphism consists in multiplying 
invertible linear maps.  

On the other hand, the $C^1$-topology is the unique smooth topology that is invariant under
(homothetical) rescaling.  More precisely, consider a perturbation $f\circ g$  
of  a diffeomorphism $f $,  where $g$ is  a diffeomorphism supported in a ball $B(x,r)$ and $C^1$-close to the identity.  The $C^1$-size of the perturbation does not increase if we replace $g$  by its conjugate $h_\lambda g h_\lambda^{-1}$ where $h_\lambda$ is the homothety of ratio $\lambda<1$. The new perturbation is now supported on the ball $B(x,r\lambda)$. 
When $\lambda$ goes to $0$, we just see a perturbation of the linear map $D_{x}f$. 

This shows that  local $C^1$-perturbation results are closely related to perturbations of linear cocycles. This connection is quite specific to the $C^1$-topology: this type of renormalization of a perturbation causes its $C^2$-size to increase proportionally to the inverse of the radius of its support. The rescaling invariance property of the $C^1$ topology is shared with the Lipschitz topology; however, bi-Lipschitz homeomorphisms do not look like linear maps at small scales.   

This special property of the $C^1$-topology was first used in the proof of Pugh's closing lemma, 
and perhaps the deepest part of the proof consists in understanding 
perturbations of a linear cocycle. Pugh introduced the  fundamental,  simple idea that, 
if we would like to perform a large perturbation in a neighborhood of a point $x$, we can spread this perturbation along the orbit of $x$ and obtain the same result, but by $C^1$-small perturbations supported in the neighborhood of an orbit segment. The difficulty in this idea is that, if one performs a small perturbation of $f$ in a very small neighborhood of the point $f^i(x)$, the effect of this perturbation when observed from a small neighborhood of $x$ is deformed by a conjugacy under the linear map $Df^i$; this deformation is easy to understand if $Df^i$ is an isometry or a conformal map, but it has no reason in general to be so. 

Our result is based on Propositions~\ref{p.tidycube} and~\ref{p.pertAreduced}, which both produce a perturbation of a linear cocycle supported in the iterates of a cube $Q$. The aim of Proposition~\ref{p.tidycube} is to perturb the derivative $Df$ in order to obtain a large norm  $\|Df^n\|$ or $\|Df^{-n}\|$, for a given $n$ and for every orbit that meets a given subcube $\delta Q\subset Q$; the aim of Proposition~\ref{p.pertAreduced} is to obtain a large variation of the jacobian $\det Df^n$, for some integer $n$ and for all the points of a given subcube $\theta Q\subset Q$.   The first type of perturbation is connected to the Large Derivative (LD) property, and the second, to the Unbounded Distortion (UD) property.

The main novelty of these two elementary perturbation results  is that, in contrast to the case of the closing lemma, whose aim is to perturb a single orbit, our perturbation lemmas will be used to perturb 
all the orbits of a given compact set.  To perturb all orbits in a compact set, we first cover the set with open cubes, and then carry out the perturbation cube-by-cube, using different orbit segments for adjacent cubes. For this reason, we need to control the effect of the perturbation associated to a given cube on the orbits through all of its  neighboring cubes.  

To obtain this control,  for each cube in the cover, we perform a perturbation along the iterates of the cubes until we obtain the desired effect on the derivative, and then we use  more iterates of the cube to ``remove the perturbation."  By this method, we ensure that the long-term effect of the perturbation on the orbit will be as small as possible in the case of Proposition~\ref{p.pertAreduced}, and indeed completely removed in the case of Proposition~\ref{p.tidycube}.  In the latter case, 
we speak of \emph{tidy perturbations}. These perturbations, which are doing ``nothing or almost nothing," are our main tools. 

\subsubsection{Perturbing the derivative without changing the topological dynamics} 

In the proof of the Main Theorem, we show that every diffeomorphism $f$ can be $C^1$-perturbed in order to obtain simultaneously  the (UD)- and (LD)-properties,  which together imply the triviality of the centralizer.  However, unlike the (UD)-property,  the (LD)-property is not a generic property: to get both properties to hold simultaneously, we have to perform a perturbation that produces the (LD)-property 
while preserving  the (UD)-property.

Our solution consists in changing the derivative of $f$ without changing its topological dynamics. The only context in which an arbitrary perturbation of the derivative of $f$ will not alter its dynamics is one where  $f$ is structurally stable.  Here $f$ is not assumed to have any kind of stability property, and yet we can realize a substantial effect on the derivative by a perturbation preserving the topological dynamics.  For example, starting with an irrational rotation of the torus
$\TT^d$, we can obtain, via an arbitrarily $C^1$-small perturbation, a diffeomorphism $g$, conjugate to the original rotation, with the property:
$$\lim_{n\to\infty} \inf_{x\in \TT^d} \sup_{y\in{ \orb}_g(x)} \|Dg^{n} (y)\| + \|Dg^{-n}(y)\|  = \infty.$$

Let us state our result (this is a weak version of Theorem~B below):  

\begin{theo*} Let $f$ be a diffeomorphism whose periodic orbits are all hyperbolic. Then any $C^1$-neighborhood of $f$ contains a diffeomorphism $g$
such that 
\begin{itemize}
\item $g$ is conjugate to $f$ via a homeomorphism.
\item $g$ has the \emph{large derivative property}: for every $K>0$ there exists $n_K$ such that, for every $n\geq n_K$ and any non-periodic point $x$:   $$\sup_{y\in {\orb}_g(x)}\{\|Dg^n(y)\|,\|Dg^{-n}(y)\|\}>K.$$
\end{itemize}
\end{theo*}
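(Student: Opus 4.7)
I would build $g$ as a $C^1$-limit of a sequence $f_0 = f, f_1, f_2, \ldots$ inside the prescribed $C^1$-neighborhood of $f$, where $f_{j+1}$ is obtained from $f_j$ by a finite composition of tidy perturbations produced by Proposition~\ref{p.tidycube}. Since each tidy perturbation leaves the topological dynamics unchanged, it comes with a homeomorphism $h_j$ of small support satisfying $h_j \circ f_j = f_{j+1} \circ h_j$. Choosing the $C^1$-sizes $\varepsilon_j$ summable and the supports of the $h_j$ with diameter decreasing to $0$, both $f_j \to g$ in $C^1$ and $H_j := h_j \circ \cdots \circ h_1 \to H$ uniformly; by construction $H \circ f = g \circ H$ and $H$ is a homeomorphism.

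\textbf{Inductive step.} Fix sequences $K_j \to \infty$, $n_j \to \infty$, $N_j \to \infty$. At stage $j$, hyperbolicity of the periodic orbits of $f$ (and their persistence under small $C^1$-perturbation) ensures that $\Per_{\le N_j}(f_j)$ is a finite set, which I enclose in an open neighborhood $U_j$ small enough that no non-periodic $f_j$-orbit is trapped in $U_j$. I cover the compact set $M \setminus U_j$ by finitely many small open cubes $Q_1, \ldots, Q_r$, with concentric subcubes $\delta Q_i \subset Q_i$ still covering $M \setminus U_j$. For each $Q_i$, Proposition~\ref{p.tidycube} produces a tidy perturbation of $C^1$-size $\le \varepsilon_j / r$, supported in a disjoint union of iterates of $Q_i$, that ensures every orbit meeting $\delta Q_i$ contains a point $y$ with $\max(\|Df^{n_j}(y)\|, \|Df^{-n_j}(y)\|) > C_j K_j$ for a large constant $C_j$. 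Keeping the cube size small and $N_j \gg n_j$, the perturbations associated with different cubes can be performed on pairwise disjoint iterated supports, avoid $U_j$, and compose to give $f_{j+1}$.

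\textbf{Verification of LD.} For any non-periodic $x$ of $g$, the point $H^{-1}(x)$ is non-periodic for $f$, so for all $j$ large the $f_j$-orbit of $H_j^{-1}(x)$ meets some $\delta Q_i$, yielding a witness $y_j$ with the desired bound at time $n_j$. This bound transfers to $g$ because a future stage-$k$ perturbation of $C^1$-size $\varepsilon_k$ modifies $\|Df^{n_j}(y_j)\|$ by a multiplicative factor at most $(1 + C \varepsilon_k)^{n_j}$, so with $\sum_{k>j} \varepsilon_k \cdot n_j$ summably small one retains $\|Dg^{n_j}(y_j)\| > K_j / 2$. The requirement ``for all $n \geq n_K$'' rather than a single $n = n_j$ is handled by running each stage with the same $K_j$ simultaneously for a range of target times covering $[n_j, n_{j+1}]$, or equivalently by exploiting that $Dg^n = Dg^{n - n_j}(g^{n_j} y) \cdot Dg^{n_j}(y)$ combined with the bounded $C^1$-norm of $g$ to slide the LD bound to neighboring times at the cost of a controlled multiplicative factor.

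\textbf{Main obstacle.} The hardest point is the delicate balance between $\varepsilon_j$, $K_j$, and $n_j$: the perturbation at stage $j$ must be small enough in $C^1$-norm for all previously established LD bounds to survive, yet large enough for Proposition~\ref{p.tidycube} to produce a macroscopic effect $K_j C_j$ on $Df^{n_j}$. This is precisely where the $C^1$-specific ``rescaling'' feature recalled in the introduction is essential: a perturbation of $C^1$-size $\varepsilon$ compounded over $n$ iterates of an orbit can produce effects of order $(1 + \varepsilon)^n$, so by taking $n_j \varepsilon_j$ large while $n_k \varepsilon_j$ remains small for all $k < j$, both constraints can be satisfied simultaneously. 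A secondary obstacle is the uniform convergence of the conjugating homeomorphisms $H_j$; since a priori the support of $h_j$ involves iterates of cubes and could be spread throughout $M$, one must ensure each $h_j$ is isotopic to the identity through an isotopy whose tracks have uniformly small diameter, so that the telescoping composition defines a genuine homeomorphism of $M$ in the limit.
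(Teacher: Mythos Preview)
Your architecture---build $g$ as a $C^1$-limit of tidy perturbations, each smoothly conjugate to $f$, and pass to a limiting homeomorphism---is exactly the paper's scheme (Sections~3--5). But the inductive step, as written, skips the central difficulty.

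\textbf{The missing ingredient: topological towers.} You cover $M\setminus U_j$ by small cubes $Q_1,\ldots,Q_r$ and propose to perform a tidy perturbation on each. It is true that around any point with no nearby low-period orbit one can find a small cube with $2n_j$ pairwise disjoint iterates. The problem is the \emph{interaction between different cubes}: the orbit segments $\bigcup_k f_j^k(Q_i)$ and $\bigcup_k f_j^k(Q_{i'})$ will in general overlap (the orbit through $Q_{i'}$ may pass through the iterates of $Q_i$), so the tidy perturbation attached to $Q_{i'}$ can destroy the large derivative just created on $Q_i$. Your claim that ``the perturbations associated with different cubes can be performed on pairwise disjoint iterated supports'' is precisely the hard statement, and it does not follow from making the cubes small or taking $N_j\gg n_j$. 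The paper resolves this via the \emph{topological tower} theorem (Theorem~\ref{t.tower}, refined in Lemma~\ref{l.wotower}): there is a \emph{single} open set $U_i$ with $m_i$ pairwise disjoint iterates such that every non-periodic orbit meets a compact $D_i\subset U_i$. All the work at stage $i$ happens inside the iterates of this one set, so there is no coordination problem between cubes.

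\textbf{Disjoint supports across consecutive stages.} A second, subtler point: the paper does \emph{not} rely solely on the multiplicative bound $(1+C\varepsilon_k)^{n_j}$ to protect earlier LD estimates. Your scheme ``$n_j\varepsilon_j$ large while $n_k\varepsilon_j$ small for $k<j$'' runs into a circularity: to protect stage $i$ from stage $i{+}1$ you need $\varepsilon_{i+1}$ small relative to $n_{i+1}C^{n_{i+1}}$ (Lemma~\ref{l.chooseve}), but $n_{i+1}=n_0(C,2K_{i+1},\varepsilon_{i+1})$ is itself determined by $\varepsilon_{i+1}$ and grows like $1/\varepsilon_{i+1}$. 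The paper breaks this loop by arranging (condition~(\ref{e.Ui}), via the wandering-orbit machinery of Lemmas~\ref{l.wosets}--\ref{l.wotower}) that the support of the stage-$(i{+}1)$ perturbation is \emph{disjoint} from the iterates of $U_i$ where the stage-$i$ large derivative lives; then $Dg_{i+1}=Dg_i$ there, and only the tail $\sum_{k\ge i+2}\varepsilon_k$ must be controlled quantitatively (proof of Lemma~\ref{l.nail}).

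Two smaller points. Your second option for ``all $n\ge n_K$'' via $Dg^n=Dg^{n-n_j}\cdot Dg^{n_j}$ does not work: the prefactor can contract by $C^{-(n-n_j)}$, which is uncontrolled over $[n_j,n_{j+1}]$. The paper implements your first option, running Theorem~B' separately for each $n\in[n_i,n_{i+1}-1]$ on disjoint blocks of iterates of the tower (this is why $m_i=2^{d+1}\sum_{n=n_i}^{n_{i+1}-1}n$). Finally, for the convergence of the conjugacies you need more than ``supports with diameter decreasing to $0$'': Lemma~\ref{l=conjugacy} requires $\sum_i\rho_i M_{i-1}<\infty$ with $M_i=\prod_{k\le i}C^{m_k}$, so the component diameters $\rho_i$ must decay faster than the super-exponentially growing Lipschitz bounds on $\Phi_i^{-1}$. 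This is arranged explicitly in condition~(\ref{e.rho}).
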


As far as we know, this result is the first perturbation lemma which produces a perturbation of the derivative inside the topological conjugacy class of a given diffeomorphism (with the unique hypothesis that all periodic orbits are  hyperbolic, which is generic in any topology).   In the proof, we construct  the perturbation $g$ of $f$ as a limit of tidy perturbations $g_n$ of $f$ which are smoothly conjugate to $f$. As we think that tidy perturbations  will used in further works, we present them in some detail.

\subsubsection{Tidy perturbations}
Let $f$ be a diffeomorphism and let $U$ be an open set such that the first iterates $\bar U$, $f(\bar U)$, \dots, $f^n(\bar U)$ are pairwise disjoint, for some $n>0$. Consider a perturbation $g$ of $f$ with support in $V=\bigcup_0^{n-1} f^i(U)$ that has no effect at all on the orbits crossing $V$:  that is, for every $x\in U$, $f^n(x)=g^n(x)$. In our proof of Theorem~B, we construct such perturbations, using the first iterates $\bar U$, $f(\bar U)$, \dots , $f^i(\bar U)$, for some $i\in\{0,\dots,n-2\}$, for perturbing $f$ and getting the desired effect on the derivative, and using the remaining iterates $f^{i+1}(\bar U)$,\dots,$f^{n-1}(\bar U)$ for removing the perturbation, bringing the $g$-orbits of the points $x\in \bar U$ back onto their $f$-orbits. The diffeomorphism $g$ is smoothly conjugate to $f$ via some diffeomorphism that is the identity map outside $\bigcup_0^{n-1} f^i(U)$. 
Such a perturbation is called  a \emph{tidy perturbation}. 

Tidy perturbations require  open sets that are disjoint from many of their iterates.  To get properties to hold on all non-periodic points, we use such open sets that also cover all the orbits of $f$, up to finitely many periodic points of low period. Such open sets, called \emph{topological towers}, were constructed in \cite{BC} for the proof of a connecting lemma for pseudo-orbits.  
In Section~\ref{s.wanderingorbit}, we further refine the construction of topological towers;  in particular, we prove that it is always possible to construct topological towers disjoint from any compact set with the {\em wandering orbit property}.  The wandering orbit property is a weak form of wandering behavior, satisfied, for example, by nonperiodic points and compact subsets of the wandering set.

In the proof of Theorem~B, we construct an infinite sequence $(g_i)$,  where $g_i$ is a tidy perturbation of $g_{i-1}$, with support in the disjoint $n_i$ first iterates of some open set $U_i$. We show that, if the diameters of the $U_i$ decrease quickly enough, then the conjugating diffeomorphisms converge in the $C^0$-topology to a homeomorphism conjugating the limit diffeomorphism $g$ to $f$. 
With a weaker decay to $0$ of the diameters of the $U_i$, it may happen that the conjugating diffeomorphisms converge uniformly to some continuous noninvertible map. In that case, the limit diffeomorphism $g$ is merely semiconjugate to $f$. This kind of technique has already been used, for instance by M. Rees \cite{R}, who constructed homeomorphisms of the torus  with positive entropy and semiconjugate to an irrational rotation.

Controlling the effect of successive general perturbations is very hard.
For tidy perturbations it is easier to manage the effect of a successive sequence of them, since each of them
has no effect on the dynamics.
However this advantage leads to some limitations on the effect we can hope for, in particular
on the derivative.
We conjecture for instance that it is not possible to change the Lyapunov exponents:

\begin{conj} Let $g=hfh^{-1}$ be a limit of tidy perturbations $g_i=h_ifh_i^{-1}$ of $f$, where the $h_i$ converge to $h$. Then given  any ergodic invariant measure $\mu$ of $f$,  the Lyapunov exponents of the measure $h_*(\mu)$ for $g$  are the same as those of $\mu$ for $f$.
\end{conj}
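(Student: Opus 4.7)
The obvious starting point is that each $g_i = h_i f h_i^{-1}$ is \emph{smoothly} conjugate to $f$, so the Oseledets spectrum of $(g_i, (h_i)_*\mu)$ coincides, with multiplicities, with that of $(f,\mu)$. Since $h_i \to h$ uniformly, the measures $(h_i)_*\mu$ converge weakly to $h_*\mu$, and the assumed $C^1$-convergence $g_i\to g$ yields uniform convergence $Dg_i^N \to Dg^N$ on each fixed iterate $N$. The plan is therefore to pass to the limit in this chain of equalities and show that no exponent drifts.

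\noindent\textbf{Key steps.} First, I would exploit the tidy structure cumulatively. Each step $g_{i+1}=k_i g_i k_i^{-1}$ is supported in a tower $V_i = \bigcup_{j=0}^{n_i-1} g_i^{\,j}(U_i)$ with $k_i \equiv \id$ off $V_i$; consequently for any $y$ with $y,\,g_i^N(y)\notin V_i$ one has $g_{i+1}^N(y)=g_i^N(y)$ and, using $Dk_i=I$ at these endpoints,
$$Dg_{i+1}^N(y)\;=\;Dk_i(g_i^N y)\cdot Dg_i^N(y)\cdot Dk_i^{-1}(y)\;=\;Dg_i^N(y).$$
Thus the derivative cocycle is unchanged on any orbit segment whose endpoints miss the tower. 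Second, I would push the Oseledets splitting for $(f,\mu)$ forward by the topological conjugacy $h$ to produce a measurable splitting over a full $h_*\mu$-measure set, and attempt to identify it with the Oseledets splitting of $(g,h_*\mu)$. Third, I would try to show that for $\mu$-a.e.\ $x$ the time spent by the $f$-orbit of $x$ inside the shrinking tower $V_i$ grows subexponentially in the segment length, so that the only asymptotically relevant contributions to $\tfrac1N\log\|Dg^N(h(x))\|$ come from the ``outside'' pieces on which the cocycle is frozen to that of $f$.

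\noindent\textbf{Main obstacle.} The essential difficulty is that the convergence $h_i\to h$ is only $C^0$; indeed the whole point of the construction behind Theorem~B is to force $\|Dh_i\|$ and $\|Dh_i^{-1}\|$ to blow up, since otherwise one could not gain the Large Derivative property. The smooth-conjugacy identity
$$Dg_i^N(h_i(x))\;=\;Dh_i(f^N x)\cdot Df^N(x)\cdot Dh_i^{-1}(h_i(x))$$
gives equality of exponents only on $(h_i)_*\mu$, and uniform convergence $Dg_i^N\to Dg^N$ for each \emph{fixed} $N$ provides no control when $N\to\infty$. The crux of any honest proof would be to bound, in a Birkhoff-subexponential sense along the $f$-orbit of a $\mu$-typical point, the growth of $\log\|Dh_i(f^n x)\|$ and $\log\|Dh_i^{-1}(f^n x)\|$ uniformly in $i$; equivalently, one needs to forbid that the escape of $\|Dh_i\|$ to infinity concentrates along a set of positive Birkhoff average. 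Establishing such control on a nonhyperbolic reference orbit, beyond the bare $C^0$ convergence delivered by the construction, is the step I expect to be the main obstacle, and is precisely what keeps the statement at the level of a conjecture.
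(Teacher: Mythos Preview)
There is nothing to compare against: the paper states this as an open \emph{conjecture} and offers no proof. You evidently recognize this yourself, since your proposal is not a proof but a diagnosis of why the naive approach stalls, ending with the acknowledgment that the obstacle ``is precisely what keeps the statement at the level of a conjecture.''

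Your identification of the difficulty is on target. The smooth conjugacies $h_i$ preserve the Oseledets spectrum at each finite stage, and $Dg_i^N\to Dg^N$ uniformly for each fixed $N$, but the construction behind Theorem~B is designed so that $\|Dh_i\|$ and $\|Dh_i^{-1}\|$ blow up (the a~priori bound in the paper is only the exponential $C^{m_i}$), so one cannot pass to the limit directly. Your ``frozen cocycle'' observation is correct as stated: if both endpoints $y$ and $g_i^N(y)$ lie outside the support of the step-conjugacy $\varphi_{i+1}$, then $Dg_{i+1}^N(y)=Dg_i^N(y)$, since $g_{i+1}^N=\varphi_{i+1}\,g_i^N\,\varphi_{i+1}^{-1}$ and only the endpoint derivatives of $\varphi_{i+1}$ enter. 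The trouble is in iterating this over all subsequent perturbations: in the paper's construction the towers are chosen precisely so that \emph{every} nonperiodic orbit visits each of them, so no orbit segment is eventually frozen. Your reformulation in terms of subexponential visit times to the shrinking towers, or equivalently a uniform-in-$i$ Birkhoff bound on $\log\|Dh_i\circ f^n\|$, is indeed the heart of the matter, and the paper offers no mechanism to control this.
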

To obtain the (LD)-property, we create some oscillations in the size of the derivative along orbits.  It seems natural to ask if, on the other hand, one could erase oscillations of the derivative. Let us formalize a question:

\begin{ques} Let $f$ be a diffeomorphism and assume that $\La$ is a minimal invariant compact set of $f$ that is uniquely ergodic
with invariant measure $\mu$. Assume that all the Lyapunov exponents of $\mu$ vanish.  Does there exist $g=hfh^{-1}$, a limit of tidy perturbations of $f$, such that the norm of $Dg^n$ and of $Dg^{-n}$ remain bounded on $h(\La)$?
\end{ques} 

Such minimal uniquely ergodic sets appear in the $C^1$ generic setting (among the dynamics 
exhibited by~\cite{BD}).

In this paper, starting with a diffeomorphism $f$ without the (LD)-property, we build a perturbation $g=hfh^{-1}$ as a limit of tidy perturbations 
and such that $g$ satisfies the (LD)-property.  Then the linear cocycles $Df$ over $f$ and $Dg$ over $g$ are not conjugate by a continuous linear cocycle over $h$.
\begin{ques} Is there a measurable linear cocycle over $h$ conjugating $Dg$ to $Df$?
\end{ques}

\subsection{Local and global: the structure of the proof of the Main Theorem}
The proof of the Main Theorem breaks into two parts, a
``local'' one and a ``global'' one. This is also the general
structure of the proofs of the main results in \cite{Ko, PY1, PY2,
To1, To2, Bu2}:
\begin{itemize}
\item  The local part proves that for
the generic $f$,  if $g$ commutes with $f$, then $g=f^\alpha$ on an
open and dense subset  $W\subset M$, where $\alpha\colon W\to \ZZ$
is a locally constant function. 

This step consists in ``individualizing'' a dense collection
of orbits of $f$, arranging that the behavior of the diffeomorphism in a neighborhood 
of one orbit is different from the behavior in a neighborhood
of any other. Hence $g$ must preserve each of these orbits, 
which allows us to obtain the function $\alpha$ 
on these orbits.

This individualization of orbits happens whenever a property of
unbounded distortion (UD) holds between certain orbits of $f$, a
property which we describe precisely in the next section. 
Theorem~A shows that the (UD) property holds for a
residual set of $f$.  

\item The global part consists in proving
that for generic $f$, $\alpha$ is constant. 
We show that it is enough to verify that
the function $\alpha$ is bounded. This would be the case if the derivative $Df^n$ were to take
large values on each orbit of $f$, {\em for each large $n$}: the bound on $Dg$ would
then forbid $\alpha$ from taking arbitrarily large values.
Notice that this property is global in nature: we require large derivative of $f^n$ on each orbit, for each large $n$.

Because it holds for every large $n$, this large derivative (LD) property is not
generic, although we prove that it is dense. This lack of genericity
affects the structure of our proof: it is not possible to obtain
both (UD) and (LD) properties just by intersecting two residual
sets.  Theorem~B shows
that {\em among the diffeomorphisms satisfying (UD)}, the property
(LD) is dense.  This allows us to conclude that the set of
diffeomorphisms with trivial centralizer is $C^1$-dense.
\end{itemize}

There is some subtlety in
how we obtain a residual subset from a dense subset. Unfortunately we don't know
if the set of diffeomorphisms with
trivial centralizer form a $G_\delta$, i.e., a countable
intersection of open sets. For this reason, we consider centralizers defined inside of the
larger space of bi-Lipschitz homeomorphisms, and we use the compactness properties of this space. 
The conclusion is that if a $C^1$-dense set of diffeomorphisms has
trivial centralizer inside of the space of bi-Lipschitz
homeomorphisms, then this property holds on a $C^1$ residual set.

\subsection{Perturbations for obtaining the  (LD) and (UD) properties}\label{s.conclusion} 
To complete the proof of
the Main Theorem, it remains to prove Theorems A
and B. Both of these results split in two parts.  
\begin{itemize} 
\item The first
part is a local perturbation tool, which changes the derivative of $f$ in a very small
neighborhood of  a point, the neighborhood being chosen so small
that $f$ resembles a linear map on many iterates of this
neighborhood. 

\item In the second part, we perform the perturbations provided by the
first part at different places in such a
way that the derivative of every (wandering or non-periodic) orbit
will be changed in the desirable way. 
For the (UD) property on the
wandering set, the existence of open sets disjoint from all its
iterates are very helpful, allowing us to spread the perturbation
out over time. For the (LD) property, we need to control every
non-periodic orbit. The existence of \emph{topological towers} with
very large return time,  constructed in \cite{BC}, are the main tool,
allowing us again to spread the perturbations out over a long time
interval. 
\end{itemize}

\subsection*{Acknowledgments} This paper grew out of several visits between the 
authors hosted by their home institutions.  The authors would like to thank the 
Institut de Math\'ematiques de Bourgogne, the Northwestern University Math Department, and
the Institut Galil\'ee of the Universit\'e Paris 13 for their hospitality and support.
This work was supported by NSF grants DMS-0401326 and DMS-0701018.  We also thank 
Andr\'es Navas, who initially called our attention to the Lipschitz centralizer,
and who pointed out several references to us.

\section{The local and global strategies}

In the remaining six sections, we prove the Main Theorem, following
the outline in the Introduction.  In this section, we reduce the
proof to two results, Theorems A and B, that together give
a dense set of diffeomorphisms with the (UD) and (LD) properties.

\subsection{Background on $C^1$-generic dynamics}\label{s.preliminaries}
The space $\Diff^1(M)$ is a Baire space in the $C^1$ topology. A
{\em residual} subset of a Baire space is one that contains a
countable intersection of open-dense sets; the Baire category
theorem implies that a residual set is dense.  We say that a
property holds for the {\em $C^1$-generic diffeomorphism} if it holds
on a residual subset of $\Diff^1(M)$.

For example, the Kupka-Smale Theorem asserts (in part)
that for a $C^1$-generic diffeomorphism $f$, the periodic orbits of 
$f$ are all hyperbolic. It is easy to verify that, furthermore, 
the $C^1$-generic diffeomorphism $f$ has the following property:
if $x,y$ are periodic points of $f$ with period
$m$ and $n$ respectively, and if their orbits are distinct, 
then the set of eigenvalues of $ Df^m(x)$ and of $Df^n(y)$ are disjoint. 
If this property holds, we
say that the \emph{periodic orbits of $f$ have distinct
eigenvalues}.

Associated to any homeomorphism $f$ of a compact metric space $X$
are several canonically-defined, invariant compact subsets that contain
the points in $X$ that are recurrent, to varying degrees, under $f$.
Here we will use three of these sets, which are the {\em closure of
the periodic orbits}, denoted here by $\overline{\Per(f)}$,
the {\em nonwandering set} $\Omega(f)$, and the {\em chain recurrent
set} $CR(f)$.  By the canonical nature of their construction, the
sets $\overline{\Per(f)}$,
$\Omega(f)$ and $CR(f)$ are all preserved by any homeomorphism $g$ that
commutes with $f$.

We recall their definitions. 
The nonwandering set  $\Omega(f)$ is the set of all points $x$
such that every neighborhood $U$ of $x$ meets some iterate of $U$:
$$U\cap \bigcup_{k>0} f^k(U) \ne \emptyset.$$

The chain recurrent set $CR(f)$ is the set of {\em chain recurrent
points} defined as follows. Given $\eps>0$, we say that a point $x$
is $\eps$-recurrent, and write $x\sim_\eps x$, if there exists an
$\eps$-pseudo-orbit, that is a sequence of points $x_0, x_1,\ldots, x_k$, $k\geq 1$
satisfying $d(f(x_i), x_{i+1}) < \eps$, for $i=0,\ldots, k-1$, such that
$x_0=x_k=x$. Then $x$ is {\em
chain recurrent} if $x\sim_\eps x$, for all $\eps>0$.
Conley theory implies that the complement of $CR(f)$
is the union of sets of the form $U\setminus \overline {f(U)}$
where $U$ is an open set which is attracting: $\overline {f(U)}\subset U$.
The chain-recurrent set is partitioned into compact invariant sets
called the \emph{chain-recurrence classes}: two points $x,y\in CR(f)$
belong to the same class if one can join  $x$ to $y$ and $y$ to $x$
by $\eps$-pseudo-orbits for every $\eps>0$.
 
It is not difficult to see that for any $f$, the inclusions 
$\overline{\Per(f)} \subseteq \Omega(f)\subseteq CR(f)$ hold;  there exist examples where the inclusions are strict. For $C^1$ generic diffeomorphisms $f$, however, all three sets coincide; 
$\overline{\Per(f)} =\Omega(f)$ is a consequence of Pugh's closing lemma~\cite{Pu},
and $\Omega(f) = CR(f)$ was shown much more recently in \cite{BC}.

We have additional links between $\Om(f)$ and the periodic points in the case it has 
non-empty interior:
\begin{theo*}[\cite{BC}] For any diffeomorphism $f$ in a residual subset of $\diff^1(M)$,  any connected component $O$ of the interior of $\Omega(f)$ is contained in the closure
of the stable manifold of a periodic point $p\in O$.
\end{theo*}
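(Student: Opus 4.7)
The plan is to combine three standard $C^1$-generic properties with a simple pseudo-orbit chaining argument. Let $\cR \subset \diff^1(M)$ be the (residual) set of diffeomorphisms satisfying: (i) the Kupka--Smale property, so every periodic orbit is hyperbolic; (ii) $\overline{\Per(f)} = \Omega(f) = CR(f)$, combining Pugh's closing lemma with the equality $\Omega(f)=CR(f)$ from~\cite{BC}; and (iii) for every periodic point $p$, the chain recurrence class $C(p)$ coincides with the homoclinic class $H(p)=\overline{W^s(p)\pitchfork W^u(p)}$, so in particular $C(p) \subset \overline{W^s(p)}$. Property (iii) is the deepest input and rests on the connecting lemma for pseudo-orbits.

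Fix $f \in \cR$ and let $O$ be a connected component of $\interior(\Omega(f))$. By (ii) the nonempty open set $O$ meets $\Per(f)$; pick a periodic point $p \in O$. The main step is to show that every $x \in O$ is chain equivalent to $p$, which gives $O \subset C(p)$. Fix $\eps>0$. Because $O$ is open in a manifold, it is path-connected; join $p$ to $x$ by a continuous path in $O$ and cover it by finitely many balls $B(z_i,\eps/4)$, $i=0,\dots,N$, with $z_0=p$, $z_N=x$, $d(z_i,z_{i+1})<\eps/2$, and all centres $z_i \in O \subset \Omega(f)$. Non-wanderingness at each $z_i$ supplies a closed $\eps/2$-pseudo-orbit based at $z_i$; replacing only its last point by the nearby $z_{i+1}$ produces an $\eps$-pseudo-orbit from $z_i$ to $z_{i+1}$ (the triangle inequality controls the modified jump by $\eps/2+\eps/2$). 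Concatenating these gives an $\eps$-pseudo-orbit from $p$ to $x$, and the same argument applied to $f^{-1}$ (whose non-wandering set is also $\Omega(f)$) produces one from $x$ to $p$. Since $\eps$ is arbitrary, $p$ and $x$ lie in the same chain recurrence class. Combining with (iii) yields $O \subset C(p) \subset \overline{W^s(p)}$, as required.

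The principal obstacle is property (iii) — the identification of the chain recurrence class of a periodic point with its homoclinic class for $C^1$-generic $f$. This is the substantive input and requires the $C^1$ connecting lemma for pseudo-orbits of~\cite{BC}. The rest is elementary: density of periodic points in $\Omega(f)$, the pseudo-orbit chaining that confines a connected open subset of $\Omega(f)$ to a single chain recurrence class, and the observation that $H(p)\subset \overline{W^s(p)}$ is immediate from the definition of $H(p)$. One small point to verify with care is that the ``splicing'' of a periodic loop pseudo-orbit through $z_i$ with the nearby point $z_{i+1}$ can be done without worsening the $\eps$-constant; this is what forced the choices $d(z_i,z_{i+1})<\eps/2$ and $\eta<\eps/2$ above.
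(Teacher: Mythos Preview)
The present paper does not prove this statement; it is quoted from~\cite{BC} as background, so there is no proof here to compare against. Your proposal is a correct derivation of the result from the main structural theorems of~\cite{BC}: the generic coincidence $\overline{\Per(f)}=\Omega(f)=CR(f)$, and above all the generic equality $C(p)=H(p)$ for every periodic point~$p$, which is exactly the deep content of the connecting lemma for pseudo-orbits. The elementary chaining argument showing that a connected open subset of $CR(f)$ lies in a single chain recurrence class is standard and carried out correctly.

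One small quibble: the inclusion $H(p)\subset\overline{W^s(p)}$ is literally immediate only if one defines $H(p)$ as $\overline{W^s(p)\pitchfork W^u(p)}$ for the single point~$p$. With the more common definition via the orbit of~$p$, a short $\lambda$-lemma argument is needed to see that the two definitions agree (equivalently, that $\overline{W^s(f^i(p))}$ does not depend on~$i$ once a transverse homoclinic point exists). This is a well-known fact and does not affect the validity of your proof.
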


Conceptually, this result means that for $C^1$ generic $f$, the
interior of $\Omega(f)$ and the {\em wandering set} $M\setminus \Omega(f)$
share certain nonrecurrent features.
While points in the interior of $\Omega(f)$ all have nonwandering dynamics,
if one instead considers the restriction of $f$ to a stable manifold of a periodic orbit
$W^s(p)\setminus \cO(p)$, the dynamics are no longer recurrent; in the
induced topology on the submanifold $W^s(p)\setminus \cO(p)$, every point
has a {\em wandering neighborhood} $V$ whose iterates are all disjoint from $V$.
Furthermore, the sufficiently large future iterates of such a 
wandering neighborhood are contained in a neighborhood of the periodic orbit.
While the forward dynamics
on the wandering set are not similarly ``localized'' as they are on a stable manifold,
they still share this first feature: on the wandering set, 
every point has a wandering neighborhood (this time
the neighborhood is in the topology on $M$).

Thus, the results in \cite{BC} imply
that for the $C^1$ generic $f$, we have the following picture: 
there is an $f$-invariant open and dense subset $W$ of $M$, consisting 
of the union of the interior of $\Omega(f)$ and the complement 
of $\Omega(f)$, and densely in $W$ the dynamics of $f$ 
can be decomposed into components with ``wandering strata.''  
We exploit this fact
in our local strategy, outlined in the next section.

\subsection{Conditions for the local strategy: the unbounded distortion (UD) properties}

In the local strategy, we control the dynamics of the $C^1$ generic $f$ on the
open and dense set $W=\interior(\Omega(f))\cup \left(M\setminus\Omega(f)\right)$.
We describe here the main analytic properties we use to
control these dynamics.

We say that diffeomorphism $f$ satisfies the \emph{unbounded distortion
property  on the wandering set (UD$^{M\setminus\Omega}$)} if there
exists a dense subset $\cX\subset M\setminus \Omega(f)$ such that,
for any $K>0$, any $x\in \cX$ and any $y\in M\setminus \Omega(f)$ 
not in the orbit of $x$,  there exists $n\geq 1$ such that:
$$|\log |\Det Df^n(x)|-\log|\Det Df^n(y)||>K.$$

A diffeomorphism $f$ satisfies the \emph{unbounded distortion
property  on the stable manifolds (UD$^s$)} if for any hyperbolic
periodic orbit $\cO$, there exists a dense subset $\cX\subset
W^s(\cO)$ such that, for any $K>0$, any $x\in \cX$ and any $y\in W^s(\cO)$
not in the orbit of $x$, there exists $n\geq 1$ such that:
$$|\log |\Det Df_{|W^s(\cO)}^n(x)|-\log|\Det Df_{|W^s(\cO)}^n(y)||>K.$$

Our first main perturbation result is:
\begin{theoA}[Unbounded distortion]
The diffeomorphisms in a residual subset of $\diff^ 1(M)$ satisfy
the (UD$^{M\setminus\Omega}$) and  the (UD$^s$) properties.
\end{theoA}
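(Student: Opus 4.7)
The plan is to express each of (UD$^{M\setminus\Omega}$) and (UD$^s$) as a countable intersection of open and dense subsets of $\Diff^1(M)$, and then apply Baire's theorem. Openness will come from continuity of $f\mapsto Df^n$ in the $C^1$ topology after quantifying over fixed open sets rather than individual points, and density will come from the local jacobian-perturbation tool, Proposition~\ref{p.pertAreduced}.

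For (UD$^{M\setminus\Omega}$), fix a countable basis $\{V_i\}_{i\in\NN}$ of $M$ and, for each triple $(i,j,K)\in\NN^3$, declare $f\in\cR_{i,j,K}$ if either $V_i$ or $V_j$ is disjoint from $M\setminus\Omega(f)$, or there exist nonempty open subsets $V'_i\subset V_i$ and $V'_j\subset V_j$ of the wandering set with all forward and backward iterates pairwise disjoint, together with an integer $n\geq1$, such that
$$|\log|\det Df^n(x)|-\log|\det Df^n(y)|| > K \quad \text{for every } (x,y)\in V'_i\times V'_j.$$
Each $\cR_{i,j,K}$ is $C^1$-open because $V'_i,V'_j$ and $n$ are fixed and the inequality is strict. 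To show density, given $f\notin\cR_{i,j,K}$ with both $V_i$ and $V_j$ meeting the wandering set, I pick wandering cubes $Q_i\subset V_i$ and $Q_j\subset V_j$ on distinct orbits with all iterates pairwise disjoint; this is possible because wandering points admit arbitrarily small neighborhoods with disjoint iterates. Proposition~\ref{p.pertAreduced}, applied in $Q_i$, produces a $C^1$-small perturbation $g$ of $f$, supported in the iterates of $Q_i$ and thus equal to $f$ on $Q_j$ and its iterates, for which $\log|\det Dg^n|$ is arbitrarily large on a subcube $\theta Q_i$. Shrinking to a sub-subcube on which $\log|\det Dg^n|$ exceeds $K+\sup_{Q_j}\log|\det Df^n|$ places $g$ in $\cR_{i,j,K}$.

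For $f\in\bigcap_{i,j,K}\cR_{i,j,K}$, I construct $\cX$ by a diagonal/Baire argument. For each basic $V_i$ meeting the wandering set, the open subsets $V'_{i'}$ furnished by $\cR_{i',j,K}$, as $V_{i'}$ ranges over basic opens inside $V_i$ and $(j,K)$ over $\NN^2$, form a countable family of dense open subsets of $V_i\cap(M\setminus\Omega(f))$; any point $x$ in their intersection satisfies the distortion condition against every basic $V_j$, and then against every individual wandering $y\notin\orb(x)$ by a covering/refinement argument using that $\{V_j\}$ is a basis and that strict inequalities persist on small open sets. Taking one such $x$ in each $V_i$ gives the required dense $\cX$. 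The main obstacle is bridging the gap between ``for every $y$ in some open subset $V'_j$'' provided by one perturbation and ``for every $y\in M\setminus\Omega(f)\setminus\orb(x)$'' demanded by the definition; this is handled by the countable nature of the basis together with the openness of the strict inequality, reducing a genuine ``for all $y$'' to a countable refinement controlled by shrinking basic neighborhoods of $y$.

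For (UD$^s$), the same strategy applies on each stable manifold $W^s(\cO)$ of a hyperbolic periodic orbit. Generically (Kupka-Smale), the periodic orbits form a countable hyperbolic set, each $W^s(\cO)$ is an immersed submanifold with a countable relatively-open basis, and every nonperiodic point of $W^s(\cO)$ has a relatively wandering neighborhood in the induced topology. Applying Proposition~\ref{p.pertAreduced} in charts adapted to $W^s(\cO)$ perturbs the jacobian of $f|_{W^s(\cO)}$, yielding analogous open-dense sets $\cR^s_{i,j,K,\cO}$; intersecting these over the countable family of basic opens, integers $K$, and periodic orbits $\cO$, together with the residual set constructed above and the Kupka-Smale residual set, gives the required residual subset of $\Diff^1(M)$.
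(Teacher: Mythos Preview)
There is a genuine gap in your construction of the dense set $\cX$, and it lies exactly where you flag the ``main obstacle''. From $f\in\cR_{i,j,K}$ you obtain nonempty open subsets $V'_i\subset V_i$ and $V'_j\subset V_j$ on which the distortion inequality holds; but you have no control over \emph{where} $V'_j$ sits inside $V_j$. So even if your Baire argument produces an $x$ lying in many of the sets $V'_{i'}$, all you can conclude is that for each basic $V_j$ the set $\{y:\exists n,\ |\log|\det Df^n(x)|-\log|\det Df^n(y)||>K\}$ meets $V_j$ --- in other words, this set is \emph{dense} in $M\setminus\Omega(f)$. The (UD$^{M\setminus\Omega}$) property demands that it equal \emph{all} of $M\setminus\Omega(f)\setminus\orb(x)$. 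Your ``covering/refinement argument'' does not close this gap: given a specific $y$, shrinking the basic neighborhood $V_j\ni y$ does nothing to force the (uncontrolled) open set $V'_j$ to contain $y$, and there is no continuity estimate that lets you pass from $y'\in V'_j$ to $y$, since the relevant $n$ may be arbitrarily large.

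The paper closes exactly this gap by working asymmetrically: it fixes a countable dense set $\cX\subset M$ \emph{a priori} and, on the $y$ side, quantifies over a countable family of compact balls $\De$. The key technical input is Proposition~\ref{p.pertA}, which for a fixed $x$ and a fixed compact ball $\De$ (with $\orb(x)\cap\De=\emptyset$, $\De$ in a fundamental domain) produces a single $C^1$-small perturbation achieving the distortion inequality for \emph{every} $y\in\De$. This is done by tiling $\De$ into many small cubes and applying the local perturbation (Proposition~\ref{p.pertAreduced}, via the linearization of Lemma~\ref{l.linearize}) on disjoint towers over each tile; the $2^d$-coloring and the time-staggering of supports is what makes these perturbations compatible. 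A single application of Proposition~\ref{p.pertAreduced}, as in your density argument, only controls one tile and cannot deliver the ``for all $y\in\De$'' conclusion. There is also a secondary issue: your escape clause ``$V_i$ or $V_j$ disjoint from $M\setminus\Omega(f)$'' is not $C^1$-open, since $f\mapsto\Omega(f)$ is not continuous; the paper handles this with the open condition $f\in\cO_\De$ (existence of an attracting region with $\De$ in its fundamental domain) and a careful analysis of the boundary of $\cU_{x,\De}$. Finally, for (UD$^s$) the paper does not argue by perturbation at all but cites a Togawa-type argument from~\cite{BCW}; your proposed use of Proposition~\ref{p.pertAreduced} there would need a version controlling the restricted jacobian $\det Df|_{W^s(\cO)}$, and would have to cope with the fact that $W^s(\cO)$ moves under perturbation.
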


A variation of an argument due to Togawa \cite{To1,To2} detailed in 
\cite{BCW} shows the (UD$^s$) property holds for a $C^1$-generic diffeomorphism. 
To prove Theorem A, we
are thus left to prove that the (UD$^{M\setminus\Omega}$) property holds
for a $C^1$-generic diffeomorphism.  This property is significantly
more difficult to establish $C^1$-generically than the (UD$^s$) property.
The reason is that points on the stable manifold of a periodic
point all have the same future dynamics, and these dynamics are
``constant'' for all large iterates: in a neighborhood of the periodic
orbit, the dynamics of $f$ are effectively linear.  In the wandering
set, by contrast, the orbits of distinct points can be completely unrelated
after sufficiently many iterates.

Nonetheless, the proofs that the (UD$^{M\setminus\Omega}$) and (UD$^s$) properties
are $C^1$ residual share some essential features, and both rely on the essentially non-recurrent
aspects of the dynamics on both the wandering set and the stable manifolds.

\subsection{Condition for the global strategy: the large derivative (LD) property}

Here we describe the analytic condition on the $C^1$-generic $f$ we
use to extend the local conclusion on the centralizer of $f$ to a global conclusion.

A diffeomorphism $f$ satisfies the \emph{large derivative property (LD) on a set $X$}
if, for any $K>0$, there exists $n(K)\geq 1$ such that
for any $x\in X$ and $n\geq n(K)$, there exists
$j\in \ZZ$ such that:
$$\sup\{\|Df^n(f^j(x))\|,\|Df^{-n}(f^{j+n}(x))\|\}>K;$$
more compactly:
$$\lim_{n\to\infty} \inf_{x\in X} \sup_{y\in{\small\orb}(x)} \{Df^n(y), Df^{-n}(y)\} = \infty.$$
Rephrased informally, the (LD) property on $X$ means that 
the derivative $Df^n$ ``tends to $\infty$'' {\em uniformly} on 
all orbits passing through $X$.  
We emphasize that the large derivative property is a property of the
{\em orbits} of points in $X$, and if it holds for $X$, it also
holds for all iterates of $X$.

Our second main perturbation result  is:
\begin{theoB}[Large derivative]
Let $f$ be a diffeomorphism whose periodic orbits are hyperbolic.
Then, there exists a diffeomorphism $g$ arbitrarily close to $f$ in $\diff^1(M)$
such that the property (LD) is satisfied on $M\setminus \Per(f)$.

Moreover,
\begin{itemize}
\item $f$ and $g$ are conjugate via a homeomorphism $\Phi$, i.e. $g=\Phi f\Phi^{-1}$;
\item for any periodic orbit $\cO$ of $f$, the derivatives of $f$ on $\cO$
and of $g$ on $\Phi(\cO)$ are conjugate (in particular the periodic orbits of $g$ are hyperbolic);
\item if $f$ satisfies the (UD$^{M\setminus\Omega}$) property, then so does $g$;
\item if $f$ satisfies the (UD$^s$) property, then so does $g$.
\end{itemize}
\end{theoB}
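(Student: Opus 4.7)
The plan is to realize $g$ as a $C^1$-limit of diffeomorphisms $g_i = \Phi_i f \Phi_i^{-1}$, where $\Phi_i = h_i\circ\cdots\circ h_1$ and each $h_j$ is a smooth conjugacy obtained from a single tidy perturbation. Fix sequences $K_i\to\infty$, $n_i\to\infty$, and a summable $\eps_i\to 0$, and enumerate the periodic orbits of $f$ by period as an increasing union $\Per(f) = \bigcup_i F_i$ with each $F_i$ finite (possible because periodic orbits of $f$ are hyperbolic, hence isolated on the compact $M$). At stage $i$ I would enforce three conditions: (a) $\|g_i-g_{i-1}\|_{C^1}<\eps_i$; (b) $\|\Phi_i-\Phi_{i-1}\|_{C^0}<\eps_i$; (c) for every $n\geq n_i$ and every $x\in M$ whose $g_i$-orbit segment of length $n$ avoids a fixed small neighborhood of $F_i$, some $y$ on that segment satisfies $\max\{\|Dg_i^n(y)\|,\|Dg_i^{-n}(y)\|\}>K_i$. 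Since $\bigcup_i F_i = \Per(f)$, a diagonal argument then yields the (LD) property on all of $M\setminus\Per(f)$.

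To construct $h_i$, I would first apply the refined topological tower construction of Section~\ref{s.wanderingorbit} to $g_{i-1}$: it provides an open set $U_i$ whose first $N_i$ iterates are pairwise disjoint, whose union covers every $g_{i-1}$-orbit segment of length $\geq n_i$ disjoint from a preassigned neighborhood of $F_i$, and whose diameter can be made as small as desired once $N_i$ is fixed. I would cover $U_i$ by finitely many small cubes $Q$ in good charts and, on each cube, apply Proposition~\ref{p.tidycube}: use roughly the first $N_i/2$ iterates of $Q$ to inflate $\|Dg_{i-1}^{N_i/2}\|$ or its inverse past $K_i$ on a subcube $\delta Q$, and use the remaining iterates to cancel the perturbation, so that after $N_i$ steps the $g_i$-orbit of every $x\in Q$ rejoins its $g_{i-1}$-orbit. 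This produces $g_i$ smoothly conjugate to $g_{i-1}$ via a diffeomorphism $h_i$ supported in $\bigcup_{0}^{N_i-1}g_{i-1}^k(U_i)$; choosing $U_i$ of sufficiently small diameter makes both $\|g_i-g_{i-1}\|_{C^1}$ and $\|h_i-\id\|_{C^0}$ as small as needed to meet (a) and (b).

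The summability of $\eps_i$ makes $\Phi_i$ uniformly Cauchy; the limit $\Phi$ is a homeomorphism because the supports of the remaining $h_j$ shrink to a nowhere-dense set on which injectivity and surjectivity can be checked directly. For the second bullet, each periodic orbit $\cO$ of $f$ lies in $F_i$ for all $i\geq i_0(\cO)$, so only finitely many $h_j$ act on a neighborhood of $\cO$, and $(Dg)|_{\Phi(\cO)}$ is linearly conjugate to $(Df)|_{\cO}$ by the smooth $D\Phi_{i_0}$. For the (UD) bullets, the defining feature of a tidy perturbation is that the $N_i$-step log-Jacobian cocycle of $g_i$ on $Q$ agrees with that of $g_{i-1}$; hence for $x,y$ in a witnessing dense set $\cX$ for $f$, each inequality $|\log|\det Df^n(x)|-\log|\det Df^n(y)||>K$ transfers to $(\Phi(x),\Phi(y))$ for $g$ by choosing $n$ to correspond to complete passages through all the towers used up to stage $i$.

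The main obstacle is condition (c): the large-derivative property installed at stage $i$ must survive all later perturbations. The later $h_j$ for $j>i$ sit in towers whose iterates may cross the subcubes $\delta Q$ where stage $i$'s large derivative lives, and could in principle shrink $\|Dg_i^n\|$ there. Because the inequality at stage $i$ is a strict open condition with a definite margin $\mu_i>0$, it suffices to choose $\eps_j$ at each later stage so that $\sum_{j>i}\eps_j<\mu_i$; this amounts to a sequential, margin-dependent choice of the $\eps_j$'s. Executing this bookkeeping while simultaneously arranging that the towers of later stages are adapted to $g_{i}$ rather than $f$ is the delicate part of the proof, but once carried out it yields the limiting $g$ with all four conclusions of Theorem~B.
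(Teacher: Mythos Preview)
Your overall architecture---$g$ as a $C^1$-limit of tidy perturbations $g_i=\Phi_i f\Phi_i^{-1}$, with towers chosen at each stage via the results of Section~\ref{s.wanderingorbit}---matches the paper's, but there are two genuine gaps.

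\textbf{The (UD) preservation argument is incorrect.} You write that ``the $N_i$-step log-Jacobian cocycle of $g_i$ on $Q$ agrees with that of $g_{i-1}$.'' This is true only for points $x$ and $g_{i-1}^{N_i}(x)$ lying \emph{outside} the support of the conjugacy $h_i$ (equivalently, for $x$ at the base or top of the tower). It does not help you compare $\det Dg^n$ to $\det Df^n$ at the witnessing time $n$ furnished by the (UD) property for $f$: that $n$ has no reason to correspond to complete passages through all the towers, and even if it did, the limit conjugacy $\Phi$ is only a homeomorphism, so you cannot transport the log-Jacobian inequality through $\Phi$. The paper solves this by a different mechanism (Section~4.3.5 and condition~(\ref{e.UD})): one enumerates countably many test pairs $(x_i,Z_i)$ and fixes, \emph{a priori}, an $N_i$ so that the (UD) inequality for $f$ holds in time $\le N_i$ with an extra margin $4d\log M_i$; then every later tower is chosen (via the wandering-orbit machinery) to avoid the compact sets $\Phi_i\bigl(\bigcup_{k=0}^{N_i} f^k(Z_i\cup\{x_i\})\bigr)$. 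This forces $g=g_i$ in a neighborhood of those orbit segments, so the Jacobian comparison is made through the \emph{diffeomorphism} $\Phi_i$, whose derivative is bounded by $M_i$. Nothing in your ``complete passages'' argument substitutes for this avoidance condition.

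\textbf{Condition (c) is too strong for a single stage.} You ask that the perturbation at stage $i$ produce large derivative for \emph{every} $n\ge n_i$. A tidy perturbation supported in a tower of height $N_i$ is smoothly conjugate to $g_{i-1}$; it cannot create large $\|Dg_i^n\|$ for $n$ much larger than $N_i$ if $g_{i-1}$ did not already have it. The paper instead assigns to stage $i$ only the finite range $n\in[n_i,n_{i+1}-1]$, takes the tower of height $m_i=2^{d+1}\sum_{n=n_i}^{n_{i+1}-1} n$, and partitions it into consecutive blocks, one per value of $n$, applying Theorem~B' separately in each block. The persistence of the stage-$i$ large-derivative estimates under later perturbations is then obtained not merely by choosing $\sum_{j>i}\eps_j$ small, but by arranging that the stage-$(i{+}1)$ tower is \emph{disjoint} from the stage-$i$ tower (condition~(\ref{e.Ui})), so that $g_{i+1}=g_i$ on $\bigcup_k g_i^k(\Delta_i)$ and the comparison in Lemma~\ref{l.nail} reduces to a continuity estimate for $Dg_i$.

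A smaller point: summability of $\|h_i-\id\|_{C^0}$ makes $\Phi_i$ Cauchy but does not by itself make $\Phi_i^{-1}$ Cauchy; the paper controls $\Phi_i^{-1}$ via the quantitative condition $\sum_i \rho_i M_{i-1}<\infty$ with $M_i=\prod_{k\le i} C^{m_k}$ (Lemma~\ref{l=conjugacy}), which must be built into the choice of tower diameters $\rho_i$.
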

\bigskip

As a consequence of Theorems A and B we obtain:
\begin{coro}\label{c.UDLD}
There exists a dense subset $\cD$ of $\diff^1(M)$
such that any $f\in \cD$ satisfies the following properties:
\begin{itemize}
\item the periodic orbits are hyperbolic and have distinct eigenvalues;
\item any component $O$ of  the interior of $\Omega(f)$ contains a periodic point whose stable manifold is dense in $O$;
\item $f$ has the (UD$^{M\setminus\Omega}$) and the (UD$^s$) properties;
\item $f$ has the (LD) property on $M\setminus \Per(g)$.
\end{itemize}
\end{coro}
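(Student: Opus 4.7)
The plan is to build $\cD$ as a dense subset sitting inside a residual set where the first three properties already hold; the fourth, non-residual, (LD) property is then grafted on via Theorem~B while preserving the rest.

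A residual set for bullets (1)--(3) is assembled as follows. Hyperbolicity of every periodic orbit, together with pairwise disjointness of the eigenvalue sets of distinct periodic orbits, is a Kupka--Smale-type $C^1$-residual property (the disjointness follows from a standard transversality argument, as noted in Section~\ref{s.preliminaries}). The second bullet is the theorem of \cite{BC} recalled in Section~\ref{s.preliminaries}, and the third is Theorem~A. A countable intersection of residuals is residual, so the intersection $\cR \subset \Diff^1(M)$ is dense.

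Now, given an arbitrary nonempty open $\cU \subset \Diff^1(M)$, pick $f_0 \in \cR \cap \cU$. Since every periodic orbit of $f_0$ is hyperbolic, Theorem~B produces $f = \Phi f_0 \Phi^{-1} \in \cU$, with $\Phi$ a homeomorphism, such that $f$ satisfies (LD) on $M\setminus \Per(f_0) = M\setminus \Per(f)$ (the equality because $\Phi(\Per(f_0)) = \Per(f)$) and, by the last two bullets of Theorem~B, inherits both (UD)-properties from $f_0$. It remains to verify that bullets (1) and (2) also pass from $f_0$ to $f$. By the second bullet of Theorem~B, $D_p f_0^k$ and $D_{\Phi(p)} f^k$ are linearly conjugate at every $k$-periodic $p$, so hyperbolicity and the eigenvalue spectrum are preserved along each periodic orbit; and distinct periodic orbits of $f$ correspond under $\Phi^{-1}$ to distinct orbits of $f_0$, so their eigenvalue sets remain disjoint. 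For the second bullet, $\Phi$ sends $\Omega(f_0)$ homeomorphically onto $\Omega(f)$ and hence each component $O$ of $\interior \Omega(f_0)$ onto a component $\Phi(O)$ of $\interior \Omega(f)$; and at a hyperbolic periodic orbit the smooth stable manifold coincides with the topological stable set $\{x : \mathrm{dist}(f^n(x),\cO(\Phi(p))) \to 0\} = \Phi(W^s_{f_0}(p))$, so density of $W^s_{f_0}(p)$ in $O$ yields density of $W^s_f(\Phi(p))$ in $\Phi(O)$. Letting $\cD$ collect all such $f$ as $f_0$ varies in $\cR$ and $\cU$ shrinks, $\cD$ meets every nonempty open subset of $\Diff^1(M)$ and is therefore dense.

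The real content of the corollary has already been concentrated into Theorems~A and~B; what remains is essentially bookkeeping. The only mildly delicate point is the transfer of bullets (1) and (2) across the merely topological conjugacy $\Phi$, which reduces to the standard fact that at a hyperbolic periodic orbit the smooth stable manifold admits a purely topological (hence conjugacy-invariant) description as the forward-asymptotic set of the orbit.
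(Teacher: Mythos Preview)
Your proof is correct and follows exactly the approach the paper intends: the corollary is stated in the paper without proof, as an immediate consequence of Theorems~A and~B, and you have correctly spelled out how to combine the residual set carrying properties (1)--(3) with the perturbation of Theorem~B, including the verification that the merely topological conjugacy $\Phi$ transports bullets (1) and (2).

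One minor slip: the equality $M\setminus \Per(f_0) = M\setminus \Per(f)$ does not follow from $\Phi(\Per(f_0)) = \Per(f)$, since $\Phi$ is not the identity. The conclusion you need---that (LD) holds for $f$ on $M\setminus \Per(f)$---is nonetheless correct: the proof of Theorem~B (see Lemma~\ref{l.nail}) actually establishes the (LD) property on the complement of the periodic set of the \emph{perturbed} diffeomorphism, and in any case (LD) holds automatically along any hyperbolic periodic orbit, so the distinction is harmless.
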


\subsection{Checking that the centralizer is trivial}
We now explain why properties (UD) and (LD) together imply that the centralizer is trivial.
\begin{proposition}\label{p=denseC1}
Any diffeomorphism $f$ in the $C^1$-dense subset $\cD\subset\Diff^1(M)$ given by Corollary~\ref{c.UDLD}
has a trivial centralizer $Z^1(f)$.
\end{proposition}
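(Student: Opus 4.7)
The plan is to execute the local-to-global strategy outlined in the introduction. Let $g\in\Diff^1(M)$ commute with $f\in\cD$. General consequences of $fg=gf$: $g$ permutes periodic orbits, and the distinct-eigenvalue hypothesis on $\cD$ forces $g$ to fix each periodic orbit $\cO$ setwise, hence to preserve $W^s(\cO)$, $\Omega(f)$, and $M\setminus\Omega(f)$. Applying the chain rule to $g\circ f^n=f^n\circ g$ and taking determinants yields
$$\log|\det Df^n(g(x))|-\log|\det Df^n(x)|=\log|\det Dg(f^n(x))|-\log|\det Dg(x)|,$$
whose right-hand side is bounded by $2\sup_{M}|\log|\det Dg||$ uniformly in $n$. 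For $x\in\cX$, the (UD$^{M\setminus\Omega}$) property makes the left-hand side unbounded unless $g(x)$ lies on the $f$-orbit of $x$; thus $g(x)=f^{n(x)}(x)$ for some $n(x)\in\ZZ$. The same computation with the $W^s$-restricted Jacobian and (UD$^s$) gives $g(x)=f^{n(x)}(x)$ for every $x\in\cX_\cO$.

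Next I upgrade these pointwise equalities to an equality $g\equiv f^{\alpha(\cdot)}$ on an open dense $W\subseteq M$, with $\alpha\colon W\to\ZZ$ locally constant. The key is that $n$ is locally bounded. For $x_0\in\cX$: if $x_i\to x_0$ with $x_i\in\cX$ and $|n(x_i)|\to\infty$, applying $f^{-n(x_0)}$ to $g(x_i)\to g(x_0)$ produces $x_i\to x_0$ and $f^{n(x_i)-n(x_0)}(x_i)\to x_0$ with $|n(x_i)-n(x_0)|\to\infty$, forcing $x_0\in\Omega(f)$ and contradicting $x_0\in\cX\subset M\setminus\Omega$. Hence in a small connected $U\subset M\setminus\Omega(f)$ containing $x_0$, the dense set $\cX\cap U$ is covered by finitely many relatively-closed sets $\{g=f^k\}\cap U$, $|k|\le N(U)$, whose union therefore equals $U$; these pieces are pairwise disjoint (since $U\cap\Per(f)=\emptyset$), hence clopen in $U$, and connectedness forces $g\equiv f^{k_U}$ on $U$. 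The same strategy in each component $O$ of $\interior\Omega(f)$ uses $\cX_{\cO(p_O)}\cap O$ (dense in $O$, since $W^s(p_O)$ is dense in $O$) together with the wandering behavior of $W^s(\cO(p_O))\setminus\cO(p_O)$ in its intrinsic topology to produce a dense open subset of $O$ on which $\alpha$ is locally constant. Taking the union yields the desired open dense $W\subseteq M$ with $g=f^{\alpha(\cdot)}$ on $W$.

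Now (LD) uniformly bounds $\alpha$. On any connected component $C$ of $W$, $\alpha\equiv\alpha_C$ and $g=f^{\alpha_C}$ on $C$; commutativity extends this to the $f$-invariant open set $\bigcup_k f^k(C)$. For any nonperiodic $x\in C$, the full $f$-orbit of $x$ lies there, so $Dg=Df^{\alpha_C}$ and $Dg^{-1}=Df^{-\alpha_C}$ along the orbit, giving $\max(\|Df^{\alpha_C}(y)\|,\|Df^{-\alpha_C}(y)\|)\le L$ at every orbit point $y$, where $L=\max(\Lip g,\Lip g^{-1})$. By (LD), choosing $|\alpha_C|\ge n_f(L+1)$ produces an orbit point violating this bound; hence $|\alpha_C|\le N_0$ for some $N_0=N_0(f,g)$ independent of $C$. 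Taking closures, $M=\overline W\subseteq\bigcup_{|k|\le N_0}\{g=f^k\}$, a finite closed cover.

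To finish, set $I_k=\interior\{g=f^k\}$. These are pairwise disjoint, since $I_k\cap I_{k'}\ne\emptyset$ would make $f^{k-k'}\equiv\id$ on an open set (impossible: hyperbolic periodic orbits are isolated), and $\bigsqcup_k I_k\supseteq W$ is dense. I claim the closed set $B=M\setminus\bigsqcup_k I_k$ is empty: if $p\in B$ is accumulated by points of only one $I_k$ within some neighborhood $N(p)$, then $I_k$ is dense in $N(p)$, so $g\equiv f^k$ on $N(p)$ by continuity, placing $p\in I_k$; and if $p$ is accumulated by two distinct $I_k,I_{k'}$, then passing to limits of $g$ and $Dg$ from both sides forces $f^k(p)=f^{k'}(p)$ and $Df^{k-k'}(p)=\id$, impossible at a hyperbolic periodic point with distinct eigenvalues. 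Thus $M=\bigsqcup_{|k|\le N_0}I_k$ is a finite disjoint open cover of the connected manifold $M$, so exactly one $I_{k_0}$ equals $M$ and $g=f^{k_0}$. The principal obstacle I anticipate is the interior-$\Omega$ case in paragraph~2: the direct wandering contradiction is unavailable there because $x_0\in O\subset\Omega(f)$, and one must exploit the wandering structure of $W^s(\cO)\setminus\cO$ in its intrinsic topology and relate it back to the ambient topology on $M$---this is the technically delicate part of the local step.
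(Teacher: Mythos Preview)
Your argument follows the same local-to-global scheme as the paper's proof and is correct. There are two tactical differences worth noting. First, for local constancy of $\alpha$ on $M\setminus\Omega(f)$, the paper simply invokes that every point of $M\setminus\Omega(f)$ has a wandering neighborhood (whose $f$-iterates are pairwise disjoint), which forces $\alpha$ to be constant there in one line; your route via an $\Omega(f)$-contradiction for boundedness followed by a finite-closed-cover/connectedness argument works but is longer. Second, and more interestingly, your endgame is genuinely different: the paper finishes (in $\dim M\ge 2$) by observing that the sets $P_i=\{x\in M\setminus\Per_{2N}:g(x)=f^i(x)\}$ form a finite disjoint closed cover of the \emph{connected} set $M\setminus\Per_{2N}$, so exactly one $P_i$ survives---a purely topological argument that needs a separate remark for $\dim M=1$. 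Your argument instead pushes the equality $g=f^k$ and its derivative to the boundary of each $I_k$, obtaining $Df^{k-k'}=\id$ at a would-be boundary point and contradicting hyperbolicity; this is slightly heavier but has the virtue of being dimension-independent. Your flagged concern about the interior-$\Omega$ step is apt: the paper is also terse there and refers to \cite[Lemma~1.2]{BCW} for the analogous argument on $W^s(\cO)\setminus\cO$, which then propagates to all of $O$ by density of $W^s(p)$.
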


\begin{demo}[Proof of Proposition~\ref{p=denseC1}]
Consider a diffeomorphism $f\in \cD$.
Let $g\in Z^{1}(f)$ be a diffeomorphism commuting with $f$, 
and let $K>0$ be a Lipschitz constant for $g$ and $g^{-1}$.
Let $W = \interior(\Omega(f))\cup\left(M\setminus \Omega(f) \right)$
be the $f$-invariant, open and dense subset of $M$ whose properties are discussed in Section~\ref{s.preliminaries}.

Our first step is to use the ``local hypotheses'' (UD$^{M\setminus\Omega}$)
and (UD$^s$) to construct a function $\alpha\colon W \to\ZZ$ that is constant on each connected component
of $W$ and satisfies $g=f^\alpha$. We then use the ``global hypothesis'' (LD) to show that $\alpha$ is 
bounded on $W$, and therefore extends to a constant function on $M$.

We first contruct $\alpha$ on the wandering set $M\setminus\Omega(f)$.
The basic properties of Lipschitz functions and the relation $f^n g = g f^n$ imply that
 for any $x\in M$, and any $n\in \ZZ$, we have
\begin{eqnarray}\label{e=bounded}
|\log \det(Df^n(x))-\log \det(Df^n(g(x)))|\leq 2d \log K,
\end{eqnarray}
where $d=\dim M$. On the other hand, $f$ satisfies the
UD$^{M\setminus\Omega(f)}$ property, and hence there exists a dense subset
$\cX \subset M\setminus\Omega(f)$, each of whose points has unbounded
distortion with respect to any point in the wandering set not on the
same orbit. That is, for any $x\in \cX$, and $y\in M\setminus \Omega(f)$
not on the orbit of $x$, we have:
$$\limsup_{n\to \infty} |\log |\det Df^n(x)|-\log|\det Df^n(y)|| = \infty.$$
Inequality (\ref{e=bounded}) then implies that $x$ and $y=g(x)$ lie on
the same orbit, for all $x\in\cX$, hence $g(x)=f^{\alpha(x)}(x)$.
Using the continuity of $g$ and the fact that the points
in $M\setminus\Omega(f)$ admit wandering neighborhoods whose  $f$-iterates
are pairwise disjoint, we deduce that the map  $\alpha\colon \cX\to \ZZ$ is
constant in the neighborhood of any point in $M\setminus\Omega(f)$.
Hence the function $\alpha$ extends on $M\setminus\Omega(f)$ to a
function that is constant on each connected component of
$M\setminus\Omega(f)$. Furthermore,  $g=f^\alpha$ on
$M\setminus\Omega(f)$.

We now define the function $\alpha$ on the interior $\interior(\Omega(f))$ of the nonwandering set.
Since the periodic orbits of $f\in \cD$ have distinct
eigenvalues and since $g$ preserves the rate of convergence along the
stable manifolds, the diffeomorphism $g$ preserves each periodic orbit of $f$.
Using the $(UD^s)$ condition, one can extend the argument above for the
wandering set to the stable manifolds of each periodic orbit (see also~\cite[Lemma 1.2]{BCW}).
We obtain that for any periodic point $p$, the
diffeomorphism $g$ coincides with a power $f^\alpha$ on each connected
component of $W^s(p)\setminus \{p\}$. For $f\in\cD$, each connected
component $O$ of the interior of $\Omega(f)$ contains a periodic
point $p$ whose stable manifold is dense in $O$. It follows that
$g$ coincides with some power $f^\alpha$ of $f$ on each connected
component of the interior of $\Omega(f)$.

We have seen that there is a locally constant function $\alpha\colon W\to\ZZ$ 
such that $g=f^\alpha$ on the $f$ invariant, open and dense subset $W\subset M$. 
We now turn to the global strategy.
Notice that, since
$f$ and $g$ commute, the function $\alpha$ is constant along the
non-periodic orbits of $f$. Now $f\in\cD$ satisfies the (LD) property. 
Consequently there exists $N>0$ such that, for every non-periodic point
$x$, and for every $n\geq N$ there is a point $y=f^i(x)$ such that
either $\|Df^n(y)\|>K$ or $\|Df^{-n}(y)\|>K$. This implies that the
function $|\alpha|$ is bounded by $N$: otherwise, $\alpha$ would be
greater than $N$ on the invariant open set $W$ of $M$. This open set
contains a non-periodic point $x$ and an iterate $y=f^i(x)$ such
that either $\|Df^\alpha(y)\|>K$ or $\|Df^{-\alpha}(y)\|>K$. This
contradicts the fact that $g$ and $g^{-1}$ are $K$-Lipschitz.

We have just shown that $|\alpha|$ is bounded by some integer $N$.
Let $\Per_{2N}$ be the set of periodic points of $f$ whose period is less than $2N$,
and for $i\in \{-N,\dots,N\}$ consider the set
$$P_i=\{x\in M\setminus \Per_{2N},\; g(x)=f^i(x)\}.$$
This is a closed invariant subset of $M\setminus \Per_{2N}$.
What we proved above implies that $M\setminus \Per_{2N}$
is the union of the sets $P_i$, $|i|\leq N$.
Moreover any two sets $P_i,P_j$ with $i\neq j$ are disjoint since a point in $P_i\cap P_j$
would be $|i-j|$ periodic for $f$.

If $\dim(M)\geq 2$, since $M$ is connected and $\Per_{2N}$ is finite, the set
$M\setminus \Per_{2N}$ is connected. It follows that only one set $P_i$ is non-empty,
implying that $g=f^i$ on $M$. This concludes the proof in this case.

If $\dim(M)=1$, one has to use that $g$ is a diffeomorphism and is not only Lipschitz:
this shows that on the two sides of a periodic orbit of $f$,
the map $g$ coincides with the same iterate of $f$. This proves again that only one set $P_i$
is nonempty.
\end{demo}

\subsection{From dense to residual: compactness and semicontinuity}
The previous results show that the set of diffeomorphisms having a trivial centralizer
is dense in $\diff^1(M)$, but this is not enough to conclude the proof of the Main Theorem.
Indeed the dense subset $\cD$ in Corollary~\ref{c.UDLD} is {\em not} a
residual subset if $\dim(M)\geq 2$: in the appendix
we exhibit a nonempty open set in which $C^1$-generic
diffeomorphisms do not satisfy the (LD)-property.

Fix a metric structure on $M$. A homeomorphism $f:M\to M$ is {\em
$K$-bi-Lipschitz} if both $f$ and $f^{-1}$ are Lipschitz, with
Lipschitz norm bounded by $K$.  A homeomorphism that is
$K$-bi-Lipschitz for some $K$ is called a  {\em bi-Lipschitz
homeomorphism}, or {\em lipeomorphism}.  We denote by $\Lip^K(M)$
the set of $K$-bi-Lipschitz homeomorphisms of $M$ and by $\Lip(M)$
the set of bi-Lipschitz homeomorphisms of $M$. The Arz{\`e}la-Ascoli
theorem implies that $\Lip^K(M)$ is compact in the uniform  ($C^0$)
topology. Note that $\Lip(M)\supset \Diff^1(M)$.
For $f\in \Lip(M)$, the set $Z^{\Lip}(f)$ is defined analogously to the $C^r$ case:
$$Z^{\Lip}(f):=\{g\in \Lip(M): fg=gf\}.$$

In dimension 1, the Main Theorem was a consequence of Togawa's work~\cite{To2}.
In higher dimension, the Main Theorem is a direct corollary of:
\begin{theo}\label{t=liptriv}
If $\dim(M)\geq 2$,
the set of diffeomorphisms $f$ with trivial centralizer $Z^{\Lip}(f)$ is residual in $\Diff^1(M)$.
\end{theo}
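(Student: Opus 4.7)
My plan is to upgrade the $C^1$-density of $\cD\subset\Diff^1(M)$ from Corollary~\ref{c.UDLD} to residuality, leveraging the Arzel\`a-Ascoli compactness of $\Lip^K(M)$ in the uniform topology (noted in Section~2). For each integer $K\geq 1$, I consider the set-valued map
$$\Phi_K\colon \Diff^1(M)\longrightarrow \cK\bigl(\Lip^K(M)\bigr),\qquad \Phi_K(f):=Z^{\Lip}(f)\cap \Lip^K(M),$$
with values in the compact subsets of $\Lip^K(M)$ equipped with the Hausdorff distance. A standard limit argument shows $\Phi_K$ is upper semicontinuous: if $f_n\to f$ in $C^1$ and $g_n\in\Phi_K(f_n)$ converges uniformly to some $g$, then $g\in\Lip^K(M)$ (as this class is $C^0$-closed) and $fg=gf$ (passing to the limit in $f_n g_n=g_n f_n$ via uniform continuity of composition on Lipschitz maps). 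A classical theorem of Fort then guarantees that the set $\cC_K$ of continuity points of $\Phi_K$, inside the Baire space $\Diff^1(M)$, is a dense $G_\delta$.

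Let $\cR_{\mathrm{KS}}\subset\Diff^1(M)$ be the residual Kupka-Smale set on which, in addition, Pugh's closing lemma yields $\overline{\Per(f)}=\Omega(f)$; since $M$ is compact and $\Omega(f)\neq\emptyset$, every $f\in\cR_{\mathrm{KS}}$ possesses at least one hyperbolic periodic orbit. I claim that $Z^{\Lip}(f)=\langle f\rangle$ for every $f$ in the residual set $\cR:=\cR_{\mathrm{KS}}\cap\bigcap_{K\in\NN}\cC_K$. Given such $f$ and any $g\in Z^{\Lip}(f)$, choose $K$ with $g\in\Lip^K(M)$. By $C^1$-density of $\cD$, pick $f_n\in\cD$ with $f_n\to f$ in $C^1$; the continuity of $\Phi_K$ at $f$ (in particular, lower semicontinuity) produces $g_n\in\Phi_K(f_n)$ with $g_n\to g$ uniformly. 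The proof of Proposition~\ref{p=denseC1} in fact shows $Z^{\Lip}(f_n)=\langle f_n\rangle$ -- it uses only the Lipschitz character of the commuting map -- so $g_n=f_n^{j_n}$ for some $j_n\in\ZZ$.

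The crux of the argument, and its main obstacle, is to bound $|j_n|\leq C=C(f,K)$ uniformly. Let $p$ be a hyperbolic periodic point of $f$ of period $k$, available since $f\in\cR_{\mathrm{KS}}$, and let $p_n$ be its continuation under $f_n$: for $n$ large, $p_n$ is a hyperbolic periodic point of $f_n$ of period $k$ whose eigenvalues converge to those of $Df^k(p)$. Pick an eigenvalue $\lambda$ of $Df^k(p)$ with $|\lambda|>1$ (if only eigenvalues of modulus $<1$ occur at $p$, the argument is symmetric after swapping $j_n\leftrightarrow -j_n$). Fix $\rho\in(1,|\lambda|)$; then for $n$ large, $\|Df_n^{mk}(p_n)\|_{\mathrm{op}}\geq \rho^m$ for every integer $m\geq 0$. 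The condition $g_n\in\Lip^K(M)$ gives $\|f_n^{j_n}\|_{\Lip}\leq K$ and $\|f_n^{-j_n}\|_{\Lip}\leq K$, hence $\|Df_n^{|j_n|}(p_n)\|_{\mathrm{op}}\leq K$; combined with the exponential lower bound, this forces $|j_n|$ to be bounded uniformly in $n$. Passing to a subsequence on which $j_n=j$ is constant yields $g=\lim f_n^j=f^j\in\langle f\rangle$. Since the reverse inclusion $\langle f\rangle\subset Z^{\Lip}(f)$ is automatic, the theorem follows. The main difficulty is making this Lyapunov bound uniform in $n$: it relies on the $C^1$-persistence of hyperbolic periodic orbits, which produces a common expansion rate $\rho$ at all orbits $p_n$ for $n$ sufficiently large.
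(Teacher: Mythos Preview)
Your proof is correct and follows essentially the same route as the paper's: both arguments use the upper semicontinuity of $f\mapsto Z^{\Lip}(f)\cap\Lip^K(M)$ into the Hausdorff space of compacta, pass to the residual set of continuity points intersected with a residual set guaranteeing a hyperbolic periodic orbit, and then bound the exponents $j_n$ via the exponential growth of $\|Df_n^{m}\|$ along the continuation of that orbit. The only cosmetic differences are that the paper packages the density statement as a separate proposition (Proposition~\ref{p=denselip}) rather than noting that the proof of Proposition~\ref{p=denseC1} already works for Lipschitz commutants in dimension $\geq 2$, and that its endgame is phrased as a dichotomy (bounded $m_n$ versus unbounded $m_n$) rather than your direct uniform bound; the substance is identical.
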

The proof of Theorem~\ref{t=liptriv} has two parts.

\begin{prop}\label{p=denselip}
If $\dim(M)\geq 2$,
any diffeomorphism $f$ in the $C^1$-dense subset $\cD\subset\Diff^1(M)$ given by Corollary~\ref{c.UDLD}
has a trivial centralizer $Z^{\Lip}(f)$.
\end{prop}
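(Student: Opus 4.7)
My plan is to follow the proof of Proposition~\ref{p=denseC1} almost verbatim; the only genuinely new point is that the commuting element $g$ is no longer assumed to be $C^1$. All the dynamical arguments in the previous proof used $g$ only through its Lipschitz constant, with one apparent exception: the volume-distortion estimate (\ref{e=bounded}), whose derivation invoked the derivative of $g$. So I would first recover (\ref{e=bounded}) for bi-Lipschitz $g$ via Rademacher's theorem, and then apply the earlier argument line by line.

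Let $g \in Z^{\Lip}(f)$ be $K$-bi-Lipschitz and set $d=\dim M$. By Rademacher's theorem, $g$ is differentiable almost everywhere, and since $g^{-1}$ is also $K$-Lipschitz, $K^{-d}\leq|\det Dg|\leq K^d$ at such points. Since $f^n$ is everywhere $C^1$, differentiating the commutation $f^n\circ g=g\circ f^n$ and taking determinants at any a.e.\ point $x$ yields
\[
\left|\frac{\det Df^n(g(x))}{\det Df^n(x)}\right| = \left|\frac{\det Dg(f^n(x))}{\det Dg(x)}\right| \in [K^{-2d}, K^{2d}].
\]
Thus $|\log|\det Df^n(x)|-\log|\det Df^n(g(x))||\leq 2d\log K$ almost everywhere; since the left-hand side is continuous in $x$ (as $f$ is $C^1$ and $g$ is continuous), the bound (\ref{e=bounded}) extends to every $x\in M$.

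From this point the remainder of the proof of Proposition~\ref{p=denseC1} transfers without modification. The (UD$^{M\setminus\Omega}$) property produces a locally constant $\alpha:M\setminus\Omega(f)\to\ZZ$ with $g=f^\alpha$ on the wandering set. Bi-Lipschitz maps preserve exponential rates of convergence, so $g$ sends stable manifolds to stable manifolds while preserving their Lyapunov spectra; the distinct-eigenvalue condition then forces $g$ to fix each periodic orbit setwise, and (UD$^s$) extends $\alpha$ across each connected component of $\interior(\Omega(f))$ via the periodic orbit with dense stable manifold. The Lipschitz bounds on $g^{\pm 1}$ give $\|Df^{\pm\alpha}\|\leq K$ along every $f$-orbit on which $g=f^\alpha$, so the (LD) property of $f$ caps $|\alpha|$ by some $N=N(K)$; finally the connectedness of $M\setminus\Per_{2N}$ (which uses $\dim M\geq 2$ and the finiteness of $\Per_{2N}$) together with the partition $M\setminus\Per_{2N}=\bigsqcup_{|i|\leq N}\{g=f^i\}$ forces $g=f^i$ globally. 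The main subtlety is purely in the Rademacher step above; beyond that, no new idea is needed.
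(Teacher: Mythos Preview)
Your proposal is correct and matches the paper's approach: the paper simply asserts that the proof of Proposition~\ref{p=denseC1} carries over verbatim (referring to \cite[Lemma~1.2]{BCW} for the bi-Lipschitz adaptation), and your Rademacher argument recovering (\ref{e=bounded}) is exactly the content of that adaptation. One small point you glide over is that the (UD$^s$) step needs the analogue of (\ref{e=bounded}) for $\det Df^n_{|W^s(\cO)}$, which requires applying Rademacher to $g|_{W^s(\cO)}$ separately (full-measure differentiability of $g$ on $M$ says nothing on the measure-zero submanifold $W^s$); this goes through the same way, since $g|_{W^s(\cO)}$ is locally Lipschitz and, at points of differentiability, the restricted derivative inherits the operator-norm bound $K$ from the ambient Lipschitz constant.
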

The proof of this proposition from Theorems A and B is the same as the proof of Proposition~\ref{p=denseC1} (see also Lemma~1.2 in \cite{BCW}).

\begin{prop}\label{p=liptocr}
Consider the set $\cT$ of diffeomorphisms $f\in \Diff^1(M)$ having a trivial centralizer $Z^{\Lip}(f)$.
Then, if $\cT$ is dense in $\Diff^1(M)$, it is also residual.
\end{prop}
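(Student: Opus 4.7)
The strategy is to realize $\cT$ as containing a dense $G_\delta$ set; combined with the hypothesis that $\cT$ is dense, this forces $\cT$ to be residual. The crucial ingredient is that $\Lip^K(M)$ is compact in the $C^0$ topology (Arzel\`a--Ascoli), so that the Lipschitz-$K$ centralizer $Z^{\Lip}(f)\cap\Lip^K(M)$ varies upper-semicontinuously with $f$.

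For positive integers $K,m,N$, set
$$\cO_{K,m,N}:=\Big\{f\in\Diff^1(M):\forall\,g\in\Lip^K(M)\text{ with }fg=gf,\;\min_{|n|\leq N} d_{C^0}(g,f^n)<1/m\Big\}.$$
A standard compactness-and-extraction argument shows $\cO_{K,m,N}$ is open: given $f_i\to f$ in $C^1$ and witnesses $g_i\in\Lip^K(M)$, $f_ig_i=g_if_i$, failing the $1/m$-approximation, extract a uniform subsequential limit $g_i\to g$ using compactness of $\Lip^K(M)$; then $g$ is $K$-bi-Lipschitz and $fg=gf$ (commutation is $C^0$-closed), and since $f_i^n\to f^n$ in $C^0$ for each fixed $|n|\leq N$, the limit $g$ still fails the $1/m$-approximation at $f$. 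Let $\cV_K:=\bigcap_m\bigcup_N\cO_{K,m,N}$, a $G_\delta$ set.

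Let $\cP\subset\Diff^1(M)$ denote the open dense set of diffeomorphisms with at least one hyperbolic periodic orbit. I claim $\cD\cap\cP\subset\cV_K$ for every $K$, making each $\cV_K$ a dense $G_\delta$. Indeed, for $f\in\cD\cap\cP$, Proposition~\ref{p=denselip} gives $\{g\in\Lip^K(M):fg=gf\}\subset\langle f\rangle$; moreover a hyperbolic periodic point $p$ with some eigenvalue $\lambda$, $|\lambda|>1$, forces $\langle f\rangle$ to be $C^0$-discrete in $\Lip^K(M)$: if $f^{n_i}\to h$ uniformly in $\Lip^K(M)$ with $|n_i|\to\infty$, then picking $x_i$ along the local unstable manifold of $p$ with $|x_i-p|\to 0$ but $|f^{n_i}(x_i)-f^{n_i}(p)|$ bounded below (using exponential expansion in the $\lambda$-direction), one contradicts the bound $|f^{n_i}(x_i)-f^{n_i}(p)|\leq 2\|f^{n_i}-h\|_\infty+K|x_i-p|\to 0$ coming from the $K$-Lipschitz property of $h$. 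Consequently $\langle f\rangle\cap\Lip^K(M)$ is finite and a single $N$ works for all $m$, placing $f$ in $\cV_K$.

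Set $\cR:=\cP\cap\bigcap_{K\in\NN}\cV_K$, a countable intersection of dense $G_\delta$ sets, hence dense $G_\delta$. For any $f\in\cR$ and $g\in Z^{\Lip}(f)$, pick $K$ with $g\in\Lip^K(M)$; membership of $f$ in $\cV_K$ produces integers $n_m$ with $f^{n_m}\to g$ in $C^0$, and the unstable-manifold argument again rules out $|n_m|\to\infty$, forcing $n_m$ eventually constant and $g\in\langle f\rangle$. Thus $\cR\subset\cT$, so $\cT$ is residual. The main obstacle is that the equality $Z^{\Lip}(f)=\langle f\rangle$ is not obviously $G_\delta$: the approximation scheme of $\cO_{K,m,N}$ only witnesses $Z^{\Lip}(f)\cap\Lip^K(M)\subset\overline{\langle f\rangle}^{C^0}$, so the auxiliary open dense condition $\cP$ is essential both to place $\cD\cap\cP$ inside the $\cV_K$ (for density) and to upgrade the $C^0$-closure to $\langle f\rangle$ itself in the final extraction.
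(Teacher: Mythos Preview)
Your argument is essentially correct and takes a genuinely different route from the paper's. The paper views $f\mapsto Z^{\Lip}(f)\cap\Lip^K(M)$ as an upper-semicontinuous map into the hyperspace $\cK(\Lip^K(M))$, takes the residual set of continuity points, and at each continuity point approximates by elements of $\cT$ to pull back a witness $g_n=f_n^{m_n}$. You instead build the $G_\delta$ sets $\cV_K$ by hand via the open conditions $\cO_{K,m,N}$ and verify density directly from $\cT$. Your approach is more elementary in that it avoids the abstract continuity-points proposition; the paper's approach is slightly cleaner in that the passage from ``close to some $f^n$'' to ``equal to some $f^n$'' happens automatically because one is tracking exact equalities $g_n=f_n^{m_n}$ along the approximating sequence rather than $C^0$-approximations.

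Two points to clean up. First, you invoke $\cD$ and Proposition~\ref{p=denselip}, but the statement you are proving takes as hypothesis only that $\cT$ is dense; simply replace $\cD\cap\cP$ by $\cT\cap\cP$ throughout (for $f\in\cT$ the inclusion $Z^{\Lip}(f)\cap\Lip^K(M)\subset\langle f\rangle$ is the definition of $\cT$). Second, your discreteness argument assumes the hyperbolic periodic point $p$ has an eigenvalue of modulus $>1$, which is not automatic: $p$ could be a sink. You need to cover that case as well. One fix: if all eigenvalues have modulus $<1$ and $n_m\to+\infty$ (after passing to a subsequence with $n_m\equiv r\pmod q$), take any $x\neq p$ in the basin of $p$; then $f^{n_m}(x)\to f^r(p)=g(p)$, forcing $g(x)=g(p)$ and contradicting that $g$ is a bijection. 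Alternatively, you can bypass the unstable-manifold geometry entirely (as the paper does) by noting that $f^n\in\Lip^K(M)$ forces $\|Df^n\|,\|Df^{-n}\|\leq K$, while hyperbolicity of $Df^q(p)$ makes $\|Df^{nq}(p)\|+\|Df^{-nq}(p)\|\to\infty$; this gives both the finiteness of $\langle f\rangle\cap\Lip^K(M)$ and the boundedness of $(n_m)$ in one stroke, and works regardless of the index of $p$.
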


\begin{remark}
The proof of Proposition~\ref{p=liptocr} also holds in the $C^r$ topology
$r\geq 2$ on any manifold $M$ on which the $C^r$-generic
diffeomorphism has at least one hyperbolic periodic orbit (for example, on
the circle, or on manifolds of nonzero Euler characteristic).
On the other hand, Theorem~\ref{t=liptriv} is false for general manifolds in the $C^2$ topology,
at least for the circle.
In fact, a simple folklore argument (see the proof of Theorem B in~\cite{navas}) 
implies that for {\em any} Kupka-Smale diffeomorphism
$f\in \Diff^2(S^1)$, the set $Z^{\Lip}(f)$ is {\em infinite dimensional}.
It would be interesting to find out what is true in higher dimensions.
\end{remark}

\begin{demo}[Proof of Proposition~\ref{p=liptocr}]
For any compact metric space $X$ we denote
by $\cK(X)$ the set of non-empty compact subsets of $X$, endowed with the Hausdorff distance $d_H$.
We use the following classical fact. 

\begin{prop*}\label{p=semi} Let $\cB$ be a Baire space,
let $X$ be a compact metric space, and let $h:\cB\to \cK(X)$ be an
upper-semicontinuous function. Then the set of continuity points of
$h$ is a residual subset of $\cB$.

In other words, if $h$ has the property that for all $b\in \cB$,
$$b_n\to b\,\implies\, \limsup b_n= \bigcap_n\overline{\bigcup_{i>n} h(b_i)} \subseteq h(b),$$
then there is a residual set $\cR_h\subset \cB$ such that, for all
$b\in \cR_h$,
$$b_n\to b\implies\, \lim d_H(b_n,b)=0.$$
\end{prop*}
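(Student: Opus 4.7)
The plan is to use a countable basis for the topology of $X$ to decompose Hausdorff continuity into countably many ``one-sided'' conditions, each failing only on a meager set. Fix a countable base $\{V_i\}_{i\in\NN}$ for the topology of $X$ and, for each $i$, set
$$A_i=\{b\in\cB: h(b)\cap V_i\neq\emptyset\}\quad\text{and}\quad B_i=\mathrm{int}(A_i).$$
I claim that $\cR_h:=\cB\setminus\bigcup_i(A_i\setminus B_i)$ is residual and that $h$ is Hausdorff-continuous at every point of $\cR_h$.

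The first step is to show that each $A_i$ is an $F_\sigma$. The upper semi-continuity hypothesis implies that $\{b: h(b)\cap F\neq\emptyset\}$ is closed for every closed $F\subseteq X$: if $b_n\to b$ and $y_n\in h(b_n)\cap F$, a subsequence of $(y_n)$ converges in $X$ to some $y_\infty\in F$, and by definition of $\limsup$ we have $y_\infty\in\limsup h(b_n)\subseteq h(b)$, so $y_\infty \in h(b)\cap F$. Writing the open set $V_i$ as an increasing union $\bigcup_n F_n$ of closed subsets expresses $A_i=\bigcup_n\{b:h(b)\cap F_n\neq\emptyset\}$ as a countable union of closed sets. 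Then $A_i\setminus B_i$ is in turn a countable union of sets of the form $C\setminus B_i$ with $C$ closed and $C\subseteq A_i$, and any such set has empty interior: an open subset of $C\setminus B_i$ would lie in $A_i$, hence in $B_i$. So each $A_i\setminus B_i$ is meager, and since $\cB$ is Baire, $\cR_h$ is residual.

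The second step is to verify Hausdorff continuity at each $b\in\cR_h$. Given $\ve>0$, cover the compact set $h(b)$ by finitely many basis elements $V_{i_1},\dots,V_{i_k}$ of diameter less than $\ve$, each meeting $h(b)$. Then $b\in A_{i_j}$ for every $j$, and because $b\in\cR_h$ we in fact have $b\in B_{i_j}$, so there is a neighborhood $U$ of $b$ on which $h(b')\cap V_{i_j}\neq\emptyset$ for every $j$. For $b'\in U$ and any $x\in h(b)$, $x$ lies in some $V_{i_j}$ and $h(b')$ meets $V_{i_j}$, so $d(x,h(b'))<\ve$, bounding $\sup_{x\in h(b)}d(x,h(b'))$ by $\ve$. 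The reverse bound $\sup_{y\in h(b')}d(y,h(b))\to 0$ as $b'\to b$ is an immediate consequence of the $\limsup$ hypothesis, via the same compactness argument as in Step~1: any subsequential limit of points $y_n\in h(b_n)$ with $d(y_n,h(b))\geq\ve$ would be a point of $\limsup h(b_n)\subseteq h(b)$ at distance $\geq\ve$ from $h(b)$, absurd.

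The main obstacle is not any deep estimate but rather the correct book-keeping: the upper semi-continuity hypothesis gives the ``upper'' half of Hausdorff convergence for free, so one must set up countably many open conditions that control the ``lower'' half, and then use upper semi-continuity a second time (via the $F_\sigma$ structure of each $A_i$) to show these conditions fail only on a meager set. Once the countable base is chosen, both tasks become routine.
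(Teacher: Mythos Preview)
Your proof is correct. The paper itself does not prove this proposition: it is stated there as a ``classical fact'' and used as a black box in the proof of Proposition~2.5, so there is no argument in the paper to compare against. Your argument---using a countable base of $X$ to reduce lower semicontinuity of $h$ to countably many conditions $h(b)\cap V_i\neq\emptyset$, showing each such condition defines an $F_\sigma$ set via upper semicontinuity, and then observing that an $F_\sigma$ minus its interior is meager---is one of the standard routes to this well-known result about set-valued maps.
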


To prove Proposition~\ref{p=liptocr}, we note that for a fixed $K>0$,
the set $Z^{\Lip}(f)\cap \Lip^K(M)$ is a closed subset (in the $C^0$
topology) of the compact metric space $\Lip^K(M)$.  This is a simple
consequence of the facts that $Z^{\Lip}(f)$ is defined by the
relation $fgf^{-1}g^{-1} = \id$, and that composition and inversion
are continuous. Thus there is well-defined map $h_K$ from
$\Diff^1(M)$ to $\cK(\Lip^K(M))$, sending $f$ to $h_K(f) =
Z^{\Lip}(f)\cap \Lip^K(M)$. It is easy to see that $h_K$ is
upper-semicontinuous: if $f_n$ converges to $f$ in $\diff^1(M)$ and
$g_n\in h_K(f_n)$ converges uniformly to $g$ then $g$ belongs to
$h_K(f)$.

Let $\cR_K\subset \Diff^1(M)$
be the set of points of continuity of $h_K$; it is
a residual subset of $\Diff^1(M)$, by Proposition~\ref{p=semi}.  Let
$\cR_{Hyp}\subset \Diff^1(M)$ be the set of diffeomorphisms
such that each $f\in \cR_{Hyp}$ has at least one
hyperbolic periodic orbit (the $C^1$ Closing Lemma implies
that $\cR_{Hyp}$ is residual).  Finally, let
$$\cR = \cR_{Hyp}\cap \bigcap_{K=1}^{\infty} \cR_K.$$

Assuming that $\cT$ is dense in $\Diff^1(M)$,
we claim that the set $\cR$ is contained in $\cT$, implying that $\cT$ is residual.
To see this, fix $f\in \cR$, and let $f_n\to f$ be a sequence of
diffeomorphisms in $\cT$ converging to $f$ in the $C^1$ topology.
Let $g\in Z^{\Lip}(M)$ be a $K$-bi-Lipschitz homeomorphism satisfying
$fg=gf$.  Since $h_K$ is continuous at $f$, there is a sequence
$g_n\in Z^{\Lip}(f_n)$ of $K$-bi-Lipschitz homeomorphisms
with $g_n\to g$ in the $C^0$ topology.
The fact that $f_n\in\cT$ implies that
the centralizer $Z^{\Lip}(f_n)$ is trivial,  so
there exist integers $m_n$ such that $g_n = f^{m_n}$.

If the sequence $(m_n)$ is bounded, then passing to a subsequence,
we obtain that $g = f^m$, for some integer $m$.
If  the sequence $(m_n)$ is
not bounded, then we obtain a contradiction as follows.  Let $x$ be a
hyperbolic periodic point of $f$, of period $p$.
For $n$ large, the map $f_n$ has a periodic orbit $x_n$ of period $p$,
and the derivatives $Df^p_n(x_n)$ tend to the derivative $Df^p(x)$.
But then $|\log\|Df_n^{m_n}\||$ tends to infinity as $n\to\infty$.
This contradicts the fact that the diffeomorphisms
$f^{m_n}_n=g_n$ and $f^{-m_n}_n=g_n^{-1}$ are both $K$-Lipschitz,
concluding the proof.
\end{demo}
\section{Perturbing the derivative without changing the dynamics} 
In order to prove Theorem B, one needs to perturb a diffeomorphism $f$ and change the dynamical properties of its derivative without changing its topological dynamics: the resulting diffeomorphism
is still conjugate to $f$. We develop in this section an important technique for the proof,
which we call  \emph{tidy perturbations} of the dynamics.

\subsection{Tidy perturbations}

\begin{defi} Let $f:M\to M$ be a homeomorphism and let $X\subset M$.  We say that a homeomorphism
$g$ is a {\em tidy perturbation of $f$ supported on $X$} if:
\begin{enumerate}
\item $g(x) = f(x)$, for all $x\in M\setminus X$,
\item if $x\in M\setminus X$ and $f^m(x)\in M\setminus X$, for some $m\geq 1$,
then $g^m(x) = f^m(x)$; 
\item if $x\in M\setminus X$ and $g^m(x)\in M\setminus X$, for some $m\geq 1$,
then $g^m(x) = f^m(x)$;
\item for all $x\in M$, there exists $m\in \ZZ$ such that 
$f^{m}(x)\in M\setminus \overline{X}$;
\item for all $x\in M$, there exists $m\in \ZZ$ such that 
$g^{m}(x)\in M\setminus \overline{X}$.
\end{enumerate}
\end{defi}
Note that this definition is symmetric in $f$ and $g$.

\begin{lemm} If $g$ is a tidy perturbation of $f$ supported on $X$, then $g$ is conjugate to $f$ by a homeomorphism $\varphi$ such that $\varphi=\id$ on $M\setminus (X\cap f(X))$.  
Furthermore, if $g$ and $f$ are diffeomorphisms, then $\varphi$ is a diffeomorphism as well.
\end{lemm}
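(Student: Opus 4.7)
The plan is to construct $\varphi$ explicitly orbit by orbit, using the fact that every orbit must visit $M \setminus \overline{X}$ by conditions 4 and 5.

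For each $x \in M$, condition 4 provides some $m \in \ZZ$ with $f^m(x) \in M \setminus \overline X \subset M \setminus X$. I would define
\[
\varphi(x) := g^{-m}(f^m(x))
\]
for any such $m$. The first task is to show this is independent of the choice of $m$. If $m < m'$ are two such integers, then setting $y = f^m(x) \in M \setminus X$ and $k = m'-m \geq 1$, we have $f^k(y) = f^{m'}(x) \in M \setminus X$, so condition 2 gives $g^k(y) = f^k(y)$. Unwinding this yields $g^{-m}(f^m(x)) = g^{-m'}(f^{m'}(x))$, establishing well-definedness.

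Next I would verify the four properties: (i) $\varphi = \id$ outside $X \cap f(X)$; (ii) $\varphi$ is a bijection; (iii) $\varphi$ is continuous; (iv) $\varphi \circ f = g \circ \varphi$. For (i), if $x \notin X$ take $m = 0$, and if $x \notin f(X)$ take $m = -1$ (and use condition 1 to get $g(f^{-1}(x)) = f(f^{-1}(x)) = x$). For (ii), define $\psi(y) = f^{-m}(g^m(y))$ for $m$ with $g^m(y) \in M \setminus X$; by the symmetric version of the well-definedness argument (using conditions 3 and 5 in place of 2 and 4), $\psi$ is well-defined, and a direct computation shows $\varphi \circ \psi = \id = \psi \circ \varphi$. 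For (iii), given $x$, pick $m$ with $f^m(x) \in M \setminus \overline X$; by openness of $M \setminus \overline X$, the same $m$ works on an entire neighborhood of $x$, and on this neighborhood $\varphi$ is given by the continuous formula $g^{-m} \circ f^m$. Continuity of $\psi = \varphi^{-1}$ is symmetric. Finally, (iv) follows because if $m$ works for $f(x)$ then $m+1$ works for $x$, and the two definitions agree up to one application of $g$.

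For the smooth case, the observation from step (iii) is decisive: locally $\varphi$ coincides with $g^{-m} \circ f^m$, a composition of diffeomorphisms, so $\varphi$ is itself a local diffeomorphism and, being a homeomorphism already, a global diffeomorphism.

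The step I expect to be the main (mild) obstacle is the well-definedness, since it is the only place where the asymmetric-looking hypothesis 2 is really used, and it hinges on the observation that condition 4 can be strengthened from $M \setminus \overline X$ to $M \setminus X$ (which is what condition 2 requires as hypothesis). The openness in condition 4 (i.e.\ the closure $\overline X$ rather than just $X$) is then exactly what is needed a second time to promote pointwise to local continuity.
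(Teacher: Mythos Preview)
Your proposal is correct and follows essentially the same route as the paper: define $\varphi(x)=g^{-m}f^{m}(x)$ for any $m$ with $f^{m}(x)\notin\overline X$, check well-definedness via property~2, get local continuity from the openness of $M\setminus\overline X$, and build the inverse by swapping the roles of $f$ and $g$ (using properties~3 and~5). Your treatment of $\varphi=\id$ on $M\setminus(X\cap f(X))$ via the explicit choices $m=0$ and $m=-1$ is a touch more transparent than the paper's one-line appeal to properties~1 and~2; just make sure you observe (as your final paragraph hints) that the well-definedness argument only needs $f^m(x)\in M\setminus X$, so these choices are legitimate even when $x\in\overline X\setminus X$.
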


\begin{proof} Given $f$ and a tidy perturbation $g$ of $f$ supported on $X$, we construct a homeomorphism
$\varphi$ as follows.  Property 4 of tidy perturbations implies that for each $x\in M$, there exists an integer
$m_x$ such that $f^{m_x}(x)\in M\setminus \overline{X}$.  We set $\varphi(x) = g^{-m_x} f^{m_x}(x)$.

Then $\varphi$ is well-defined, for suppose that $f^{m_1}(x)\notin \overline{X}$ and  $f^{m_2}(x)\notin \overline{X}$,
for some integers $m_1<m_2$.  Let $y= f^{m_1}(x)$.  Then $y\notin \overline{X}$, and 
$f^{m_2-m_1}(y) = f^{m_2}(x) \notin \overline{X}$.  Property 2 of tidy perturbations implies
that $g^{m_2-m_1}(y) = f^{m_2-m_1}(y)$; in other words, $g^{-m_1}f^{m_1}(x) = g^{-m_2} f^{m_2}(x)$.
Hence the definition of $\varphi(x)$ is independent of the integer $m_x$.
In particular $g\circ \varphi=\varphi\circ f$.

To see that $\varphi$ is continuous, note that for every $x\in M$, if $f^{m}(x)\notin \overline{X}$, then there exists
a neighborhood $U$ of of $x$ such that $f^m(U)\cap \overline{X} \neq \emptyset$.  Hence $\varphi= g^{-m}\circ f^m$ on
$U$.  This implies that $\varphi$ is a local homeomorphism.

Let $\varphi^-$ be the local homeomorphism obtained by switching the roles of $f$ and $g$.
Clearly $\varphi^{-}$ is the inverse of $\varphi$, so that $\varphi$ is a homeomorphism.
If $f$ and $g$ are diffeomorphisms, it is clear from the construction of $\varphi$ that $\varphi$ is a diffeomorphism.

Properties 1 and 2 of tidy perturbations imply that
for any point $x\in M\setminus (X\cap f(X))$, we have $f^{m_x}(x)=g^{m_x}(x)$.
This gives $\varphi=\id$ on $M\setminus (X\cap f(X))$.
\end{proof}

Each the tidy perturbations we will consider is supported in the union of succesive disjoint iterates of an open set. 
In that case, the characterization of tidy perturbations is much easier, as explained in the next lemma:  

\begin{lemm} Let $f$ be a homeomorphism, $U\subset M$ an open set,
and $m\geq 1$ an integer such that the iterates 
$\overline{U}, f(\overline{U}), \ldots , f^{m}(\overline{U})$ are pairwise disjoint.

Let $g$ be a homeomorphism such that:
\begin{itemize}
\item $g=f$ on the complement of the union $X=\bigcup_{i=0}^{m-1} f^i(U)$, and
\item for all $x\in U$,  $f^{m}(x) = g^{m}(x)$.
\end{itemize}
Then $g$ is a tidy perturbation of $f$ supported on $X$.
\end{lemm}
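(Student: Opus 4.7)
The plan is to verify each of the five properties in the definition of a tidy perturbation supported on $X$.

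Conditions (1) and (4) are almost immediate. (1) is the first hypothesis. For (4), pairwise disjointness of the iterates yields $\overline{X}\subseteq\bigcup_{i=0}^{m-1}f^i(\overline U)$; any $x\in\overline X$ thus lies in some $f^j(\overline U)$ with $0\le j\le m-1$, and then $f^{m-j}(x)\in f^m(\overline U)\subseteq M\setminus\overline X$.

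Condition (2) will be proved by strong induction on $n$. Given $x\in M\setminus X$ with $f^n(x)\in M\setminus X$, if the orbit segment $x,f(x),\dots,f^n(x)$ avoids $X$ entirely then iterating (1) gives $g^n(x)=f^n(x)$. Otherwise, let $k\ge 1$ be the first time $f^k(x)\in X$: then $f^{k-1}(x)\in M\setminus X$, and this forces $f^k(x)\in U$, because $f^k(x)\in f^j(U)$ with $j\ge 1$ would put $f^{k-1}(x)\in f^{j-1}(U)\subseteq X$. Iterating (1) gives $g^k(x)=f^k(x)\in U$, and the second hypothesis then yields
$$g^{k+m}(x)=g^m(f^k(x))=f^{k+m}(x)\in f^m(U)\subseteq M\setminus\overline X.$$
Since the $f$-orbit from $f^k(x)\in U$ passes through $f(U),\dots,f^{m-1}(U)\subseteq X$ before returning to $M\setminus X$, the hypothesis $f^n(x)\in M\setminus X$ forces $n\ge k+m$, and the induction closes at $f^{k+m}(x)$ with exponent $n-k-m$.

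Condition (3) follows the same inductive template, but first requires an intermediate claim: \emph{for every $y\in U$, one has $g^j(y)\in X$ for $0\le j\le m-1$, and $g^m(y)\in f^m(U)\subseteq M\setminus\overline X$.} The second statement is the hypothesis. For the first, I use that $g$ is a homeomorphism coinciding with $f$ on $M\setminus X$, which automatically yields $g(X)=f(X)$. Taking $U$ connected (as in the intended applications), the sets $f^j(U)$ for $0\le j\le m-1$ are the connected components of $X$, those of $f(X)$ are $f^j(U)$ for $1\le j\le m$, and $g$ induces a bijection $\sigma\colon\{0,\dots,m-1\}\to\{1,\dots,m\}$ with $g(f^j(U))=f^{\sigma(j)}(U)$. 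A pigeonhole argument combined with $g^m|_U=f^m|_U$ then shows that the $\sigma$-orbit of $0$ reaches $m$ exactly at step $m$: a premature exit $\sigma^T(0)=m$ with $T<m$ would, combined with $g^m(U)=f^m(U)$, force an inclusion $U\subseteq f^{m-T}(U)$ with $1\le m-T\le m-1$, contradicting the pairwise disjointness hypothesis. With this intermediate claim in hand, condition (3) follows exactly as (2): if $x\in M\setminus X$ and $g(x)\in X$, then $g(x)=f(x)\in U$, so the $g$-orbit remains in $X$ for the next $m-1$ steps, and $g^n(x)\in M\setminus X$ forces $n\ge m+1$; the second hypothesis provides $g^{m+1}(x)=f^{m+1}(x)$, and the induction continues. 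Condition (5) is then an easy consequence: extended by continuity to $\overline U$, the intermediate claim shows that any $x\in f^j(\overline U)\subseteq\overline X$ satisfies $g^{m-j}(x)\in f^m(\overline U)\subseteq M\setminus\overline X$.

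The main obstacle will be the intermediate claim for condition (3), which must rule out ``shortcut'' exits of the $g$-orbit of $U$ from $X$ before step $m$. After the $g$-orbit first reaches $f^m(U)$, it continues along the $f$-orbit by (1); however, the forward $f$-iterates $f^{m+i}(U)$ are not controlled by the disjointness hypothesis and could in principle re-enter $X$ by lying inside $U$. The delicate step is to verify that, even in this exceptional situation, the equation $g^m(U)=f^m(U)$ still forces the contradictory inclusion $U\subseteq f^{m-T}(U)$, so that the permutation $\sigma$ is in fact the shift $j\mapsto j+1$.
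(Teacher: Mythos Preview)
Your arguments for (1), (2), and (4) are correct and match the paper's. For (3) and (5), the reduction to an intermediate claim---that for $y\in U$ the $g$-orbit stays in $X$ through step $m-1$ and $g^m(y)=f^m(y)\in f^m(U)$---is sound and is equivalent to $g^i(U)=f^i(U)$ for $1\le i\le m$, which is precisely what the paper establishes. The gap lies in your proof of this claim. First, you assume $U$ connected ``as in the intended applications,'' but this is false: in the paper's applications $U$ is the base of a topological tower and has many small components, so your bijection $\sigma$ on the index set $\{0,\dots,m-1\}$ is not even well-defined. Second, even if $U$ were connected, the pigeonhole argument is incomplete: once $g^T(U)=f^m(U)$ exits $X$ with $T<m$, the orbit follows $f$, and the iterates $f^{m+1}(U),f^{m+2}(U),\dots$ may re-enter $X$ arbitrarily; you flag this as the ``delicate step'' but do not show how $g^m(U)=f^m(U)$ then forces the claimed contradiction, and it is not clear this can be done along these lines.

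The paper's route is more direct and avoids both issues. From $g=f$ on $M\setminus X$ one has $g(\overline X)=f(\overline X)=\bigsqcup_{k=1}^{m}f^k(\overline U)$, a disjoint union of compacta. For each connected component $C$ of $\overline U$, the image $g(C)$ is connected, lies in this disjoint union, and meets $f(\overline U)$ (since $C$ must meet $\partial U\subset M\setminus X$, where $g=f$); hence $g(C)\subset f(\overline U)$, giving $g(\overline U)\subset f(\overline U)$. The same argument on each $f^j(\overline U)$ gives $g(f^j(\overline U))\subset f^{j+1}(\overline U)$, and since $g(\overline X)=f(\overline X)$ these inclusions are equalities. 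Thus $g^i(U)=f^i(U)$ for $i\le m$, the hypotheses become symmetric in $f$ and $g$, and (3), (5) follow from (2), (4) by swapping the two maps. Incidentally, this boundary argument would also settle your $\sigma$ in one line had it been well-defined: $\partial(g(f^j(U)))=g(f^j(\partial U))=f^{j+1}(\partial U)$, and the sets $f^k(\partial U)\subset f^k(\overline U)$ are pairwise disjoint for $0\le k\le m$, forcing $\sigma(j)=j+1$ with no orbit-tracking at all.
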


\begin{proof}
Property 1 of tidy perturbations is immediate; property 4 is as well, since
$\overline{U}, f(\overline{U}), \ldots , f^{m}(\overline{U})$ are pairwise disjoint.

Let $x$ be a point in $M\setminus X$ whose $f$-orbit enters and leaves $X$, and let $i>0$ be its first entry in $X$;  then $f^i(x) = g^i(x)$ by the first hypothesis on $g$. The $f$-orbit of $x$ leaves $X$ at $f^{m+i}(x) = g^{m+i}(x)$, by the second hypothesis on $g$. Proceeding inductively on the successive entrances and exits of the $f$-orbit of $x$ we get that $g^n(x)=f^n(x)$ for every integer $n$ such that $f^n(x)\in M\setminus X$, proving Property 2 of tidy perturbations.


There is a neighborhood $V$ of $\bar U$ such that $f=g$ on $V\setminus U$. It
follows that   $f(U)=g(U)$.
Inductively, we see that $g^i(U) = f^i(U)$ for $i\in\{1, \ldots , m\}$. 
This shows that the hypotheses of the lemma still are satisfied if we switch the roles of
$f$ and $g$. This implies Properties 3 and 5 of tidy perturbations.
\end{proof}

If $g$ is a small $C^1$-perturbation of $f$ that is a tidy perturbation supported in the disjoint union $\bigcup_{i=0}^{m-1} f^i(U)$, 
it is tempting to think that the conjugating diffeomorphism $\varphi$ must be  $C^1$-close to the identity as well. This is not always the case.
The derivative $Dg$ is a small perturbation of $Df$ but
for $x\in U$ and $i\in \{1,\dots,m\}$, the maps $Df^i(x)$ and $Dg^i(x)$ could be very different.
There is however a straighforward \emph{a priori} bound for $D\varphi$:   

\begin{lemm}\label{l.apriori} Let $f$ be a diffeomorphism, $U\subset M$ an open set
and  $m\geq 1$ an integer such that the iterates 
$\overline{U}, f(\overline{U}), \ldots , f^{m}(U)$ are pairwise disjoint.
Suppose that $g$ is a tidy perturbation of $f$ supported on $X=\bigcup_{i=0}^{m-1} f^i(\overline{U})$ and that  $\varphi:M\to M$ is the  diffeomorphism such that $g = \varphi \circ f\circ \varphi^{-1}$ and  $\varphi = \id$ on $M\setminus X$. 

Then  $$\max\{\|D\varphi\|, \|D\varphi^{-1}\|\} \leq C^{m},$$ where $C = \sup\{\|Dg\|, \|Dg^{-1}\|, \|Df\|, \|Df^{-1}\|\}$.  
\end{lemm}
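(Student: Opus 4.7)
Plan: The plan is to compute $\varphi$ explicitly floor by floor on the tower $X = \bigcup_{i=0}^{m-1} f^i(\bar U)$; on each floor $\varphi$ will admit two natural expressions involving different numbers of derivative factors, and taking whichever is shorter yields the desired exponent. First I would record from the preceding lemma that $\varphi = \id$ on $M \setminus (X \cap f(X))$, which in the present setting coincides with $(M \setminus X) \cup \bar U \cup f^m(\bar U)$; in particular, $\varphi = \id$ on both the bottom floor $\bar U$ and the top floor $f^m(\bar U)$.

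Next, for $y = f^i(x)$ with $x \in \bar U$ and $i \in \{0, 1, \ldots, m\}$, I would propagate using the conjugacy $g \circ \varphi = \varphi \circ f$. Iterating forward from $\varphi(x) = x$ gives $\varphi(y) = g^i \circ f^{-i}(y)$, while iterating backward from $\varphi(f^m(x)) = f^m(x) = g^m(x)$ gives $\varphi(y) = g^{-(m-i)} \circ f^{m-i}(y)$; consistency of the two expressions is precisely the hypothesis $f^m = g^m$ on $\bar U$. Differentiating each yields
\[
\|D\varphi(y)\| \leq \min\bigl(C^{2i},\, C^{2(m-i)}\bigr) = C^{2 \min(i,m-i)} \leq C^m,
\]
since $2\min(i, m-i) \leq m$ for every $i \in \{0, 1, \ldots, m\}$. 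Outside $\bar X$ the map $\varphi$ is the identity, so the bound is trivial. The bound on $\|D\varphi^{-1}\|$ follows by the symmetric argument, swapping the roles of $f$ and $g$ (which is legitimate since the notion of tidy perturbation is symmetric in $f$ and $g$).

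The only step requiring any cleverness is using \emph{two} formulas for $\varphi$ on each floor and taking the minimum. A one-sided estimate, say using only the forward formula, would produce $C^{2m}$, twice the desired exponent; the gain of a factor of two in the exponent comes from the elementary fact that every floor of a tower of height $m$ can be reached from the top or the bottom in at most $\lceil m/2 \rceil$ iterates.
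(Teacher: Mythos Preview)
Your proof is correct and follows essentially the same approach as the paper's. Both arguments obtain two expressions for $\varphi$ on each floor of the tower (one propagating from the bottom, one from the top) and take whichever involves fewer factors; the paper phrases this as ``$\varphi$ coincides with $g^{-i}f^{i}$ and $g^{-i+m}f^{i-m}$'' for a suitable exit time $i$, which is exactly your pair of formulas after the change of index $i \leftrightarrow m-i$. Your observation that $\varphi = \id$ on $\bar U$ (via the preceding lemma's conclusion $\varphi = \id$ on $M\setminus (X\cap f(X))$) is in fact needed to justify the second formula, and making it explicit is a small improvement in clarity over the paper's terse version.
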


\begin{demo} For every $x\in M$ there exists $i\in\{0,\dots,m\}$ such that $f^i(x)\notin \overline{X}$, so that $\varphi$ coincides with $g^{-i}f^i$ and $g^{-i+m}f^{i-m}$ in  a neighborhood of $x$.
We conclude by noting that either $2i$ or $2(m-i)$ is less than $m$.
\end{demo}

\subsection{Sequences of tidy perturbations}

The aim of this section is to control the effect of infinitely many successive tidy perturbations $g_i$ of a diffeomorphism $f$ and to give a sufficient condition for  the sequence $g_i$ to converge to a diffeomorphism $g$ conjugate to $f$ by a homeomorphism.  

\begin{lemm}\label{l=conjugacy}
Let $f$ be a diffeomorphism and $\ve,C>0$ be constants such that
for every $g$ with $d_{C^1}(f,g)<\ve$, we have $\|Dg\|, \|Dg^{-1}\| < C$. Consider:

\begin{itemize}
\item $(U_i)_{i\geq 1}$, a sequence of open subsets of $M$,
\item $(m_i)_{i\geq 1}$, a sequence of positive integers,
\item $(\varepsilon_i)_{i\geq 1}$, a sequence of positive numbers such that $\sum \ve_i < \ve$, 
\item $(g_i)_{i\geq 0}$, a sequence of diffeomorphisms such that $g_0=f$ and for each $i\geq 1$,
\begin{itemize}
\item the sets $\overline{U_i}, g_{i-1}(\overline{U_i}), \ldots , g_{i-1}^{m_i}(\overline{U_i})$ are pairwise disjoint,
\item $g_{i}$ is a tidy $\varepsilon_{i}$-perturbation of $g_{i-1}$, supported in $\bigcup_{k=0}^{m_i-1}g_{i-1}^k(U_i)$, 
\end{itemize}
\item $(\rho_i)_{i\geq 1}$, a sequence of positive numbers such that $g_{i-1}^k(U_i)$ has diameter bounded by $\rho_i$, for $k\in\{0,\ldots,m_i\}$.  
\end{itemize}
Denote by $\varphi_i$ the  diffeomorphism such that $g_i = \varphi_i \circ g_{i-1}\circ \varphi_i^{-1}$ and  
$\varphi_i = \id$ on $M\setminus \bigcup_{k=1}^{m_i-1}g_{i-1}^k(U_i)$. 
Let $\Phi_i = \varphi_i\circ \cdots\circ \varphi_1$ and $M_i = \Pi_{k=1}^i C^{m_{k}}$.
If one assumes that
$$\sum_{i\geq 1} \rho_i M_{i-1}< \infty,$$
then
\begin{enumerate}
\item $(g_i)$ converges in the $C^1$ metric to a diffeomorphism $g$ with $d_{C^1}(f,g)<\ve$,
\item for all $i<j$ one has $d_{unif}(\Phi_j\Phi_i^{-1}, \id) < \sum_{k=i+1}^j \rho_k$,
\item $(\Phi_i)$ converges uniformly to a homeomorphism $\Phi$ satisfying $g=\Phi f \Phi^{-1}$.
\end{enumerate}
\end{lemm}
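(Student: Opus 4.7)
The plan is to verify the three conclusions in turn, with the key technical input being the \emph{a priori} Lipschitz bound from Lemma~\ref{l.apriori} applied to each conjugating diffeomorphism $\varphi_i$.

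For (1), I would observe that the chain $d_{C^1}(g_{i-1}, g_i) < \ve_i$ together with $\sum \ve_i < \ve$ makes $(g_i)$ a Cauchy sequence in $C^1$ with all terms inside the $\ve$-ball around $f$; the limit $g$ inherits $d_{C^1}(f,g)<\ve$ and, since $\|Dg_i^{-1}\|$ stays uniformly bounded by $C$, is a diffeomorphism.

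The core of the argument is the uniform estimate $d_{unif}(\varphi_i,\id)\le \rho_i$. I would prove this by noting that $\varphi_i$ is the identity off $\bigcup_{k=1}^{m_i-1}g_{i-1}^k(U_i)$, that the tidy condition gives $g_{i-1}^k(U_i)=g_i^k(U_i)$ for every relevant $k$ (the argument of the preceding lemma), and that the conjugacy identity $\varphi_i\circ g_{i-1}=g_i\circ \varphi_i$ together with $\varphi_i|_{U_i}=\id$ (which follows from taking $m=m_i$ in the formula $\varphi_i(x)=g_i^{-m}g_{i-1}^m(x)$) forces $\varphi_i$ to preserve each piece $g_{i-1}^k(U_i)$ setwise. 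Each such piece has diameter at most $\rho_i$ by hypothesis, giving the pointwise bound. Conclusion (2) then follows by writing $\Phi_j\Phi_i^{-1}=\varphi_j\circ\cdots\circ\varphi_{i+1}$ and iterating the triangle inequality.

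For (3), the uniform Cauchy property of $(\Phi_i)$ is immediate from (2), producing a continuous limit $\Phi$. The delicate point---and the main obstacle---is showing that $\Phi$ is a \emph{homeomorphism}: pointwise uniform convergence does not imply uniform convergence of inverses, and indeed if one only assumed $\sum\rho_i<\infty$ the limit could end up being a mere semiconjugacy. This is exactly where the stronger hypothesis $\sum\rho_i M_{i-1}<\infty$ is used: Lemma~\ref{l.apriori} gives $\|D\varphi_k^{-1}\|\le C^{m_k}$, so $\Phi_{i-1}^{-1}$ is $M_{i-1}$-Lipschitz, and combining with the analogue of the estimate above for the inverses ($d_{unif}(\varphi_i^{-1},\id)\le \rho_i$) yields
$$d_{unif}(\Phi_i^{-1},\Phi_{i-1}^{-1})\le M_{i-1}\rho_i,$$
which is summable. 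So $(\Phi_i^{-1})$ converges uniformly to some $\Psi$, and passing to the limit in $\Phi_i\Phi_i^{-1}=\id=\Phi_i^{-1}\Phi_i$ identifies $\Psi=\Phi^{-1}$. The conjugacy $g=\Phi f\Phi^{-1}$ then follows by taking the uniform limit in the induction identity $g_i=\Phi_i f \Phi_i^{-1}$, using the uniform continuity of $f$ on the compact manifold $M$.
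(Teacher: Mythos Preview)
Your proof is correct and follows essentially the same route as the paper's: both obtain $d_{unif}(\varphi_i,\id)\le\rho_i$ from the fact that the support of $\varphi_i$ sits in the pairwise disjoint pieces $g_{i-1}^k(U_i)$ of diameter $\le\rho_i$, deduce (2) by telescoping, and then use the Lipschitz bound $\|D\Phi_{i-1}^{-1}\|\le M_{i-1}$ from Lemma~\ref{l.apriori} together with $d_{unif}(\varphi_i^{-1},\id)\le\rho_i$ to get $d_{unif}(\Phi_i^{-1},\Phi_{i-1}^{-1})\le M_{i-1}\rho_i$, making $(\Phi_i^{-1})$ Cauchy. Your justification in (2) that $\varphi_i$ preserves each $g_{i-1}^k(U_i)$ setwise is a bit more explicit than the paper's, but the content is the same.
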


\begin{rema} The uniform metric $d_{unif}$ on continuous self-maps on $M$ 
in part 3 of this lemma is not complete for the space of homeomorphisms
of $M$; to avoid confusion, we denote by $d_{C^0}$ the complete metric defined by:
$$d_{C^0}(f,g) = d_{unif}(f,g) + d_{unif}(f^{-1},g^{-1}).$$
One difficulty in the proof of Lemma~\ref{l=conjugacy}, which explains the role of $M_i$, is
to control the distances $d_{unif}(\Phi_i^{-1}\Phi_j, \id)$.
\end{rema}

\begin{demo} The first conclusion is clear.
Since $\Phi_j\Phi_i^{-1} = \varphi_j\circ\cdots\circ \varphi_{i+1}$, the second one is immediate from the
hypothesis that $\varphi_i$ is a tidy perturbation and the fact that the connected components of the support of $\varphi_i$ have diameter less than $\rho_i$.
The hypothesis $\sum_{k\geq 1}\rho_i < \infty$ implies that $(\Phi_i)_{i\in\NN}$ is a Cauchy sequence and converges to a continuous
map $\Phi$ satisfying $g\Phi =\Phi f$. Finally, the a priori 
bound in Lemma~\ref{l.apriori} implies that $\|D\Phi^{-1}_i\| < M_i$, so that
$$\sup_{d(x,y) < \rho_i} d(\Phi_{i-1}^{-1}(x), \Phi_{i-1}^{-1}(y)) < \rho_i M_{i-1}.$$
But for any $x$, we have $d(x,\varphi_i(x)) < \rho_i$. So the previous calculation
implies $d(\Phi_{i-1}^{-1}(x), \Phi_{i}^{-1}(x)) < \rho_iM_{i-1}$.  By
hypothesis, $\sum_{i\geq 1} \rho_i M_{i-1} < \infty$, which implies that $(\Phi_i^{-1})_{i\in\NN}$ is a Cauchy sequence in the $d_{unif}$ metric.  Hence
$(\Phi^{-1}_i)$ converges as $i\to \infty$ to the inverse of $\Phi$ and so $\Phi$ is a homeomorphism. 
\end{demo}

\subsection{Topological towers}

For each tidy perturbation we construct in this paper, we will use an open set with many disjoint iterates.
The large number of iterates will allow us to spread the effects of the perturbation out over
a large number of steps, effecting a large change in the derivative with a small perturbation. For these perturbations to have a global effect, we need to have most orbits in $M$ visit $U$. The technique
of {\em topological towers}, developed in \cite{BC}, allows us to choose $U$ to have many disjoint iterates, while simultaneously guaranteeing that most orbits visit $U$.

\begin{theo}[Topological towers, \cite{BC}, Th\'eor\`eme 3.1]\label{t.tower}
For any integer $d\geq 1$, there exists a constant $\kappa_d>0$ such that
for any integer $m\geq 1$ and for any diffeomorphism $f$ of a $d$-dimensional manifold $M$, 
whose periodic orbits of period less than $\kappa_d.m$ are hyperbolic,
there exists an open set $U$ and a compact subset $D\subset U$
having the following properties:

\begin{itemize}
\item Any point $x\in M$ that does not belong to a periodic orbit of period $< m$
has an iterate $f^i(x)$ in the interior of $D$. 
\item The sets $\bar U,f(\bar U),\dots,f^{m-1}(\bar U)$ are pairwise disjoint.
\end{itemize}
Moreover, the connected components of $\overline{U}$ can be chosen with arbitrarily small diameter.
\end{theo}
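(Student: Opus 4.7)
The plan is to construct the pair $(U,D)$ by iteratively refining a finite open cover of $M$, subject to two competing constraints: $U$ must be dynamically ``thin'' enough that its first $m$ iterates are disjoint, yet ``fat'' enough that every orbit (except possibly short periodic ones) visits it, and in fact visits the smaller compact $D\subset U$.

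First, I would dispose of the short periodic orbits. By the hyperbolicity hypothesis, periodic orbits of period less than $\kappa_d\cdot m$ are isolated, hence finite in number on the compact manifold $M$. I would pick pairwise disjoint open neighborhoods $\Omega_1,\dots,\Omega_r$ of these orbits, small enough that the only short-period recurrence inside $\Omega_k$ is along the orbit itself (using local product structure at hyperbolic orbits). The tower will be built over $M\setminus\bigcup_k\Omega_k$; points in this complement either lie on a long periodic orbit or are non-periodic. Next, I would cover $M\setminus\bigcup_k\Omega_k$ by a finite family of small open balls $B_1,\dots,B_k$ (in local charts), each of diameter less than the prescribed $\delta$, drawn from a geometric family whose covering multiplicity in each chart is bounded by a constant depending only on $d$.

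The core of the construction is a finite greedy procedure. For each pair $(B_i,B_j)$ and each time $t\in\{1,\dots,m-1\}$ with $f^t(B_i)\cap B_j\neq\emptyset$, one must cut: remove either $B_i\cap f^{-t}(B_j)$ from $B_i$ or the symmetric piece from $B_j$. After finitely many such excisions, the resulting sets $U_i\subset B_i$ satisfy the disjointness condition, and we set $U=\bigcup_i U_i$. The compact set $D$ is a slight retraction: for each $i$ pick $D_i\subset U_i$ compact with $D_i$ in the interior of $U_i$, so that any orbit entering $U$ has an iterate in the interior of $D=\bigcup_i D_i$, while leaving enough ``buffer'' between $D$ and $\partial U$ to absorb the boundary behavior. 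The diameter conclusion on connected components of $\bar U$ is automatic from the choice of $\delta$.

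The main obstacle is verifying that this cutting procedure leaves enough material: every point $x\in M\setminus\bigcup_k\Omega_k$ must still admit an iterate landing in the interior of $D$. This amounts to a combinatorial/dimensional bookkeeping. The geometric multiplicity of the initial cover, controlled by $d$, bounds how many cuts a given point can suffer; and the exclusion zones $\Omega_k$ absorb all the short-period recurrence, so the only way a point $x$ could evade $U$ altogether would be through recurrence forcing it onto a periodic orbit of period less than a fixed multiple of $m$. Choosing $\kappa_d$ larger than this multiple (with the multiplier coming from the covering-multiplicity estimate in dimension $d$) ensures such orbits are already hyperbolic and thus lie inside $\bigcup_k\Omega_k$. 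The construction therefore terminates with a non-empty $U$ whose iterates cover every allowed orbit, and a judicious choice of $D\subset U$ completes the proof.
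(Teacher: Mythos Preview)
The paper does not give its own proof of this statement: it is quoted verbatim from \cite{BC} (Th\'eor\`eme 3.1 there) and used as a black box. So there is nothing in the present paper to compare your argument against.

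That said, your sketch has a genuine gap at the crucial point. The ``greedy cutting'' step is where the entire difficulty lies, and your justification for why it leaves enough material is not a proof. You say the covering multiplicity (bounded in terms of $d$) controls how many cuts a given point can suffer, but the number of potential excisions a single ball $B_i$ undergoes is governed by $m$ (one possible cut for each time $t\in\{1,\dots,m-1\}$ and each overlapping $B_j$), not merely by the static multiplicity of the cover. After $m$ rounds of cutting there is no a priori reason any of $B_i$ survives. Your fallback claim---that a point evading $U$ must lie on a periodic orbit of period bounded by a fixed multiple of $m$---is asserted, not argued: a non-periodic orbit can perfectly well miss a set that has been whittled down too aggressively, with no periodicity forced. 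This is exactly the step where the constant $\kappa_d$ has to earn its keep, and your outline does not explain how.

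The actual construction in \cite{BC} is substantially more delicate: it does not proceed by cutting a single cover but builds the tower in layers, using a careful ordering of the balls and a quantitative pigeonhole argument tied to the dimension to guarantee that at each stage some definite proportion of each orbit segment of length $\kappa_d m$ lands in the surviving set. If you want to reconstruct a proof, that is the mechanism you need to supply; the present sketch identifies the right tension (thin enough for disjointness, fat enough to capture every orbit) but does not resolve it.
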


\subsection{Towers avoiding certain sets}\label{s.wanderingorbit}

In constructing our sequence $(g_i)$ of tidy perturbations, we need to ensure that the
effects of the $(i+1)$st perturbations do not undo the effects of the $i$th perturbation.
In addition, we aim to produce large derivative without affecting unbounded distortion. For these reasons,
it is desirable to choose the towers in the tidy perturbations to avoid certain subsets in $M$.
It is not possible to choose a tower avoiding an arbitrary subset, but it turns out that
certain sets with a {\em wandering orbit property} can be avoided.  We now define this property.

\begin{defi} Let $N, J\geq 1$ be integers.  An {\em $(N,J)$-wandering cover} of a compact set $Z$ 
is a finite cover $\cU$ of $Z$ such that every $V\in \cU$ has $N$ iterates
$f^j(V), f^{j+1}(V), \ldots, f^{j+N-1}(V)$ with $j\in \{1,\dots, J\}$ that are disjoint from $Z$.
\end{defi}

\begin{defi} Let $f$ be a homeomorphism and $Z\subset M$ be a compact set.

$Z$ has the {\em $(N,J)$-wandering orbit property} if it has an $(N,J)$-wandering cover.

$Z$ has the {\em $N$-wandering orbit property} if it has the $(N,J)$-wandering orbit property for some $J\geq 1$.

$Z$ has the {\em wandering orbit property} it has the $N$-wandering orbit property, for every $N\geq 1$.
\end{defi}

The next lemma explains that topological towers can be constructed avoiding any compact
set with the wandering orbit property.

\begin{lemm}\label{l.wotower} Suppose that the periodic orbits of $f$ are
all hyperbolic.

For all $m_1, m_2 \geq 1$, and $\rho>0$,
if $Z$ is any compact set with the $m_1$-wandering orbit property, then there
exists an open set $U\subset M$ with the following properties:
\begin{enumerate}
\item The diameter of each connected component of $\overline U$ is less than $\rho$.
\item The iterates $\overline U, f(\overline U), \ldots, f^{m_1-1}(\overline{U})$
are disjoint from the set $Z$.
\item The iterates $\overline U, f(\overline U), \ldots, f^{m_1+m_2-1}(\overline{U})$
are pairwise disjoint.
\item There is a compact set $D\subset U$ such that every nonperiodic point $x\in M$ has an iterate in $D$.
\end{enumerate}
\end{lemm}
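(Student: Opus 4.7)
The plan is to apply Theorem~\ref{t.tower} to obtain an initial topological tower of generous height, then carve out the desired open set $U$ from it by time-shifting each connected component forward by a controlled, component-dependent amount dictated by the $(m_1,J)$-wandering cover of $Z$. The shifts are chosen so that the next $m_1$ iterates of each shifted component land in the ``wandering window'' of the cover and so avoid $Z$; taking the initial tower tall enough keeps all shifted iterates in distinct, disjoint layers of that tower, preserving pairwise disjointness over $m_1+m_2$ steps.

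I would first fix an $(m_1,J)$-wandering cover $\cU$ of $Z$, choosing for each $V\in\cU$ an integer $j_V\in\{1,\dots,J\}$ such that $f^{j_V}(V),\dots,f^{j_V+m_1-1}(V)$ are disjoint from $Z$. Since $Z$ is compact, $\cU\cup\{M\setminus Z\}$ is an open cover of $M$ and admits a Lebesgue number $\delta>0$. Next I invoke Theorem~\ref{t.tower} at height $m:=2m_1+m_2+J$ (permissible since every periodic orbit of $f$ is hyperbolic) to produce a tower $(U_0,D_0)$ whose components, by further shrinking if necessary, satisfy $\diam f^i(W)<\min(\delta,\rho)$ for every component $W$ of $U_0$ and every $i\in\{0,\dots,m_1+J-1\}$. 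This is possible by uniform continuity, as only finitely many iterates are involved.

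For each connected component $W$ of $U_0$ I would assign a shift $s(W)\in\{0,\dots,m_1+J-1\}$ as follows: if $W,f(W),\dots,f^{m_1-1}(W)$ all avoid $Z$, set $s(W)=0$; otherwise let $i(W)$ be the least index $i<m_1$ with $f^{i(W)}(W)\cap Z\neq\emptyset$, observe that $f^{i(W)}(W)$ has diameter less than $\delta$ and meets $Z$, so by the Lebesgue number property must lie in some $V\in\cU$, and set $s(W):=i(W)+j_V$. I then define
\[
U:=\bigcup_W f^{s(W)}(W),\qquad D:=\bigcup_W f^{s(W)}(D_0\cap W),
\]
with $W$ ranging over components of $U_0$. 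Condition (1) is immediate from the diameter choice. For (2), each iterate $f^{s(W)+k}(W)$ with $0\le k<m_1$ lies either in $M\setminus Z$ (case $s(W)=0$) or in $f^{j_V+k}(V)\subset M\setminus Z$. For (4), any non-periodic $x$ has an iterate in $D_0$ that belongs to some $D_0\cap W$, so some further iterate lies in $D\subset U$; the compactness of $D$ follows because $D_0\subset U_0$ forces $D_0$ to meet only finitely many components of $U_0$.

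The most delicate point, which I expect to be the main technical obstacle, is condition (3): pairwise disjointness of the closures $\overline U,f(\overline U),\dots,f^{m_1+m_2-1}(\overline U)$. For indices $k,k'\in\{0,\dots,m_1+m_2-1\}$ and components $W,W'$ of $U_0$, the sets $f^{k+s(W)}(\overline W)$ and $f^{k'+s(W')}(\overline{W'})$ sit in tower-layers $k+s(W),k'+s(W')\in\{0,\dots,m-1\}$. When the two layers differ, disjointness follows from Theorem~\ref{t.tower}; when they coincide, necessarily $W\neq W'$, and I would arrange (by perturbing $U_0$ slightly so that distinct components have disjoint closures, which is harmless for a tower with small components) that $f^{k+s(W)}(\overline W)$ and $f^{k+s(W)}(\overline{W'})$ remain disjoint via the bijection $f^{k+s(W)}$. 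The choice $m=2m_1+m_2+J$ is exactly what is needed to keep every layer index below $m$. Once this bookkeeping is in place, the four conditions are met.
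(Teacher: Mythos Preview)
Your argument is correct and follows essentially the same route as the paper's proof: invoke Theorem~\ref{t.tower} at height $2m_1+m_2+J$, then time-shift each component of the resulting tower forward by an amount read off from the $(m_1,J)$-wandering cover so that the next $m_1$ iterates avoid $Z$. The only notable difference is that the paper works with the connected components of $\overline{U_0}$ (which Theorem~\ref{t.tower} controls directly and which are automatically pairwise disjoint as closed sets) rather than those of $U_0$, thereby sidestepping the ``perturb so that distinct components have disjoint closures'' step you flag in verifying condition~(3).
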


This will be proved at the end of this subsection.

\begin{rema}

\begin{itemize}
\item If $Z$ has the $(N,J)$-wandering orbit property, then for every
$\rho>0$, there is an $(N,J)$-wandering cover all of whose elements
have diameter less than $\rho$. 
\item  If $Z$ has the $(N,J)$-wandering orbit property, then so does $f(Z)$.  
\item More generally, if $Z$ has the $(N,J)$-wandering orbit
property for $f$, and $g=\Phi f\Phi^{-1}$, then $\Phi(Z)$ has 
the $(N,J)$-wandering orbit property for $g$.
\end{itemize}
\end{rema}

\begin{lemm}\label{l.wosets}
\begin{enumerate}
\item\label{a} If $X$ is any compact set such that the iterates $X$, $f(X)$, \ldots, $f^{m+N}(X)$ are
pairwise disjoint, then $\bigcup_{i=0}^{m-1}f^{i}(X)$ has the $(N,m)$-wandering orbit property.
\item\label{b} If $x$ is any point whose period lies in $(N+1,\infty]$, then $\{x\}$ has the $(N,1)$-wandering orbit property.
\item\label{c} If $x$ is any nonperiodic point, then $\{x\}$ has the wandering orbit property.
\item\label{d} Any compact subset $Z\subset \cW^s(\orb(p))\setminus \orb(p)$ has the wandering orbit property, 
for any hyperbolic periodic point $p$.
\item\label{e} Any compact subset of the wandering set $M\setminus\Om(f)$ of $f$ has the wandering orbit property.
\end{enumerate}
\end{lemm}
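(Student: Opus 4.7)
\textbf{Proof plan for Lemma \ref{l.wosets}.} The plan is to treat each of the five parts separately, exhibiting in each case an explicit finite cover with the required disjointness property. Parts (a) and (b) are direct continuity arguments, (c) is an immediate consequence of (b), and (d) and (e) share a single argument based on $\omega$-limit sets.

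For (a), set $Y = \bigcup_{i=0}^{m-1} f^i(X)$ and, for each $i \in \{0, \ldots, m-1\}$, choose a small open neighborhood $V_i$ of $f^i(X)$. For $j = m - i \in \{1, \ldots, m\}$ and $k \in \{0, \ldots, N-1\}$, the image $f^{j+k}(V_i)$ lies near $f^{m+k}(X)$. Since by hypothesis the sets $X, f(X), \ldots, f^{m+N}(X)$ are pairwise disjoint compacts, each $f^{m+k}(X)$ with $0 \leq k \leq N-1$ is disjoint from $Y$; shrinking the $V_i$ we arrange that $f^{j+k}(V_i) \cap Y = \emptyset$. The resulting cover $\{V_0, \ldots, V_{m-1}\}$ is then an $(N, m)$-wandering cover. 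For (b), if the period of $x$ exceeds $N+1$, each of the points $f^{-1}(x), \ldots, f^{-N}(x)$ differs from $x$, so a sufficiently small open neighborhood $V$ of $x$ omits them, and $\{V\}$ is then an $(N, 1)$-wandering cover of $\{x\}$. For (c), a non-periodic point has ``period $\infty$'', so (b) applies for every $N$.

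For (d) and (e), the key observation is that for every $x \in Z$ the $\omega$-limit set $\omega(x)$ is disjoint from $Z$: in (e) because $\omega(x) \subset \Omega(f)$ while $Z \subset M \setminus \Omega(f)$; and in (d) because $x \in \cW^s(\orb(p))$ forces $\omega(x) \subset \orb(p)$, while the compact set $Z$ avoids the finite set $\orb(p)$. Since $Z$ is closed and $\omega(x)$ captures every forward-orbit accumulation point, for each $x \in Z$ there exists an integer $j_0(x) \geq 1$ such that $f^{j}(x) \notin Z$ for every $j \geq j_0(x)$. By continuity of the finitely many iterates $f^{j_0(x)}, \ldots, f^{j_0(x)+N-1}$, a small open neighborhood $V_x$ of $x$ then satisfies $f^{j_0(x)+k}(V_x) \cap Z = \emptyset$ for every $k = 0, \ldots, N-1$. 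Cover the compact set $Z$ by finitely many such $V_{x_1}, \ldots, V_{x_r}$, and set $J = \max_i j_0(x_i)$; this yields the desired $(N, J)$-wandering cover.

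The main obstacle is the passage from the pointwise ``escape time'' $j_0(x)$ to a uniform bound $J$ in parts (d) and (e); this is resolved via compactness of $Z$, once one has established the topological fact that $\omega(x) \cap Z = \emptyset$ for each $x \in Z$. The remaining steps are direct verifications of the definitions.
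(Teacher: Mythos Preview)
Your proof is correct. Parts (a)--(c) and (e) match the paper's treatment: the paper simply declares (a)--(c) ``easy'' and gives exactly your compactness-plus-finite-returns argument for (e). For (d) you take a slightly different route. The paper observes directly that since $Z$ is a compact subset of the stable manifold, there is a \emph{uniform} $m$ with $f^n(Z)$ contained in a neighborhood $\cN$ of $\orb(p)$ disjoint from $Z$ for all $n \geq m$; a single $j = m$ then works for every element of the cover. Your approach via $\omega$-limit sets instead produces a pointwise escape time $j_0(x)$ and extracts a uniform $J$ by compactness. This has the pleasant feature of unifying (d) and (e) under one argument, at the minor cost of not exploiting the stronger uniform convergence available in the stable-manifold case. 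Both routes are entirely valid.
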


\begin{demo} \ref{a}), \ref{b}) and \ref{c}) are easy.  To prove \ref{d}), we consider
$Z\subset W^s(\orb(p))\setminus \orb(p)$, for some hyperbolic periodic point $p$.  Let $\cN$ be a neighborhood of the orbit of $p$ that is disjoint from $Z$.  There exists $m>0$ such that
$f^n(Z)\subset \cN$ for any $n\geq m$. Given $N$, there is a covering $\cV$ of $Z$ such that, for every $V\in\cV$, 
the sets $f^m(V), \ldots, f^{m+N}(V)$ are all contained in $\cN$.  Then $\cV$ is an $(N,m)$-wandering cover of $Z$.

Finally we prove \ref{e}).
If $Z$ is a compact subset of $M\setminus \Om(f)$ then every point $z\in Z$ has finitely many returns in $Z$. So for every $N>0$ and every $x\in Z$ there is a neighborhood $U_x$ and an integer $j_x>0$ such that $f^{j_x}(U_x),\dots, f^{j_x+N-1}(U_x)$ are disjoint from $Z$. Extracting a finite cover $U_{x_i}$ and chosing $J=\max_i j_{x_i}$ we get an $(N,J)$-wandering cover. 
\end{demo}

\begin{lemm}\label{l.wounion} Suppose $Z$ has the $(N,J)$-wandering orbit property
and $W$ has the $(2N+J, K)$-wandering orbit property. Then
$Z\cup W$ has the $(N, K + 2J + 2N)$ wandering orbit property.

If $Z$ has the $N$-wandering orbit property and $W$ has the wandering orbit property, then $Z\cup W$ has
the $N$-wandering orbit property.

If $Z$ and $W$ both have the wandering orbit property, then $Z\cup W$ has
the wandering orbit property.
\end{lemm}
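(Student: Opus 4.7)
The plan is to prove the quantitative first statement; the other two follow as formal consequences. Since the wandering orbit property is defined via \emph{finite} covers, and $Z\cup W$ is compact, it suffices to produce, for every $x\in Z\cup W$, an open neighborhood $V_x$ together with an integer $j_x\in\{1,\ldots,K+2J+2N\}$ such that $f^{j_x}(V_x),\ldots,f^{j_x+N-1}(V_x)$ are all disjoint from $Z\cup W$; a finite subcover will then yield the desired $(N,K+2J+2N)$-wandering cover.

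The construction of $(V_x,j_x)$ uses the two given wandering properties in succession. In Phase I, if $x\in Z$, the $(N,J)$-wandering cover of $Z$ provides $j\in\{1,\ldots,J\}$ with $f^{j+k}(x)\notin Z$ for $k=0,\ldots,N-1$: either these iterates also miss $W$, in which case we stop with $j_x=j$, or there is a smallest $t_1\in[j,j+N-1]$ with $f^{t_1}(x)\in W$. If $x\in W$ we simply set $t_1=0$ and skip directly to Phase II. Phase II applies the $(2N+J,K)$-wandering cover of $W$ to the point $y:=f^{t_1}(x)\in W$, producing $j''\in\{1,\ldots,K\}$ such that the $2N+J$ consecutive iterates $f^{t_1+j''+\ell}(x)$, $\ell=0,\ldots,2N+J-1$, all avoid $W$. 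Within this window we now locate $N$ consecutive times that also avoid $Z$: either the first $N$ of them already work, or there is a smallest $\ell_0\in[0,N-1]$ with $f^{t_1+j''+\ell_0}(x)\in Z$, and applying the $(N,J)$-wandering cover of $Z$ at that point supplies a further shift $j^*\in\{1,\ldots,J\}$ such that $f^{t_1+j''+\ell_0+j^*+k}(x)\notin Z$ for $k=0,\ldots,N-1$. An index check shows that these $N$ times remain nested inside the $(2N+J)$-window in which $W$ is avoided (using $\ell_0+j^*\leq N-1+J<2N+J$), and that the largest index employed is at most $(J+N-1)+K+(N-1)+J=K+2J+2N-2$. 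Finally, since $Z\cup W$ is closed, continuity of the relevant iterates of $f$ produces a neighborhood $V_x$ of $x$ for which the iterates of the whole set $V_x$ (not merely of the point $x$) still avoid $Z\cup W$.

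The second statement follows by choosing $J$ so that $Z$ has the $(N,J)$-wandering orbit property, then $K$ so that $W$ has the $(2N+J,K)$-wandering orbit property (available because $W$ has the full wandering orbit property), and applying statement 1 to these data. The third statement is then obtained by applying the second for every $N\geq 1$.

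The main obstacle is not conceptual but combinatorial: one must verify that the total budget $K+2J+2N$ is enough to absorb up to three successive time-shifts (by $t_1$ to exit $Z$, by $j''$ to enter the $W$-free window, and possibly by $j^*$ to re-exit $Z$), while guaranteeing that the final length-$N$ window still lies inside the length-$(2N+J)$ window on which $W$ is already avoided. The specific constants in the statement are tuned to just accommodate this nesting: the slack $J+N$ inside the $W$-free window is precisely what allows the last $Z$-avoidance shift of size at most $J$ to be carried out without losing the $W$-avoidance already established.
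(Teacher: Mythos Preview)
Your proof is correct and follows essentially the same combinatorial scheme as the paper: the same three successive time-shifts (leave $Z$, enter a long $W$-free window, re-leave $Z$ if necessary) with the same index bookkeeping showing the final length-$N$ block remains nested in the $(2N+J)$-window and that the total shift is at most $K+2J+2N$. The only organizational difference is that the paper builds the wandering cover of $Z\cup W$ directly by first refining the given covers so that relevant iterates of cover elements are nested inside one another, whereas you work pointwise and invoke continuity plus compactness at the end to extract a finite cover; this is a cosmetic rearrangement of the same argument.
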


\begin{demo} The second two claims follow from the first. To prove the first,
fix a $(N,J)$-wandering cover $\cU_Z$ of $Z$ and a $(2N+J, K)$-wandering cover $\cV_W$ of $W$ with the property that, for
every $V \in \cV_W$ and every $i\in\{0,\ldots, N+K-1\}$, if
$f^i(V)$ intersects $Z$, then $f^i(V)$ is contained in an
element of $\cU_Z$.
Let $\cV_Z$  be an $(N,J)$-wandering cover of
$Z$ such that, for every $V \in \cV_Z$ and every
$i\in\{0,\ldots, N+J - 1\}$, if $f^i(V)$ intersects $W$, then
$f^i(V)$ is contained in an element of $\cV_W$.  We claim that $\cV_Z\cup \cV_W$ is an $(N, K + 2J + 2N)$-wandering
cover of $Z\cup W$: 

\begin{itemize}
\item Consider  first $V\in \cV_W$. As $\cV_W$ is a $(2N+J,K)$-wandering cover of $W$, there exists 
$k\in\{1,\dots, K\}$ such that 
$f^{k}(V),\dots, f^{k+2N+J-1}(V)$ are disjoint from $W$. If $f^{k}(V),\dots,f^{k+N-1}(V)$ are disjoint from $Z$, then
they are disjoint from $W\cup Z$ which is what we want. Otherwise, there exists $V'\in \cU_Z$ and $j\in\{1,\dots, N-1\}$ such that $f^{k+j}(V)\subset V'$. Now there exists $i\in\{1,\dots,J\}$ such that $f^i(V'),\dots, f^{i+N-1}(V')$ are disjoint from $Z$; let $j'=k+i+j\in \{1,\dots, K+J+N-1\}$.  Then $f^{k+i+j}(V),\dots, f^{k+i+j+N-1}(V)$ are disjoint from $Z\cup W$. 
\item Now consider $V\in\cV_Z$. As $\cV_Z$ is a $(N,J)$-wandering cover of $W$, there exists $j\in\{1,\dots, J\}$ 
such that 
$f^{j}(V),\dots, f^{j+N-1}(V)$ are disjoint from $Z$. If $f^{j}(V),\dots,f^{k+N-1}(V)$ are disjoint from $W$, then
they are also disjoint from $W\cup Z$, which is what we want. Otherwise, there exists $V'\in \cV_W$ and $i\in\{1,\dots, N-1\}$ such that $f^{i+j}(V)\subset V'$. By the previous case, we know there exists $k\in\{1,\dots,K+J=N-1\}$ such that the iterates $f^k(V'),\dots,f^{k+N-1}(V')$ are disjoint from $Z\cup W$; hence $j'=i+j+k\in\{1,\dots,K+2N+2J-1\}$, and the iterates $f^{j'}(V),\dots,f^{j'+N-1}(V)$ are disjoint from $Z\cup W$.
\end{itemize} 
\end{demo}

\begin{demo}[Proof of Lemma~\ref{l.wotower}]
Let $m_1, m_2 \geq 1$ and $\rho>0$ be given.  Let $Z$ be a compact set with the $m_1$-wandering property,
and let $\cU$ be an $(m_1,J)$-wandering cover of  $Z$.
Let $U_0$ be an open set given by Theorem~\ref{t.tower} with the following properties:
\begin{itemize}
\item for each connected component $O$ of $\overline{U_0}$ and each $j\in \{0,\ldots, m_1+J-1\}$, the diameter of
$f^j(O)$ is less than $\rho$;
\item for each component $O$ of $\overline{U_0}$, if $f^i(O)\cap Z\ne\emptyset$, for some
$i\in \{0,\ldots,m_1-1\}$, then $f^i(O)$ is contained in some element of $\cU$;
\item the iterates $\overline U_0, f(\overline U_0), \ldots, f^{2m_1 + m_2 + J - 1}(\overline U_0)$
are pairwise disjoint;
\item there is a compact set $D_0\subset U_0$ such that every nonperiodic point $x\in M$ has an iterate in $D_0$.
\end{itemize}
For each component $O$ of $\overline U_0$, either the first $m_1$ iterates $O, f(O), \ldots, f^{m_1-1}(O)$ 
are disjoint from $Z$, or one of these iterates $f^i(O)$ is contained in an element of $\cU$.
In the latter case, there exists a $j\in \{1,\ldots, m_1+J-1\}$ such that the iterates
$f^j(O),\ldots, f^{j+m_1-1}(O)$ are disjoint from $Z$.  
In either case, there exists
an iterate $f^j(O)$ of $O$, $j\in \{0,\ldots, m_1+J-1\}$ of diameter less than $\rho$, 
whose first $m_1$ iterates are disjoint from $Z$.
Selecting one such iterate  $f^j(O)$ for each component of $\overline{U_0}$
and taking the union of the $f^j(U_0\cap O)$ over all the components $O$,
we obtain our desired set $U$. The set $D$ is obtained by intersecting the first $2m_1+m_2 +J-1$ iterates of $D_0$ with the  components of $U$ and taking their union. One hence obtain the conclusions 1, 2 and 4.

The sets of the form $f^{j+k}(U_0\cap O)$, where the sets $f^j(U_0\cap O)$ define $U$
and $k\in \{0,\dots,m_1-1\}$, are pairwise disjoint, since $\overline{U_0}$ is disjoint from
its $2m_1+m_2+J-1$ first iterates. This gives conclusion 3 and ends the proof of the lemma.
\end{demo}

\subsection{Linearizing the germ of dynamics along a nonperiodic orbit}\label{ss.linearization}

In this subsection we present an elementary tool which will be used to construct perturbations of a diffeomorphism
$f$ using perturbations of linear cocycles.
\paragraph{Lift of $f$ by the exponential map.}
Let $M$ be a compact manifold endowed with a Riemannian metric. Denote by $\exp\colon TM\to M$ the exponential map associated to this metric, and by $R>0$ the radius of injectivity of the exponential map: for every $x\in M$ the exponential map at $x$ induces a diffeomorphism $\exp_x\colon B_x(0,R)\to B(x,R)$ where $B_x(0,R)\subset T_xM$ and $B(x,R)$ are the balls of radius $R$ centered at $0_x\in T_xM$ and at $x$, respectively. It is important for our purposes to notice that the derivative $D_x\exp_x$ can be identified with the identity map of $T_xM$. 

Let $f$ be a diffeomorphism and let $C>1$ be a bound for $\|Df\|$ and for $\|Df^{-1}\|$. 
For each vector $u\in T_xM$ with  $\|u\|< R$ and $d(f(x),f(\exp_x(u)))< R$,  we define
$$\tilde f_x(u)=\exp_{f(x)}^{-1}(f(\exp_x(u))).$$
This formula defines a diffeomorphism $\tilde f\colon U\to V$ where
$$U=\{(x,u)\in TM \mid \|u\|<R\quad\hbox{ and }\quad d(f(x),f(\exp_x(u)))<R\}$$
$$V=\{(x,u)\in TM \mid  \|u\|<R\quad\hbox{ and }\quad  d(f^{-1}(x),f^{-1}(\exp_x(u)))<R\}.$$
The sets $U$ and $V$ are neighborhoods of the zero-section of the tangent bundle, both containing the set of vectors whose norm is bounded by $C^{-1}R$.  We denote by $\tilde f_x\colon U_x\to V_{f(x)}$ the induced diffeomorphism on $U_x=U\cap T_xM$.

For every $n\in\ZZ$ we let $U^n$ be the domain of definition of $\tilde f^n$, we let $V_n=\tilde f^n(U_n)$ and  we let $\tilde f^n_x \colon U_{x,n}\to V_{f^n(x),n}$ be the induced map on the fibers.  

\paragraph{Linearizing coordinates of $\tilde f$ along an orbit.}

The map $\tilde f^n$ defines a germ of the dynamics in a neighborhood of the zero section. The differential $Df$ is also a homeomorphism of $TM$ that induces a continuous family of linear diffeomorphisms on the fibers. 

For $n\geq 0$, we let $\psi_n = Df^n\circ \tilde f^{-n}\colon V_n\to TM$ and $\psi_{x,n}\colon V_{x,n}\to T_xM$ be
the induced maps. We have the following properties:
\begin{itemize}
\item For every integer $n$,  the family $\{\psi_{x,n}\}_{x\in M}$ is a compact family of $C^1$ diffeomorphisms depending continuously (in the $C^1$-topology) on $x$. Indeed, $\{\tilde{f}^{-n}_x\}_{x\in M} $ and $\{Df^n_{x}\}_{x\in M} $ are uniformly continuous families of diffeomorphisms. 
\item The derivative of the diffeomorphism $\psi_{x,n}$ at the zero vector $0_x$ is the identity map:
$$D_{0_x}(\psi_{x,n})=Id_{T_xM}.$$

\item For every $x\in M$ the family of diffeomorphisms $\{\psi_{f^n(x),n}\}_{n\in \NN}$ are linearizing coordinates along the orbit of $x$; more precisely, we have the following commutative diagram:
\scriptsize{
$$
\begin{array}{ccccccccccc}
\cdots& \stackrel{\tilde f}{\longrightarrow}&T_{f^{-1}(x)}M& \stackrel{\tilde f}{\longrightarrow}&T_xM& \stackrel{\tilde f}{\longrightarrow}& \dots&\stackrel{\tilde f}{\longrightarrow}&T_{f^n(x)}M\\
      &                                      &\quad\downarrow \id& 
      &\quad\quad \downarrow \psi_{f(x),1}&                      & && \quad\quad\downarrow \psi_{f^{n}(x),n}\\     
\cdots& \stackrel{D f}{\longrightarrow}&T_{f^{-1}(x)}M& \stackrel{D f}{\longrightarrow}&T_xM& \stackrel{Df}{\longrightarrow}&\dots&\stackrel{Df}{\longrightarrow}&T_{f^n(x)}M
\end{array}
.$$
}
\end{itemize}

\paragraph{Perturbations of the linearized dynamics.}

Given a point $x\in M$ we consider the derivative of $f$ along the orbit $\orb_f(x)$ of $x$ as being a diffeomorphism of the (non-compact, non-connected) manifold  $TM|_{\orb_f(x)}$ endowed with the Euclidean metric on the fibers. Now we will consider perturbations of $Df$ in $\diff^1(TM|_{\orb_f(x)})$ in the corresponding $C^1$-topology.

\begin{lemm}\label{l.linearize} Let $f$ be a diffeomorphism of a compact manifold. For every pair $0<\tilde \varepsilon<\varepsilon$ and every $n>0$ there exists $\rho>0$ with the following property. 

Consider an open set $U$ whose iterates $f^i(U)$, $i\in\{0,\dots,n\}$, are pairwise disjoint. Fix a point $x\in M$  and assume that $U$ is contained in $B(x,\rho)$;  let $\tilde U=\exp_x^{-1}(U)$. Then we have the following.

\begin{enumerate}
\item For every $i\in\{0,\dots, n\}$, the map $\Psi_{f^i(x),i}=\psi_{f^i(x),i}\circ\exp_{f^i(x)}^{-1}$ induces a diffeomorphism from $f^i(U)$ onto $Df^i(\tilde U)$ such that $$\max\{\|D\Psi_{f^i(x),i}\|, \|D\Psi_{f^i(x),i}^{-1}\|,
|\det D\Psi_{f^i(x),i}|, |\det D\Psi_{f^i(x),i}^{-1}|\}<2.$$
\item For every perturbation $\tilde g$ of $Df$ with support in $\bigcup_{i=0}^{n-1} Df^i(\tilde U)$
such that $d_{C^1}(\tilde g,Df)<\tilde \varepsilon$, let $g$ be the map  defined by 
\begin{itemize}
\item $g(y)=f(y)$ if $y\notin \bigcup_{i=0}^{n-1} f^i(U)$
\item $g(y)= \Psi_{f^{i+1}(x),i+1}^{-1}\circ \tilde g \circ \Psi_{f^i(x),i}(y)$
if $y\in f^i(U)$, $i\in \{0,\dots, n-1\}$.
\end{itemize}
Then the map $g\colon M\to M$ is a diffeomorphism. It is a perturbation of $f$ with support in $\bigcup_{i=0}^{n-1} f^i(U)$ such that $d_{C^1}(g,f)<\varepsilon$.
\item If furthermore $\tilde g$ is a tidy perturbation of $Df$, then $g$ is a tidy perturbation of $f$. 
\end{enumerate}
\end{lemm}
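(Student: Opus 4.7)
The plan is to exploit that $D_{0_y}\psi_{y,i}$ and $D_y\exp_y^{-1}$ both equal the identity, so by uniform continuity of the families $\{\psi_{y,i}\}_{y\in M}$ in the $C^1$ topology, the maps $\Psi_{y,i}$ are arbitrarily $C^1$-close to the identity when restricted to a small enough ball around $y$. The key structural identity is
$$\Psi_{f^i(x),i}(f^i(z)) = Df^i(\exp_x^{-1}(z)) \quad\text{for } z\in U,$$
which is proved by iterating $\exp_{f(x)}^{-1}\circ f = \tilde f_x\circ \exp_x^{-1}$ to get $\exp_{f^i(x)}^{-1}\circ f^i = \tilde f^i_x\circ \exp_x^{-1}$, and then applying $\psi_{f^i(x),i} = Df^i\circ \tilde f^{-i}$. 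This identity shows that $\Psi_{f^i(x),i}$ maps $f^i(U)$ bijectively onto $Df^i(\tilde U)$, and, for $i\in\{0,\ldots,n-1\}$, delivers the intertwining relation $Df\circ \Psi_{f^i(x),i} = \Psi_{f^{i+1}(x),i+1}\circ f$ on $f^i(U)$.

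For item~(1), given a small $\eta>0$, the uniform $C^1$-continuity of $\{\psi_{y,i}\}_{y\in M}$ near the zero section produces a $\delta>0$ such that $\|D\Psi_{y,i} - \id\| < \eta$ on $B(y,\delta)$, uniformly in $y\in M$ and $i\in\{0,\ldots,n\}$. Choosing $\rho < \delta/C^n$, where $C$ bounds $\|Df^{\pm 1}\|$, forces $f^i(U)\subset B(f^i(x),\delta)$ for $i\leq n$, and taking $\eta$ sufficiently small yields the norm and Jacobian bounds by $2$ (in fact, by any constant $>1$).

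For item~(2), the gluing is consistent because the hypothesis that the support of $\tilde g$ lies in the open set $\bigcup Df^i(\tilde U)$ makes $\tilde g = Df$ on a neighborhood of $\partial Df^i(\tilde U)$, and there the intertwining relation gives $\Psi_{f^{i+1}(x),i+1}^{-1}\circ\tilde g\circ\Psi_{f^i(x),i} = f$, matching the outer definition $g=f$. The same intertwining shows $f = \Psi_{f^{i+1}(x),i+1}^{-1}\circ Df\circ\Psi_{f^i(x),i}$ on $f^i(U)$, whence
$$Dg - Df = D\Psi_{f^{i+1}(x),i+1}^{-1}\cdot(D\tilde g - Df)\cdot D\Psi_{f^i(x),i}$$
on $f^i(U)$, so $\|Dg-Df\|\leq (1+\eta)^2\tilde\varepsilon$; the $C^0$-distance is estimated in the same way. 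Item~(3) then follows by induction: the conjugation formula iterates to $g^j(y) = \Psi_{f^j(x),j}^{-1}(\tilde g^j(\exp_x^{-1}(y)))$ for $y\in U$, and if $\tilde g$ is a tidy perturbation then $\tilde g^n = Df^n$ on $\tilde U$, so $g^n(y) = f^n(y)$; together with $g=f$ outside $\bigcup f^i(U)$, this is exactly the characterization of tidy perturbations from the preceding lemma.

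The main obstacle is the quantitative step in item~(2): the bound from item~(1) alone could in principle be as weak as~$2$, so the naive estimate would give $\|Dg-Df\|\leq 4\tilde\varepsilon$, which need not be below $\varepsilon$ when $\tilde\varepsilon$ is close to $\varepsilon$. The fix is that the threshold $\eta$ in item~(1) is at our disposal, becoming arbitrarily small by shrinking $\rho$; one therefore first selects $\eta < \sqrt{\varepsilon/\tilde\varepsilon} - 1$ (which is positive since $\tilde\varepsilon<\varepsilon$) and only then chooses the corresponding $\rho$.
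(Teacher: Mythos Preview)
Your approach is the paper's approach: exploit that $D_{0_y}\psi_{y,i}=\id$ and $D_y\exp_y^{-1}=\id$, together with compactness of the family $\{\psi_{y,i}\}_{y\in M,\,0\le i\le n}$, to make $\Psi_{f^i(x),i}$ arbitrarily $C^1$-close to the identity on small balls, and then transfer the perturbation via the intertwining $\Psi_{f^{i+1}(x),i+1}\circ f = Df\circ\Psi_{f^i(x),i}$. The paper's proof is in fact only the list of these three facts plus the words ``simple calculation''; you have filled in the structural identity, the gluing argument, and the tidy case (item~3), all correctly.

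There is one technical slip in your derivative estimate. The displayed identity
\[
Dg - Df \;=\; D\Psi_{f^{i+1}(x),i+1}^{-1}\cdot(D\tilde g - Df)\cdot D\Psi_{f^i(x),i}
\]
is not literally true: by the chain rule, the factor $D\Psi_{f^{i+1}(x),i+1}^{-1}$ in $Dg(y)$ is evaluated at $\tilde g\bigl(\Psi_{f^i(x),i}(y)\bigr)$, whereas in $Df(y)$ it is evaluated at $Df\bigl(\Psi_{f^i(x),i}(y)\bigr)$, and $\Psi_{i+1}^{-1}$ is not affine. The correct decomposition carries an extra cross-term
\[
\Bigl(D\Psi_{i+1}^{-1}\big|_{\tilde g(\Psi_i y)} - D\Psi_{i+1}^{-1}\big|_{Df(\Psi_i y)}\Bigr)\cdot D\tilde g\cdot D\Psi_i,
\]
which your bound $(1+\eta)^2\tilde\varepsilon$ does not account for. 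The fix uses nothing beyond what you already invoke. Since $\tilde g - Df$ vanishes outside $Df^i(\tilde U)$, which has diameter at most $2C^i\rho$, the mean value inequality (applied to $\tilde g - Df$, whose derivative has norm $<\tilde\varepsilon$) gives $\lvert\tilde g(v)-Df(v)\rvert\le 2\tilde\varepsilon C^n\rho$ on the support. Hence the two evaluation points are within $2\tilde\varepsilon C^n\rho$ of each other, and uniform continuity of $D\Psi_{i+1}^{-1}$ (again from compactness of the family) makes the cross-term smaller than any prescribed amount once $\rho$ is small. After this correction your choice of $\eta$ and the rest of the argument go through unchanged.
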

\begin{demo} The unique nontrivial statement in this lemma is the fact that
$g$ is an $\eps$ perturbation of $f$ if $\tilde g$ is an $\tilde\eps$ perturbation of $Df$.
The proof of this is a simple calculation using the facts that:
\begin{itemize}
\item  the derivative $D_{0_x}(\exp_x)$ is an isometry of $T_xM$, 
\item the derivative $D_{0_{x}}\psi_{x,i}$ is the identity for all $x$ and $i$,
\item the family $\{\psi_{x,i}, x\in M, 0\leq i\leq n\}$ induces a compact family of $C^1$-diffeomorphisms on the balls  $B(0_x,\frac R{C^n})$ having $0_x$ as a fixed point.
\end{itemize}
\end{demo}
\section{(LD) property: reduction to a perturbation result in towers}\label{s.largederiv}
In this section, we show how to reduce the proof of Theorem B to
the following perturbative result. Its proof is deferred to Section~\ref{s.perturbation}.

\begin{theoB'}[Large derivative by perturbation]
For any $d\geq 1$ and any $C,K,\ve>0$, there
exists an integer $n_0=n_0(C,K,\ve)\geq 1$ with the
following property. Consider any diffeomorphism $f$ of a
$d$-dimensional manifold $M$ with $\|Df\|,\|Df^{-1}\|<C$ and any
$n\geq n_0$. Let $N=2^{d+1}n$. Then there exists
$\rho_0=\rho_0(n,f,K,\ve)>0$ such that,
\begin{itemize}
\item for any open set $U$ with diameter $<\rho_0$ and where the iterates
$\overline{U}$, $f(\overline{U})$,\dots, $f^{N-1}(\overline{U})$ are pairwise disjoint,
\item for any compact set $\Delta\subset U$,
\end{itemize}
there exists $g\in \diff^1(M)$ such that
\begin{itemize}
\item the $C^1$ distance from $f$ to $g$ is less than $\ve$;
\item $g$ is a tidy perturbation of $f$ with support in $U\cup f(U)\cup\cdots\cup f^{N-1}(U)$;
\item for any $x\in \Delta$ there exists $j\in \{0,\dots, N-n\}$ such that
$$\max\{\|Dg^n(g^j(x))\|,\|Dg^{-n}(g^{j+n}(x))\|\}>K.$$
\end{itemize}
\end{theoB'}

\subsection{Our shopping list for the proof of Theorem B}\label{ss.shopping}
Let $M$ be a compact manifold with dimension $d$.
Consider a diffeomorphism $f$ whose periodic orbits are hyperbolic,
a constant $\ve>0$, a sequence $(K_i)$ that tends to $+\infty$ (for instance $K_i=i$)
and a constant $C>0$ that bounds the norms of $Dg$ and $Dg^{-1}$, for any diffeomorphism
$g$ that is $\ve$-close to $f$ in the $C^1$-distance.

Our perturbation $g$ of $f$ will emerge as the limit of a sequence of 
perturbations $g_0=f, g_1, g_2, \ldots$.
Each perturbation $g_i$ will satisfy a large derivative property for an interval of times $[n_{i}, n_{i+1}-1]$. 
The objects involved at step $i$ in the construction are:
\begin{itemize}
\item $g_{i}$, the perturbation, and $\ve_i$, the $C^1$ distance from $g_i$ to $g_{i-1}$.
\item $[n_i,n_{i+1}-1]$, the interval of time for which prescribed large derivative for $g_i$ occur, and
$K_i$, the magnitude of large derivative created for these iterates.
\item $U_{i}$, an open set such that the iterates $g_{i-1}^k(\overline{U_i})$, $k\in \{0,\dots,m_i\}$
are pairwise disjoint, where $m_i=2^{d+1}(n_i+n_1+1+\dots+n_{i+1}-1)$.
The perturbation $g_i$ of $g_{i-1}$ is supported in the union of these iterates
and is tidy with respect to their union. $\Phi_i$ is the $C^1$ conjugacy between $g_i$ and $f$ such that
$\Phi_i\circ \Phi_{i-1}^{-1}$ is supported in $\cup_{k=1}^{m_i-1}g_{i-1}^k(U_i)$.
The integer $M_i=\prod_{k=1}^iC^{m_k}$ bounds the $C^1$ norm of the conjugacy $\Phi_i$.
\item $\rho_{i}>0$, an upper bound on the diameter of the first $m_i$ iterates of $U_{i}$ under $g_{i-1}$.
\item $D_{i}$, a compact set contained in $U_i$ that meets every aperiodic $g_{i-1}$-orbit
(and hence also every aperiodic orbit of the tidy perturbation $g_i$).
\item $\De_{i}$, a compact set with $D_i\subset \hbox{int} \De_i \subset \De_i \subset U_i$.
Large derivative will occur for $g_i$ in the  first $m_i$ iterates of $\De_{i}$.
\item $Z_i$, a compact set, $x_i$, a point, and $N_i$, an iterate,
such that property (UD) holds for $f$ between $x_i$ and points of $Z_i$ in time less than $N_i$.
\end{itemize}

Some objects in this construction are fixed prior to the iterative construction of the 
perturbations, and others are chosen concurrently with the perturbations.  Thus
we have two types of choices, a priori and inductive, and they are chosen in roughly the following
order:
\begin{itemize}
\item {\em a priori choices:} $\ve_i, n_i, Z_i, x_i, N_i$;
\item {\em inductive choices:} $\rho_{i}$, $U_{i}$, $D_{i}$, $\De_{i}$, $g_{i}$. 
\end{itemize}

\subsection{A priori choices}

\subsubsection{Choice of $\ve_i$, $n_i$, $m_i$, $M_i$}\label{s.ven}

We first describe how to select inductively the positive numbers $\varepsilon_i$ and the integers $n_i$.
We initially choose $\ve_1 < \ve/2$ and $n_1 = n_0(C, 2K_1, \ve_1)$, according to Theorem B'.  Now given $(\ve_i, n_i)$, we let
$$\ve_{i+1} = \min\{\ve_i/2, \frac{K_{i-1}}{4 n_i C^{n_i -1}}\}
\quad \text{ and } \quad n_{i+1} = n_0(C, 2K_{i+1}, \ve_{i+1}).$$
With these definitions we have for any $\ell\geq 1$,
$$\sum_{k\geq \ell} \ve_k < 2 \ve_\ell<\varepsilon.$$
From this it follows that the sequence of perturbations $(g_i)_{i\in \NN}$ such that
$d_{C^1}(g_{i-1},g_i) < \ve_i$ and $g_0 = f$ will have a limit $g_i\to g$ as $i\to\infty$ that is
a diffeomorphism satisfying $d_{C^1}(f,g)<\ve$.
The bound for $\ve_{i}$ is justified by the following lemma which allows us to pass to $g$
the Large Derivative Property satisfied by $g_i$.

\begin{lemm}\label{l.chooseve}
Suppose that $\ve_{i+1} < \frac{K_{i-1}}{4 n_i C^{n_i -1}}$.
Then, for any $n\leq n_i$ and any sequences of matrices $A_1, \ldots, A_{n}$ and $B_1,\ldots, B_{n}$ satisfying:
\begin{itemize}
\item  $\|A_k\|, \|A_k^{-1}\|,\|B_k\|, \|B_k^{-1}\|<C $ for $k\in\{1,\ldots, n\}$,
\item $\|A_k -B_k \|, \|A_k^{-1} -B_k^{-1} \| < 4\ve_{i+1}$ for $k\in\{1,\ldots, n\}$,
\item $\max\{\|A_{n}\cdots A_1\|, \|(A_{n}\cdots A_1)^{-1}\| \} > 2K_{i-1},$
\end{itemize}
we have:
$$\max\{\|B_{n}\cdots B_1\|, \|(B_{n}\cdots B_1)^{-1}\| \} > K_{i-1}.$$
\end{lemm}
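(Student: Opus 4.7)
The plan is to prove this by the standard telescoping estimate for products of matrices. The key identity I will use is
\[
A_n \cdots A_1 - B_n \cdots B_1 \;=\; \sum_{k=1}^{n} B_n \cdots B_{k+1}\,(A_k - B_k)\, A_{k-1}\cdots A_1,
\]
where the $k=n$ term is understood as $(A_n - B_n)A_{n-1}\cdots A_1$ and the $k=1$ term as $B_n\cdots B_2 (A_1 - B_1)$. This identity follows by induction on $n$ (or by a direct check: expand and telescope).

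Taking norms and using the hypotheses $\|A_j\|,\|B_j\| < C$ and $\|A_k - B_k\| < 4\ve_{i+1}$, each summand is bounded by $C^{n-k} \cdot 4\ve_{i+1} \cdot C^{k-1} = 4\ve_{i+1} C^{n-1}$. Since $n \leq n_i$ and $C \geq 1$, this yields
\[
\|A_n \cdots A_1 - B_n \cdots B_1\| \;\leq\; 4 n\,\ve_{i+1}\,C^{n-1} \;\leq\; 4 n_i\,\ve_{i+1}\,C^{n_i - 1} \;<\; K_{i-1},
\]
by the assumed bound on $\ve_{i+1}$. Hence if $\|A_n\cdots A_1\| > 2K_{i-1}$, the triangle inequality gives $\|B_n \cdots B_1\| > K_{i-1}$.

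For the inverse case, I observe that $(A_n\cdots A_1)^{-1} = A_1^{-1}\cdots A_n^{-1}$, and similarly for $B$. The matrices $A_k^{-1}$ and $B_k^{-1}$ satisfy exactly the same hypotheses (norm bounded by $C$, pairwise $4\ve_{i+1}$-close), so applying the same telescoping argument to the reversed sequence yields $\|(B_n\cdots B_1)^{-1} - (A_n\cdots A_1)^{-1}\| < K_{i-1}$, and the conclusion follows. There is no real obstacle here; the only point to keep in mind is to use $C \geq 1$ (which holds since $C$ bounds both a norm and the norm of its inverse) when passing from $C^{n-1}$ to $C^{n_i - 1}$, and to treat the direct and inverse cases on equal footing via the observation that the hypotheses are symmetric in $A_k \leftrightarrow A_k^{-1}$.
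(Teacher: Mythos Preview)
Your proof is correct and follows essentially the same approach as the paper: a telescoping identity for the difference of matrix products, bounded term-by-term using $\|A_k\|,\|B_k\|<C$ and $\|A_k-B_k\|<4\ve_{i+1}$. The only cosmetic differences are that the paper telescopes with the $A$'s on the left and $B$'s on the right (you do the reverse), and the paper dispatches the inverse case by a ``without loss of generality'' while you spell it out via the symmetry $A_k\leftrightarrow A_k^{-1}$.
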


\begin{demo}
Without loss of generality we may assume that $\|A_n\dots A_1\|>2K_{i-1}$.
We have:
\begin{eqnarray*} 
 \|  A_{n}\cdots A_1\| - \|B_{n}\cdots B_1\|
 &\leq & \sum_{j=1}^{n} \|A_{n}\cdots A_{j+1}\cdot(A_j-B_j)\cdot B_{j-1}\cdots B_1\| \\
 &\leq & 4 n \ve_{i+1} C^{n-1}\leq 4n_i\varepsilon_{i+1}C^{n_i-1} \\
 &<& K_{i-1} \mbox{, by our choice of $\varepsilon_i$.}
\end{eqnarray*}
We obtain the desired conclusion: $\|B_{n_i}\cdots B_1\|> \|A_{n_i}\cdots A_1\| - K_{i-1} > K_{i-1}$.
\end{demo}

\subsubsection{Choice of $x_i$, $Z_i$ and $N_i$}\label{s.xZN}

Assume $f$ has the unbounded distortion
property  on the wandering set (UD$^{M\setminus\Omega}$) and on the stable manifolds (UD$^s$).
In this section we choose a countable  family of pairs $(x_i,Z_i)$ where $Z_i$ is a compact set disjoint from the orbit of $x_i$ which will be used for testing the (UD$^{M\setminus\Omega}$) and the (UD$^s$)-properties of the perturbations $g_i$ we will construct.

By definition of the (UD$^{M\setminus\Omega}$)-property there
exists a countable and dense subset $\cX^{M\setminus\Om}\subset M\setminus \Omega(f)$ such that,
for any $K>0$, any $x\in \cX^{M\setminus \Omega}$ and any $y\in M\setminus \Omega(f)$ 
not in the orbit of $x$,  there exists $n\geq 1$ such that:
$$|\log |\det Df^n(x)|-\log|\det Df^n(y)||>K.$$
For every $x\in \cX^{M\setminus\Om}$
we choose a countable set $\cZ_x$ of compact subsets of $M\setminus\Om_f$ disjoint from the orbit of $x$ and covering the complement in $M\setminus\Om$ of the orbit of $x$.
We define  $$\cZ^{M\setminus\Om}=\{(x,Z) | x\in \cX^{M\setminus\Om}, Z\in \cZ_x\}.$$

By definition of  the (UD$^s$)-property, for any hyperbolic
periodic orbit $\cO$, there exists a dense countable subset $\cX^\cO\subset
W^s(\cO)\setminus \cO$ such that, for any $K>0$, any $x\in \cX^\cO$ and any $y\in W^s(\cO)$
not in the orbit of $x$, there exists $n\geq 1$ such that:
$$|\log |\Det Df_{|W^s(\cO)}^n(x)|-\log|\Det Df_{|W^s(\cO)}^n(y)||>K.$$
For every $x\in \cX^{\cO}$
we choose a countable set $\cZ_x$ of compact subsets of $W^s(\cO)$ disjoint from the orbit of $x$ and covering the complement in $W^s(\cO)$ of the orbit of $x$.
We define  $$\cZ^{\cO}=\{(x,Z) | x\in \cX^{\cO}, Z\in \cZ_x\}.$$
 
We set
$$\cZ^s=\bigcup_{\cO\in Per(f)} \cZ^{\cO},\mbox{ and } \cZ=\cZ^{M\setminus \Om}\sqcup\cZ^s \mbox{ the disjoint union of }\cZ^{M\setminus \Om}\mbox{ and }\cZ^s .$$
Since, by hypothesis, the periodic orbits of $f$ are all hyperbolic, the set of periodic orbits is countable.
Consequently $\cZ$ is a countable set.

For each pair $(x,Z)\in \cZ$, we will need to verify that our perturbations preserve countably 
many conditions of the form 
$$ |\log |\Det Dg_i^{n}(x)|-\log|\Det Dg_i^{n}(y)||> L_i, \quad \hbox{ for every } y\in \Phi_i(Z),$$
where $L_i$ is a sequence of natural numbers tending to $\infty$.  To this
end, we fix an enumeration $\{(x_i, Z_i)\}_{i\in \NN}$ of the disjoint union $\bigsqcup_{n\in\NN} \cZ$;
each pair $(x,Z)\in \cZ$ appears infinitely many times as a pair $(x_{i_k}, Z_{i_k})$ in this choice of indexing.

We now fix a sequence of integers $N_i$ as follows:
\begin{itemize}
\item If $(x_i,Z_i)\in \cZ^{M\setminus\Om}$ then we fix $N_i$ such that for every $y\in Z_i$ there is $n\in\{1,\dots, N_i\}$ with
$$|\log |\Det Df^{n}(x_i)|-\log|\Det Df^{n}(y)||> K_i+4d\log M_i.$$

\item If $(x_i,Z_i)\in \cZ^{\cO}$ for some periodic orbit $\cO$ then we fix $N_i$ such that for every $y\in Z_i$ there exists $n\in\{1,\dots, N_i\}$ with
$$|\log |\Det Df_{|W^s(\cO)}^{n}(x_i)|-\log|\Det Df_{|W^s(\cO)}^{n}(y)||> K_i+4d\log M_i.$$
\end{itemize}

The $M_i$ appears in these expressions because $i$ will index the $i$th perturbation in our construction, and
the effects of the previous perturbations on the (UD) property will be taken into account.  The number $M_i^2$ bounds the effect of conjugacy by $\Phi_i$ on the derivative.

We set $Y_{i,0}=\bigcup_{k=0}^{N_i} f^k(Z_i\cup \{x_i\})$. Lemma~\ref{l.wosets} implies that the sets $Y_{i,0}$ have the wandering orbit property for $f$.

\subsection{Inductive hypotheses implying Theorem B}
We now describe conditions on the inductively-chosen objects that must satisfied and explain how they imply the
conclusion of Theorem B.
In a later subsection we explain how these choices can be made so that the inductive conditions are satisfied. 

\subsubsection{Conditions on $\rho_i$ so that $(\Phi_i)$ converges}

According to Lemma~\ref{l=conjugacy}, the diffeomorphisms $\Phi_i$
(conjugating $g_i$ to $f$) converge uniformly to a homeomorphism $\Phi$ (conjugating $g$ to $f$)
provided the following conditions hold.
\begin{equation}\label{e.rho}
\rho_i<2^{-1}\min\{\rho_{i-1},M_{i-1}^{-1}\}.
\end{equation}

\subsubsection{Conditions on $\rho_i$ preserving prolonged visits to towers}

Our perturbation at step $i$ will produce prescribed 
large derivative for $g$ for orbits visiting the compact set $\De_i$.  
In order to verify that $g$ has the (LD) property, we will need to know that every nonperiodic orbit for $g$
visits each set $\De_i$, which imposes the following condition on the sequence $(\rho_i)$:

\begin{equation}\label{e.rhovisits}
\rho_{i+1} < 2^{-1} \inf_{k\in\{0,\ldots, m_i\}} d(g_i^k(D_i), M\setminus g_i^k(\De_i)).
\end{equation}

The (LD) property on $g$ will be proved in the next subsection by using the following lemma.
\begin{lemm}\label{l.rhodelta} Assume furthermore that condition (\ref{e.rhovisits}) holds.
Then for every $i$ and any $y\in M\setminus \Per(g)$, there exists an integer $t$ such that
$$g^t(y)\in \De_i\quad\hbox{ and }\quad g_i^t(x)\in D_i,$$
where $x = \Phi_i\circ\Phi^{-1}(y)$.
For all $k\in\{0,\ldots, m_i\}$, we also have $g^{t+k}(y) \in g_i^k(\De_i)$.
\end{lemm}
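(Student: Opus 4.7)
The plan is to transfer the visit information from the known orbit behavior of $g_i$ (for which $D_i$ meets every aperiodic orbit) to the orbit behavior of $g$, using only that the conjugacy $\Phi_i$ is $C^0$-close to the limit conjugacy $\Phi$. The proof is short; the main bookkeeping task is to chain the relevant conjugacy relations.

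First, I would observe that $\Psi_i := \Phi_i \circ \Phi^{-1}$ conjugates $g$ to $g_i$: from $g = \Phi f \Phi^{-1}$ and $g_i = \Phi_i f \Phi_i^{-1}$ one gets $g_i \circ \Psi_i = \Psi_i \circ g$. Hence for $y \in M \setminus \Per(g)$, the point $x = \Psi_i(y)$ is aperiodic for $g_i$. By the defining property of $D_i$ (it meets every aperiodic $g_i$-orbit), there exists an integer $t$ with $g_i^t(x) \in D_i$. This produces the candidate time $t$ appearing in the statement, and immediately gives the second half of the first conclusion.

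Next, I would estimate $d(g^{t+k}(y),\, g_i^{t+k}(x))$ for every $k \in \{0,\dots, m_i\}$. Setting $z = \Phi^{-1}(y) = \Phi_i^{-1}(x)$, the conjugacies give $g^{t+k}(y) = \Phi(f^{t+k}(z))$ and $g_i^{t+k}(x) = \Phi_i(f^{t+k}(z))$, so this distance is at most $d_{unif}(\Phi, \Phi_i)$. Using the change of variables $w = \Phi_i(\,\cdot\,)$ I rewrite $d_{unif}(\Phi,\Phi_i) = d_{unif}(\Phi \circ \Phi_i^{-1},\, \id)$, and Lemma~\ref{l=conjugacy}(2), passing to the limit $j \to \infty$, bounds the latter by $\sum_{k>i}\rho_k$. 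Condition (\ref{e.rho}) forces $\rho_{i+1+j} \leq 2^{-j}\rho_{i+1}$, so $\sum_{k>i}\rho_k \leq 2\rho_{i+1}$.

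Finally, I would invoke condition (\ref{e.rhovisits}), which gives $2\rho_{i+1} < d(g_i^k(D_i),\, M\setminus g_i^k(\Delta_i))$ for each $k \in \{0,\dots,m_i\}$. Since $g_i^{t+k}(x) = g_i^k(g_i^t(x)) \in g_i^k(D_i)$ and $g^{t+k}(y)$ lies strictly within this distance of it, I conclude $g^{t+k}(y) \in g_i^k(\Delta_i)$. Taking $k = 0$ yields $g^t(y) \in \Delta_i$ together with $g_i^t(x) \in D_i$, and the case of general $k$ gives the last assertion. There is no real obstacle: once the conjugacy identity $g^{t+k} = \Phi \circ f^{t+k} \circ \Phi^{-1}$ and its analogue for $g_i$ are in place, the argument is just the triangle-inequality estimate above combined with the geometric series produced by (\ref{e.rho}).
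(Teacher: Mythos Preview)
Your proof is correct and rests on the same idea as the paper's: use that $\Phi_i\circ\Phi^{-1}$ conjugates $g$ to $g_i$, pick $t$ so that $g_i^t(x)\in D_i$, and then control the displacement between $g^{t+k}(y)$ and $g_i^{t+k}(x)$ by the bound $d_{unif}(\Phi_j\Phi_i^{-1},\id)<2\rho_{i+1}$ coming from Lemma~\ref{l=conjugacy} together with condition~(\ref{e.rho}). Your execution is in fact more direct than the paper's: the paper first proves the analogous statement for each finite $j$ (with $y_j=\Phi_j\Phi_i^{-1}(x)$ in place of $y$) and then lets $j\to\infty$, invoking compactness of $g_i^k(\Delta_i)$; you bypass this by observing immediately that $g^{t+k}(y)=\Phi(f^{t+k}(z))$ and $g_i^{t+k}(x)=\Phi_i(f^{t+k}(z))$ for $z=\Phi^{-1}(y)$, so the displacement is bounded by $d_{unif}(\Phi,\Phi_i)$ directly. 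Both routes yield the same conclusion with the same ingredients; yours just avoids the intermediate limiting step.
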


\begin{demo} Fix $j\geq i$.
Let $y$ be a non-periodic point for $g_j$ and $x = \Phi_i\circ \Phi_j^{-1}(y)$.
For any $n$, we have $g_j^n(y) = \Phi_j\circ \Phi_i^{-1} g_i^n (x)$.
The point $x$ is not periodic for $g_i$, since $y$ is not periodic for $g_j$ and $y$
is the image of $x$ under the conjugacy between $g_i$ and $g_j$.

By our hypothesis on $D_i$, there exists $t$
such that  $g_i^t(x) \in D_i$.  The hypothesis~(\ref{e.rhovisits}) on $\rho_i$ implies that
$\Phi_j\circ\Phi_i^{-1}(g_i^k(D_i))\subset g_i^k(\De_i)$ for $k\in \{0,\dots,0,\dots,m_i\}$,
since, by Lemma~\ref{l=conjugacy},
$d_{unif}(\Phi_j\circ \Phi_i^{-1}, \id) <  \sum_{k=i+1}^j \rho_k < 2\rho_{i+1}$.
This implies that $g_j^{n+k}(y)\in g_i^k(\De_i)$. 

Now consider $y\in M$ that is not periodic for $g$. 
Let $x = \Phi_i\circ \Phi^{-1}(y)$.  For every $j\geq i$, we set $y_j = \Phi_j\circ \Phi_i^{-1}(x)$;
observe that $y_j\to y$ as $j\to\infty$. The previous argument implies there exists a
$t$ such that for every $j$ and every $k\in \{0,\dots,0,\dots,m_i\}$,
we have $g_j^{t+k}(y_j)\in g_i^k(\De_i)$; compactness of $\De_i$ implies
that $g^{t+k}(y)\in g_i^k(\De_i)$.
\end{demo}

\subsubsection{Conditions on $\rho_i$, $U_i$ and $g_i$ for the (LD) property} 
The next lemma imposes one more requirement on $\rho_i,U_i$ and on the derivative $Dg_i$ in order
to obtain property (LD).

\begin{lemma}[Final conditions for (LD)]\label{l.nail} Suppose further that:
\begin{itemize}
\item For every $x,y\in M$, 
\begin{equation}\label{e.rhoi}
d(x,y) < 2\rho_{i}\Rightarrow \|Dg_{i-1}(x) - Dg_{i-1}(y)\| < \ve_{i+1}.
\end{equation}
\item The support of $g_{i+1}$ is disjoint from the support of $g_i$:
\begin{equation}\label{e.Ui}
\bar U_{i-1},\dots,g_{i-1}^{m_{i-1}-1}(\bar U_{i-1}),
\bar U_i,\dots,g_{i-1}^{m_i-1}(\bar U_i)
\mbox{ are pairwise disjoint.}
\end{equation}
\item For any $n\in [n_i, n_{i+1}-1]$ and $x\in \Delta_i$, there is $j\in \{0,\dots, m_i-n\}$ such that
\begin{equation}\label{e.Dgi}
\max\{\|Dg_i^n(g_i^j(x))\|,\|Dg_i^{-n}(g_i^{j+n}(x))\|\}>2K_i.
\end{equation}
\end{itemize}
Then $g$ has the large derivative (LD) property.
\end{lemma}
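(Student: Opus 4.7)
The plan is to verify the (LD) property for $g$ by transferring, for each sufficiently large $n$, the lower bound on $Dg_\ell$-products coming from (\ref{e.Dgi}) to a lower bound on $Dg$-products along the corresponding $g$-orbit. Given $K>0$, pick $i_0$ so that $K_\ell>K$ for every $\ell\geq i_0$ and set $n(K):=n_{i_0}$. For $n\geq n(K)$ and any nonperiodic point $y$ of $g$, let $\ell\geq i_0$ be the unique index with $n\in[n_\ell,n_{\ell+1}-1]$. By Lemma~\ref{l.rhodelta} at $\ell$, there exist $t\in\ZZ$ and $x:=\Phi_\ell\Phi^{-1}(y)$ with $g_\ell^t(x)\in D_\ell$ and $g^{t+k}(y)\in g_\ell^k(\De_\ell)$ for $k\in\{0,\ldots,m_\ell\}$; condition (\ref{e.Dgi}) then yields $j\in\{0,\ldots,m_\ell-n\}$ with
$$\max\{\|Dg_\ell^n(g_\ell^{t+j}(x))\|,\|Dg_\ell^{-n}(g_\ell^{t+j+n}(x))\|\}>2K_\ell.$$

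The main obstacle is to push this bound from $g_\ell$ to the limit $g$. The plan is to do it in two stages: first replace $g_\ell$ by $g_{\ell+1}$ on the same orbit segment, then pass from $g_{\ell+1}$ to $g$ by a routine perturbative estimate. Set $x^{(\ell+1)}:=\Phi_{\ell+1}\Phi^{-1}(y)=\varphi_{\ell+1}(x)$; the relation $g_{\ell+1}=\varphi_{\ell+1}g_\ell\varphi_{\ell+1}^{-1}$ gives $g_{\ell+1}^{t+k}(x^{(\ell+1)})=\varphi_{\ell+1}(g_\ell^{t+k}(x))$. For $k\in\{0,\ldots,m_\ell-1\}$, the point $g_\ell^{t+k}(x)$ sits in $g_\ell^k(U_\ell)$, which by (\ref{e.Ui}) at index $\ell+1$ is disjoint from $\bigcup_{i=0}^{m_{\ell+1}-1}g_\ell^i(U_{\ell+1})$, and in particular from the support of $\varphi_{\ell+1}$. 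Hence $\varphi_{\ell+1}$ is the identity on a neighborhood of $g_\ell^{t+k}(x)$, so $g_{\ell+1}^{t+k}(x^{(\ell+1)})=g_\ell^{t+k}(x)$ and $Dg_{\ell+1}=Dg_\ell$ at these points, and the bound above transfers verbatim to $g_{\ell+1}$ at $x^{(\ell+1)}$. The delicate point here is identifying $\operatorname{supp}(\varphi_{\ell+1})$ with iterates of $U_{\ell+1}$ and using (\ref{e.Ui}) to obtain pointwise \emph{equality}, not merely $C^1$-proximity, of $g_\ell$ and $g_{\ell+1}$ near the orbit segment.

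For the second stage, Lemma~\ref{l=conjugacy} gives $d(g^{t+k}(y),g_{\ell+1}^{t+k}(x^{(\ell+1)}))\leq d_{\rm unif}(\Phi\circ\Phi_{\ell+1}^{-1},\id)<2\rho_{\ell+2}$, while summability yields $\|Dg-Dg_{\ell+1}\|_\infty<2\ve_{\ell+2}$. Combining these with (\ref{e.rhoi}) at index $\ell+2$, which controls the oscillation of $Dg_{\ell+1}$ at scale $2\rho_{\ell+2}$ by $\ve_{\ell+3}$, the matrices $A_k:=Dg_{\ell+1}(g_{\ell+1}^{t+k}(x^{(\ell+1)}))$ and $B_k:=Dg(g^{t+k}(y))$ satisfy $\|A_k-B_k\|<4\ve_{\ell+2}$ for $k\in\{j,\ldots,j+n-1\}$, and similarly for their inverses. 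The choice of $\ve_{\ell+2}$ in Section~\ref{s.ven} gives $\ve_{\ell+2}<K_\ell/(4n_{\ell+1}C^{n_{\ell+1}-1})$, and since $n<n_{\ell+1}$, Lemma~\ref{l.chooseve} (applied with its internal index equal to $\ell+1$) converts the $2K_\ell$-bound on the length-$n$ $A$-product into a $K_\ell$-bound on the $B$-product. This produces
$$\max\{\|Dg^n(g^{t+j}(y))\|,\|Dg^{-n}(g^{t+j+n}(y))\|\}>K_\ell>K,$$
as required for the (LD) property on $M\setminus\Per(g)$.
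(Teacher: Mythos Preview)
Your proof is correct and follows essentially the same route as the paper's. Both arguments reduce to comparing the $Dg_\ell$-products along the $g_\ell$-orbit of $x$ with the $Dg$-products along the $g$-orbit of $y$, using the disjointness (\ref{e.Ui}) to kill the discrepancy between $g_\ell$ and $g_{\ell+1}$, the modulus-of-continuity condition (\ref{e.rhoi}) together with $d_{\rm unif}(\Phi\Phi_{\ell}^{-1},\id)<2\rho_{\ell+1}$ to control the drift of base points, and finally Lemma~\ref{l.chooseve} to pass from $2K_\ell$ to $K_\ell$. The only difference is organizational: the paper bounds $\|Dg(g^k(y))-Dg_\ell(g_\ell^k(x))\|$ directly by a three-term triangle inequality through the single intermediate point $g^k(y)$ (getting $\|Dg_{\ell+1}(g^k(y))-Dg_\ell(g^k(y))\|=0$ since $g^k(y)\in g_\ell^k(\Delta_\ell)$ lies off the support of $g_{\ell+1}$), whereas you first pass through the auxiliary point $x^{(\ell+1)}=\varphi_{\ell+1}(x)$ to identify the two orbit segments exactly, and then do a two-term comparison using (\ref{e.rhoi}) at index $\ell+2$ rather than $\ell+1$. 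Both yield a bound $<4\ve_{\ell+2}$ and the same conclusion.
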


\begin{demo} Let $g$ be the limit of the $g_i$.  Let $K$ be a large positive number and
choose $i_0$ such that $K_{i_0} > K$.
We claim that for every $y\in M\setminus \Per(g)$, and for every $n\geq n_{i_0}$, there exists an 
integer $j>0$ such that
$$\max\{\|Dg (g^j(y))\|,\|Dg^{-n}(g^{j+n}(y))\|\}> K.$$

Fix such an $n\geq n_{i_0}$, and choose $i\geq i_0$
such that $n\in [n_i,n_{i+1}-1]$.  Let $y\in M\setminus \Per(g)$.
Let $x = \Phi_i\circ\Phi^{-1}(y)$.  Note that $x \in M\setminus \Per(g_i)$,
so by our hypothesis on $D_i$, there exists an integer $t$ such that
$g_i^t(x)\in D_i$.  Lemma~\ref{l.rhodelta} implies that $g^t(y)\in \De_i$.  
Since the (LD) property is a property of 
orbits, it suffices to assume that $y\in \De_i$ and $x=\Phi_i\circ\Phi^{-1}(y)\in D_i$.

Now $x\in D_i \subset \De_i$ implies there exists $j\in \{0, \ldots, m_i - n\}$ such that
$$\max\{\|Dg_i^n(g_i^j(x))\|,\|Dg_i^{-n}(g_i^{j+n}(x))\|\}>2K_i.$$

For any $k \in \{0, \ldots, m_i-1\}$, we have 
\begin{equation*}
\begin{split}
\|Dg(&g^k (y))-Dg_i(g_i^k(x))\|\\
&\leq \|Dg(g^k(y)) - Dg_{i+1}(g^k(y))\|+ \|Dg_{i+1}(g^k(y)) - Dg_i(g_i^k(x))\|\\
&\leq 2\ve_{i+2} + \|Dg_{i+1}(g^k(y)) - Dg_i(g^k(y))\| + \| Dg_i(g^k(y))  - Dg_i(g_i^k(x))\|,
\end{split}
\end{equation*}
where we have used the fact that the $C^1$ distance from $g$ to $g_{i+1}$ is bounded by $\sum_{k\geq i+2} \ve_k < 2\ve_{i+2}$.

We next bound the remaining terms in the inequality. Since $g^k(y) = \Phi\circ\Phi_i^{-1} (g^k_i(x))$ and
$d_{unif}(\Phi\circ\Phi_i^{-1}, \id) < 2\rho_{i+1}$, the hypothesis (\ref{e.Dgi}) implies that
$$ \| Dg_i(g^k(y))  - Dg_i(g_i^k(x))\| < \ve_{i+2}.$$

By Lemma~\ref{l.rhodelta}, we know that for $k \in \{0, \ldots, m_i-1\}$, the point $g^k(y)$  belongs
to the set $g_i^k(\Delta_i)$.  Since the support of $g_{i+1}$ is disjoint from $g_i^k(U_i)$, which
contains $g_i^k(\Delta_i)$, we obtain
that $g_{i+1}$ and $g_{i}$ agree in a neighborhood of $g^k(y)$, for $k \in \{0, \ldots, m_i-1\}$.
From this it follows that $ \|Dg_{i+1}(g^k(y)) - Dg_i(g^k(y))\| = 0$.

We conclude that 
\begin{eqnarray*}
\|Dg(g^k (y)) - Dg_i(g_i^k(x))\| &\leq & 3\ve_{i+2}.
\end{eqnarray*}
But now Lemma~\ref{l.chooseve} and the Chain Rule imply that  
$$\max\{\|Dg^n(g^j(y))\|,\|Dg^{-n}(g^{j+n}(y))\|\}>K_i.$$
\end{demo}

\subsubsection{The derivative at the periodic orbits is preserved}

Let $\cO$ be a periodic orbit of $f$. Note that its image by $\Phi_{i}$ should
be disjoint from $\overline{U_{i+1}},\dots,g_{i}(\overline {U_{i+1}})$
for $i$ large. In particular, the maps $g_{i+1}$ and $g_i$ coincide in a neighborhood
of the periodic orbit $\Phi_{i}(\cO)$. This proves that $\Phi(\cO)=\Phi_i(\cO)$
and that $Dg$ coincides with $Dg_i$ at points of $\Phi(\cO)$. Since
$g_i$ and $f$ are conjugate by the diffeomorphism $\Phi_i$,
we conclude that the derivatives of $f$ on $\cO$ and of $g$ on $\Phi(\cO)$
are conjugate.

\subsubsection{Conditions on $U_i$ for preserving the (UD) property}

We assume here that $f$ satisfies the unbounded distortion (UD) property on the stable manifolds and
on the wandering set, and we consider the sequences  $(x_i)$, $(Z_i)$, $(N_i)$ and
$(Y_{i,0})$ defined in Subsection~\ref{s.xZN}.
Let $$Y_i = \bigcup_{k=0}^{N_i} g_i^k(\Phi_i(Z_i\cup \{x_i\})) = \Phi_i(Y_{i,0}).$$
We introduce the following condition
\begin{equation}\label{e.UD}
\text{The sets }
\overline{U_i},\dots,g_{i-1}^{m_i}(\overline{U_i})
\text{ are disjoint from }
\cup_{k=1}^{i-1} Y_k.
\end{equation}

\begin{lemm}\label{l.Yi} If in addition hypothesis~(\ref{e.UD}) is satisfied,
then $g$ has the unbounded distortion properties (UD)$^{M\setminus\Omega}$ and (UD)$^s$.
\end{lemm}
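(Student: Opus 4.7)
\begin{demo}[Proof plan for Lemma~\ref{l.Yi}]
The natural candidate dense subsets witnessing (UD)$^{M\setminus\Omega}$ and (UD)$^s$ for $g$ are the images under $\Phi$ of the dense subsets $\cX^{M\setminus\Omega}$ and $\cX^\cO$ (for each periodic orbit $\cO$) witnessing these properties for $f$. Since $g=\Phi f\Phi^{-1}$ is topologically conjugate to $f$, one has $\Omega(g)=\Phi(\Omega(f))$, and $\Phi$ sends each stable manifold of $f$ to the corresponding stable manifold of $g$ (the periodic orbits of $g$ are hyperbolic, since the derivatives on them are conjugate to those on the orbits of $f$).

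The key geometric observation is that condition~(\ref{e.UD}) propagates by induction: for every $j>i$, the support of the tidy perturbation $g_{j-1}\to g_j$, as well as the support of the conjugacy $\Phi_j\circ \Phi_{j-1}^{-1}$, are contained in $\bigcup_{k=0}^{m_j-1}g_{j-1}^k(\overline{U_j})$, which is disjoint from $Y_i$. Thus $g_j$ agrees with $g_i$ on an open neighborhood of $Y_i$, and $\Phi_j=\Phi_i$ on $Y_i$. Taking $j\to\infty$ (using the $C^1$-convergence $g_j\to g$), we conclude that $Dg=Dg_i$ on a neighborhood of $Y_i$, and $\Phi=\Phi_i$ on $Y_i$. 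In particular, on a neighborhood of $Y_i$ the conjugacy $\Phi$ is the \emph{smooth} diffeomorphism $\Phi_i$, whose derivative is controlled by $\|D\Phi_i^{\pm 1}\|\leq M_i$ (Lemma~\ref{l.apriori}).

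The proof then proceeds as follows. Fix $(x,Z)\in \cZ^{M\setminus\Omega}$ (the stable case is identical). Using the smooth conjugacy $\Phi_i$ on a neighborhood of $Y_i=\Phi_i(Y_{i,0})$ and the chain rule, for any $y\in Z_i\cup\{x_i\}$ and any $n\leq N_i$,
\begin{equation*}
\bigl|\log|\det Dg^n(\Phi(y))|-\log|\det Df^n(y)|\bigr|
\;\leq\; 2d\log M_i,
\end{equation*}
since the Jacobian contributions from $D\Phi_i$ at $y$ and at $f^n(y)$ are each bounded by $d\log M_i$. For the stable case one restricts to $TW^s(\cO)$ and uses the same bound (the restriction of $D\Phi_i$ to the stable bundle has the same norm bound). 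Combined with the choice of $N_i$, which guarantees a distortion $>K_i+4d\log M_i$ for $f$ between $x_i$ and any $\tilde y\in Z_i$, the triangle inequality yields a distortion $>K_i$ for $g$ between $\Phi(x_i)$ and $\Phi(\tilde y)$.

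To conclude, given any $x'=\Phi(x)\in \Phi(\cX^{M\setminus\Omega})$ and any $y'\in M\setminus\Omega(g)$ not in the $g$-orbit of $x'$, set $y=\Phi^{-1}(y')\in M\setminus \Omega(f)$, which lies outside the $f$-orbit of $x$. Since $\cZ_x$ covers the complement in $M\setminus\Omega(f)$ of the orbit of $x$, there is some $Z\in \cZ_x$ containing $y$. By our enumeration of $\cZ^{M\setminus\Omega}$, the pair $(x,Z)$ appears as $(x_{i_k},Z_{i_k})$ for infinitely many indices $i_k$; choosing $k$ so that $K_{i_k}$ exceeds any prescribed $K>0$, the preceding paragraph provides $n\leq N_{i_k}$ with distortion $>K$ between $x'$ and $y'$. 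This establishes (UD)$^{M\setminus\Omega}$ for $g$; the argument for (UD)$^s$ is entirely analogous. The only subtlety in the plan is the passage from the merely continuous $\Phi$ to the smooth $\Phi_i$, which is precisely what condition~(\ref{e.UD}) is designed to enable.
\end{demo}
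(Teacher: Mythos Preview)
Your proof is correct and follows essentially the same route as the paper's: both use condition~(\ref{e.UD}) to deduce that for all $j>i$ the support of $\varphi_j$ avoids $Y_i$, hence $\Phi_j=\Phi_i$ on $Y_{i,0}$ and $g_j=g_i$ on $Y_i$, and then pass to the limit and transfer the $f$-distortion $>K_i+4d\log M_i$ to a $g$-distortion $>K_i$ via the smooth conjugacy $\Phi_i$ with $\|D\Phi_i^{\pm1}\|\leq M_i$. One harmless imprecision: you assert that $Dg=Dg_i$ and $\Phi=\Phi_i$ hold on a \emph{neighborhood} of $Y_i$, but since the open sets on which $g_j=g_i$ may shrink as $j\to\infty$, the limit only yields equality on $Y_i$ (resp.\ $Y_{i,0}$) itself---which is all the Jacobian estimate requires.
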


\begin{demo} We prove that $g$ has property (UD)$^{M\setminus\Omega}$; the proof of (UD)$^s$ is similar.
Recall the dense set $\cX^{M\setminus\Omega}$ in $M\setminus\Om(f)$ used to define
$(x_i)$ and $(Z_i)$. Clearly $\Phi(\cX^{M\setminus\Omega})$ is dense in $M\setminus\Om(g)$.

Fix $x\in \Phi(\cX^{M\setminus\Omega})$ and $y\in M\setminus \Omega(g)$ that are not on the same orbit.
Let $K>0$ be some large constant.
We claim that there exists $i\in \NN$ such that:
\begin{itemize}
\item  $\Phi^{-1}(x)=x_i$,
\item $\Phi^{-1}(y)\in Z_i$ (and we set $y_i=\Phi^{-1}(y)$),
\item $K_i>K$.
\end{itemize}
Such an $i$ exists because $\Phi^{-1}(y)\in M\setminus(\Om(f)\cup \orb_f(x))$ and so, by definition of $\cZ_x$, there exists $Z\in \cZ_x$ containing $\Phi^{-1}(y)$. The pair $(x,Z)$ appears as $(x_i,Z_i)$ for infinitely many values of $i$. For $i$ sufficiently large, we have $K_i>K$, which proves the claim.

By definition of $N_i$, there exists $n\in\{1,\dots, N_i\}$ such that
$$|\log |\det Df^n(x_i)|-\log|\det Df^n(y_i)||>K_i+4d\log M_i.$$
Since $g_i=\Phi_i\circ f\circ \Phi_i^{-1}$,  and $\|D\Phi_i\|$ and $\|D\Phi_i^{-1}\|$ are both  bounded by $M_i$,
 we obtain that 
$$|\log |\det Dg_i^n(\Phi_i(x_i))|-\log|\det Dg_i^n(\Phi_i(y_i))||>K_i.$$

Our assumption on the support of the tidy perturbations implies that for every $j>i$, 
$g_j$ is a tidy perturbation of $g_{j-1}$ whose support is disjoint from the compact set $Y_i$. This implies that $g_j$ and $g_i$ coincide in a neighborhood of the points 
$\Phi_i(x_i),g_i(\Phi_i(x_i)),\dots,g^{N_i}(\Phi_i(x_i))$ and 
$\Phi_i(y_i),g_i(\Phi_i(y_i)),\dots,g^{N_i}(\Phi_i(y_i))$. In particular:
\begin{itemize}
\item $\Phi_j(x_i)=\Phi_i(x_i)$ and $\Phi_j(y_i)=\Phi_i(y_i)$. 
Since the points $\Phi_j(x_i)$ and $\Phi_j(y_i)$ converge to $x$ and $y$ when $j\to\infty$,
it follows that $\Phi_i(x_i)=x$ and $\Phi_i(y_i)=y$.
\item $ Dg_i^n(\Phi_i(x_i))= Dg_j^n(x)$ and  $Dg_i^n(\Phi_i(y_i))=Dg_j^n(y)$.
\end{itemize}
It follows that, for $j\geq i$,
$$ |\log |\det Dg^n_j(x)|-\log|\det Dg^n_j(y)||\geq K_i,$$

Since $Dg_j^n$ tends to $Dg^n$ as $j\to\infty$, it follows that 
$$ |\log |\det Dg^n(x)|-\log|\det Dg^n(y)||\geq K_i>K,$$
which concludes the proof of the ($\mbox{UD}^{M\setminus\Om}$) property for $g$. 
The proof of the ($\mbox{UD}^s$) property is completely analogous. 

\end{demo}

\subsection{Satisfying the inductive hypotheses}

To finish the proof of Theorem B (assuming Theorem B'), we are left to
explain how to construct inductively $\rho_i,U_i,D_i,\Delta_i$ and $g_i$
satisfying the properties stated at Section~\ref{ss.shopping}
and properties~(\ref{e.rho}), (\ref{e.rhovisits}), (\ref{e.rhoi}), (\ref{e.Ui}), (\ref{e.Dgi})
and (\ref{e.UD}). For the construction we require the following extra property:
\begin{equation}\label{e.Uiextra}
\text{ The sets } g_{i-1}^j(\overline{U_i}) \text{ for }
j\in\{0,\dots,m_i+m_{i+1}-1\} \text{ are pairwise disjoint.}
\end{equation}
In the following we assume that all the objects have been constructed up to
$\rho_i, U_i, D_i, \De_i, g_i$, and we will construct $\rho_{i+1}, U_{i+1}, D_{i+1}, \De_{i+1}, g_{i+1}$.

\paragraph{The constant $\rho_{i+1}$.}
We choose $\rho_{i+1}$ satisfying:
\begin{enumerate}
\item $\rho_{i+1}$ is strictly less than the numbers $\rho_0(n,g_i,2K_{i+1},\ve_{i+1})$ given by Theorem~B'
for $n_{i+1}\leq n <n_{i+2}$;
\item $\rho_{i+1} < 2^{-1}\min\{\rho_i, M_i^{-1}\}$,
\item $\rho_{i+1} < 2^{-1} \inf_{k\in\{0,\ldots, m_i\}} d(g_i^k(D_i), M\setminus g_i^k(\De_i)).$
\item $\rho_{i+1}$ is less than the Lebesgue number associated to $Dg_i$ for $\ve_{i+2}$, so that, for every $x,y\in M$, if $d(x,y) < 2\rho_{i+1}$, then $\|Dg_{i}(x) - Dg_{i}(y)\| < \ve_{i+2}$.
\end{enumerate}
In particular conditions~(\ref{e.rho}), (\ref{e.rhovisits}) and~(\ref{e.rhoi}) are satisfied by $\rho_{i+1}$.

\paragraph{The sets $U_{i+1},D_{i+1},\Delta_{i+1}$.}
Observe that  $Y_k=\Phi_k(Y_{k,0})=\Phi_i(Y_{k,0})$ for $k\leq i$ have the wandering orbit property for $g_i$
because $Y_{k,0}$ have the wandering orbit property for $f$.
By induction property~(\ref{e.Uiextra}) is satisfied by $U_i$
and implies that the sets $g_{i}^j(\overline{U_i})$ for $j\in\{0,\dots,m_i+m_{i+1}-1\}$
are pairwise disjoint. By Lemma~\ref{l.wosets}, the set $\bigcup_{j=0}^{m_{i}-1} g_i^{j}(\overline U_{i})$ has the
$m_{i+1}$-wandering orbit property and so Lemma~\ref{l.wounion} implies that
$\bigcup_{k=0}^iY_k\cup \bigcup_{j=0}^{m_{i}-1} g_i^{j}(\overline U_{i})$ has the 
$m_{i+1}$-wandering orbit property.
Lemma~\ref{l.wotower} gives an open set $U_{i+1}$ and a compact set $D_{i+1}\subset U_{i+1}$ such that
\begin{enumerate}
\item the diameter of each connected component of $g_i^j(\overline{U_{i+1}})$, $j\in\{0,\dots,m_{i+1}\}$ is less than $\rho_{i+1}$;
\item the sets $g_{i}^j(\overline{U_{i+1}})$, $j\in\{0,\dots,m_{i+1}-1\}$,
are disjoint from the sets $Y_k$, $k\leq i$, and from the sets $g_{i}^{j}(\overline U_{i})$, $j\in\{0,\dots,m_{i+1}-1\}$;
\item the sets $g_{i}^j(\overline{U_{i+1}})$ for $j\in\{0,\dots,m_{i+1}+m_{i+2}-1\}$ are pairwise disjoint;
\item every nonperiodic point $x\in M$ has an iterate in $D_{i+1}$.
\end{enumerate}
In particular, conditions~(\ref{e.Ui}), (\ref{e.UD}) and~(\ref{e.Uiextra}) are satisfied by $U_{i+1}$.
We next fix some compact set $\De_{i+1}\subset U_{i+1}$ containing $D_{i+1}$ in its interior.

\paragraph{The perturbation $g_{i+1}$.}
For each $n\in \{n_{i+1},\dots,n_{i+2}-1\}$,
we will make a tidy perturbation $g_{n, i+1}$ producing the large derivative at the
$n$th iterate. Furthermore, these perturbations will have pairwise disjoint support.

To do this, we partition $\{0,1,\ldots, m_{i+1}-1\}$ into
intervals of the form $I_n=\{\alpha_n,\dots,\alpha_{n+1}-1\}$, $n_{i+1}\leq n <n_{i+2}$,
where $\alpha_n=2^{d+1}(n_{i+1}+\dots+n-1)$. This is possible since
$m_{i+1} = 2^{d+1}(n_{i+1} + (n_{i+1} + 1) + \cdots + n_{i+2} - 1)$.

The sets $g_i^{j}(\overline{U_{i+1}})$ for $j\in I_n$ are pairwise
disjoint and have a diameter less than $\rho_0(n,g_i,2K_{i+1},\ve_{i+1})$.
Hence, we can apply Theorem~B' to obtain a tidy perturbation $g_{n,i+1}$ of $g_i$
with support in $\bigcup_{j\in I_n} g_i^j(U_{i+1})$ 
such that $d_{C^1}(g_{n,i+1},g_i)<\varepsilon_{i+1}$ and
for any  $x\in g_i^{\alpha_n}(\Delta_{i+1})$, there exists $j\in \{\alpha_n,\dots, \alpha_{n+1}-n\}$ such that
$$\max\{\|Dg_{n,i+1}^n(g_{n,i+1}^j(x))\|,\|Dg_{n,i+1}^{-n}(g_{n,i+1}^{j+n}(x))\|\}>2K_{i+1}.$$
We denote by $\varphi_{n,i+1}$ the conjugating diffeomorphism associated to the tidy perturbation $g_{n,i+1}$, which is the identity outside of $\bigcup_{j\in I_n} g_i^j(U_{i+1})$. Notice that $\varphi_{n,i+1}$ is also the identity map on $g_i^{\alpha_n}(U_{i+1})$.

We now define the diffeomorphisms $g_{i+1}$ and $\varphi_{i+1}$
that coincide respectively with $g_i$ and $\id_M$ on $M\setminus \bigcup_{j=0}^{m_{i+1}-1} g_i^j(U_{i+1})$ and  with $g_{n,i+1}$ and $\varphi_{n,i+1}$ on $\bigcup_{j\in I_n} g_i^j(U_{i+1})$, for $n\in\{n_i,\dots, n_{i+1}-1\}$.
Using the fact that the tidy  perturbations $g_{n,i+1}$ have disjoint support, we obtain that $g_{i+1}$ and $\varphi_{i+1}$ have the following properties:
\begin{itemize}
\item $g_{i+1}$ is a tidy perturbation of $g_i$ with support in $\bigcup_{j=0}^{m_{i+1}-1} g_i^j(U_{i+1})$.
\item $g_{i+1}$ is conjugate to $g_i$ by $\varphi_{i+1}$.
\item $\varphi_{i+1}$ is the identity map on each $g_i^{\alpha_n}(U_{i+1})$; in particular, if $x\in\De_{i+1}$ then $g_{i+1}^{\alpha_n}(x)\in g_i^{\alpha_n}(\De_{i+1})$. 
\item Consequently, for any $n\in \{n_{i+1},\dots, n_{i+2}-1\}$ and any $x\in \Delta_{i+1}$, there exists $j\in \{\alpha_n,\alpha_{n+1}-n\}\subset \{0,\dots, m_{i+1}-n\}$ such that
$$\max\{\|Dg_{i+1}^n(g_{i+1}^j(x))\|,\|Dg_{i+1}^{-n}(g_{i+1}^{j+n}(x))\|\}>2K_{i+1}.$$
\end{itemize}

The proof of Theorem B assuming Theorem B' is now complete.
\section{Large derivative by perturbation in towers}\label{s.perturbation}
The aim of this section is to prove Theorem~B', thereby
completing the proof of Theorem B. In the first three subsections we reduce the problem
to a linear algebra result, which is proved in the last section.

\subsection{Reduction to cocycles}
To any sequence $(A_i)$ in $GL(d,\RR)$ we associate its linear cocycle
as the map $f\colon \ZZ\times\RR^d\to \ZZ\times \RR^d$ defined by
$(i,v)\mapsto (i+1,A_i(v))$.
Theorem~B' is a consequence of the following corresponding result
for $C^1$ perturbations of linear cocycles.

\begin{proposition}\label{p.tidycocycle} For any $d\geq 1$ and any $C, K, 
\varepsilon>0$,
there exists $n_1=n_1(d,C,K,\varepsilon)\geq 1$ with the following
property.

Consider any sequence  $(A_i)$ in $GL(d,\RR)$ with $\|A_i\|,\|A_i^{-1}\|<C$
and the associated linear cocycle $f$. Consider any integer $n\geq n_1$ and
let $N=2^{d+1}n$. Then for any open set $U\subset [-1,1]^d$ and for any compact set $\Delta\subset U$, 
there exists a diffeomorphism $g$ of $\ZZ\times\RR^d$ such that:
\begin{enumerate}
\item the $C^1$-distance from $g$ to $f$ is bounded by $\varepsilon$;
\item $g$ is a tidy perturbation of $f$ supported on $\bigcup_{i=0}^{N-1} f^i(\{0\}\times U)$;
\item for any $x\in \{0\}\times \De$ there exists $j\in\{0,\dots, N-n\}$ such that
$$\max\{\|Dg^{n}(g^j(x))\|,\|Dg^{-n}(g^{j+n}(x))\|\}>K.$$
\end{enumerate}
\end{proposition}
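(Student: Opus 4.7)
My plan is to isolate the linear-algebraic core of the statement and handle the diffeomorphism structure separately. Concretely, I will (i) construct a sequence of matrices $B_0,\ldots,B_{N-1}$ close to $A_0,\ldots,A_{N-1}$ with $B_{N-1}\cdots B_0 = A_{N-1}\cdots A_0$ and some intermediate $n$-fold product of large norm, and (ii) realize this matrix sequence as a $C^1$-small tidy diffeomorphism perturbation $g$ of $f$ on the disjoint iterates of $\{0\}\times U$. The factor $N = 2^{d+1}n$ will emerge from an induction on $d$ in step~(i).

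For step~(i), the base case $d=1$ is the main mechanism. For scalars $a_i$ with $|a_i|,|a_i^{-1}|<C$, set $b_i = (1+\mu)a_i$ for $i<N/2$ and $b_i = (1+\mu)^{-1}a_i$ for $i\geq N/2$, with $\mu \asymp \varepsilon/C$. Then $|b_i-a_i|<\varepsilon$, the tidy identity $\prod_i b_i = \prod_i a_i$ is automatic since $(1+\mu)^{N/2}(1+\mu)^{-N/2}=1$, and for any $j$ with $j+n\leq N/2$ the partial ratio $\prod_{i=j}^{j+n-1}(b_i/a_i) = (1+\mu)^n$ contributes $n\log(1+\mu)\sim n\varepsilon/C$ to $\log|b_{j+n-1}\cdots b_j|$. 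Since $\max(y,y^{-1})\geq K$ whenever $y\notin[K^{-1},K]$, it suffices to arrange $n\log(1+\mu) > 2\log K$, giving the base threshold $n_1 \gtrsim C\log K/\varepsilon$ with room to spare inside $N=4n$. For the inductive step, I split $\{0,\ldots,N_d-1\}$ with $N_d = 2^{d+1}n$ into two halves of length $N_{d-1}=2^d n$. On the first half, I apply the $(d-1)$-dimensional hypothesis to a quotient cocycle obtained from a suitable block decomposition of the $A_i$, producing the large intermediate norm for orbits whose tangent at the relevant time has nontrivial component in the $(d-1)$-dimensional factor. On the second half, I run the one-dimensional construction in the complementary direction. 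A partition of $\Delta$ into two compact pieces (distinguished by how concentrated the tangent vector is in the chosen direction) ensures every $x\in\Delta$ is handled by exactly one of the two halves, and the tidy conditions on each half compose to a tidy condition on $[0,N_d]$.

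For step~(ii), I turn the matrix sequence into a diffeomorphism as follows: on each fiber $\{i\}\times\RR^d$ I define $g$ to act as the affine map $v\mapsto B_i v + c_i$ on a neighborhood of $A_{i-1}\cdots A_0(\Delta)$, transitioning via a bump function to the linear map $v\mapsto A_i v$ outside $A_{i-1}\cdots A_0(\bar U)$. Choosing the diameter $\rho_0$ of $U$ small enough guarantees that the bump-function correction contributes only $O(\varepsilon)$ to $d_{C^1}(g,f)$ and that $g$ is a genuine diffeomorphism of $\ZZ\times\RR^d$. The tidy identity $g^N(0,u)=f^N(0,u)$ for $u\in U$ reduces exactly to $B_{N-1}\cdots B_0 = A_{N-1}\cdots A_0$, which holds by construction, and for any $x\in\{0\}\times\Delta$ and $j$ furnished by step~(i), the derivative $Dg^n(g^j(x))$ equals $B_{j+n-1}\cdots B_j$, so the required norm bound transfers automatically.

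The main obstacle is the inductive step in~(i): a simultaneous block decomposition of all the $A_i$ does not exist in general, so the splitting used to pass to a $(d-1)$-dimensional quotient must be allowed to vary with $i$, and one has to control the commutators created by parallel-transporting that splitting along the cocycle. This forces one to enlarge $n_1$ and subdivide the $\varepsilon$-budget at each inductive stage, but only by multiplicative constants depending on $d$ and $C$, so the final $n_1(d,C,K,\varepsilon)$ remains finite. Uniformity over $x\in\Delta$ is preserved because at every stage the partition of $\Delta$ is into finitely many compact pieces, each piece is assigned a time-window of length $n$ entirely contained in $[0,N-n]$, and the supports of the various matrix perturbations $B_i-A_i$ interact only through the linear cocycle itself, never through the nonlinear diffeomorphism structure.
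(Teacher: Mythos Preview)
Your step~(ii) contains a genuine gap that breaks the argument: the tidy condition $g^N(0,u)=f^N(0,u)$ must hold for \emph{every} $u\in U$, not just for $u\in\Delta$. On the inner region where $g$ acts by the affine maps $v\mapsto B_i v+c_i$, the matrix identity $B_{N-1}\cdots B_0=A_{N-1}\cdots A_0$ (together with a compatible choice of the $c_i$) does give $g^N=f^N$. Outside a neighborhood of $\bar\Delta$ you have $g=f$ and again $g^N=f^N$. But in the bump-function transition annulus, $g$ acts by a genuinely nonlinear map on each fiber, and the composition $g^N$ has no reason to coincide with the linear map $f^N$ there. The factored product of the $B_i$'s tells you nothing about the composition of the interpolated maps. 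The paper avoids this by a structural trick you do not use: on the cube level it applies a diffeomorphism $h_i$ at time $i$ and then the \emph{same} diffeomorphism $h_i^{-1}$ (conjugated by the cocycle) at the mirror time $n-i-1$, so that tidiness holds pointwise on all of $U$, transition region included, without any linearity assumption.

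There is also a second gap in step~(i): the inductive step on $d$ is only sketched, and the sketch is confused. You speak of partitioning $\Delta$ into two compact pieces ``distinguished by how concentrated the tangent vector is in the chosen direction'', but $\Delta$ is a subset of $\RR^d$, not of a unit sphere bundle; and since on $\Delta$ the derivative $Dg^n(g^j(x))=B_{j+n-1}\cdots B_j$ is independent of $x$, a partition of $\Delta$ cannot help. You yourself flag the absence of a simultaneous block decomposition of the $A_i$ as ``the main obstacle'' and then assert it can be handled by controlling commutators under parallel transport, but no mechanism is given. In the paper the factor $2^{d+1}=2^d\cdot 2$ arises for an entirely different reason: one tiles $\tilde U$ at time $n$ by slightly overlapping cubes, splits the tiles into $2^d$ chromatic families of pairwise disjoint cubes, assigns each family its own time window of length $2n$, and on each cube invokes a separate linear-algebra lemma (Proposition~\ref{p.linear}) that does not proceed by induction on $d$ at all but by a dichotomy on the eccentricities $e(A_{i_0-1}\cdots A_0)$.
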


\begin{demo}[Proof of Theorem~B' from Proposition~\ref{p.tidycocycle}]
Fix $\tilde\varepsilon <\varepsilon$, let $\tilde K = 4K$ and set
$n_0 = n_1(d,C, \tilde K,\tilde \varepsilon)$, according to
Proposition~\ref{p.tidycocycle}. Let $f$ be a diffeomorphism of the
$d$-manifold $M$ with $\|Df\|,\|Df^{-1}\|<C$ and fix $n\geq n_0$.
Let $N=2^{d+1}n$. We choose $\rho_0=\rho$ according
to Lemma~\ref{l.linearize} associated to $\tilde \varepsilon<\varepsilon$ and $N$.

Let us show that $\rho_0$ satisfies the conclusions of
Theorem~B'.  Let $U \subset M$ be an open set with
diameter less than $\rho_0$ whose iterates $U, f(U), \ldots,
f^{N-1}(U)$ are pairwise disjoint, and let $\De\subset U$ be
compact. Fix a point $x_0\in U$.

Lemma~\ref{l.linearize} asserts that there are diffeomorphisms 
$\Psi_{f^i(x_0),i}\colon f^i(U)\to T_{f^i(x_0)}M$, $i\in\{0,\ldots,N-1\}$ which conjugate $f$ to its
tangent map: if $y\in f^i(U)$, $i\in \{0,\dots, N-1\}$ then
$$f(y)= \Psi_{f^{i+1}(x_0),i+1}^{-1}\circ D_{f^i(x_0)}f \circ \Psi_{f^i(x_0),i}(y).$$
Moreover,
\begin{itemize}
\item the quantities $\|D\Psi_{f^i(x_0),i}\|$ and $\|D\Psi^{-1}_{f^i(x_0),i}\|$ are bounded by $2$;

\item any tidy $\tilde\varepsilon$-perturbation $\tilde g$ of $Df$ with support in $\bigcup_0^{N-1} Df^i(\tilde U)$, where $\tilde U=\Psi_{x_0,0}(U)$, induces a tidy $\varepsilon$-perturbation $g$ of $f$ supported on $\bigcup_0^{N-1} f^i(U)$, through a conjugacy by the diffeomorphisms $\Psi_{f^i(x_0),i}$.
\end{itemize} 

We apply Proposition~\ref{p.tidycocycle} to the cocycle induced by $Df$ on the tangent bundle $TM|_{\orb(x_0)}$ over the orbit of $x_0$, to the images $\tilde U,\tilde \Delta$ of $U,\Delta$ by $\Psi$, and to the integer $n$ (which is larger than $n_1$). We obtain a (nonlinear) cocycle $\tilde g:\ZZ \times \RR^d \to \ZZ \times\RR^d$
whose $C^1$-distance to $Df$ is smaller than $\varepsilon$,
which is a tidy perturbation of $Df$ supported in $\bigcup_{i=0}^{N-1} Df^i(\tilde U)$
and such that for any $x\in \tilde \Delta$ there exists $j\in \{0,\dots,N-n\}$ satisfying
\begin{equation}\label{e.ex}
\max\{\|D\tilde g^{n}(\tilde g^j(x))\|,
\|D\tilde g^{-n}(\tilde g^{j+n}(x))\|\}>\tilde K.
\end{equation}

By Lemma~\ref{l.linearize}, the cocycle $\tilde g$ defines a tidy perturbation $g$ of $f$ supported on $\bigcup_0^{N-1} f^i(U)$ and such that $d_{C^1}(f,g)<\varepsilon$.
Consider $x\in \Delta$, $\tilde x=\Psi_{x_0,0}(x)\in\tilde \De$ and the
integer $j\in \{0,\dots,N-n\}$ such that~(\ref{e.ex}) holds.
Note that on $g^j(U)$ we have $\tilde g^n=\Psi_{f^{j+n}(x_0),j+n}\circ\tilde g^n\circ \Psi_{f^j(x_0),j}^{-1}$.
Since the derivatives $\|\Psi_{f^{j+n}(x_0),j+n}\|$ and $\|\Psi_{f^j(x_0),j}^{-1}\|$ are bounded by $2$,
it follows that
$$\|D g^{n}(g^j(x))\|\geq \frac 14\|D\tilde g^{n}(\tilde g^j(\tilde x))\|,$$
$$\|D g^{-n}(g^{j+n}(x))\|\geq \frac 14\|D\tilde g^{-n}(\tilde g^{j+n}(\tilde x))\|.$$
By property~(\ref{e.ex}) above, this gives
$$\max\{\|Dg^{n}(g^j(x))\|,\|Dg^{-n}(g^{j+n}(x))\|\}>\frac14 \tilde K =K.$$

\end{demo}

\subsection{Reduction to a perturbation result in a cube}\label{ss.tidycocycle}

We now reduce Proposition~\ref{p.tidycocycle} to the case
$U$ is an iterate of the interior of the standard cube $Q=[-1,1]^d$
and $\Delta$ is an iterate of a smaller closed cube $\delta Q$.

\begin{proposition}\label{p.tidycube} For any $d\geq 1$, $C,K,\varepsilon>0$ and
$\delta\in (0,1)$, there exists an integer
$n_2=n_2(d,C,K,\varepsilon,\delta)\geq 1$ with the following property.

Consider any sequence $(A_i)$ in $GL(d,\RR)$ with $\|A_i\|,\|A_i^{-1}\|<C$
and the associated linear cocycle $f$. Then, for any $n\geq
n_2$ there exists a diffeomorphism $g$ of $\ZZ\times\RR^d$ such that:
\begin{enumerate}
\item the $C^1$-distance from $g$ to $f$ is bounded by $\varepsilon$;
\item $g$ is a tidy perturbation of $f$ supported on $\bigcup_{i=0}^{n-1} f^i(\{0\}\times Q)$;
\item there exists $k\in\{0,\dots, n\}$ such that for any $x\in \{0\}\times \delta Q$: 
$$\max\{\|Dg^{n}(g^{-k}(x))\|,\|Dg^{-n}(g^{n-k}(x))\|\}>K.$$
\end{enumerate}
\end{proposition}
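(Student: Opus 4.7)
The plan is to build $g$ as a conjugate $g=\Phi\circ f\circ\Phi^{-1}$, where the diffeomorphism $\Phi$ of $\ZZ\times\RR^d$ is the identity outside the interior levels $\bigcup_{\ell=1}^{n-1}f^\ell(\{0\}\times Q)$ of the tower, and on each such level it restricts to a diffeomorphism $\phi_\ell$ of $A_{\ell-1}\cdots A_0(Q)$ that is the identity near the boundary; set $\phi_0=\phi_n=\id$. By the tidy-perturbation lemmas of Section~3, $g$ is then automatically a tidy perturbation of $f$ supported in $\bigcup_{\ell=0}^{n-1}f^\ell(\{0\}\times Q)$, and the $C^1$-distance from $g$ to $f$ is governed level by level by the quantities $\|D\phi_{\ell+1}\cdot A_\ell\cdot D\phi_\ell^{-1}-A_\ell\|$.

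A telescoping chain-rule computation, using $\phi_0=\id$, gives for any $x\in\{0\}\times\delta Q$ and $k\in\{0,\ldots,n\}$:
$$Dg^n(g^{-k}(x))\;=\;M_{n-k}(x)\cdot Q_k,\qquad Dg^{-n}(g^{n-k}(x))=(M_{n-k}(x)\cdot Q_k)^{-1},$$
where $M_j(x):=D\phi_j(A_{j-1}\cdots A_0\,x)$ and $Q_k:=A_{n-k-1}\cdots A_{-k}$ depends only on the unperturbed cocycle. The proposition therefore reduces to finding a single index $k^{*}\in\{0,\ldots,n\}$ such that, uniformly over $x\in\delta Q$, the matrix $M_{n-k^{*}}(x)\cdot Q_{k^{*}}$ has either its operator norm or the operator norm of its inverse greater than $K$.

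The heart of the argument is a linear-algebra construction of the sequence $(M_\ell)_{0\le\ell\le n}$. The $C^1$-closeness forces $M_{\ell+1}$ to be an $\varepsilon$-perturbation of the conjugate $A_\ell M_\ell A_\ell^{-1}$, so $M_\ell$ can only drift slowly. My strategy is a \emph{build up, then tear down} construction: over the first $\lfloor n/2\rfloor$ steps, apply small multiplicative perturbations aligned with a direction chosen according to the singular decomposition of $Q_{k^{*}}$ and transported along the cocycle by the approximate relation $M_{\ell+1}\approx A_\ell M_\ell A_\ell^{-1}$; the per-step effect is a factor $(1+\eta)$ for some $\eta=\eta(\varepsilon,C,d)>0$, yielding an accumulated factor $(1+\eta)^{\lfloor n/2\rfloor}$ that is exponential in $n$. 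Over the remaining steps, the perturbations are reversed so that $M_n$ returns to $I$. Choosing $n_2$ large enough that $(1+\eta)^{\lfloor n/2\rfloor}>K$ then yields the desired estimate at $k^{*}=n-\lfloor n/2\rfloor$. The matrices are finally realized by explicit diffeomorphisms $\phi_\ell$ of $A_{\ell-1}\cdots A_0(Q)$ prescribed to have derivative equal to $M_\ell$ on the sub-cube $A_{\ell-1}\cdots A_0(\delta Q)$, interpolating to the identity near the boundary via a standard bump-function construction.

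The main obstacle is the linear-algebra step. The approximate-conjugation constraint $M_{\ell+1}\approx A_\ell M_\ell A_\ell^{-1}$ forces the natural direction of stretch to rotate with the cocycle, so the per-step perturbations must be selected coherently with the Oseledets-type evolution of the singular directions of $(A_\ell)$, and the cumulative error under this evolution has to be controlled over the whole $n$-step window. A secondary technical point is uniformity: the derivative of $\phi_\ell$ must realize the prescribed matrix \emph{on the entire image} $A_{\ell-1}\cdots A_0(\delta Q)$, which can be a very anisotropic region when the cocycle has large conditioning, so the interpolation inside the ambient cube must be performed in cocycle-adapted coordinates in order to fit within the geometric constraints while retaining the large derivative on the prescribed sub-cube.
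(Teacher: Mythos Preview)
Your framework is sound and matches the paper's: write $g=\Phi f\Phi^{-1}$ with $\Phi$ supported on the interior levels, so that tidiness is automatic and the derivative formula $Dg^n(g^{-k}(x))=M_{n-k}(x)\,Q_k$ reduces the problem to linear algebra. But the linear-algebra step, which you yourself flag as ``the main obstacle,'' has a genuine gap.

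The issue is your claim that the per-step multiplicative gain can be taken to be a fixed $(1+\eta)$ with $\eta=\eta(\varepsilon,C,d)$. When you realize $\phi_\ell$ as a diffeomorphism of the image parallelepiped $A_{\ell-1}\cdots A_0(Q)$ that equals a prescribed linear map on the sub-parallelepiped and the identity near the boundary, the $C^1$-cost of the interpolation scales with the \emph{eccentricity} $e(A_{\ell-1}\cdots A_0)=\|A_{\ell-1}\cdots A_0\|\cdot\|(A_{\ell-1}\cdots A_0)^{-1}\|$ of the intermediary product, not merely with $C$. (Equivalently, pull $\phi_\ell$ back to level $0$: the conjugated map $(A_{\ell-1}\cdots A_0)^{-1}M_\ell(A_{\ell-1}\cdots A_0)$ is what must be interpolated to the identity inside the standard cube, and its distance to $I$ picks up exactly this eccentricity factor.) Since nothing prevents $e(A_{\ell-1}\cdots A_0)$ from growing like $C^{2\ell}$, your admissible $\eta$ at step $\ell$ may be forced to be of order $\varepsilon/C^{2\ell}$, and the cumulative product $\prod_\ell(1+\eta_\ell)$ stays bounded---no large derivative is produced. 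Your remark about ``cocycle-adapted coordinates'' does not sidestep this: in those coordinates the sub-cube is nice, but the $C^1$-distance $d_{C^1}(g,f)$ is measured in the ambient coordinates, where the conjugation by the intermediary product reappears.

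The paper resolves this with a dichotomy you are missing. Either some intermediary eccentricity $e(A_{i_0-1}\cdots A_0)$ exceeds a threshold $e_0$ (chosen large relative to $K$), in which case a \emph{single} small shear $H_0$ at level $0$---where the eccentricity is $1$, so the interpolation is cheap---already makes the product large, precisely by exploiting that huge eccentricity; or all intermediary eccentricities are bounded by $e_0$, in which case your build-up scheme works, and in fact one can take the $H_i$ to be scalar contractions $(1+s)^{-1}I$, which commute with everything (no singular-direction tracking needed) and trivially map the cube into itself. The parameters $e_0$ and $s$ are tuned against each other so that in the bounded case the per-step cost $2e_0\cdot\frac{s}{1+s}$ stays below the $C^1$ budget while $(1+s)^{n/2}$ eventually beats $K^2$. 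Without this split, your argument works only under the tacit hypothesis that intermediary eccentricities are uniformly bounded, which is not given.
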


\begin{rema}\label{r.tidycube} In Proposition~\ref{p.tidycube}, the time interval of the perturbation is $\{0,\dots,n-1\}$, and its support is 
$\bigcup_{i=0}^{n-1} f^i(\{0\}\times Q)$. 
The time $-k\in\{-n,\ldots, 0\}$ where we see the large derivative property might actually lie outside 
of the time interval of the
perturbation, and so we cannot compose two such perturbations with disjoint support without potentially
destroying the large derivative property.  Hence we need to consider the {\em effective} support of the perturbation, which is the larger
set  $\bigcup_{i=-n}^{n-1} f^i(\{0\}\times Q)$ corresponding to the time interval $\{-n,\ldots, n-1\}$. Note that the unique value of $i\in\{-n,\ldots, n-1\}$ 
for which we know that that the intersection of the effective support of the perturbation 
with $\{i\}\times \RR^d$ is a cube is $i=0$.  
\end{rema}
\noindent

\begin{demo}[Proof of Proposition~\ref{p.tidycocycle}, assuming Proposition~\ref{p.tidycube}]
We first define the integer $n_1$.
Let $C, K,\varepsilon>0$ be given.  Define $K_0=K^{2^{d+2}}$,
$C_1=C.K_0^2$, $\varepsilon_2=\frac\varepsilon{K_0^2}$, and
$K_1=K_0^3$. We fix $\delta=\frac9{10}$ and set:
$$n_1(d,C,K,\varepsilon)=n_2(d,C_1,K_1,\varepsilon_2,\delta).$$

Now consider $(A_i)$, $N$, $\Delta$ and $U$ as in the statement of Proposition~\ref{p.tidycocycle}. Note that we may assume that for any
$j\in\{0,\dots,N-n\}$, the product $A_{n-1+j}\cdots A_{j}$ and its inverse
have a norm bounded by $K$: otherwise, the conclusion
of Proposition~\ref{p.tidycocycle} is satisfied for the trivial perturbation $g=f$ and this value of $k$.

In order to apply Proposition~\ref{p.tidycube} we will tile the support of the perturbation. 
Recall our Remark~\ref{r.tidycube} that for a perturbation supported at times
$0,\dots,2n-1$, the tiles we use will have to be cubes ``at time $n$" and not ``at time $0$".
Thus we will tile the images $\{n\}\times \tilde U$ and $\{n\}\times\tilde \De$
of $\{0\}\times U$ and $\{0\}\times \De$ under $f^n$, where
$$\tilde U= A_{n-1}\dots A_0(U) \text{ and } \tilde \De= A_{n-1}\dots A_0(\De).$$
Let $a>0$ be such that $\frac {2a}{\delta}\sqrt d$ is less than the distance between $\tilde\De$ and the complement of $\tilde U$. We consider a regular tiling of $\RR^d$ by
cubes 
$$\tilde Q_{i_1,...,i_d}=[-a,a]^d+(2i_1a,\dots,2i_d a),\mbox{ where }i_j\in \ZZ. 
$$
We also consider the enlarged cubes
$$Q_{i_1,...,i_d}=\left[-\frac a{\delta},\frac a{\delta}\right]^d+(2i_1a,\dots,2i_d a).$$
The cubes $Q_{i_1,...,i_d}$ have diameter equal to $\frac{2a}{\delta}\sqrt d$.
By our choice of $a$, any cube $Q_{i_1,...,i_d}$ such
that $\tilde Q_{i_1,...,i_d}$ intersects the compact set $\De$ is
entirely contained in $U$. We denote by $\Ga$ the family of the  cubes
$Q_{i_1,...,i_d}$ such that $\tilde Q_{i_1,...,i_d}\cap \De\neq
\emptyset$.
Observe that two cubes $Q_{i_1,...,i_d}$ and $Q_{j_1,...,j_d}$ are
disjoint if and only if there exists $\ell\in\{1,\dots,,d\}$ with
$|i_\ell-j_\ell|\geq 2$.

Now consider the families $\{\Ga_{\bf
\mu}\}_{{\bf \mu}\in \{0,1\}^d}$, where $\Ga_{\mu_1,\dots,\mu_d}$ is
the collection of cubes $Q_{i_1,...,i_d}\in \Ga$ such that the index
$i_\ell$ is even if $\mu_\ell=0$ and odd if $\mu_\ell=1$. This gives
$2^d$ families of pairwise disjoint cubes contained in $\tilde U$ such that
the union of the corresponding smaller cubes $\tilde
Q_{i_1,...,i_d}$ covers $\tilde \De$.
To each $(\mu_1,\dots,\mu_d)\in\{0,1\}^d$ we associate the integer $\ell=\sum_{i=1}^d\mu_i 2^{i-1}$ 
(this formula induces a bijection from 
$\{0,1\}^d$ to $\{0,\dots,2^d-1\}$), and for simplicity we will use the notation $\Ga_\ell$ to
denote the family $\Ga_{\mu_1,\dots,\mu_d}$. We can thus write our partition 
$\Gamma = \Gamma_0 \cup \cdots \cup \Gamma_{2^d-1}$.

We will construct a tidy perturbation $g_Q$  of $f$ for each cube $Q\in\Ga$  
such that the supports of the perturbations $g_Q$ are pairwise disjoint.
The ultimate  perturbation $g$ 
will be obtained by combining all of these perturbations,
setting $g$ to equal $g_Q$ on the support of $g_Q$
and to equal $f$ outside the union of the supports.  
For each cube $Q\in\Ga_\ell$, for $\ell\in\{0,\dots,2^d-1\}$, 
the time interval of the perturbation will be 
$I_\ell=\{2n\ell,\dots,2n(\ell+1)-1\}\subset \{0,\dots,N-1\}$ 
and  the support of $g_Q$ will be 
$$W_Q = \bigcup_{i\in I_\ell} f^{i}(\{n\}\times Q).$$ 
The time intervals  $I_\ell$ form a partition of $\{0,\dots,N-1\}$ into $2^{d}$ intervals of length $2n$,
which implies that $W_Q\cap W_{Q'}=\emptyset$ for $Q\in\Ga_\ell$, $Q'\in \Ga_{\ell'}$ with $\ell\neq\ell'$. 
If $Q,Q'\in\Ga_\ell$, then the supports are disjoint as well, because $Q$ and $Q'$
are disjoint. 

We will now use Proposition~\ref{p.tidycube} to construct the perturbation $g_Q$ on $W_Q$. 
However there is an issue (as explained in Remark~\ref{r.tidycube}):
the time interval $I_\ell$ has length $2n$, as required, but 
the set $f^{2n(\ell-1)}(\{n\}\times Q)$ is not (in general) a cube (unless $\ell=0$),
and so does not satisfy the hypotheses of the proposition.
For this reason, for $Q\in \Ga_\ell$ and the corresponding smaller cube $\tilde Q\subset Q$
we first consider 
$$\widehat{W_Q}=f^{-2n\ell}(W_Q)=\bigcup_{j=0}^{2n-1} f^{j}(\{n\}\times Q).$$
 
Proposition~\ref{p.tidycube} ensures the existence of a tidy $\varepsilon_2$-perturbation 
$\widehat{g_Q}$ of $f$, supported in $\bigcup_{i=n}^{2n-1} f^{i}(\{n\}\times Q) \subset \widehat{W_Q}$ and of $k\in \{0,\dots,n\}$ such that
for every $x\in\{n\}\times \tilde Q$:
$$\max\{\|D\widehat{g_Q}^{n}(f^{-k}(x))\|,\|D\widehat{g_Q}^{-n}(\widehat{g_Q}^{n}\circ f^{n-k}(x))\|\}>K_1.$$
We now define $$g_Q=f^{2n\ell}\circ \widehat{g_Q}\circ f^{-2n\ell}.$$
Observe that:
\begin{itemize}
\item $g_Q$ is a tidy perturbation of $f$ supported in $\bigcup_{i=2n\ell+n}^{2n(\ell+1)-1} f^{i}(\{n\}\times Q) \subset W_Q$,
\item using the bound on the products $A_{j+n-1}\dots A_j$ and the choice of $\varepsilon_2$, we have:
$$d_{C^1}(g_Q,f)\leq (\max\{\|Df^n\|,\|Df^{-n}\|\})^{4\ell}d_{C^1}(\widehat{g_Q},f)\leq K^{2^{d+2}}\varepsilon_2\leq \varepsilon,$$
\item for every $x\in\{n\}\times \tilde Q$:
\begin{eqnarray}\label{e.tildederiv}
\max\{\|D{g_Q}^{n}(f^{2n\ell-k}(x))\|,\|D{g_Q}^{-n}({g_Q}^{n}\circ f^{2n\ell-k}(x))\|\}>\frac{K_1}{K^{2^{d+2}}}>K.
\end{eqnarray}
\end{itemize}

We finally define $g$ by $g=g_Q$ on $W_Q$ for $Q\in \Ga$ and $g=f$
outside the union of the $W_Q$, $Q\in \Gamma$. Since the support of the tidy $\varepsilon$-perturbations $g_Q$ are disjoint we get that $g$ is a tidy $\varepsilon$-perturbation of $f$ supported on
$\bigcup_{i=0}^{N-1}f^i(\{0\}\times U)$, proving the two first conclusions of the proposition. Furthermore, since these perturbations are tidy, for every point $x\in \{0\}\times \RR^d$ and every $\ell\in\ZZ$ we have $f^{2n\ell}(x)=g^{2n\ell}(x)$.  What is more, since the support of the perturbation
$g_Q$ is confined to the final $n$ iterates in the time interval $I_n$, we have that
$f^{2n\ell+n-k}(x)=g^{2n\ell +n-k}(x)$, for any $j\in\{-n,\ldots, 0\}$.

Consider now a point $x\in\{0\}\times\De$.  We show that there exists a $j$ such that
$$\max\{\|D{g}^{n}(g^{j}(0,w))\|,\|D{g}^{-n}(g^{n+j}(0,w))\|\}>K.$$
By our choice of cubes $Q$ covering $\tilde \De$, there exists a smaller cube $\tilde Q$ such that
$f^n(x)\in \{n\}\times\tilde{Q}$.  The corresponding cube $Q$ belongs to some $\Gamma_\ell$. 
The argument above shows that there exists $k\in \{0,\ldots,-n\}$
such that (\ref{e.tildederiv}) holds
for $f^n(x)$. Using the fact that $f^{2n\ell+n-k}(x)=g^{2n\ell +n-k}(x)$, for any $k\in\{0,\ldots, n\}$, we obtain that:
$$\max\{\|D{g}^{n}(g^{j}(x))\|,\|D{g}^{-n}(g^{n+j}(x))\|\}>K,$$
where $j=2n\ell+n-k$.
This gives the last conclusion of the proposition.
\end{demo}

\subsection{Reduction to linear perturbations}
We first define notation and review some linear algebra.
For any element $A\in GL(d,\RR)$ we define the {\em eccentricity of
$A$}, denoted by $e(A)$, to be the ratio
$$\sup\left\{\frac{\|A(u)\|}{\|A(v)\|} \mbox{ for } u,v\in\RR^d,\|u\|=\|v\|=1  
\right\}=
\|A\|\cdot \|A^{-1}\|.$$ Note that $e(A) \geq 1$, $e(A^{-1}) =
e(A)$, and for any $B\in GL(d,\RR)$, we have $\|A B A^{-1}\| \leq
e(A) \|B\|$.  We will also use the conorm notation $\cM(A) = \|A^{-1}\|^{-1}$.
We recall the basic fact from linear algebra that for any $A\in GL(d,\RR)$,
there exist orthogonal unit vectors $u$ and $v$ such that $\|Au\| = \|A\|$
and $\|Av\| = \cM(A)$.

Proposition~\ref{p.tidycube} is a consequence of the following proposition.

\begin{proposition}\label{p.linear}
For any $d\geq 1$ and $C,K,\eta>0$, there exists
$n_3=n_3(d,C,K,\eta)\geq 1$ with the following property.

For any sequence $(A_i)$ in $GL(d,\RR)$ satisfying $\|A_i\|,\|A_i^{-1}\|<C$
and $n\geq n_3$, there exist $H_0,\dots,H_{r-1}$, $0\leq r\leq\frac{n}2$, in $GL(d,\RR)$
and $k\in \{0,\dots,n\}$ such that:
\begin{itemize}
\item The product $P=A_{-k+n-1}\cdots A_0 H_{r-1}\cdots H_0 A_{-1}\cdots A_{-k}$
satisfies the estimate $\max\{\|P\|,\|P^{-1}\|\}>K$.
\item For $0\leq i<r$, the map $H_i$ sends the standard cube $Q$ into itself.
\item For $0\leq i\leq r$, there is the control
$$\left(e(A_{n-i-1}\cdots A_0)+e(A_{i-1}\cdots A_0)\right).\|H_i-\id\|<\eta.$$
\end{itemize}
\end{proposition}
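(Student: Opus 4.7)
My plan is to prove Proposition~\ref{p.linear} via a case analysis, based on the sizes of the partial forward products $B_j := A_{j-1}\cdots A_0$ and of the sliding-window products $P_k := A_{n-k-1}\cdots A_{-k}$. If some $P_k$ already satisfies $\max\{\|P_k\|,\|P_k^{-1}\|\}>K$, I take $r=0$ with this $k$ and am done. So assume $\|P_k\|,\|P_k^{-1}\|\leq K$ for every $k\in\{0,\ldots,n\}$; in particular $e(P_k)\leq K^2$ and all singular values of $P_k$ lie in $[1/K,K]$.

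\textbf{Bounded-eccentricity regime.} If all the forward partial eccentricities $e(B_j)$ with $j\in\{0,\ldots,n\}$ are bounded by some $M=M(d,C,K)$, I take every $H_i:=(1-\eta_i)\id$ to be a scalar contraction with $\eta_i$ just under $\eta/(e(B_{n-i})+e(B_i))\geq\eta/(2M)$. Each such $H_i$ sends $Q$ into $Q$ and meets the eccentricity-weighted constraint. The composition $H=\prod H_i$ is the scalar contraction by $\lambda\leq\exp(-r\eta/(2M))$, which drops below $K^{-2}$ once $r\gtrsim M\log K/\eta$; the cap $r\leq n/2$ then forces $n_3\gtrsim 2M\log K/\eta$. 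For any $k$, using $\|P_k^{-1}\|\geq 1/\|P_k\|\geq 1/K$,
\[
\|P^{-1}\|=\lambda^{-1}\|P_k^{-1}\|\geq\lambda^{-1}/K>K.
\]

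\textbf{Anisotropic regime.} Otherwise there is some $j^\ast$ with $e(B_{j^\ast})\gg K^2$, and since $e(B_n)\leq K^2$, the anisotropy of $B_{j^\ast}$ must be almost exactly cancelled by the tail $A_{n-1}\cdots A_{j^\ast}$. I then pick a shift $k$ so that the product $\|B_{n-k}\|\cdot\|G_k\|$, with $G_k:=A_{-1}\cdots A_{-k}$, is much larger than $\|P_k\|=\|B_{n-k}G_k\|$, reflecting this cancellation. I insert small rank-one perturbations $H_i=\id+\epsilon_i\,v\,w^T$ with $v$ aligned with the top right-singular direction of $B_{n-k}$ and $w$ with the top left-singular direction of $G_k$; the cumulative first-order effect on $P$ is the rank-one change
\[
\delta P=\Bigl(\sum_i\epsilon_i\Bigr)(B_{n-k}v)(w^T G_k),
\]
of norm $(\sum_i\epsilon_i)\,\|B_{n-k}\|\,\|G_k\|$, which can be made to exceed $K$ once the amplification factor $\|B_{n-k}\|\,\|G_k\|$ absorbs the tight budget $\sum_i\epsilon_i\lesssim\eta\sum_i 1/(e(B_{n-i})+e(B_i))$. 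Cube-preservation is secured by post-composing each $H_i$ with a scalar contraction of size $1-O(\epsilon_i)$, which affects the leading-order calculation only negligibly.

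\textbf{Main obstacle.} The anisotropic regime is the technical heart of the argument. Tracking quantitatively the largeness of $e(B_{j^\ast})$, the resulting amplification $\|B_{n-k}\|\cdot\|G_k\|/\|P_k\|$ available at the optimal shift~$k$, and the tight eccentricity-weighted budget (which may vary dramatically with $i$) is delicate, particularly when the anisotropy of intermediate $B_j$ lives on a subspace of dimension greater than one. In that case the rank-one construction likely has to be iterated along a short chain of singular directions in a Mañé--Bochi style alignment argument, all while keeping the total number of perturbations within the window $r\leq n/2$ and respecting the cube constraint; balancing these is the source of the dependence $n_3=n_3(d,C,K,\eta)$.
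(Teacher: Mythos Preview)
Your case dichotomy and your treatment of the bounded-eccentricity regime match the paper's argument essentially line for line: scalar contractions $H_i=(1+s)^{-1}\id$ with $s$ chosen so that $2e_0\cdot s/(1+s)<\eta$, and then $r=\lfloor n/2\rfloor$ of them force $\|P^{-1}\|>K$ once $(1+s)^{n/2-1}>K^2$. One correction: the eccentricity threshold for the dichotomy must depend on $\eta$ (the paper takes $e_0>\eta^{-1}(K^2+1)^2$), not only on $d,C,K$ as you wrote.

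Where you diverge is the anisotropic regime, which you have substantially overcomplicated. The paper dispatches it with a \emph{single} perturbation $H_0$: no accumulation over many $i$, no Ma\~n\'e--Bochi alignment, no higher-rank singular subspaces. The observation you are missing is that the constraint at $i=0$ is extremely mild: it reads $(e(B_n)+1)\|H_0-\id\|<\eta$, and since $e(B_n)=e(P_0)\leq K^2$ under the standing assumption, the available budget is the fixed quantity $\eta/(K^2+1)$, independent of the large threshold $e_0$. Now set $k=n-i_0$ where $e(E)>e_0$ for $E:=B_{i_0}$; write $F:=A_{-1}\cdots A_{-k}$ and $A:=EF$, so that $\cM(A)\geq 1/K$. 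Pick orthonormal $u,v$ with $\|Eu\|=\|E\|$ and $\|Ev\|=\cM(E)$, and let $H_0$ be the shear $v\mapsto v+tu$ with $\|H_0-\id\|=t$ for any $t\in\bigl((K^2+1)/e_0,\,\eta/(K^2+1)\bigr)$; this interval is nonempty by the choice of $e_0$. With $w=F^{-1}v$ one has $\|EH_0Fw\|\geq t\|Eu\|-\|Ev\|>(te_0-1)\|Aw\|$, whence $\|P\|>(te_0-1)\cM(A)\geq(te_0-1)/K>K$. That is the whole argument. In the paper's convention the cube constraint is vacuous for this lone $H_0$; if you prefer to enforce it, your own post-composition with a scalar contraction of size $1-O(t)$ works at the cost of a harmless constant. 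Either way, what you flagged as the ``main obstacle'' dissolves into a few lines.
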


We also state without proof a standard $C^1$ perturbation lemma.

\begin{lemma}\label{l.basicperturb}
For every $d\geq 1$, $\delta\in (0,1)$ there exists a neighborhood
$\cO$ of $\id$ in $GL(d,\RR)$ and for any $H\in \cO$
there exists a diffeomorphism $h(H)$ of $\RR^d$ such that
\begin{itemize}
\item $h(H)$ coincides with the identity map on $\RR^d\setminus Q$ and with $H$ on $\delta Q$.
\item The map $H\mapsto h(H)$ is $C^1$ in the $C^1$-topology.
\end{itemize}
\end{lemma}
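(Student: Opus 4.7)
The plan is to construct $h(H)$ by interpolating linearly between the identity and $H$ using a smooth bump function. Fix, once and for all, a smooth function $\psi\colon\RR^d\to[0,1]$ with $\psi\equiv 1$ on $\delta Q$ and $\psi\equiv 0$ on $\RR^d\setminus Q$ (this is where $\delta\in(0,1)$ is used: it leaves room for the cutoff). Writing $L=H-\id\in\operatorname{End}(\RR^d)$, I would define
$$h(H)(x)\;=\;x+\psi(x)\,L(x)\;=\;\psi(x)\,H(x)+(1-\psi(x))\,x.$$
The two boundary properties in the first bullet are then immediate: on $\delta Q$ one has $\psi(x)=1$ so $h(H)(x)=H(x)$, and on $\RR^d\setminus Q$ one has $\psi(x)=0$ so $h(H)(x)=x$.

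Next I would verify that for $H$ in a small enough neighborhood $\cO$ of $\id$, the map $h(H)$ is a $C^1$ diffeomorphism of $\RR^d$. A direct computation gives
$$Dh(H)(x)\,v\;=\;v+\psi(x)\,L\,v+\bigl(\nabla\psi(x)\cdot v\bigr)\,L(x),$$
and since $x$ is constrained to lie in $Q$ (outside $Q$ both $\psi$ and $\nabla\psi$ vanish), the norm of $Dh(H)(x)-\id$ is bounded by $c_\psi\|L\|$ for a constant $c_\psi$ depending only on $\psi$ and $d$. By shrinking $\cO$ I may assume $\|Dh(H)-\id\|<\tfrac12$ uniformly on $\RR^d$, so $Dh(H)(x)$ is everywhere invertible; the inverse function theorem then makes $h(H)$ a local diffeomorphism. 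For global bijectivity, I would use the standard contraction argument: for each $y\in\RR^d$, the map $T_y(x)=y-\psi(x)L(x)$ is a contraction of $\RR^d$ (its Lipschitz constant is the same as that of $\psi L$, hence $<1$ by the same estimate), so it has a unique fixed point, which is precisely $h(H)^{-1}(y)$. Thus $h(H)$ is a bijective local diffeomorphism, i.e.\ a diffeomorphism.

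Finally, the $C^1$-dependence in the second bullet is essentially free from the formula: $h(H)(x)$ depends affinely on the matrix entries of $H$, with coefficients that are fixed $C^\infty$ functions of $x$ (namely $\psi(x)$ and $\psi(x)x_j$). In particular, for any compact set $K\subset\RR^d$ the map
$$\cO\;\longrightarrow\;C^1(K,\RR^d),\qquad H\;\longmapsto\;h(H)|_K$$
is smooth (in fact real-analytic), since it factors through a bounded affine map into $C^1(K,\RR^d)$. Combined with the fact that $h(H)$ equals $\id$ outside the fixed compact set $Q$, this yields $C^1$-continuity of $H\mapsto h(H)$ in the $C^1$-topology on $\Diff^1(\RR^d)$.

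I do not anticipate a genuine obstacle here; the only point requiring any care is the global injectivity, and the contraction-mapping trick above disposes of it cleanly. Everything else is a routine chain-rule computation together with the affine (indeed linear in $L=H-\id$) form of the construction.
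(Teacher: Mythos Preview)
Your argument is correct; it is precisely the standard bump-function interpolation that one has in mind for this lemma. Note that the paper itself states Lemma~\ref{l.basicperturb} \emph{without proof}, calling it ``a standard $C^1$ perturbation lemma,'' so there is no proof in the paper to compare against. Your write-up supplies exactly the intended argument.
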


\begin{demo}[Proof of Proposition~\ref{p.tidycube} from Proposition~\ref{p.linear}]
Lemma~\ref{l.basicperturb} associates to any $H$ in a neighborhood $\cO$ of $\id$ in $GL(d,\RR)$
a diffeomorphism $h(H)$ that satisfies
$$d_{C^1}(h(H),\id)\leq \theta \|H-\id\|,$$
for some uniform constant $\theta>0$.
We then choose $\eta<\varepsilon/(C\theta)$
such that the ball centered at $\id$ and with radius $\eta$ in $GL(d,\RR)$ is contained in $\cO$.
Set $n_2=n_3(d,C,K,\eta)$.

Let $(A_i)$ be any sequence in $GL(d,\RR)$ satisfying $\|A_i\|,\|A_i^{-1}\|<K$
and any integer $n\geq n_2$. Let us consider the sequence $(H_0,\dots H_{r-1})$ and
the integer $0\leq k\leq n$ given by Proposition~\ref{p.linear}.
By our assumptions each matrix $H_i$ belongs to $\cO$ and can be associated to a diffeomorphism
$h_i=h(H_i)$ by Lemma~\ref{l.basicperturb}.

We define the cocycle $g:\ZZ\times \RR^d\to \ZZ\times \RR^d$ as follows.
$$g(x) =
\begin{cases}
f^{i + 1} \circ h_i \circ f^{-i}(x),& \hbox{on } \{i\}\times \RR^d \text{ with } 0\leq i\leq r,\\
f^{n-i} \circ h^{-1}_i \circ f^{n-i-1}(x),& \hbox{on } \{n-i-1\}\times \RR^d \text{ with } 0\leq i\leq r,\\
f(x) & \hbox{otherwise.}\end{cases}$$
By construction, $g$ is a tidy perturbation of $f$ supported on $\bigcup_{i=0}^{n-1}f^i(\{0\}\times Q)$.

On the set $\{i\}\times \RR^d$, for $0\leq i\leq r$, the $C^1$ distance
between $f$ and $g$ is bounded by
$$d_{C^1}(h_i,\id)\leq\max\{\|A_{i}\|,\|A_{i}^{-1}\|\}\cdot  e(A_{i-1}\cdots
A_{0}) \cdot \theta \cdot\|H_i-\id\| < C\theta \eta< \varepsilon.$$
The same estimate holds on $\{n-i-1\}\times \RR^d$. Consequently the distance $d_{C^1}(f,g)$ is bounded by $\varepsilon$.

Since the map $h_i$ coincides with $H_i$ on $\delta Q$, it maps $\delta Q$ into itself.
For each $0 \leq i < r$, the map $g$ sends the set $f^i(\{0\}\times \delta Q)$
into the set $f^{i+1}(\{0\}\times \delta Q)$ and coincides with
$(A_{i}\dots A_0)H_0(A_{i}\dots A_0)^{-1}$.
It follows that on the set $g^{-k}(\{0\}\times \delta Q)$ the map $g^n$ is linear and coincides
with the product $P=A_{-k+n-1}\cdots A_0 H_{r-1}\cdots H_0 A_{-1}\cdots A_{-k}$.
In particular $\max\{\|Dg^n(g^{-k}(x)\|,\|Dg^{-n}(g^{n-k}(x))\|\}$ is larger than $K$ on
$\{0\}\times \delta Q$.
\end{demo}

\subsection{Huge versus bounded intermediary products}
We now come to the proof of Proposition~\ref{p.linear}.
Let $d\geq 1$, $K>1$ and $C,\eta>0$ be given. We choose $e_0>0$ and $s\in (0,1)$
such that
$$e_0>\eta^{-1}(K^2+1)^2 \text{ and } 2e_0\frac s {1+s}<\eta.$$
Next, we choose $n_3$ satisfying
$$(1+s)^{n_3/2-1}>K^2.$$
Let us consider $(A_i)$ and $n\geq n_3$ as in the statement of the proposition.
We may assume that the products $A_{-k+n-1}\cdots A_k$ and their inverses
have a norm bounded by $K$, since otherwise the conclusion of Proposition~\ref{p.linear}
holds already. Two cases are possible.

\paragraph{Huge intermediary products.}
We first assume that
$$\text{There exists } i_0\in\{1,\dots ,n\} \text{ such that }
e(A_{i_0-1}\cdots A_0)>e_0.$$

Decompose the linear map $A=
A_{i_0-1}A_{i_0-2}\dots A_{i_0-n}$ into a product $A=EF$, where
$E=A_{i_0-1}\dots A_{0}$ and $F=A_{-1}\dots A_{i_0-n}$.
By assumption we have $e(E) > e_0$, and $\cM(A) = \|A^{-1}\|^{-1} \geq \frac{1}{K}$. 

Let $u$ and $v$ be orthogonal unit vectors in $\RR^d$ satisfying $\|E u\| =
\|E\|$ and $\|E v\| = \cM(E)$. Then $\|E u\| 
= e(E) \|Ev\| > e_0 \|E v\|$.

Since $e_0\eta>(K^2 + 1)^2$, we can choose $t\in \left(\frac {K^2+1}{e_0},\frac\eta {K^2+1}\right)$.
Let $H\in GL(d,\RR)$ be a linear map satisfying
$H(v)=v+tu$ and $\|H-\id\|=t$.
Let $P= EHF$. We claim that the
norm of $P$ is greater than $K$.  To show this, we will use the
following elementary fact:
\begin{affi} Let $A, P\in GL(d, \RR)$.  Suppose there exists a
constant $\gamma>0$ and a nonzero vector $w\in \RR^d$ such that
$\|P w\| > \gamma \|A w\|$.  Then $\|P\| > \gamma \cM(A)$.
\end{affi}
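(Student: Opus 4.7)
The proof of this claim is a short linear-algebra calculation, so my plan is just to spell out the two inequalities involved and combine them.

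First, I recall the definition $\cM(A) = \|A^{-1}\|^{-1}$. The key observation is that for any nonzero $w \in \RR^d$, writing $w = A^{-1}(Aw)$ gives
\[
\|w\| = \|A^{-1}(Aw)\| \leq \|A^{-1}\|\,\|Aw\|,
\]
and rearranging yields the lower bound $\|Aw\| \geq \cM(A)\,\|w\|$.

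Second, using the hypothesis $\|Pw\| > \gamma \|Aw\|$ together with this lower bound, I get
\[
\|Pw\| > \gamma\,\|Aw\| \geq \gamma\,\cM(A)\,\|w\|.
\]
Since $w \neq 0$, dividing through by $\|w\|$ and using $\|P\| \geq \|Pw\|/\|w\|$ gives $\|P\| > \gamma\,\cM(A)$, which is exactly the desired conclusion.

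There is no real obstacle here — the claim is essentially just the observation that $A$ cannot shrink any vector by a factor more than $\cM(A)^{-1}$, so a lower bound relative to $\|Aw\|$ immediately transfers to a lower bound relative to $\|w\|$, and hence to the operator norm of $P$.
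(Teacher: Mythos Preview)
Your proof is correct. The paper does not actually give a proof of this claim at all --- it is introduced as an ``elementary fact'' and used immediately --- so your argument is exactly the natural way to fill in the omitted details.
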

To apply this lemma, we set $w = F^{-1}(v)$, and calculate:
$$
\|P w\| = \|EHF(F^{-1}(v))\| = \|EH(v)\| = \|E(v+t u)\|
\geq t \|E(u)\| - \|E(v)\|
$$
$$> (t e_0 - 1) \|E(v)\| = (t e_0 - 1) \| A w\|.$$
Applying the lemma with $\gamma = t e_0 - 1$, we
see that
$$\|P\| > (t e_0 - 1) \cM(A) \geq \frac{(te_0-1)}{K} > K,$$
by our lower bound on $t$.

By setting $r=0$ and $H_0=H$, we obtain the first conclusion of the proposition.
The second one is empty.
Since $\|A_{n-1}\dots A_0\|$ and $\|(A_{n-1}\dots A_0)^{-1}\|$
are bounded by $K$, we obtain that $e(A_{n-1}\dots A_0)<K^2$.
As a consequence, we have
$$(e(A_{n-1}\cdots A_0)+1)\|H_0-\id\|\leq (K^2+1)t<\eta,$$
by our upper bound on $t$. This gives the last conclusion.

\paragraph{Bounded intermediary products.}
Assume, on the other hand, that
$$\text{For any } i_0\in\{1,\dots ,n\} \text{ we have }
e(A_{i_0-1}\cdots A_0)\leq e_0.$$

Let $r=\lfloor\frac{n}{2}\rfloor$ and let $H\in GL(d,\RR)$ be the linear conformal dilation
$H=(1+s)^{-1}I$.
Since this is a linear contraction, $H$ maps the standard cube $Q$ into itself.
We define $H_i=H$ for any $i=0,\dots,r-1$.
Note that
$$(e(A_{n-i-1}\cdots A_0)+e(A_{i-1}\cdots A_0)).\|H_i-\id\|\leq 2e_0 \frac s {1+s}<\eta,$$
by our choice of $s$.

It is straightforward to check that for
$k=n-r$, the product $P=A_{-k+n-1}\cdots A_0 H_{r-1}\cdots H_0 A_{-1}\cdots A_{-k}$ satisfies
$$\|P^{-1}\|\geq (1+s)^r \|A_{-k+n-1}\cdots A_{-k}\|>(1+s)^{n_3/2-1}K^{-1}>K,$$
by our choice of $n_3$.
The conclusions of the proposition are thus satisfied.
\section{(UD) property: reduction to a perturbation result in a cube}
The aim of this section is to provide successive reductions for Theorem A.
At the end we are led to a perturbation result (Proposition~\ref{p.pertAreduced})
for cocycles that produces an arbitrarily large variation of the jacobian
along orbits in a cube. Many of the difficulties we meet in the proof of Theorem A come from the fact that we have not been able to create a large change in the jacobian of a linear cocycle, inside a cube, by a tidy perturbation.

\subsection{Reduction to a perturbation result in towers}
We show that Theorem B follows from a perturbation result
that produces arbitrarily large distortion between
a given orbit and the orbit of a wandering compact set.

\begin{prop}\label{p.pertA} Consider a diffeomorphism $f$,
a compact ball $\De$, an open set $U$, and a point $x$ of $M$ satisfying:
\begin{itemize}
\item $f(\overline U)\subset U$;
\item $\De \subset U\setminus f(\overline U)$;
\item the orbit of $x$ is disjoint from $\De$.
\end{itemize}
Then for any $K,\ve >0$ there exists a diffeomorphism $g$ with $d_{C^1}(f,g)<\ve$
having the following property:
for all $y\in \De$, there exists $n\geq 1$ such that
$$\left| \log\det Dg^n(x) - \log\det Dg^n(y)\right| > K.$$
Moreover, $f=g$ on a neighborhood of the chain-recurrent set $CR(f)$.
 \end{prop}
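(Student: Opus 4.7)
\begin{demo}[Proof outline of Proposition~\ref{p.pertA}]
My approach is to perform finitely many disjoint local perturbations, one per ball of a fine cover of $\De$, each confined to a tower of consecutive fundamental domains of the attracting neighborhood $U$. First I would set up the following. Since $f(\overline U)\subset U$, the sets $F_n:=f^n(U)\setminus f^{n+1}(\overline U)$ for $n\geq 0$ are pairwise disjoint open subsets of $U$, all lying in $M\setminus CR(f)$ by the Conley-type characterization recalled in the introduction, and $f^n(\De)\subset F_n$. Moreover $\De\cap \Om(f)=\emptyset$, so $\orb(x)$ cannot accumulate on $\De$; combined with $\orb(x)\cap\De=\emptyset$ this yields $d(\orb(x),\De)>0$. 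If $\orb(x)$ meets $U$, it meets each $F_n$ in exactly one point, which is at positive distance from $f^n(\De)$.

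Next I would cover $\De$ by finitely many open balls $B_1,\dots,B_M\subset F_0$, centered at points of $\De$ and of radius small enough that they avoid $\orb(x)\cap F_0$ and fit into the linearization regions given by Lemma~\ref{l.linearize}. For each $i$, I would pick recursively a time shift $s_i$ (with $s_1=0$ and $s_{i+1}=s_i+n_i$) and a number of iterates $n_i$, and perform a perturbation supported in the tower $\bigcup_{k=0}^{n_i-1}f^{s_i+k}(B_i)\subset F_{s_i}\cup\cdots\cup F_{s_i+n_i-1}$. By construction these towers are pairwise disjoint across $i$, contained in the wandering set, and disjoint from $\orb(x)$ (since each $B_i\subset F_0$ avoids the unique iterate of $x$ that might lie in $F_0$, and $f$ is injective).

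For the local step, I would apply Lemma~\ref{l.linearize} to linearize $f$ along $n_i$ consecutive iterates of a central point of $B_i$ into a linear cocycle, with conjugating diffeomorphism whose derivative and jacobian are bounded by $2$. I would then invoke Proposition~\ref{p.pertAreduced} on this cocycle with target constant $K_i>K+2(s_i+n_i)d\log C+\log 4$: this provides a $C^1$-small cocycle perturbation, supported in iterates of a cube corresponding to $B_i$, that changes $\log|\det|$ of the $n_i$-th iterate by more than $K_i$, with a fixed sign (say positive) on the subcube corresponding to $\De\cap B_i$. Transferring back via the linearization yields a perturbation $g_i$ of $f$ of $C^1$-size less than $\eps/2$, supported in the tower above, with
\[
\log|\det Dg_i^{n_i}(f^{s_i}(y))|-\log|\det Df^{n_i}(f^{s_i}(y))|>K_i-\log 4\qquad\text{for every }y\in \De\cap B_i.
\]
I then define $g$ to equal $g_i$ on the support of $g_i$ and $f$ elsewhere; because the supports are pairwise disjoint, $d_{C^1}(f,g)<\eps$, and the compact total support lies in $M\setminus CR(f)$, so $g=f$ on a neighborhood of $CR(f)$.

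The verification then goes as follows. Since $\orb(x)$ avoids every $B_i$ and its iterates, one has $\log|\det Dg^n(x)|=\log|\det Df^n(x)|$ for all $n$. For $y\in\De$, pick $i$ with $y\in B_i$ and set $n=s_i+n_i$; because the contributions of the various $g_j$ to $\log|\det Dg^n(y)|-\log|\det Df^n(y)|$ are nonnegative (same sign convention) and only those with $y\in B_j$ are nonzero, one has
\[
\log|\det Dg^n(y)|-\log|\det Df^n(y)|\geq K_i-\log 4,\qquad \bigl|\log|\det Df^n(y)|-\log|\det Df^n(x)|\bigr|\leq 2nd\log C,
\]
so that the choice $K_i>K+2nd\log C+\log 4$ yields $\bigl|\log|\det Dg^n(y)|-\log|\det Dg^n(x)|\bigr|>K$. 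The main obstacle is Proposition~\ref{p.pertAreduced} itself, used here as a black box: producing, by a $C^1$-small perturbation of a linear cocycle, a uniform large change of $\log|\det|$ over an entire subcube. A secondary subtlety is the consistent inductive choice of $K_i$ and $n_i$ so that $K_i$ dominates $s_i+n_i$ at each step, which is tractable since $M$ is finite and $n_i$ is only chosen after $s_i$ is fixed.
\end{demo}
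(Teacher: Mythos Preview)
Your outline has two genuine gaps that the paper's argument avoids.

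\textbf{The sign-consistency claim fails.} You assert that the contributions of the various $g_j$ to $\log|\det Dg^n(y)|-\log|\det Df^n(y)|$ are all nonnegative. But Proposition~\ref{p.pertAreduced} only controls the Jacobian change on the inner subcube $\theta Q_\mu$: in the construction (Section~7) the cutoff $\xi$ satisfies $\xi'<0$ in the transition zone $[\theta_0^2,\theta_0]$, so the time-$(-s_0)$ map $h_{-1}$ has derivative $>1$ there, and $\log\det DH_i$ changes sign on $Q_\mu\setminus\theta Q_\mu$. Since your covering balls must overlap to cover $\De$, a point $y$ in the inner region of $B_i$ will typically lie in the boundary annulus of a neighbouring $B_j$, where the Jacobian contribution can be large and of the wrong sign, cancelling the gain from $B_i$. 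You also tacitly identify $g^{s_i}(y)$ with $f^{s_i}(y)$; since the perturbations of Proposition~\ref{p.pertAreduced} are only \emph{almost} tidy, these differ, and your decomposition into independent contributions is not clean.

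\textbf{The inductive choice of $K_i$ is circular.} You require $K_i>K+2(s_i+n_i)d\log C+\log 4$, but $n_i$ depends on $K_i$: in Proposition~\ref{p.pertAreduced} one needs $n_2 s_\ve>K_i$, so $n_i\gtrsim K_i/s_\ve$ with $s_\ve=O(\ve/C)$. For small $\ve$ the factor $2d\log C/s_\ve$ exceeds $1$, and then no $K_i$ can satisfy your inequality.

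The paper proceeds differently. It interposes Proposition~\ref{p.pertAlocal}, which carries an explicit $C^0$ bound $d_{C^0}(f,g)<\eta$ and uses the \emph{fixed} target $2K$. To prove Proposition~\ref{p.pertA}, $\De$ is tiled by cubes organised into $2^d$ families, the cubes in each family pairwise disjoint; family $\cF_j$ is perturbed in the time window $[n_0 j, n_0(j+1)-1]$. The $C^0$ smallness ensures that the $g$-orbit of $y$ lands in the correct iterate $f^{n_0 j}(D)$ at the start of its window (this is the Claim in the paper's proof). The conclusion is then a dichotomy rather than an additivity argument: either $|\log\det Dg^{n_0 j}(x)-\log\det Dg^{n_0 j}(y)|>K$ already, or this quantity is $\le K$ and the local $2K$-estimate forces $|\log\det Dg^{n_0 j+n}(x)-\log\det Dg^{n_0 j+n}(y)|>K$. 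This sidesteps both the sign issue and the need for the local target to dominate the total length of the tower.
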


\begin{demo}[Proof of Theorem A from Proposition~\ref{p.pertA}]
Let $\cX\subset M$ be a countable dense set and let $\cK = \{\De_n\}$ be a countable collection of compact balls in $M$ satisfying:
\begin{itemize}
\item $\hbox{diam}\De_n\to 0$ as $n\to\infty$, and
\item $\bigcup_{n\geq n_1} \De_n = M$ for all $n_1\geq 1$.
\end{itemize}
For $\De\in \cK$, define the open subset of $\Diff^1(M)$:
$$\cO_\De = \{f\in \Diff^1(M)\,|\, \exists  \hbox{ open set } U, f(\overline U) \subset U, \De \subset U\setminus f(\overline U)\}.$$
For $x\in \cX$, define
$$\cU_{x,\De} = \{f\in \cO_\De \,|\, \orb_f(x)\cap \De = \emptyset\}.$$
Notice that  $f\in\cU_{x,\De}$ means that there is an open set $U$ such that $(\De,U,x)$ satisfies the hypotheses of Proposition~\ref{p.pertA}.

The set $\cU_{x,\De}$ is not open. The next lemma gives a simple criterion for $f$ to belong to its interior:
\begin{lemm}\label{l.intU} Consider  $f\in\cU_{x,\De}$ and  an open subset $U$ of $M$ with $f(\overline{U})\subset U$, $\De\subset U\setminus f(\overline{U})$. 
Assume that the orbit of $x$ meets $U\setminus f(\overline{U})$. Then $f$ belongs to the interior of $\cU_{x,\De}$: the orbit of $x$ under any diffeomorphism $g$ sufficiently $C^1$-close to $f$ is disjoint from $\De$.
\end{lemm}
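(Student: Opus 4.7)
The plan is to extract from the trapping hypothesis $f(\overline U)\subset U$ a forward-invariant compact set $W$ and a backward-invariant open set $V$, both disjoint from $\De$ and both robust under $C^1$-perturbation of $f$. Once these are in place, the hypothesis that the $f$-orbit of $x$ meets the ``fundamental domain'' $U\setminus f(\overline U)$ will force the $g$-orbit of $x$ to enter $\mathrm{int}\,W$ on one side and $V$ on the other after finitely many iterates, yielding disjointness from $\De$ for all times.

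First I would observe that $f(\overline U)\subset U$ is an open condition in the $C^0$ topology, so there is a $C^1$-neighborhood $\cN_1$ of $f$ on which $g(\overline U)\subset U\subset \overline U$; equivalently, $V:=M\setminus\overline U$ satisfies $g^{-1}(V)\subset V$ for every $g\in\cN_1$. Since $f(\overline U)$ and $\De$ are disjoint compacta in the open set $U$ (disjointness follows from $\De\subset U\setminus f(\overline U)$), I can choose a compact neighborhood $W$ of $f(\overline U)$ with $W\subset U\setminus\De$, and then shrink $\cN_1$ to a $C^1$-neighborhood $\cN_2\subset\cN_1$ on which $g(\overline U)\subset\mathrm{int}\,W$. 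For such $g$ one gets $g(W)\subset g(\overline U)\subset W$, so $W$ is forward-invariant under $g$ and disjoint from $\De$.

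Next I would pin down the structure of the $f$-orbit of $x$. The integer $k$ with $f^k(x)\in U\setminus f(\overline U)$ is unique: once the orbit enters $U$ it is swept into $f(\overline U)$ by $f$ and stays there forever forward, and one step backward from $U\setminus f(\overline U)$ necessarily lands outside $\overline U$. Hence $f^{k+1}(x)\in f(\overline U)\subset\mathrm{int}\,W$, $f^{k-1}(x)\in V$, and $f^k(x)\notin\De$ by the hypothesis on the orbit of $x$. By continuity of the finite-time iterate $g\mapsto g^j(x)$ at $f$ in the $C^0$ topology, a yet smaller $C^1$-neighborhood $\cN\subset\cN_2$ preserves all three of these memberships. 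Forward-invariance of $W$ under $g$ then gives $g^n(x)\in W$ for every $n\geq k+1$, and backward-invariance of $V$ gives $g^n(x)\in V$ for every $n\leq k-1$; both sets being disjoint from $\De$, this concludes.

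The main obstacle is that there are infinitely many iterates to control on both sides, whereas continuity of $g\mapsto g^n(x)$ only gives control at finitely many times. This forces the use of trapping regions whose invariance is itself an open condition in $g$ — which is precisely why the hypothesis is phrased in terms of the open trapping set $U$ rather than only in terms of the $f$-orbit of $x$.
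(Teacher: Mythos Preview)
Your proof is correct and follows essentially the same approach as the paper: both exploit the attracting region $U$ to control all but one iterate of the orbit, then use continuity of finite iterates for that remaining time. The paper's version is slightly more compressed---it ensures directly that $g^i(x)\in U\setminus(\overline{g(U)}\cup\De)$ and invokes the fundamental-domain property of $U\setminus\overline{g(U)}$---whereas you make the forward/backward trapping explicit via $W$ and $V$; you should also remark (it is immediate from your construction, since $g(\overline U)\subset W$ and $W\cap\De=\emptyset$) that $g\in\cO_\De$, so that $g\in\cU_{x,\De}$ and not merely $\orb_g(x)\cap\De=\emptyset$.
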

\begin{demo}
Consider $i\in \ZZ$ such $f^i(x)\in U\setminus\overline{f(U)}$. Such a number $i$ is unique, 
because $U\setminus\overline{f(U)}$
is the fundamental domain of an attracting region $U$ and hence is disjoint
from all its iterates.
Moreover, there is a neighborhood $\cU$ of $f$ such that every $g\in \cU$ satisfies:
\begin{itemize}
\item $g(\overline{U})\subset U$,
\item $\De\subset U\setminus\overline{g(U)}$,
\item  $g^i(x)\in U\setminus \left(\overline{g(U)} \cup \De \right)$.
\end{itemize}
This shows that the open neighborhood $\cU$ of $f$ is contained in $\cU_{x,\De}$.
\end{demo}

In order to obtain a residual set, we must first produce a countable family of open and dense subsets of $\Diff^1(M)$. 
However the sets $\cU_{x,\De}$ are neither open nor closed.  The next lemma shows the way to bypass this difficulty:

\begin{lemm} The set $\interior\left(\cU_{x,\De}\right) \cup \interior\left(\cO_\De \setminus \cU_{x,\De}\right)$ is open
and dense in $\cO_\De$.
\end{lemm}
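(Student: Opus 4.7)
Openness is automatic, as the set in question is a union of two open subsets of $\cO_\De$. The content is the density claim, and my plan is to fix some $f \in \cO_\De$ together with an arbitrary $C^1$-open neighborhood $\cV \subseteq \cO_\De$ of $f$ and to produce a point of $\cV$ lying in $\interior(\cU_{x,\De}) \cup \interior(\cO_\De \setminus \cU_{x,\De})$.

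The strategy is a clean dichotomy: either $\cV \subseteq \cU_{x,\De}$ or not. If it is, then $\cV$ is itself an open subset of $\cU_{x,\De}$, so by definition of interior $\cV \subseteq \interior(\cU_{x,\De})$, and there is nothing more to do. If it is not, pick some $g \in \cV$ and $n_0 \in \ZZ$ with $g^{n_0}(x) \in \De$. (The degenerate case $x \in \De$ is trivial: then $\cU_{x,\De} = \emptyset$ and $\cO_\De \setminus \cU_{x,\De} = \cO_\De$ is open.) Assuming $x \notin \De$, so $n_0 \neq 0$, I would split once more: if $g^{n_0}(x) \in \interior(\De)$, then since the evaluation map $h \mapsto h^{n_0}(x)$ is continuous on $\cO_\De$ for the $C^1$ topology and $\interior(\De)$ is open, a whole $C^1$-neighborhood of $g$ lies in $\cO_\De \setminus \cU_{x,\De}$, so $g \in \interior(\cO_\De \setminus \cU_{x,\De})$. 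If instead $g^{n_0}(x) \in \partial \De$, I would reduce to the previous subcase by a standard $C^1$-small local perturbation of $g$, supported in a small neighborhood of the orbit point $g^{n_0-\sign(n_0)}(x)$, chosen small enough to avoid the finitely many other orbit points $g^k(x)$ with $|k| \leq |n_0|$; picking the perturbation direction so that it pushes $g^{n_0}(x)$ slightly into $\interior(\De)$ yields a diffeomorphism $g' \in \cV$ with $(g')^{n_0}(x) \in \interior(\De)$, which places $g'$ in $\interior(\cO_\De \setminus \cU_{x,\De})$.

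The only step involving any real analytic content is this final local perturbation, but it is entirely routine: it is the same bump-function construction that underlies the classical Kupka--Smale-type transversality arguments. The conceptual point of the lemma is the dichotomy itself, which bypasses the fact that $\cU_{x,\De}$ is neither open nor closed in $\cO_\De$ by absorbing its ``non-open'' part into the complementary alternative $\interior(\cO_\De \setminus \cU_{x,\De})$.
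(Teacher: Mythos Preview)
Your argument is correct and takes a genuinely different, more elementary route than the paper's. The only small caveat is in Subcase~2b: for the localized perturbation near $g^{n_0-\operatorname{sign}(n_0)}(x)$ to leave the earlier orbit segment untouched, you need that point to be distinct from $x,g(x),\ldots,g^{n_0-2}(x)$ (say for $n_0>0$). This can fail if $x$ is $g$-periodic with period at most $n_0-1$, but is immediately repaired by first replacing $n_0$ by its residue modulo the period, so that $0<n_0<p$; then the relevant orbit points lie in a single period and are distinct.

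The paper proceeds without any perturbation and without using that $\De$ has nonempty interior. It fixes a trapping region $U$ for $f$ that persists on a neighborhood $\cU$, and splits $\cU$ according to whether the orbit of $x$ enters the fundamental domain $U\setminus g(\overline U)$. On the subset $\cV$ where it does, Lemma~\ref{l.intU} says that $\cV\cap\cU_{x,\De}$ is already open; the complement of its closure in $\cV$ is then open and contained in $\cO_\De\setminus\cU_{x,\De}$. Together with the open set $\cU\setminus\overline{\cV}\subset\cU_{x,\De}$, this yields an open dense subset of $\cU$ inside the target set. So the paper's argument is a pure Baire-space decomposition leaning on the fundamental-domain structure, while yours is a hands-on perturbation that exploits $\interior(\De)\neq\emptyset$. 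Your approach is shorter and avoids Lemma~\ref{l.intU}; the paper's has the mild advantage of working for arbitrary compact $\De$ and making the role of the attracting region more transparent, which fits the later use of these fundamental domains in the proof of Proposition~\ref{p.pertA}.
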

\begin{demo} The set $\interior\left(\cU_{x,\De}\right) \cup \interior\left(\cO_\De \setminus \cU_{x,\De}\right)$ is open
by definition; we just have to prove its density in $\cO_\De$.

Fix $f\in \cO_\De$.  Then there exists an open set $U\subset M$ and
an open neighborhood $\cU\subset \cO_\De$ of $f$ such that for $g\in \cU$
we have $g(\overline U) \subset U$ and $\De \subset U\setminus g(\overline U)$.
Now fix $x\in X$.  We will show that $\cU\cap (\interior\left(\cU_{x,\De}\right) \cup \interior\left(\cO_\De \setminus \cU_{x,\De}\right))$ is dense in $\cU$.

Let $\cV\subset \cU$ be the open subset of diffeomorphisms $g$  such that the orbit $\orb_g(x)$ meets $U\setminus g(\overline U)$. Any diffeomorphism $g\in \cU\setminus \cV$ belongs to $\cU_{x,\De}$: the orbit of $x$ is disjoint from $U\setminus g(\overline U)$, and so is disjoint \emph{a fortiori} from $\De$. Let $\cW_0=\cU\setminus\overline{\cV}$. By construction, $\cV\cup\cW_0$ is open and dense in $\cU$, and $\cW_0\subset \interior\left(\cU_{x,\De}\right)$.

Lemma~\ref{l.intU} asserts that $\cV_0=\cV\cap U_{x,\De}$ is open. Let $\cW_1=\cV\setminus\overline{\cV_0}$. Then $\cW_1$ is an open set contained in $\cO_\De \setminus \cU_{x,\De}$ and hence in $\interior\left(\cO_\De \setminus \cU_{x,\De}\right)$. Moreover, $\cV_0\cup\cW_1$ is open and dense in $\cV$.
We have thus shown that $\cV_0\cup\cW_1\cup\cW_0$ is an open and dense subset of $\cU$ contained in $\interior\left(\cU_{x,\De}\right) \cup \interior\left(\cO_\De \setminus \cU_{x,\De}\right)$, ending the proof.
\end{demo}

For $x\in \cX$, $\De\in\cK$ and  any  integer $K\in\NN$, we define:
 \begin{equation*}
\begin{split}
\cV_{x,\De,K} = \{f\in\interior(\cU_{x,\De}) \,|\,
\forall y\in \De, &\exists n\geq 1,\\
&\left|\log\jac Df^n(x) - \log\jac Df^n(y) \right| > K\}.
\end{split}
\end{equation*}
Note that $\cV_{x,\De,K}$ is open in $\interior(\cU_{x,\De})$. Furthermore  $\cV_{x,\De,K}$ is dense in $\interior(\cU_{x,\De})$,  by 
Proposition~\ref{p.pertA}. If follows that the set
$$
\cW_{x,\De,K} =  \cV_{x,\De,K} \cup \interior \left(\cO_\De\setminus \cU_{x,\De}\right) \cup \interior\left(\diff^1(M)\setminus\cO_\De\right) 
$$
is open and dense in $\diff^1(M)$.  Next, we set
$$\cG_0 = \bigcap \cW_{x,\De,K},$$
where the intersection is taken over $x\in \cX, \De\in \cK,$ and $K\in \NN$.  This set is residual in $\diff^1(M)$.
Finally, we take $\cG$ to be the intersection of $\cG_0$ with the residual set
of $f\in\diff^1(M)$ constructed in \cite{BC} where the chain recurrent and nonwandering sets coincide.

Consider $f\in \cG$. Fix $x\in \cX$ and $y\in M\setminus\Om(f)=M\setminus CR(f)$ such that $\orb(x)\cap\orb(y)=\emptyset$.  
Since $y$ is not chain recurrent,
Conley theory implies that there exists an open set $U\subset M$ such that $f(\overline U)\subset U$ and $y\in U\setminus \overline{f(U)}$
(see Section~\ref{s.preliminaries}).
Observe that $\orb(x)\cap U\setminus \overline{f(U)}$ contains at most one point;
it is distinct from $y$ by assumption.
Thus any compact set $\De\in \cK$ containing $y$ and with  sufficiently small diameter satisfies 
\begin{itemize}
\item $\De \subset U\setminus f(\overline U)$,
\item $\orb(x)\cap \De = \emptyset$.
\end{itemize}

We fix such a compact set $\De\in \cK$. This implies that $f\in  \cU_{x,\De}$.
Since $f\in\cG_0$, the definition of $\cG_0$ implies that for every $K\in \NN$, we have $f\in \cW_{x,\De,K}$; 
since  $f\in  U_{x,\De}$, we must have $f\in \cV_{x,\De,K}$. This means that, for every $K$, we have for some $n\geq 1$
$$ \left|\log\jac Df^n(x) - \log\jac Df^n(y) \right| > K.$$
Hence $f$ satisfies property (UD) on the nonwandering set.
\end{demo}

\subsection{Localization of the perturbation}
Here we reduce Proposition~\ref{p.pertA} to the case where
$\De$ has  small diameter.

We use the following notation. If $X\subset M$ is a compact set and $\delta>0$ then $U_\delta(X)$ denotes the \emph{$\delta$-neighborhood} of $X$:
$U_\delta(X) = \{y\in M\,|\, d(y,X) <\delta\}$.

\begin{prop}\label{p.pertAlocal} 
For any $d\geq 1$ and $C, K, \ve >0$,
there exists $n_0 = n_0(C,K,\ve)$ with the following property.

For any diffeomorphism $f$ of a $d$-dimensional manifold $M$
satisfying $\|Df\|,\|Df^{-1}\|<C$,
there exists $\rho_0 = \rho_0(d,C,K,\ve)$ such that for
any $\eta>0$, any compact set $\De\subset M$ and $x\in M$  satisfying:
\begin{itemize}
\item $\diam(\De) < \rho_0$,
\item $\De$ is disjoint from its first $n_0$ iterates $\{f^i(\De): 1\leq i\leq n_0\}$,
\item $\orb(x)\cap \De = \emptyset$,
\end{itemize}
there exists a diffeomorphism $g\in\diff^1(M)$ such that 
\begin{itemize}
\item  $d_{C^1}(f,g)<\ve$,
\item  $d_{C^0}(f,g) < \eta$,
\item for all $y\in \De$, there exists an integer $n\in \{1,\ldots,n_0\}$ such that:
$$\left| \log\det Dg^n(x) - \log\det Dg^n(y)\right| > K.$$
\end{itemize}
Moreover, $f=g$ on the complement of  $U_\eta(\bigcup_{i=0}^{n_0-1} f^i(\De))$. 
\end{prop}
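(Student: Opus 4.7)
The plan is to adapt the cocycle-reduction strategy used to prove Theorem B' (Proposition~\ref{p.tidycocycle}) to this jacobian-distortion setting: use Lemma~\ref{l.linearize} to pass to a linear cocycle along the iterates of a small neighborhood of $\De$, apply a cocycle-level perturbation result (precisely Proposition~\ref{p.pertAreduced} promised in this section's introduction) that produces the required variation of the jacobian at every orbit meeting a subcube, and finally transfer back to a perturbation of $f$ supported on the iterates of $\De$. Unlike the analogous step for the (LD) property, this cocycle perturbation will not be tidy; this does not matter here, since only the (UD) conclusion of Proposition~\ref{p.pertA} is required, and the paper explicitly warns at the beginning of the section that a large jacobian change inside a cube is not available via a tidy perturbation.

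First, choose $n_0=n_0(C,K,\ve)$ large enough that Proposition~\ref{p.pertAreduced} applies to any linear cocycle with norm bounds $C$ and produces a variation of $\log|\det|$ of size at least $K+d n_0\log C+\log 4$; the extra terms will absorb the fixed quantity $|\log|\det Df^n(x)||\leq d n_0\log C$ and the distortion introduced by the linearizing conjugacies. Then choose $\rho_0=\rho_0(d,C,K,\ve)$ via Lemma~\ref{l.linearize} so that for any open set $U$ of diameter $<\rho_0$ whose iterates $\overline U, f(\overline U),\dots,f^{n_0}(\overline U)$ are pairwise disjoint, a $C^1$-small perturbation of the linearized cocycle of size $\tilde\ve$ transfers to an $\ve$-perturbation of $f$. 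Under the hypotheses of the proposition, fix a base point $x_0\in\De$ and choose an open neighborhood $U\supset \De$ satisfying $U\subset U_\eta(\De)$, $\diam(U)<\rho_0$, pairwise disjointness of $\overline U,\dots,f^{n_0}(\overline U)$, and
$$\Bigl(\bigcup_{i=0}^{n_0-1} f^i(U)\Bigr)\cap\{f^j(x):|j|\leq n_0\}=\emptyset.$$
This last condition is achievable by shrinking $U$, because $\orb(x)\cap\De=\emptyset$ forces $\orb(x)$ to be disjoint from every iterate of $\De$ (if $f^j(x)\in f^i(\De)$ then $f^{j-i}(x)\in\De$), so the finite set $\{f^j(x):|j|\leq n_0\}$ is at positive distance from $\bigcup_{i=0}^{n_0-1} f^i(\De)$.

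Let $\Psi_i:f^i(U)\to Df^i(\tilde U)$ be the linearizing diffeomorphisms of Lemma~\ref{l.linearize}, where $\tilde U=\exp_{x_0}^{-1}(U)$, and set $\tilde\De=\Psi_0(\De)$. Proposition~\ref{p.pertAreduced}, applied to the linear cocycle $(Df_{f^i(x_0)})$ and to $\tilde\De$ inside the cube $\tilde U$, produces a perturbation $\tilde g$ of this cocycle, $\tilde\ve$-close to it in $C^1$, supported in $\bigcup_{i=0}^{n_0-1} Df^i(\tilde U)$, with the prescribed jacobian variation at every point of $\tilde\De$. Transferring via Lemma~\ref{l.linearize} yields a diffeomorphism $g$ which agrees with $f$ outside $\bigcup_{i=0}^{n_0-1} f^i(U)\subset U_\eta\bigl(\bigcup_{i=0}^{n_0-1} f^i(\De)\bigr)$, with $d_{C^1}(f,g)<\ve$ and, by a further shrinking of $U$ to lie in an $\eta/C^{n_0}$-neighborhood of $\De$, with $d_{C^0}(f,g)<\eta$. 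The orbit-avoidance arranged above guarantees $\det Dg^n(x)=\det Df^n(x)$ for $n\leq n_0$, while the cocycle estimate at $\Psi_0(y)$, combined with the uniform bound $\max\{|\det D\Psi_i|,|\det D\Psi_i^{-1}|\}<2$ from Lemma~\ref{l.linearize}, gives for each $y\in\De$ the desired inequality
$$\bigl|\log|\det Dg^n(x)|-\log|\det Dg^n(y)|\bigr|>K.$$

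The main obstacle in this argument, and the reason we can only reduce the question here to a statement whose own proof will be nontrivial, is the cocycle-level assertion (Proposition~\ref{p.pertAreduced}): achieving a \emph{uniform} large variation of the jacobian along \emph{every} orbit of a cube, by a $C^1$-small perturbation confined to the tower. As noted at the start of this section, it appears impossible to do this tidily, so the perturbation must genuinely alter orbit dynamics inside the tower, and that is where the real technical content lies. Once this cocycle statement is granted, the reduction above is essentially bookkeeping about linearization and the choice of neighborhoods.
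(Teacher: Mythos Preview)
Your overall strategy---linearize along the orbit via Lemma~\ref{l.linearize}, apply a cocycle-level jacobian-perturbation result, transfer back---is the paper's strategy. Two issues, however, keep the argument from going through as written.

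First, you invoke Proposition~\ref{p.pertAreduced} directly on the pair $(\tilde U,\tilde\De)$. That proposition is stated only for the very special cubes $Q_\mu$ and $\theta Q_\mu$, with an auxiliary thinness parameter $\mu$; it does not accept an arbitrary compact set in an arbitrary open set. The paper inserts an intermediate result, Proposition~\ref{p.pertAcocycle}, which does take arbitrary $(U,\De)$ and whose proof from Proposition~\ref{p.pertAreduced} is a genuine tiling argument. What you call Proposition~\ref{p.pertAreduced} should really be Proposition~\ref{p.pertAcocycle}.

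Second, and more seriously, your choice of constants is circular. You ask for an $n_0$ such that the cocycle result produces a variation of $\log|\det|$ of size at least $K_0:=K+dn_0\log C+\log 4$. But the integer delivered by the cocycle result is itself an increasing (in fact linear) function of the target variation $K_0$; so you are asking for $n_0$ at least a constant times $K+dn_0\log C$, which has no solution once $\ve$ (hence $s_\ve$) is small relative to $d\log C$. The paper's proof sidesteps this entirely with a dichotomy you omit: before perturbing, one checks whether some iterate already satisfies $|\log\det Df^n(z_0)-\log\det Df^n(x)|>K+2\log 2$ for the center $z_0\in\De$. If so, no perturbation is needed, since every $z\in\De$ has $\log\det Df^n(z)$ within $2\log 2$ of $\log\det Df^n(z_0)$. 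If not, then $|\log\det Df^n(x)-\log\det Df^n(y)|$ is bounded by the \emph{fixed} constant $K+4\log 2$ for all $n\le n_0$ and $y\in\De$, so choosing $K_0>2K+8\log 2$---independent of $n_0$---suffices. This dichotomy is the key device that breaks the circularity, and it is missing from your argument.
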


\begin{demo}[Proof of Proposition~\ref{p.pertA} from Proposition~\ref{p.pertAlocal}]
Let $f$, $\De$, $U$, $x$ be as in the statement of Proposition~\ref{p.pertA}
Choose $n_0 = n_0(d,C,2K,\ve)$ and $\rho_0 = \rho_0(d,C,K,\ve)$
according to Proposition~\ref{p.pertAlocal}. We set $N=2^dn_0$.

Cover $\De$ by a finite collection $\cF$ of compact sets satisfying:
\begin{itemize}
\item $\De \subset \bigcup_{D\in \cF} \interior(D) \subset U\setminus f(\overline{U})$, so that for each $D\in \cF$ the iterates
$D, f(D), f^2(D), \ldots$ are pairwise disjoint;
\item $\orb(x)\cap  \bigcup_{D\in \cF} D = \emptyset$;
\item $\hbox{diam}(f^i(D)) <\rho_0$, for all $D\in \cF$ and $i\in\{0,\ldots,N\}$;
\item $\cF = \cF_0\cup \cdots \cup \cF_{2^d-1}$, where $\cF_i\cap \cF_j=\emptyset$
for $i\neq j$ and the elements of $\cF_j$ are pairwise disjoint for each $j$.
\end{itemize}
One can obtain $\cF$ by tiling by arbitrarily small cubes the compact ball $\De$.

Let $\lambda>0$ be the Lebesgue number of the cover $\cF$.  For any $\eta>0$
we define an increasing sequence $(a_\eta(n))$ by the inductive formula:
$$a_\eta(0)=0;\qquad a_\eta(n+1) = Ca_\eta(n) + \eta.$$
Note that for $n\geq 0$ fixed, we have $a_\eta(n)\to 0$ as $\eta\to 0$.

Let $\eta>0$ be small such that:
\begin{itemize}
\item for each $D\in \cF$ and $0\leq i<N-1$,
the $\eta$-neighborhood $U_\eta(f^i(D))$
is contained in $f^i(U)\setminus\overline{f^{i+1}(U)}$;
in particular the sets $U_\eta(D), \ldots, U_\eta(f^{N-1}(D))$ are pairwise disjoint;
\item the orbit $\orb(x)$ is disjoint from the union $\bigcup_{i\in \{0,\ldots, N\}} U_\eta(f^{i}(D))$;
\item  for each distinct $(j,D)$ and $(j',D')$
with $j,j'\in\{0,\ldots,2^d-1\}$ and $D\in \cF_j$, $D'\in \cF_{j'}$,
we have for all $k,k'\in\{0,\ldots, n_0-1\}$,
$$U_\eta(f^{n_0j + k}(D))\cap U_\eta(f^{n_0j' + k'}(D')) = \emptyset;$$
\item $a_\eta(N) < C^{-N}\lambda$.
\end{itemize}

For $j\in\{0,\ldots, 2^d-1\}$ and $D\in \cF_j$, the set
$f^{n_0j}(D)$ and the point $f^{n_0j}(x)$ satisfy the hypotheses of Proposition~\ref{p.pertAlocal}.
We obtain a perturbation of $f$ supported 
on the $\eta$-neighborhood of $\bigcup_{k\in\{0,\ldots, n_0-1\}}f^{n_0 j + k}(D)$; by our choice of $\eta$, any two
such perturbations for distinct choices of $(j,D)$ will be disjointly supported.
Hence, applying Proposition~\ref{p.pertAlocal} over all pairs $(j,D)$ with
$j\in\{0,\ldots, 2^d-1\}$ and $D\in \cF_j\}$,
we obtain a perturbation $g$ with the following properties:
\begin{enumerate}
\item $d_{C^1}(f,g)<\ve$;
\item\label{c2} $d_{C^0}(f,g) < \eta$;
\item $g=f$ on
$M\setminus U_\eta\left(\bigcup_{k=0}^{N-1} f^k(D)\right)$;
\item for each $j\in\{0,\ldots, 2^d-1\}$ and each $y\in \bigcup_{D\in \cF_j} D$,
 there exists $n\in\{1,\ldots,n_0\}$ such that:
\begin{eqnarray}\label{e.distfg}
\left| \log\det D{g}^n(f^{n_0j}x) - \log\det D{g}^n(f^{n_0j}y)\right| > 2K.
\end{eqnarray}
\end{enumerate}
\medskip

We now prove the large derivative formula.
We fix $y\in \De$.
\begin{affi} There exist $j\in\{0,\ldots, 2^d-1\}$ and
$D\in \cF_j$ such that $g^{n_0j}(y)\in f^{n_0j}(D)$.
\end{affi}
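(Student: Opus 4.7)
The plan is to use the Lebesgue number property of the covering $\cF$ to locate a cube $D\in\cF$ whose $M$-interior contains a whole ball $B(y,\lambda)$; the coloring class $\cF_j$ containing this $D$ determines the index $j$, and the $\lambda$-buffer absorbs the drift of the $g$-orbit away from the $f$-orbit of $y$ during the first $n_0 j$ steps.

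Concretely, I would first use the Lebesgue number $\lambda$ of $\cF$ (interpreted as: for every $y\in\De$ there exists $D\in\cF$ with $B(y,\lambda)\subset D$, which follows from the continuous positive function $y\mapsto \max_{D\in\cF} d(y,M\setminus\interior(D))$ being bounded below on the compact set $\De$) to pick, for the given $y$, such a $D$, and set $j$ to be the unique index with $D\in\cF_j$. Next, from $d_{C^0}(f,g)<\eta$ and $\|Df\|\leq C$, an elementary induction gives
$$d(g^k(y),f^k(y))\leq a_\eta(k)\qquad\text{for all }k\geq 0,$$
because
$$d(g(g^k(y)),f(f^k(y)))\leq d(g(g^k(y)),f(g^k(y)))+d(f(g^k(y)),f(f^k(y)))\leq \eta+C\,d(g^k(y),f^k(y)),$$
which is precisely the defining recursion of $a_\eta$. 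Applying $f^{-n_0 j}$ and using $\|Df^{-1}\|\leq C$, the preimage $w:=f^{-n_0 j}(g^{n_0 j}(y))$ satisfies
$$d(w,y)\leq C^{n_0 j}\,a_\eta(n_0 j)\leq C^{N}\,a_\eta(N)<\lambda,$$
by the standing condition $a_\eta(N)<C^{-N}\lambda$ fixed when choosing $\eta$. Hence $w\in B(y,\lambda)\subset D$, so $g^{n_0 j}(y)=f^{n_0 j}(w)\in f^{n_0 j}(D)$, which is exactly the claim.

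The only real subtlety — and it is conceptual rather than computational — is recognizing that the claim is a $C^0$-stability statement about the tiles, not a statement about the large-jacobian effect of $g$. The condition $a_\eta(N)<C^{-N}\lambda$ was installed in the setup precisely so that the per-step uniform deviation $\eta$ between $f$ and $g$, amplified by the worst possible factor $C^{N}$ when one pulls $n_0 j\leq N$ iterates back under $f^{-1}$, still leaves the preimage $w$ inside the $\lambda$-ball around $y$ given by the Lebesgue number of $\cF$.
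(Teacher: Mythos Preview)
Your proof is correct and essentially identical to the paper's own argument. Both choose $D$ via the Lebesgue number so that $B(y,\lambda)\subset D$, run the same recursion $d(f^{k+1}(y),g^{k+1}(y))\leq C\,d(f^k(y),g^k(y))+\eta$ to obtain $d(f^k(y),g^k(y))\leq a_\eta(k)$, and then invoke the prearranged bound $a_\eta(N)<C^{-N}\lambda$; the only cosmetic difference is that the paper phrases the final step as $g^{n_0j}(y)\in B(f^{n_0j}(y),C^{-n_0j}\lambda)\subset f^{n_0j}(D)$, whereas you pull back by $f^{-n_0j}$ to land in $B(y,\lambda)\subset D$ and push forward.
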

\begin{demo} Choose $D\in \cF$ such that the ball $B(y,\lambda)$ is contained in $D$
and fix $j\in \{1,\ldots, 2^d\}$ such that $D\in \cF_j$. This implies that
$B(f^k(y), C^{-k}\lambda) \subset f^k(D)$, for all $k\in \{0,\ldots, N-1\}$.
Note that
\begin{eqnarray*}
d(f^{k+1}(y), g^{k+1}(y))& \leq& d(f^{k+1}(y), f(g^{k}(y))) + d(f(g^{k}(y)), g^{k+1}(y))\\
&\leq& C d(f^{k}(y), g^k(y)) + \eta,
\end{eqnarray*}
which implies, by Property~\ref{c2} above and our choice of $\lambda$,
that for any $k\in \{0,\ldots, N-1\}$ we have:
$$d(f^k(y), g^k(y)) \leq a_\eta(k)<C^{-N}\lambda.$$
We conclude that $g^{n_0j}(y)\in f^{n_0j}(D)$.
\end{demo}

Since the orbit of $x$ is disjoint from the support of the perturbation,
we have that $f^{n_0j}(x) = g^{n_0j}(x)$, for all $j\in \{1,\ldots, 2^d-1\}$.
From these properties and from (\ref{e.distfg}), there exist
$j\in \{0,\ldots, 2^d-1\}$ and $n\in\{1,\ldots, n_0\}$ such that:
$$\left| \log\det D{g}^n(g^{n_0j}x) - \log\det D{g}^n(g^{n_0j}y)\right| > 2K.$$
The fact that $Dg^{n+n_0j} = Dg^{n}\circ Dg^{n_0j}$
implies that one of these two cases holds:
\begin{itemize}
\item either $\left| \log\det D{g}^{n_0j}(x) - \log\det D{g}^{n_0j}(y)\right| > K$,
\item or $\left| \log\det D{g}^{n+n_0j}(x) - \log\det D{g}^{n+n_0j}(y)\right| > K$.
\end{itemize}
In any case, there exists $n\in \{1,\dots,N\}$ such that the required estimate
$\left| \log\det D{g}^{n}(x) - \log\det D{g}^{n}(y)\right| > K$ holds.

By construction the support of the perturbation is contained in
a finite number of iterates of $U\setminus \overline{f(U)}$;
the iterates of $U\setminus \overline{f(U)}$ for $f$ and $g$ hence coincide.
This implies that $CR(f)=CR(g)$.
\end{demo}

\subsection{Reduction to cocycles}
Proposition~\ref{p.pertAlocal} is a consequence of the following
result about cocycles.
\begin{prop}\label{p.pertAcocycle}
For any $d\geq 1$ and any $C, K, \ve>0$,
there exists $n_1 = n_1(d,C,K,\ve)\geq 1$ with the following property.

Consider any sequence $(A_i)$ in $GL(d,\RR)$ with
$\|A,\|,\|A^ {-1}\|<C$ and the associated cocycle $f$.
Then, for any open set $U\subset \RR^d$,
for any compact set $\De\subset U$ and for any $\eta>0$, there exists a diffeomorphism $g$ of $\ZZ\times\RR^d$ such that:
\begin{itemize}
\item $d_{C^1}(f,g)<\ve$, 
\item $d_{C^0}(f,g) <\eta$,
\item $g = f$ on the complement of $\bigcup_{i=0}^{2n_1-1} f^i(\{0\}\times U)$,
\item for all $y\in \{0\}\times \De$, there exists $n\in\{1,\ldots, n_1\}$ such that 
$$\left| \log\det Df^{n}(y) - \log\det Dg^{n}(y)\right| > K.$$
\end{itemize}
\end{prop}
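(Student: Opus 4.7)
The plan is to mirror the reduction strategy used for the (LD) property (Propositions~\ref{p.tidycocycle}, \ref{p.tidycube}, \ref{p.linear}): first reduce Proposition~\ref{p.pertAcocycle} to a cube version in which $U$ is the cube $Q$ and $\De$ an inner subcube $\theta Q$ (the (UD) analogue of Proposition~\ref{p.tidycube}), and then prove that cube version by an explicit per-iterate construction. For the reduction, cover $\De$ by finitely many pairwise disjoint small cubes $Q_j \subset U$ whose inner subcubes $\theta Q_j$ still cover $\De$. Since the cocycle is linear, the slices $A_{i-1}\cdots A_0(Q_j)$ remain pairwise disjoint across different $j$, so the cube version can be applied in each $Q_j$ with appropriately scaled parameters; the resulting perturbations have disjoint supports and combine into the desired $g$.

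For the cube version, write $\alpha_i = A_{i-1}\cdots A_0$. In each fiber $\{i\}\times \RR^d$ with $0 \leq i \leq n_1-1$, I would define $g$ by $(i,x) \mapsto (i+1, A_i \phi_i(x))$, where $\phi_i$ is a diffeomorphism of $\RR^d$ supported in $\alpha_i(Q)$ and equal on $\alpha_i(\theta Q)$ to a prescribed linear map $H_i \in GL(d,\RR)$ close to the identity. Extending $H_i$ via a bump (Lemma~\ref{l.basicperturb}, transported to $\alpha_i(Q)$) gives $\|\phi_i - \id\|_{C^1} \leq \theta_0\, e(\alpha_i)\, \|H_i - I\|$, where $\theta_0$ depends only on $\theta$ and $d$. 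The log-det contribution of $\phi_i$ at the orbit point in $\alpha_i(\theta Q)$ is exactly $\log|\det H_i|$, so $\log|\det Dg^n(y)| - \log|\det Df^n(y)| = \sum_{i=0}^{n-1}\log|\det H_i|$ for $y \in \theta Q$, as long as the orbit stays in the prescribed regions---for which the further $n_1$ iterates of support (total $2n_1$) accommodate any drift. The constraint $\|g-f\|_{C^1}<\eps$ translates to $e(\alpha_i)\|H_i - I\| < \eta$ for some $\eta = \eta(\eps, C, \theta_0)$, and one needs $\bigl|\sum_i \log|\det H_i|\bigr| > K$.

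The main obstacle, the analogue of Proposition~\ref{p.linear}, is this linear-algebra problem on $(A_i)$. When the eccentricities $e(\alpha_i)$ are uniformly bounded over $0 \leq i \leq n_1$, one takes $H_i = (1-c)I$ for a common small $c$ and accumulates a log-det change of order $n_1 c d$; this handles the ``bounded intermediary products'' regime and forces the choice $n_1 \gtrsim K/(d\eta)$. The hard case is when some $e(\alpha_{i_0})$ is very large: unlike for the (LD) property, a single near-identity perturbation cannot be leveraged by the eccentric directions of $\alpha_{i_0}$ to create a large log-det change, since $\bigl|\log|\det H|\bigr| = O(\|H-I\|)$ regardless of the cocycle. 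This is precisely the obstruction the authors acknowledge at the start of the section when they remark that a \emph{tidy} perturbation in a cube cannot create a large variation of the jacobian. The resolution must allow the orbit to drift (non-tidy perturbations, whence the $2n_1$ iterates of support rather than $n_1$), and exploit a more subtle geometric construction to guarantee a log-det change on \emph{every} orbit through $\theta Q$.
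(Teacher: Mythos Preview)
Your reduction step contains a geometric impossibility: you propose to cover $\De$ by finitely many \emph{pairwise disjoint} cubes $Q_j\subset U$ whose inner subcubes $\theta Q_j$ (with $\theta<1$) still cover $\De$. But if the $Q_j$ are disjoint, the shrunken copies $\theta Q_j$ leave gaps between them and cannot cover any set with nonempty interior. The paper faces exactly this issue and resolves it by using \emph{overlapping} cubes together with a $2^d$-coloring: the tiles are partitioned into $2^d$ families $\Gamma_\ell$, the cubes within each family are disjoint, and each family is perturbed in its own time window $I_\ell\subset\{0,\dots,2n_1-1\}$. For this to work one must control where an orbit lands after being perturbed in earlier windows---hence the fourth (``almost tidy'') conclusion of Proposition~\ref{p.pertAreduced}, which guarantees that $E_\mu^{-1}f^{-2(j+1)n_2}g^{2(j+1)n_2}E_\mu$ is uniformly close to the identity. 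Your scheme has no analogue of this orbit-tracking mechanism.

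Second, you have not proved the cube version. You correctly observe that the ``huge eccentricity'' branch of the (LD) argument fails for the jacobian, and you note that a non-tidy, ``more subtle geometric construction'' is needed---but you do not supply one. The paper's construction is genuinely different from the (LD) template: the cubes are not standard but \emph{thin wafers} $Q_\mu=[-1,1]^{d-1}\times[-\mu,\mu]$ with $\mu$ taken very small, and the perturbation acts only in the thin direction via the time-$s$ maps of a fixed one-dimensional vector field (Section~7). The first $n_2$ iterates contract the thin direction (producing the jacobian change uniformly on $\theta Q_\mu$), and the next $n_2$ iterates apply the inverse flow; the result is not tidy, but as $\mu\to 0$ the discrepancy vanishes after rescaling by $E_\mu$. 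This ``almost tidy'' feature is what makes the $2^d$-coloring reduction go through. In short, both the reduction and the cube step in your outline require ideas (overlapping cover with time-staggering, thin cubes, one-directional almost-tidy perturbation) that are absent from the proposal.
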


\begin{demo}[Proof of Proposition~\ref{p.pertAlocal} from Proposition~\ref{p.pertAcocycle}]
Fix $d,C,K,\ve>0$, choose $0<\tilde \ve<\ve$, $K_0> 2K+8\log2$ and set $n_0=2n_1(d,C,K_0,\tilde\varepsilon)$.

Let $f\colon M\to M$ be a diffeomorphism such that  $\|Df\|,\|Df^{-1}\|<C$
and let $\delta>0$ be the constant associated to $f$, $\tilde \ve$, $\varepsilon$,
and $n_0$ by Lemma~\ref{l.linearize}.
Fix $\rho_0\in\left(0,\frac{\delta}{2}C^{-n_1}\right)$.

Consider $\eta$, $\De$ and $x$ as in the statement of Proposition~\ref{p.pertAlocal}.
We fix an open neighborhood $U$ of $\De$ such that:
\begin{itemize}
\item $\diam(U) < \rho_0$; in particular $\diam(f^i(U))<\delta$ for all $i\in\{0,\dots, n_1\}$;
\item $U$ is disjoint from its first $n_0$ iterates;
\item $f^i(x)\notin U $ for $i\in\{0,\dots,n_0\}$.
\end{itemize}
Fix a point $z_0\in \De$ and let $\tilde f$ denote the linear cocycle
induced by the derivative $Df$ along the orbit of $z_0$.

By Lemma~\ref{l.linearize}, there are diffeomorphisms $\Psi_i\colon f^i(U)\to \tilde U_i\subset T_{f^i(z_0)}M$ for $i\in\{0,\dots, n_1\}$, which
conjugate $f$ to $\tilde f$: for every $z\in f^i(U)$
we have $\tilde f(\Psi_i(z))=\Psi_{i+1}(f(z))$.
Moreover,
\begin{itemize}
\item  $\|D\Psi_{i}\|$, $\|D\Psi_{i}^{-1}\|$, $|\det D\Psi_{i}|$ and $|\det D\Psi_{i}^{-1}|$ are bounded by $2$;
\item any $\tilde\varepsilon$-perturbation $\tilde g$ of $\tilde f$ with support in $\bigcup_{i=0}^{n_0-1} \tilde U_i$ induces a diffeomorphism $g$
which is a $\varepsilon$-perturbation of $f$ supported on
$\bigcup_{i=0}^{n_0-1} f^i(U)$ through a conjugacy by the diffeomorphisms
$\Psi_i$.
\end{itemize}
We denote by $\Psi\colon\bigcup_{i=0}^{n_0} f^i(U)\to \bigcup_{i=0}^{n_0} \tilde U_i$ the diffeomorphism that is equal to $\Psi_i$ on $f^i(U)$.

We now apply Proposition~\ref{p.pertAcocycle} to obtain a $\tilde \ve$-perturbation $\tilde g$ of 
$\tilde f$ supported in $\bigcup_{i=0}^{n_0-1} \tilde U_i$ such that for every $y\in\Psi_0(\De)$ we have:
\begin{itemize}
\item $d_{C^0}(\tilde f,\tilde g) <\frac\eta2$,
\item $\tilde g = \tilde f$ on the complement of $\bigcup_{i=0}^{n_0-1} \tilde U_i$,
\item  for all $y\in\Psi_0(\De)$ there exists $n\in\{1,\ldots, n_0\}$ such that 
$$\left| \log\det D\tilde f^{n}(y) - \log\det D\tilde g^{n}(y)\right| > K_0.$$
\end{itemize}
Let $g$ be the corresponding $\ve$-perturbation of $f$. 
Since $\Psi$ satisfies $\|D\Psi^{-1}\|\leq 2$,
we obtain that $$d_{C^0}(f,g)<2d_{C^0}(\tilde f,\tilde g)<\eta.$$
Furthermore, for each $z\in \De$, there exists $n\in\{1,\ldots, n_0\}$ such that 
\begin{eqnarray}\label{e.k0-log16}
\left| \log\det D f^{n}(z) - \log\det Dg^{n}(z)\right| > K_0- 4\log 2.
\end{eqnarray}

For $n\in\{0,\dots,n_0-1\}$,
the maps $f^n$ and $Df^n(z_0)$ are conjugate on $\De$
by the diffeomorphism $\Psi$.
Since $\|\det D\Psi\| \in [\frac12, 2]$, this implies
that, for every $z\in\De$, we have
$\left| \log\det Df^{n}(z_0) - \log\det Df^{n}(z)\right| \leq 2\log 2 $.
If there exists $n\in\{0,\dots, n_0-1\}$ such that
$\left| \log\det Df^{n}(z_0) - \log\det Df^{n}(x)\right| > K+2\log 2$,
then we do not perturb $f$:
in this case, every point $z\in\De$ satisfies $\left| \log\det Df^{n}(z) - \log\det Df^{n}(x)\right| > K$ as required.
Hence we may assume that for every $n\in\{0,\dots, n_0-1\}$ and every $z\in\De$ we have
\begin{eqnarray}\label{e.k+4log2}
\left| \log\det Df^{n}(z) - \log\det Df^{n}(x)\right| \leq K+2\log 2.
\end{eqnarray}

Notice that the  support of the perturbation $g$ is disjoint from the set $\{x,f(x),\dots,f^{n_0}(x)\}$, 
so that $Df^n(x)=Dg^n(x)$ for $n\in\{0,\dots, n_0\}$.
Inequality (\ref{e.k0-log16}) implies that for every $z\in \De$
there exists $n\in\{1,\dots n_0\}$ such that
\begin{eqnarray*}
\left| \log\frac{\det D g^{n}(x)}{\det Dg^{n}(z)}\right| &>& \left| \log\frac{\det D f^{n}(z)}{ \det Dg^{n}(z)}\right| - \left| \log\frac{\det D f^{n}(x)}{\det Df^{n}(z)}\right|\\
 & >&  K_0- 4\log 2 -K-4\log2 \; > K.
\end{eqnarray*}
This completes the proof of Proposition~\ref{p.pertAlocal}, assuming Proposition~\ref{p.pertAcocycle}.
\end{demo}

\subsection{Reduction to a perturbation result in a cube}
We now reduce Proposition~\ref{p.pertAcocycle} to the case where
$U$ is the interior of a cube $Q_\mu$, which has to be chosen to be very thin along
one coordinate, and $\De$ is a smaller cube $\theta Q_\mu$.

We thus consider the space $\RR^d$ as a product
$\RR^{d-1}\times \RR$ and denote by $(v,z)$ its coordinates.
For a constant $\mu>0$, let $E_\mu$
be the linear isomorphism of $\RR^d$ defined by $E_\mu(v,z) = (v,\mu z)$.
The image of the standard cube $Q=[-1,1]^d$ by $E_\mu$ will be denoted by
$Q_\mu=[-1,1]^{d-1}\times [-\mu,\mu]$. 

\begin{prop}\label{p.pertAreduced}  For any $d\geq 1$, $C, K, \ve >0$  and $\theta\in (0,1)$, 
there exists $n_2 = n_2(d,C,K,\ve, \theta)\geq 1$ and for any $\eta>0$ there exists 
$\mu_0=\mu_0(d,C,K,\ve,\theta,\eta) > 0$ with the following property.

Consider any sequence $(A_i)$ in $GL(d,\RR)$ with $\|A_i\|,\|A_i^{-1}\|<C$ and the associated linear cocycle $f$.
Consider any $\mu\in (0,\mu_0)$ and $j\in \{0,\dots, 2^d-1\}$.
Then there exists a diffeomorphism $g$ of $\ZZ\times\RR^d$ such that:
\begin{itemize} 
\item $d_{C^1}(f,g)<\ve$;
\item $g = f$ on the complement of $\bigcup_{i=2jn_2}^{2(j+1)n_2-1} f^i(\{0\}\times Q_\mu)$;
\item for all $x\in \{0\}\times \theta Q_\mu$, 
$$\left| \log\det Df^{n_2}(f^{2jn_2}(x)) - \log\det Dg^{n_2}(f^{2jn_2}(x))\right| > K;$$
\item on $\{0\}\times \RR^d$, we have:
 $$d_{unif}(\id, {E_\mu^{-1} f^{-2(j+1n_2)} g^{2(j+1)n_2} E_\mu}_{|_{\{0\}\times \RR^d}}) < \eta.$$ 
\end{itemize}
\end{prop}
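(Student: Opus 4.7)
The plan is to construct $g$ as $f$ composed with a sequence of $2n_2$ small nonlinear perturbations $\tilde h_i$ supported in $\{i\}\times B_i(Q_\mu)$ (with $B_i = A_{i-1}\circ\cdots\circ A_0$), for $i$ in the time window $\{2jn_2,\ldots,2(j+1)n_2-1\}$. I split this window into two halves of length $n_2$: ``Phase~1'' (the first $n_2$ steps) produces the required large change in $\log\det$, while ``Phase~2'' (the last $n_2$ steps) dynamically cancels Phase~1 so that $g^{2(j+1)n_2}$ agrees with $f^{2(j+1)n_2}$ up to an $\eta$-error in the $E_\mu^{-1}$-rescaled sense. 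It is also useful, both for intuition and for the estimates, to pass to the \emph{straightened coordinates} $y\mapsto B_i^{-1}y$ along the orbit of $Q_\mu$, in which $f$ becomes the trivial cocycle.

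Each $\tilde h_i$ will be a rank-one shear of the form $\tilde h_i(x) = x + \delta\,\phi_i(x)\,u_i$, where $u_i\in S^{d-1}$ is a unit vector chosen adapted to the geometry of $B_i(Q_\mu)$ and $\phi_i$ is a bump function supported in $B_i(Q_\mu)$ satisfying $u_i\cdot\nabla\phi_i = \mp 1$ on $B_i(\theta Q_\mu)$ (sign $-$ in Phase~1 and $+$ in Phase~2). Then $\det D\tilde h_i = 1\mp\delta$ on $B_i(\theta Q_\mu)$ and $\|D\tilde h_i-\id\| = \delta\,|\nabla\phi_i|$. Choosing $\delta=\Theta(\varepsilon)$ and $n_2 = \Theta(K/\delta)$, the composition over Phase~1 multiplies $\det Df^{n_2}$ at $f^{2jn_2}(x)$ by $(1-\delta)^{n_2}$, whose absolute log exceeds $K$, giving the desired large jacobian estimate.

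The main obstacle is the choice of $u_i$ and $\phi_i$ achieving simultaneously (i) uniform $C^1$-smallness $\delta|\nabla\phi_i|<\varepsilon$, (ii) the jacobian condition on $B_i(\theta Q_\mu)$, and (iii) sufficient cancellation between Phase~1 and Phase~2 to meet the $\eta$-tidiness requirement. The parallelepipeds $B_i(Q_\mu)$ have one ``thin'' direction (the unit normal $W_i$ to $\mathrm{span}(B_ie_1,\ldots,B_ie_{d-1})$, with thickness of order $\mu|B_ie_d\cdot W_i|$) and $d-1$ ``fat'' directions (of extent $\sim\|B_ie_j\|$), with the ratio between thin and fat possibly as extreme as $\mu\cdot C^{O(dn_2)}$. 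The bump functions $\phi_i$ must be built so that the transitions in the fat directions do not introduce large gradients, and this is where $\mu_0$ must be taken small enough (depending on $d,C,K,\varepsilon,\theta$) that the thin direction is much thinner than the size scale of the fat directions. A careful pairing of Phase~1 and Phase~2 perturbations (with the direction $u_{2jn_2+n_2+k}$ matched to $u_{2jn_2+k}$ through the cocycle, so that their shifts cancel to leading order in straightened coordinates) then yields a total displacement of the $g$-orbit from the $f$-orbit bounded by $\mu$ times a polynomial in $n_2,C,d$; taking $\mu_0$ smaller yet (now depending also on $\eta$) forces the rescaled displacement below $\eta$. Executing this careful bookkeeping -- the direction choices $u_i$, the adapted bump functions $\phi_i$, and the Phase~1/Phase~2 cancellation estimate -- is the technical heart of the argument.
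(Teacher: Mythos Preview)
Your overall strategy is the right one and matches the paper's: split the time window into two halves, use the first half to contract in the thin direction (changing the Jacobian by a definite factor at each step), use the second half to expand back, and take $\mu$ small so that the residual drift is below $\eta$ after the $E_\mu^{-1}$-rescaling. But what you have written is a plan, not a proof: you explicitly defer ``the technical heart of the argument'', and the part you defer is exactly where the difficulty lies.

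The key simplification you are missing, and which the paper uses, is this: before doing anything, conjugate by an isometry on each fiber $\{i\}\times\RR^d$ so that every $A_i$ becomes block upper-triangular, preserving the hyperplane $\RR^{d-1}\times\{0\}$. After this reduction the thin direction is the fixed coordinate $e_d$ for \emph{all} $i$, and the perturbation $H_i$ can be taken to be the time-$\pm s_0$ map of a fixed vector field $\xi(z)\partial_z$ in the last coordinate, modulated by a cutoff $\zeta$ in the first $d-1$ coordinates (pulled back by the base cocycle $B_{i-1}\cdots B_0$). Because the block-diagonal parts $\bar A_i$ preserve the $e_d$-axis, the Phase~1 and Phase~2 perturbations literally commute with $\bar A_i$ in the $z$-variable; the almost-tidy estimate then reduces to the elementary fact that $E_\mu^{-1}A_iE_\mu\to\bar A_i$ as $\mu\to 0$, so the rescaled composition $E_\mu^{-1}f^{-2(j+1)n_2}g^{2(j+1)n_2}E_\mu$ is a product of maps each tending to the identity.

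Without this reduction your proposed ``matching of $u_{2jn_2+n_2+k}$ to $u_{2jn_2+k}$ through the cocycle'' is problematic. If you want cancellation, $u_{i+n_2}$ should be the cocycle-transport of $u_i$; but that transported direction has no reason to coincide with the thin direction $W_{i+n_2}$, and if it does not, your bump function $\phi_{i+n_2}$ must vary at unit rate along a direction that is not thin, which can force $|\nabla\phi_{i+n_2}|$ large and break the $C^1$-smallness. Conversely, if you insist $u_{i+n_2}=W_{i+n_2}$, the Phase~2 shift need not cancel the Phase~1 shift. The triangular normal form resolves this tension by making the two requirements coincide. I recommend you insert that reduction at the start and then carry out the four estimates (support, $C^1$-size, Jacobian, almost-tidy) along the lines above; each becomes a short computation once the coordinates are right.

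A minor point: your $\tilde h_i(x)=x+\delta\,\phi_i(x)\,u_i$ with $u_i\cdot\nabla\phi_i=\mp 1$ is not a shear but (on the core region) a scaling by $1\mp\delta$ in the $u_i$-direction; this is in fact exactly what the paper does with the flow of $z\,\partial_z$, so your intuition is correct even if the terminology is off.
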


\begin{demo}[Proof of Proposition~\ref{p.pertAcocycle} from Proposition~\ref{p.pertAreduced}]
The proof is partly similar to the proof of Proposition~\ref{p.tidycocycle}.
Let $C, K, \ve>0$ be given. We fix $\theta=\frac{9}{10}$ and set
$$n_1(d,C,K,\varepsilon) = 2^{d+1}n_2(C,2K,\ve,\theta)\geq 1.$$
Consider $(A_i)$, $\Delta$, $U$, $\eta$ as in the statement of Proposition~\ref{p.pertAcocycle}.
Up to rescaling by homothety, we may assume that:
\begin{eqnarray}\label{e.dilatation1}
C^{2^{d+1}n_1}2\theta^{-1}\sqrt{d}&< &d(\De,\RR^d\setminus U),\\
C^{2^{d+1}n_1}2\theta^{-1}\sqrt{d}&<&\eta.\label{e.dilatation2}
\end{eqnarray}
Indeed let us consider some $a\in (0,1)$ and the homothety $h_{\frac 1 a}$ of $\RR^d$ with ratio $\frac 1 a$.
If $g_0$ is a perturbation of $f$ satisfying the conclusions of Proposition~\ref{p.pertAcocycle} for  
$C,K,\varepsilon$, $h_{\frac 1 a}(\De)$, $h_{\frac 1 a}(U)$ and $\frac \eta a$,
then $g=h_a g_0 h_{\frac 1a}$ is a perturbation of $f$ satisfying
the conclusions of Proposition~\ref{p.pertAcocycle} for  $C,K,\varepsilon$, $\De$, $U$ and $\eta$.
Hence we may assume that the estimates~(\ref{e.dilatation1}) and~(\ref{e.dilatation2}) hold by choosing $a$ small enough.
\medskip

In order to apply Proposition~\ref{p.pertAreduced}, we tile the set $U$.  The tiling
we use has the same structure as the tiling used in Proposition~\ref{p.tidycocycle}, producing
$2^d$ disjoint families of tiles, but the geometry is different (see Figure 1).
\begin{figure}\label{f.grid}
\begin{center}
\includegraphics[scale=.75]{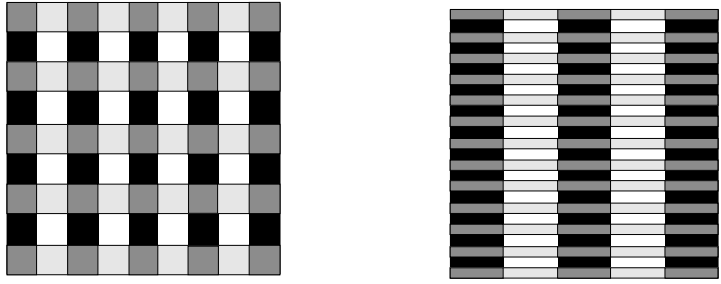}
\end{center}
\caption{Tiling of the plane used in (LD) perturbation theorem (left) and (UD) perturbation theorem (right).  The different shadings correspond to the partition into $2^d=4$ families. }
\end{figure}
We first consider the regular tilings $\cT$ and $\tilde \cT$ of $\RR^d$ by the cubes 
$$Q_{i_1,\dots,i_d}=[-\theta^{-1},\theta^{-1}]^d+(i_1,\dots,i_d),$$ 
$$\tilde Q_{i_1,\dots,i_d}=[-\theta^{-2},\theta^{-2}]^d+(i_1,\dots,i_d),$$
respectively, where $(i_1,\dots, i_d)\in\ZZ^d$.
Let $\lambda$ be the Lebesgue number of the cover $\cT$ of $\RR^d$.  
We choose $\eta_0>0$ smaller than $2^{-d}\lambda$.

Now fix $\mu\in (0,1)$ smaller than $\mu_0(C, 2K, \varepsilon,\theta,\eta_0)$
and denote by $\cT_\mu$ and $\tilde \cT_\mu$ the images of the tilings $\cT$ and $\tilde \cT$ by
$E_\mu$. These are the tilings of $\RR^d$ by the cubes
$$Q_{\mu,(i_1,\dots,i_d)}=
[-\theta^{-1},\theta^{-1}]^{d-1}\times[-\theta^{-1}\mu,\theta^{-1}\mu]+(i_1,\dots,i_{d-1},\mu i_d),$$ 
$$\tilde Q_{\mu,(i_1,\dots,i_d)}=[-\theta^{-2},\theta^{-2}]^{d-1}\times[-\theta^{-2}\mu,\theta^{-2}\mu]+(i_1,\dots,i_{d-1},\mu i_d),$$
respectively. By our assumption~(\ref{e.dilatation1}), any cube $Q_{\mu,(i_1,\dots,i_d)}$ such that
$\tilde Q_{\mu,(i_1,\dots,i_d)}$ intersects the compact set $\Delta$ is contained in $U$.
We denote by $\Ga_\mu$ the family of cubes $Q_{\mu,(i_1,\dots,i_d)}$
such that $\tilde Q_{\mu,(i_1,\dots,i_d)}$ intersects $\Delta$.

We next consider the partition  $\Ga_\mu=\bigcup_{\ell=0}^{2^d-1}\Ga_{\mu,\ell}$ such that 
for pair of any elements $Q,Q'$ in  $\Ga_{\mu,\ell}$, the enlarged cubes $\tilde Q,\tilde Q'$ are disjoint.
(As in Section~\ref{ss.tidycocycle} we set $Q_{\mu,(i_1,\dots,i_d)}\in\Ga_{\mu,\ell}$
if $\ell=\sum_{j=1}^{d} \alpha_j 2^{j-1}$ where $\alpha_j=0$ if $i_j$ is even and $\alpha_j=1$ if $i_j$ is odd). 
\medskip

For each cube $Q\in \Gamma_{\mu,\ell}$, we also introduce the time interval
$I_\ell=\{2n_2\ell,\dots,2n_2(\ell+1)-1\}$.
Proposition~\ref{p.pertAreduced} produces a perturbation $g_Q$
supported on the union
$$W_Q=\bigcup_{i\in I_\ell}f^i(\{0\}\times \tilde Q).$$
The construction of the famillies $\Gamma_{\mu,\ell}$ and of the intervals $I_\ell$ implies that
$W_Q$ and $W_{Q'}$ are disjoint if $Q,Q'\in \Gamma_\mu$ are distinct.
We finally define $g$ by $g_Q$ on $W_Q$, for $Q\in \Gamma_{\mu,\ell}$, and $g=f$ elsewhere.
In particular, we have $g=f$ on the complement of
$\bigcup_{i=0}^{n_1-1}f^i(\{0\}\times U)$.
The perturbation $g$ is supported in a disjoint union of images
$f^i(\{0\}\times \tilde Q_{\mu,(i_1,\dots,i_d)})$ for $i\in\{0,\dots, 2^dn_1-1\}$
and by~(\ref{e.dilatation2}) satisfies $d_{C^0}(f,g)<\eta$.
Each perturbation $g_Q$ satisfies $d_{C^1}(g_Q,f)<\varepsilon$, and hence
$d_{C^1}(g,f)<\varepsilon$ holds.
It remains to prove the last part of the conclusion of the proposition.

\begin{affi} For every $y\in\{0\}\times\De$, there exist $\ell\in\{0,\dots,2^d-1\}$ and $Q\in \Gamma_\ell$
such that $g_{\eta}^{2n_2\ell}(y)\in f^{2n_2\ell}(Q)$.
\end{affi}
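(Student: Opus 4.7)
The plan is to track the position of $g^{2n_2\ell}(y)$ block by block, passing to the $E_\mu^{-1}$-rescaled coordinates in which the underlying tiling $\cT$ has Lebesgue number $\lambda$. For each $w \in \RR^d$, I would introduce the cumulative tracking map
$$\phi_\ell(w) = E_\mu^{-1} \circ \pi_{\RR^d} \circ f^{-2\ell n_2} \circ g^{2\ell n_2}(0, E_\mu w),$$
where $\pi_{\RR^d}$ denotes projection onto the $\RR^d$-fiber. Then $\phi_0(w) = w$, and setting $w_0 = E_\mu^{-1}(y_0)$ with $y = (0,y_0)$, the desired conclusion $g^{2n_2\ell}(y) \in f^{2n_2\ell}(Q)$ for some $Q \in \Ga_{\mu, \ell}$ is equivalent to $\phi_\ell(w_0) \in E_\mu^{-1}(Q)$, where $E_\mu^{-1}(Q) \in \cT$ is the preimage of a cube $Q \in \cT_\mu$ lying in $\Ga_{\mu,\ell}$.

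The key step is the one-block bound $|\phi_\ell(w_0) - \phi_{\ell-1}(w_0)| < \eta_0$ for each $\ell \in \{1, \ldots, 2^d - 1\}$. By construction the perturbations $g_{Q'}$ for $Q' \in \Ga_{\mu, \ell - 1}$ have pairwise disjoint supports, all confined to the time-block $I_{\ell-1}$, so during this block the combined $g$ acts on $g^{2(\ell-1)n_2}(y) = f^{2(\ell-1)n_2}(0, E_\mu \phi_{\ell-1}(w_0))$ either as $f$ (giving $\phi_\ell(w_0) = \phi_{\ell-1}(w_0)$) or as a single $g_{Q'}$. In the latter case I would observe that $g_{Q'}^{2\ell n_2}(0, E_\mu \phi_{\ell-1}(w_0)) = g^{2\ell n_2}(y)$, because $g_{Q'}$ coincides with $f$ outside block $\ell-1$ and with $g$ along the trajectory during that block, and then invoke the fourth property of Proposition~\ref{p.pertAreduced} (with $j = \ell - 1$) applied to $g_{Q'}$ at the point $\phi_{\ell-1}(w_0)$, which yields the bound directly from the definition of $\phi_\ell$.

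With this step-wise control in hand, the sequence $(\phi_\ell(w_0))_{\ell=0,\ldots, 2^d-1}$ has diameter at most $(2^d - 1)\eta_0 < 2^d \eta_0 < \lambda$, using our choice $\eta_0 < 2^{-d}\lambda$. The Lebesgue number property of $\cT$ then produces a single cube $Q_0 \in \cT$ containing every $\phi_\ell(w_0)$. Setting $Q = E_\mu(Q_0) \in \cT_\mu$, the enlargement $\tilde Q$ contains $y_0 \in \Delta$, so $Q \in \Ga_\mu$; let $\ell$ be the unique index with $Q \in \Ga_{\mu,\ell}$. Then $\phi_\ell(w_0) \in Q_0 = E_\mu^{-1}(Q)$ unfolds back to $g^{2n_2\ell}(y) \in f^{2n_2\ell}(Q)$, which is the claim.

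The main obstacle I anticipate is the block-by-block comparison in the second paragraph: the fourth property of Proposition~\ref{p.pertAreduced} is stated for a single perturbation $g_{Q'}$, not for the composition of all the $g_{Q'}$'s built in the proof of Proposition~\ref{p.pertAcocycle}. The argument hinges on the disjointness of supports across blocks and on identifying the combined $g$-trajectory with a single $g_{Q'}$-trajectory during each block, so that the single-perturbation bound can be applied repeatedly without interference between blocks.
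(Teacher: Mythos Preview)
Your proposal is correct and follows essentially the same route as the paper: track the drift of $E_\mu^{-1}f^{-2n_2\ell}g^{2n_2\ell}E_\mu$ block by block, bound each block by $\eta_0$ via the fourth conclusion of Proposition~\ref{p.pertAreduced}, sum to get a bound $<2^d\eta_0<\lambda$, and invoke the Lebesgue number of the tiling $\cT$. The only cosmetic difference is the order: the paper fixes the cube $Q$ containing $E_\mu^{-1}(y)$ \emph{first} (reading off $\ell$ from it) and then checks that the drift up to time $2n_2\ell$ keeps the rescaled point inside $E_\mu^{-1}(Q)$; you instead bound the drift for \emph{all} $\ell$ and then choose a single cube containing the whole sequence $(\phi_\ell(w_0))$. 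Both work.

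Regarding your ``main obstacle'': your dichotomy (during block $I_{\ell-1}$ the combined $g$ acts along the orbit either as $f$ or as a single $g_{Q'}$) is in fact valid, but it deserves one line of justification that you omit. By the construction in Section~7, each $H_i^{Q'}$ is supported in $\theta_0\cC_i^{Q'}$, which by Lemma~\ref{l.cuttingcube2} lies strictly inside $f^i(\{0\}\times\tilde Q')$. Hence $H_i^{Q'}$ is a diffeomorphism of the fiber that carries $f^i(\{0\}\times\tilde Q')$ onto itself, and composing with the linear map $A_i$ shows that $g$ sends $f^i(\{0\}\times\tilde Q')$ bijectively onto $f^{i+1}(\{0\}\times\tilde Q')$ for every $Q'\in\Ga_{\mu,\ell-1}$. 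Thus the $g$-orbit during block $I_{\ell-1}$ either stays entirely outside all the $W_{Q'}$ (so $g=f$ along it) or stays in exactly one $W_{Q'}$, and your reduction to the single-cube bound from Proposition~\ref{p.pertAreduced} goes through.
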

\begin{demo}
By definition of $\lambda$, the ball $B(E_\mu^{-1}(y),\lambda)$ is contained in some cube
$Q_{(i_1,\dots,i_d)}$.
The cube $Q=Q_{\mu,(i_1,\dots,i_d)}=E_\mu(Q_{(i_1,\dots,i_d)})$ contains $y\in \Delta$ and hence belongs
to $\Gamma_{\mu,\ell}$.
Let $\ell\in \{0,\dots, 2^d-1\}$ be such that $Q_{\mu,(i_1,\dots,i_d)}\in \Gamma_{\mu,\ell}$.

Since $d_{unif}(\id,E_\mu^{-1}f^{-2n_2(j+1)}g_Q^{2n_2}f^{2n_2(j)}E_\mu)<\eta_0$ for each $j\in \{0,\dots,2^d-1\}$,
we obtain the bound
\begin{equation*}
\begin{split}
d_{unif}(\id, E_{\mu}^{-1}f^{-2n_2\ell}g^{2n_2\ell}E_\mu)&\leq
\sum_{j=0}^{\ell-1}d_{unif}(\id,E_\mu^{-1}f^{-2n_2(j+1)}g^{2n_2}f^{2n_2(j)}E_\mu)\\
&<2^\ell\eta_0\leq 2^d\eta_0<\lambda,
\end{split}
\end{equation*}
by our choice of $\eta_0$.

By our estimate above, the point
$E_\mu^{-1}f^{-2n_2\ell}g^{2n_2\ell}E_\mu(E_\mu^{-1}(y))$ belongs to the ball
$B(E_\mu^{-1}(y),\lambda)$ and hence to $Q_{(i_1,\dots,i_d)}$.
This proves that $f^{2n_2\ell}(Q)$ contains the point $g^{2n_2\ell}(y)$ as required.
\end{demo}

In order to conclude, we fix a point $y\in \{0\}\times\Delta$
and $\ell\in\{0,\dots,2^d-1\}$, $Q\in \Gamma_\ell$
such that $g_{\eta}^{2n_2\ell}(y)\in f^{2n_2\ell}(Q)$.
Note that if
$$|\log\det Df^{2n_2\ell}(y)-\log\det Dg^{2n_2\ell}(y)|>K,$$
then the last conclusion of the proposition already holds for $y$.

Otherwise, since $g^{2n_2\ell}(y)$ belongs to $f^{2n_2\ell}(Q)$, we have
$$Dg^{2n_2}(g^{2n_2\ell}(y))=Dg^{2n_2}_Q(g^{2n_2\ell}(y));$$
since $f$ is a linear cocycle, we obtain $$\log\det Df^{2n_2}(g^{2n_2\ell}(y))=\log\det Df^{2n_2}(f^{2n_2\ell}(y)).$$
Thus the property satisfied by $g_Q$ implies that
$$|\log\det Df^{2n_2}(f^{2n_2\ell}(y))-\log\det Dg^{2n_2}(g^{2n_2\ell}(y))|>2K.$$
Using the fact that
$$|\log\det Df^{2n_2\ell}(y)-\log\det Dg^{2n_2\ell}(y)|\leq K,$$
we then obtain that
$$|\log\det Df^{2n_2(\ell+1)}(y)-\log\det Dg^{2n_2(\ell+1)}(y)|>K.$$
This gives again the last conclusion of the proposition and concludes the proof.
\end{demo}
\section{Almost tidy perturbation in a cube}

The aim of this section is to give the proof of Proposition~\ref{p.pertAreduced}, which finishes the proof of Theorem A and hence of the Main Theorem. 

This proposition is very close in spirit to the original idea of C. Pugh for the famous $C^1$-closing lemma. Pugh wanted  to perturb the orbit of a point $x$ in a cube in such a way that it exits the support of the perturbation through the orbit of another given point $y$. Pugh noticed that, at each time, one has much more freedom to perturb the orbits ``in the direction of the smallest dimension of the image of the cube". As he needed to perform a perturbation in arbitrary directions, a linear algebra lemma allowed him to choose the pattern of the cube such that each direction would be at some time the smallest one.

Here we just want to perform a perturbation that modifies the jacobian: hence we can do it by a perturbation ``in an arbitrary direction". However, in principle our perturbations have to be tidy, or very close to tidy.
For this reason the support of the perturbation will have length $2n_2$: we obtain the perturbation of the jacobian during the first $n_2$ iterates, and then try to remove the perturbation during the last $n_2$ iterates.
For this reason, we need that the smallest dimension of the image of our cube corresponds to the same direction, all along the time support of the perturbation. This is obtained by choosing a pattern of our cubes having one direction much smaller than the others.

An additional difficulty comes from the fact that we were not able
to obtain a tidy perturbation. After several iterations, the thin edges of the cube could become
strongly sheared above its base; for this reason, it was not possible to remove the perturbation
in a neighborhood of these edges. By choosing the cubes's height small enough, we are able to remove the perturbation
in a rectified cube which differs from the sheared one only in a small region (see Figure 2).
This ``almost tidy perturbation'' turns out to be sufficient for our purposes.

\begin{figure}[ht]\label{f.pancake}
\begin{center}
\psfrag{f}{$f^n$}
\includegraphics[scale=.5]{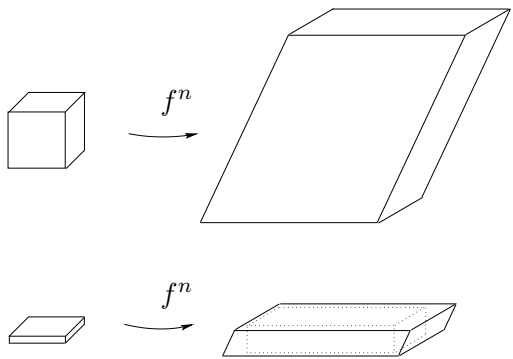}
\end{center}
\caption{The image of a cube under many iterates of a linear cocycle can be quite distorted (top). The image of a wafer, on the other hand, stays flat and wafer-like (bottom). }
\end{figure}

\subsection{Choice of $n_2$}

Given $d\geq 1$ and $\theta\in (0,1)$, we fix $\theta_0=\theta^{\frac 13}$ and choose:
\begin{itemize}
\item a smooth function $\zeta\colon\RR^{d-1}\to [0,1]$ such that
\begin{itemize}
\item $\zeta(v)=1$ if $v\in [-\theta_0^2,\theta_0^2]^{d-1}$ and
\item $\zeta(v)=0$ if $v\in \RR^{d-1}\setminus[-\theta_0,\theta_0]^{d-1}$;
\end{itemize}
\item  a smooth function $\xi\colon[-1,1]\to\RR $ such that:
\begin{itemize}
\item $\xi(z)=z$ for $z\in[-\theta_0^2,\theta_0^2]$,
\item $\xi(z)=0$ for $z$ in a neighborhood of $[-1,1]\setminus[-\theta_0,\theta_0]$, and
\item $\xi(-z)=-\xi(z)$.
\end{itemize}
\end{itemize}
We denote by $X$ the vector field on $[-1,1]$ defined by $X(z)=\xi(z)\frac\partial{\partial z}$ and by $(X_s)_{s\in \RR}$ the induced flow on $[-1,1]$. 
For every non-trivial closed interval $I \subset \RR$ and any $s$, we denote by $X^{I}_s$
the diffeomorphism of $I$ obtained from $X_s$ by considering a parametrization of $I$ by $[-1,1]$ with constant derivative. 

\begin{rema}
The distance $d_{C^1}(X_s^{I},\id_I)$ has two parts.
The $C^0$ part is proportional to the length of $I$.
The part that measures the distance between the derivatives does not depend on $I$: $$\max{\{|D(X^{I}_s)-1|,|D(X^{I}_{-s})-1|\}}=\max{\{|D(X_s)-1|,|D(X_{-s})-1|\}}.$$
\end{rema}

We fix constants $C,\ve>0$ and a real number $s_0>0$ satisfying  $\max{\{|D(X_{s_0})-1)|,|D(X_{-s_0})-1|\}}<\frac\varepsilon{C}$.
We then define the diffeomorphisms $h_s=X_{s.s_0}$ and
$h^I_s=X^{I}_{s.s_0}$.

Fix $K>0$. Note that the derivative of $h_{-1}$ on $[-\theta_0^2,\theta_0^2]$ is a constant:
it is equal to $e^{-s_\ve}<1$ for some constant $s_\ve$. We fix $n_2\geq 1$ such that
$$n_2 s_\ve>K.$$ 

\subsection{Construction of a perturbation}\label{ss.perturbation}
Consider now a sequence of matrices $(A_i)_{i\in \ZZ}$
in $GL(d,\RR)$ such that $\|A \|,\|A_i^{-1}\|<C$,
and denote by $f$ the associated linear cocycle.
Up to a change of coordinates by isometries on each $\{i\}\times \RR$,
we may assume that the $A_i$ leave invariant the hyperplane $\RR^{d-1}\times \{0\}$
and take the form 
$$
A_i=\left(\begin{array}{cccl}&& &\alpha_1^i\\
                           &B_i& &\vdots\\
                           &&    &\alpha_{d-1}^i\\
                           0&\dots&0&b_i
\end{array}\right).
$$
For $i\in\{0,\dots,2^{d+1}n_2\}$ and $\mu>0$ we set
$$I_{i}=\left[-\mu\prod_{k=0}^{i-1}b_k,\mu\prod_{k=0}^{i-1}b_k\right].$$ 

We next fix $j\in \{0,\dots, 2^d-1\}$. For each $i\in \ZZ$,
we denote by $H_{i}$ the diffeomorphism of $\{i\}\times\RR^d$ defined as follows:
\begin{itemize}
\item $H_{i}=\id$ if $i\not \in \{2jn_2,\dots,2(j+1)n_2-1\}$.
\item For $i\in\{2jn_2,\dots,2(j+1)n_2-1\}$, the map $H_i$ coincides with $\id$ outside
$\{i\}\times\RR^{d-1}\times I_i$.
\item For $i\in\{2jn_2,\dots,2(j+1)n_2-1\}$ and $(v,z)\in \RR^{d-1}\times I_{i}$, we set
$$s(v)=\zeta\left((B_{i-1}\dots B_0)^{-1}(v)\right),$$
and define
$$H_{i}(v,z)=(v,h^{I_i}_{-s(v)}(z)) \text{ if } i<(2j+1)n_2,$$
$$H_{i}(v,z)=(v,h^{I_i}_{+s(v)}(z) \text{ if } i\geq (2j+1)n_2.$$
\end{itemize}
Note that the diffeomorphisms $H_i$ depend on the choice of $\mu, (A_i)$ and $j$.

We define by $g$ the cocycle that coincides with
$f\circ H_{i}$ on $\{i\}\times \RR^d$. It will satisfy the conclusions of
Proposition~\ref{p.pertAreduced}, provided $\mu$ has been chosen smaller than
some constant $\mu_0=\min\{\mu_1,\mu_2,\mu_3,\mu_4\}$ which will be defined in the 
following subsections.

\subsection{Support of the perturbation: first choice of $\mu$}

Let $\cC_{i}=P_{i}\times I_{i}$ where $P_i= B_{i-1}\dots B_0([-1,1]^{d-1})$.
Let $Q_i=A_{i-1}\dots A_0(Q_\mu)$.
The two parallelepipeds $\cC_i$ and $Q_i$ have the same base $P_i\times \{0\}$
in $\RR^{d-1}\times \{0\}$
and the same height along the coordinate $z$, but $\cC_i$ has been rectified.

In these notations $g$ is a perturbation of $f$ with support contained in $ \bigcup_{i=0}^{2n_2-1} \{i+2jn_2\}\times \theta_0 \cC_{i+2jn_2}$.

\begin{lemm}\label{l.cuttingcube2} Given $d,C,K,\ve>0$ and $\theta\in (0,1)$,
there exists $\mu_1=\mu_1(d,C,K, \ve,\theta)$
such that, for every $\mu\in(0,\mu_1)$,
for every sequence $(A_i)$ in $GL(d,\RR)$ with $\|A_i\|,\|A_i^{-1}\|<C$ and for every
$i\in\{0,\dots,2^{d+1}n_2\}$, we have:
$$
\theta_0 \cC_{i} \subset Q_{i}.
$$
In particular the perturbation $g$ defined in Section~\ref{ss.perturbation} is supported in 
$$\bigcup_{i=2jn_2}^{2(j+1)n_2-1} f^i(\{0\}\times Q_\mu)=\bigcup_{i=2jn_2}^{2(j+1)n_2-1} Q_i.$$
\end{lemm}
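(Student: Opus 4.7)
The strategy is to exploit the block-triangular form
$$A_i=\begin{pmatrix} B_i & \vec{\alpha}_i\\ 0 & b_i\end{pmatrix}$$
prepared in Subsection~\ref{ss.perturbation} in order to write an explicit formula for the images of the cube, and then observe that the only discrepancy between the sheared cube $Q_i$ and its rectified version $\cC_i$ is a shear of size $O(\mu)$. Concretely, I would verify by induction on $i$ that
$$A_{i-1}\cdots A_0(v,z)=\bigl(B_{i-1}\cdots B_0\,v+z\,\vec{w}_i,\; b_{i-1}\cdots b_0\,z\bigr),$$
where $\vec{w}_i=\sum_{k=0}^{i-1}(B_{i-1}\cdots B_{k+1})(b_{k-1}\cdots b_0)\,\vec{\alpha}_k$ with the convention that empty products equal $1$ or $\id$. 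Thus $Q_i$ is the set of all such images for $(v,z)\in[-1,1]^{d-1}\times[-\mu,\mu]$, while $\cC_i=P_i\times I_i$ is obtained by deleting the shear $z\,\vec{w}_i$. In particular, $Q_i$ and $\cC_i$ share the base $P_i\times\{0\}$ and the same $z$-range $I_i$; only the shear distinguishes them.

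To prove $\theta_0\cC_i\subset Q_i$ I would take an arbitrary point $(\theta_0\,B_{i-1}\cdots B_0\,u,\;\theta_0 t)$ of $\theta_0\cC_i$, with $u\in[-1,1]^{d-1}$ and $t\in I_i$, and solve for a preimage $(v,z)\in Q_\mu$. Matching the last coordinate forces $z=\theta_0 t/(b_{i-1}\cdots b_0)\in[-\theta_0\mu,\theta_0\mu]\subset[-\mu,\mu]$, which is legitimate. Matching the first $d-1$ coordinates yields $v=\theta_0 u-z\,(B_{i-1}\cdots B_0)^{-1}\vec{w}_i$, so
$$\|v\|_\infty\le\theta_0+\theta_0\,\mu\,\bigl\|(B_{i-1}\cdots B_0)^{-1}\vec{w}_i\bigr\|_\infty.$$
The inclusion therefore holds as soon as the right-hand side is bounded by $1$, i.e.\ as soon as $\mu$ is taken small compared with $(1-\theta_0)/(\theta_0\,\|(B_{i-1}\cdots B_0)^{-1}\vec{w}_i\|_\infty)$.

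It remains to bound $\|(B_{i-1}\cdots B_0)^{-1}\vec{w}_i\|_\infty$ uniformly in $i\in\{0,\dots,2^{d+1}n_2\}$. The hypothesis $\|A_k\|,\|A_k^{-1}\|<C$ combined with the block form gives $\|B_k\|,\|B_k^{-1}\|<C$, $|b_k|,|b_k^{-1}|<C$, and $\|\vec{\alpha}_k\|<C$. Since
$$(B_{i-1}\cdots B_0)^{-1}\vec{w}_i=\sum_{k=0}^{i-1}(B_0^{-1}\cdots B_k^{-1})(b_{k-1}\cdots b_0)\,\vec{\alpha}_k,$$
each of the at most $2^{d+1}n_2$ summands is bounded in norm by $C^{2i+1}\le C^{2^{d+2}n_2+1}$, so one obtains a constant $M=M(d,C,K,\ve,\theta)$ (recall that $n_2$ has been fixed in Subsection~7.1 as a function of these parameters). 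Setting $\mu_1:=(1-\theta_0)/(\theta_0 M)$ then gives $\|v\|_\infty\le 1$ for every $\mu<\mu_1$, proving the first conclusion. The ``in particular'' clause is immediate: by construction $H_i$ is supported in $\{i\}\times B_{i-1}\cdots B_0([-\theta_0,\theta_0]^{d-1})\times I_i=\{i\}\times\theta_0 P_i\times I_i$, which is contained in $\{i\}\times Q_i$ by exactly the same computation (one only uses $z\in[-\mu,\mu]$ rather than $[-\theta_0\mu,\theta_0\mu]$, and the bound on $\|v\|_\infty$ only improves since the $u$-factor still lies in $[-\theta_0,\theta_0]^{d-1}$). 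I do not expect any substantive obstacle: the content of the lemma is really that the preparation in Subsection~\ref{ss.perturbation} concentrates all the shearing created by the iterations of $f$ into the single thin $z$-direction, so that choosing the height $\mu$ small enough makes this shearing negligible compared with the horizontal contraction by $\theta_0$.
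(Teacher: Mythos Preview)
Your argument for the main inclusion $\theta_0\cC_i\subset Q_i$ is correct and is essentially the same computation as the paper's: both pull back a point to $Q_\mu$ and bound the horizontal error by $O(\mu)$; you write down the closed formula for $A_{i-1}\cdots A_0$ whereas the paper iterates one step at a time, but the content is identical.

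There is a small slip in your handling of the ``in particular'' clause. The support of $H_i$ is in fact $\theta_0 P_i\times\theta_0 I_i=\theta_0\cC_i$, not $\theta_0 P_i\times I_i$: the flow $h^{I_i}_s$ is built from the vector field $X$ whose generator $\xi$ vanishes outside $[-\theta_0,\theta_0]$, so $h^{I_i}_s$ is the identity outside $\theta_0 I_i$. Hence the ``in particular'' follows immediately from the first conclusion and no extra computation is needed. (Your remark that the bound on $\|v\|_\infty$ ``only improves'' when $z$ ranges over $[-\mu,\mu]$ rather than $[-\theta_0\mu,\theta_0\mu]$ is actually backwards---the shear term grows by a factor $\theta_0^{-1}$---but since the true support sits inside $\theta_0\cC_i$ this does not matter.)
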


\begin{demo} Choose
$$\mu_1 < \left(2^{d+1}n_2C^{2^{d+2} n_2}\right)^{-1} (\theta_0^{-1}-1).$$
Consider a point $(v_i,z_i)$ in $\cC_i$, and for $0\leq k<i$,
its image $(v_k,z_k)$ under $(A_{i-1}\dots A_k)^{-1}$.
We need to show that $(v_0,z_0)$ belongs to $\theta_0^{-1}Q_\mu$.

For the second coordinate, we have $z_0=z_i\prod_{k=0}^{i-1}b_k^{-1}$, and hence $z_0$ belongs to
$I_0\subset \theta_0^{-1} I_0$. It remains to control the first coordinate:
for each $k$, it decomposes as $v_k=B_{k}^{-1} v_{k+1}+w_k$ where
$w_k$ is the projection on the first coordinate of $A_{k}^{-1}(0,z_{k+1})$.
Note that
$$\|w_k\|\leq C |I_{k+1}|\leq \mu C^{k+2}\leq \mu C^{i+1}.$$
Decomposes $v_k$ as a sum $\tilde v_k+r_k$ where
$\tilde v_k=(A_{i-1}\dots A_k)^{-1}(v_i)$ belongs to $P_k$ and
$$r_k=\sum_{j=k}^{i-1} (A_{j-1}\dots A_k)^{-1} w_{j}.$$
We thus have
$$\|r_k\|\leq (i-k)C^{i-k-1}\max\{w_j\}\leq iC^{2i}\mu_1.$$

Since $v_0$ belongs to $P_0=[-1,1]^{d-1}\times \{0\}$,
it is now enough to show that
$\|r_0\|$ is smaller than the distance between the complement of $\theta_0^{-1} P_0$ and $P_0$.
This distance is bounded from below by $\theta_0^{-1}-1$.
Our choice of $\mu_1$ now implies that $v_0$ belongs to $\theta_0^{-1}P_0$, as required.
\end{demo}

\subsection{Size of the perturbation: second choice of $\mu$}
We check here that the perturbation $g$ is $C^1$-close to $f$.
\begin{lemm} Given $d,C,K, \ve,\theta>0$
there exists $\mu_2=\mu_2(d,C,K,\varepsilon,\theta)$
such that for any $\mu\in (0,\mu_2)$,
for any sequence $(A_i)$ in $GL(d,\RR)$ with
$\|A_i\|,\|A_i^ {-1}\|<C$ and for any $j\in \{0,\dots,2^d-1\}$,
the diffeomorphisms $f$ and $g$ defined at Section~\ref{ss.perturbation} satisfy
$$d_{C^1}(g, f)<\ve.$$
\end{lemm}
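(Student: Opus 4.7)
The plan is to reduce the $C^1$-closeness of $g$ and $f$ to a uniform $C^1$ bound on the diffeomorphisms $H_i$, and then to estimate $DH_i-\id$ block by block, exploiting the fact that $\mu$ controls both the height of $I_i$ and the off-diagonal derivative contribution, while the $C^1$ part in the $z$-direction is controlled independently by the initial choice of $s_0$. Since $g|_{\{i\}\times \RR^d}=f\circ H_i$ and $f|_{\{i\}\times \RR^d}$ is the linear map $A_i$ with $\|A_i\|\leq C$, the chain rule gives
$$d_{C^1}(g,f)\leq (1+C)\cdot \sup_i d_{C^1}(H_i,\id).$$
Moreover $H_i=\id$ except for the $2n_2$ values $i\in\{2jn_2,\dots,2(j+1)n_2-1\}$, and $n_2$ has already been fixed in terms of $d,C,K,\varepsilon,\theta$, so it suffices to bound $d_{C^1}(H_i,\id)$ uniformly on this finite range.

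The $C^0$ contribution is easy: $H_i$ moves points only in the $z$-direction and only inside $\RR^{d-1}\times I_i$, so $d_{C^0}(H_i,\id)\leq |I_i|=2\mu\prod_{k<i}b_k\leq 2\mu\,C^{2^{d+1}n_2}$, which is made arbitrarily small by taking $\mu$ small. For the derivative, write
$$DH_i(v,z)=\begin{pmatrix} I_{d-1} & 0 \\ \pm\bigl(\partial_s h^{I_i}_{s(v)}(z)\bigr)\,\nabla_v s(v) & D_z h^{I_i}_{\pm s(v)}(z)\end{pmatrix}.$$
The $(d,d)$-entry lies within $\varepsilon/C$ of $1$: by the remark preceding the choice of $s_0$, the $z$-derivative of $h^{I_i}_{s}$ equals that of $h_s$ at the rescaled point, and since $s(v)\in[0,1]$ and we chose $s_0$ with $\max\{|D X_{s_0}-1|,|D X_{-s_0}-1|\}<\varepsilon/C$, this derivative lies in an $\varepsilon/C$-neighborhood of $1$ (and is exactly $1$ outside the support of $H_i$).

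The delicate term is the off-diagonal block. One has $\nabla_v s(v)=\nabla\zeta\bigl((B_{i-1}\cdots B_0)^{-1}v\bigr)\cdot (B_{i-1}\cdots B_0)^{-1}$, whose norm is bounded by $\|\nabla\zeta\|_\infty\cdot C^i$; meanwhile the rescaling from $[-1,1]$ to $I_i$ forces $\partial_s h^{I_i}_{s}(z)=(|I_i|/2)\cdot s_0\,\xi(h_s(\phi_i^{-1}z))$, which has magnitude at most $\mu\,C^i\cdot s_0\|\xi\|_\infty$. The product of the two is of order $\mu\cdot C^{2i}$ times constants depending only on $\|\zeta\|_{C^1},\|\xi\|_{C^0}$ and $s_0$. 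Since $i\leq 2^{d+1}n_2$ is uniformly bounded in terms of $d,C,K,\varepsilon,\theta$, it suffices to pick $\mu_2$ so small that
$$\mathrm{const}(d,C,K,\varepsilon,\theta)\cdot\mu_2\cdot C^{2\cdot 2^{d+1}n_2}<\frac{\varepsilon}{2(1+C)}.$$
The only apparent obstacle is this exponential factor $C^{2i}$ coming from the large norm of $(B_{i-1}\cdots B_0)^{-1}$, but once $n_2$ has been frozen it is a bounded constant of the problem, and Lemma~6.2 gives us exactly the freedom to shrink $\mu$ enough to absorb it. Combining the $C^0$ and $C^1$ estimates, $d_{C^1}(H_i,\id)<\varepsilon/(1+C)$ for $\mu<\mu_2$, and hence $d_{C^1}(g,f)<\varepsilon$.
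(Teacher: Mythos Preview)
Your proof is correct and follows essentially the same approach as the paper's: bound $\|DH_i-\id\|$ by treating the $z$-direction derivative via the a priori choice of $s_0$ (giving $\varepsilon/C$) and the off-diagonal contribution as $O(\mu\,C^{2i}\,s_0\,\|\xi\|_\infty\,\|\nabla\zeta\|_\infty)$, then absorb the bounded exponential factor by taking $\mu$ small. The paper's proof evaluates $DH_i$ on the basis vectors $\partial/\partial z$ and $\partial/\partial u$ rather than writing the block matrix explicitly, and passes directly from $\|DH_i-\id\|<\varepsilon/C$ to $\|Dg-Df\|<\varepsilon$ (using the factor $C$ rather than your $1+C$, and leaving the $C^0$ part implicit), but the substance is identical.
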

\begin{demo}
We choose
$$\mu_2<\frac \ve{C^{2^{d+2}n_2} s_0 \|\xi\|\|D\zeta\|}.$$
For all $i\in\{0,\dots,2^ {d+1}n_2-1\}$, the quantity
$\|DH_i(\frac\partial{\partial z})-\frac\partial{\partial z}\|$ is bounded by the maximum of $|Dh_1-1|$ and $|Dh_{-1}-1|$ and thus is bounded by $\ve/C$, by our choice of $s_0$. For any unit vector $u$ of $\RR^ {d-1}\times \{0\}$
we have 
\begin{eqnarray*}
\left\|DH_i\left(\frac\partial{\partial u}\right)-\frac\partial{\partial u}\right\|&\leq&
\|\xi\|\|D\zeta\|\|(B_{i-1}\dots,B_0)^{-1}\| s_0 \mu\prod_{j=0}^{i-1}b_i \\
 &\leq& C^{2(2^{d+1}n_2-1)} s_0 \mu\|\xi\|\|D\zeta\|.
\end{eqnarray*}
It follows that for $\mu<\mu_2$, we have
$\|DH_{i}-\id\|<\frac\ve{C},$ and
consequently we obtain $\|Dg-Df\|<\ve$.

\end{demo}

\subsection{Perturbation of the jacobian: third choice of $\mu$\label{sss.jac}}
We define the \emph{effective support} of the perturbation as the set where
$$|\log \det Df- \log \det Dg|=|\log \det DH|=s_\ve.$$
By construction, this effective support contains $ \bigcup_{i=0}^{n_2-1} \{i+2jn_2\}\times \theta_0^2 \cC_{i+2jn_2}$. 
We now prove that for small $\mu$ the orbits of the points in $\{0\}\times \theta Q_\mu$
under $g$ meet the effective support; this implies that the perturbation has the expected effect
on the jacobian along these orbits.

\begin{lemm}\label{l.Jaccube}
Given $d,C,K, \ve,\theta>0$  there exists $\mu_3=\mu_3(d,C,K,\ve,\theta)$ such that
for any $\mu\in (0,\mu_2)$,
for any sequence $(A_i)$ in $GL(d,\RR)$ with
$\|A_i\|,\|A_i^ {-1}\|<C$ and for any $j\in \{0,\dots,2^d-1\}$,
the perturbation $g$ defined at Section~\ref{ss.perturbation} satisfies
for every $i\in\{0,\dots n_2-1\}$:
$$g^{i}\left(\{2jn_2\}\times \theta Q_{2jn_2}\right)\subset \{i+2jn_2\}\times \theta_0^2 \cC_{i+2jn_2}.$$ 
In particular, for every $x\in \{0\}\times \theta Q_\mu$ one has
$$|\log \det Df^{n_2}(f^{2jn_2}(x))-\log \det Dg^{n_2}(f^{2jn_2}(x))|
=n_2s_\varepsilon>K.$$ 
\end{lemm}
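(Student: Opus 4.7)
The plan is to establish the orbit-containment claim $g^{i+2jn_2}(x)\in\theta_0^2\,\cC_{i+2jn_2}$ by induction on $i$, and then read off the Jacobian identity from the chain rule. Writing $i'=i+2jn_2$ throughout, I would exploit the upper block-triangular form $A_i=\bigl(\begin{smallmatrix}B_i&\alpha^i\\ 0&b_i\end{smallmatrix}\bigr)$; iterated products remain block-triangular, so $A_{i-1}\cdots A_0$ has base $B_{i-1}\cdots B_0$, bottom entry $b_{i-1}\cdots b_0$, and off-diagonal column $\beta^i=\sum_k B_{i-1}\cdots B_{k+1}\,\alpha^k\,b_{k-1}\cdots b_0$ whose norm is bounded in terms of $C$ and $n_2$ alone, since $i\leq 2^{d+1}n_2$.

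For the base case $i=0$, a point $x=(v_0,z_0)\in\theta Q_\mu$ satisfies $f^{2jn_2}(x)=(B_{2jn_2-1}\cdots B_0\,v_0+\beta^{2jn_2}z_0,\;b_{2jn_2-1}\cdots b_0\,z_0)$: the $z$-coordinate already lies in $\theta I_{2jn_2}\subset\theta_0^2 I_{2jn_2}$ (using $\theta=\theta_0^3<\theta_0^2$), while the $v$-coordinate lies in $\theta P_{2jn_2}$ plus a shear of norm at most $\|\beta^{2jn_2}\|\,\theta\mu$. For the inductive step, once $y_i:=g^{i'}(x)$ belongs to $\theta_0^2\cC_{i'}$, the identities $\zeta=1$ and $\xi(z)=z$ on this region force $H_{i'}$ to act on $y_i$ as the diagonal contraction $(v,z)\mapsto(v,e^{-s_\varepsilon}z)$. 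Composing with $A_{i'}$ yields $\pi_z(y_{i+1})=e^{-s_\varepsilon}b_{i'}\pi_z(y_i)$, which stays inside $\theta_0^2 I_{i'+1}$ because the extra factor $e^{-s_\varepsilon}$ only helps, and $\pi_v(y_{i+1})=B_{i'}\pi_v(y_i)+e^{-s_\varepsilon}\pi_z(y_i)\,\alpha^{i'}$. Unrolling the recursion produces the closed form $\pi_v(y_i)=B_{i'-1}\cdots B_0\,v_0+\widetilde\beta^{i'}z_0$, where $\widetilde\beta^{i'}$ differs from $\beta^{i'}$ only by inserting factors $e^{-k s_\varepsilon}\leq 1$ into each summand, so $\|\widetilde\beta^{i'}\|\leq\|\beta^{i'}\|$.

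The main technical obstacle is then to verify that both shears fit inside the margin separating $\theta P_{i'}$ from the boundary of $\theta_0^2 P_{i'}$: this margin is bounded below by $(\theta_0^2-\theta_0^3)\,\cM(B_{i'-1}\cdots B_0)\geq(\theta_0^2-\theta_0^3)\,C^{-2^{d+1}n_2}$, whereas each shear has norm at most $\mathrm{const}(C,n_2)\cdot\theta\mu$. Since $n_2$ has already been fixed in terms of $d,C,K,\varepsilon,\theta$, this produces a threshold $\mu_3=\mu_3(d,C,K,\varepsilon,\theta)$ below which the induction closes. Once the containment $y_i\in\theta_0^2\cC_{i'}$ is in hand for $i\in\{0,\dots,n_2-1\}$, the chain rule gives $\log\det Dg^{n_2}(y_0)-\log\det Df^{n_2}(y_0)=\sum_{i=0}^{n_2-1}\log\det DH_{i'}(y_i)$; the derivative $DH_{i'}$ is block lower-triangular with determinant $(h^{I_{i'}}_{-s(v)})'(z)$, and on the effective support (where $s(v)=1$ and $z$ is in the linear range of $\xi$) this determinant equals exactly $e^{-s_\varepsilon}$. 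The total log discrepancy is therefore $-n_2 s_\varepsilon$, and $|n_2 s_\varepsilon|>K$ by the original choice of $n_2$.
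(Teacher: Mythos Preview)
Your proposal is correct and follows essentially the same approach as the paper's proof: track the $z$-coordinate (which can only contract under the $H_{i'}$) to remain in $\theta I_{i'}\subset\theta_0^2 I_{i'}$, decompose the $v$-coordinate as $B_{i'-1}\cdots B_0\,v_0$ plus an accumulated shear of size $O(\mu)$, and choose $\mu_3$ so that this shear fits inside the margin $(\theta_0^2-\theta_0^3)\,\cM(B_{i'-1}\cdots B_0)$ between $\theta P_{i'}$ and $\theta_0^2 P_{i'}$. The paper organizes the same computation slightly differently---starting the orbit at time $0$ rather than at time $2jn_2$, and summing the shear errors $w_k$ recursively rather than via your closed form $\tilde\beta^{i'}$---but the content is identical.
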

\begin{demo}
The proof is quite similar to the proof of Lemma~\ref{l.cuttingcube2}.
We choose
$$\mu_3 < \left(2^{d+1}n_2C^{2^{d+2} n_2}\right)^{-1} (\theta_0^{-1}-1).$$
Let $(0,v_0,z_0)$ be a point in $\{0\}\times \theta Q_\mu$, and let
$(i,v_i,z_i)$ be its image under $g^i$.
We will show that $(v_i,z_i)$ belongs to $\theta_0^2\cC_i$,
for every $i\in\{0,\dots,(2j+1)n_2-1\}$.

For the second coordinate, we have $z_i\leq z_0\prod_{k=0}^{i-1}b_k$,
since $H_i$ is the identity, for $i<2jn_2$, and
shrinks the second coordinate, for $i\in\{2jn_2,\dots,(2j+1)n_2-1\}$.
Hence $z_i$ belongs to $\theta I_i\subset \theta_0^2 I_i$.
It remains to control the first coordinate:
$H_i$ has the form $(v,z)\mapsto (v,h_i(v,z))$
and $v_i$ decomposes as $B_{i-1} v_{i-1}+w_i$ where
$w_i$ is the projection on the first coordinate of $A_{i-1}(0,h_{i-1}(v_{i-1},z_{i-1}))$.
Note that
$$\|w_i\|\leq C\|h_{i-1}(v_{i-1},z_{i-1})\|\leq C|z_{i-1}|\leq C^i\mu_3\theta.$$

In particular, $v_i$ decomposes as a sum $\tilde v_i+r_i$, where
$\tilde v_i=B_{i-1}\dots B_0(v_0)$ belongs to $\theta P_i$ and
$$r_i=\sum_{k=1}^i A_{i-1}\dots A_k w_{k}.$$
It follows that $\|r_i\|\leq iC^i\mu_3\theta$.

Since $\bar v_i$ belongs to $\theta P_i$, it is now enough to show that
$\|r_i\|$ is smaller than the distance between the complement of $\theta^2_0 P_i$ and $\theta P_i=\theta_0^3 P_i$.
This distance is bounded from below by $C^{-i}\theta(\theta_0^{-1}-1)$.
Our choice of $\mu_1$ now implies that $v_i$ belongs to $\theta_0^2P_i$, as required.
\end{demo}

\subsection{Almost tidy perturbation: fourth choice of $\mu$}
To finish the proof of Proposition~\ref{p.pertAreduced}, we are left to show that, after a rescaling by the linear map $E_\mu$, the action of the perturbation $g$
on the orbits of $f$ tends uniformly to the identity as $\mu\to 0$.

\begin{lemm}\label{l.atidycube}
Given $d,C,K, \ve,\theta$ and $\eta>0$ there exists $\mu_4=\mu_4(d,C,K,\ve,\theta,\eta)$ such that
for any $\mu\in (0,\mu_4)$,
for any sequence $(A_i)$ in $GL(d,\RR)$ with
$\|A_i\|,\|A_i^ {-1}\|<C$ and for any $j\in \{0,\dots,2^d-1\}$,
the diffeomorphisms $f$ and $g$ defined in Section~\ref{ss.perturbation} satisfy
on $\{0\}\times \RR^d$,
$$d_{unif}\left(\id, {E_\mu^{-1}f^{-2(j+1)n_2}g^{2(j+1)n_2}E_\mu}_{|_{\{0\}\times \RR^d}}\right)<\eta.$$
\end{lemm}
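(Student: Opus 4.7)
The plan is to compare the $g$-orbit of $y = f^{2jn_2}\circ E_\mu(v,z)$ to the $f$-orbit of the same point, and then pull back through $f^{-2(j+1)n_2}\circ E_\mu^{-1}$. Since $g=f$ for indices outside the window $\{2jn_2,\dots,2(j+1)n_2-1\}$, we have $g^{2(j+1)n_2}=g^{2n_2}\circ f^{2jn_2}$; moreover, if the $g$-orbit of $y$ never enters the support of any $H_i$ then the composition $E_\mu^{-1}\circ f^{-2(j+1)n_2}\circ g^{2(j+1)n_2}\circ E_\mu$ equals the identity at $(v,z)$ exactly. The orbit can only meet the support if $|z|\lesssim 1$ and $v$ lies in a bounded set (independent of $\mu$), so it suffices to control the error there.

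The heart of the argument is a rescaling computation in the $z$-coordinate. Writing $(v_i^g,z_i^g)=g^i(y)$ and $\beta_i=\prod_{k=0}^{i-1}b_{k+2jn_2}$, set $\hat z_i = z_i^g/\beta_i$, so that $\hat z_0 = \mu z$ and $\hat z_i\in I_0=[-\mu,\mu]$ throughout. Using the definition of $h^{I_i}_s$ as the conjugate of $h^{I_0}_s$ by the linear parametrization $I_0\to I_i$, one verifies that
$$\hat z_{i+1} \;=\; h^{I_0}_{\mp s(v_i^g)}(\hat z_i),$$
where the sign is $-$ for $i<n_2$ and $+$ for $i\geq n_2$. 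Because all these maps are time-$s_0s(v_i^g)$ maps of the \emph{same} flow $X^{I_0}$, they commute, and the full composition is $\hat z_{2n_2}=X^{I_0}_{\tau}(\mu z)$ with
$$\tau \;=\; s_0\!\!\sum_{i=n_2}^{2n_2-1}\!\! s(v_i^g)\;-\;s_0\!\!\sum_{i=0}^{n_2-1}\!\! s(v_i^g).$$

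If $s(v_i^g)$ were constant in $i$ the two sums would cancel and $\tau$ would vanish; this is where the ``almost tidy'' nature appears. To estimate how far we are from cancellation, one checks that both orbits (under $f$ and under $g$) have $v$-coordinate equal to $B_{2jn_2+i-1}\cdots B_0\, v$ up to an error $O(\mu)$ coming from the $\alpha^k z_k$ cross-terms and from the perturbation of $z$. Since $s(v_i^g)=\zeta((B_{2jn_2+i-1}\cdots B_0)^{-1}v_i^g)$, one obtains $s(v_i^g)=\zeta(v)+O(\mu)$ with constants depending on $C,n_2,\|D\zeta\|$; summing, $|\tau|\leq c(C,n_2,\|D\zeta\|,s_0)\,\mu$. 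Since the vector field $X^{I_0}$ has norm $\leq\mu\|\xi\|_\infty$ on $I_0$, one then gets $|X^{I_0}_\tau(\mu z)-\mu z|\leq |\tau|\mu\|\xi\|_\infty$. Pulling back by $f^{-2(j+1)n_2}$ divides the $z$-discrepancy by $\beta_{2(j+1)n_2}$ (which exactly cancels the $\beta_{2(j+1)n_2}$ factor between $z_{2n_2}^g$ and $z_{2n_2}^f$), so the $z$-error at time $0$ is $X^{I_0}_\tau(\mu z)-\mu z$; dividing by $\mu$ via $E_\mu^{-1}$ gives a $z$-error bounded by $|\tau|\,\|\xi\|_\infty=O(\mu)$.

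For the $v$-coordinate, a Gronwall-type recursion on $|v_i^g-v_i^f|$ using the cross-terms $\alpha^{2jn_2+i}(\tilde z_i-z_i^f)$, each of size $O(\mu\,\beta_{2jn_2+i})$, yields $|v_{2n_2}^g-v_{2n_2}^f|=O(\mu\,n_2\,C^{2(j+1)n_2})$; pulling back by $f^{-2(j+1)n_2}$ multiplies by at most $C^{2(j+1)n_2}$, and $E_\mu^{-1}$ acts trivially on $v$, so the $v$-error at time $0$ is still $O(\mu)$ with $n_2,C,j$ dependent constants. Choosing $\mu_4=\mu_4(d,C,K,\varepsilon,\theta,\eta)$ small enough that both the $z$-error and the $v$-error are below $\eta$ completes the proof. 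The main technical obstacle is the bookkeeping to confirm that the commutativity of $X^{I_0}$ really does reduce the $z$-analysis to a single effective flow time $\tau$; once that is established, the rest is Gronwall-style propagation of small errors.
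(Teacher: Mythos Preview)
Your argument is correct in substance, and it relies on the same cancellation mechanism as the paper's proof, but the two presentations are organized quite differently. One small slip: your normalization $\beta_i$ should be the full product $\prod_{k=0}^{2jn_2+i-1} b_k$, not just the ``window'' product $\prod_{k=0}^{i-1} b_{k+2jn_2}$; otherwise $\hat z_0=z_0^g\neq\mu z$ when $j>0$. With this correction your recursion $\hat z_{i+1}=h^{I_0}_{\mp s(v_i^g)}(\hat z_i)$ holds, the flow-time telescoping gives $\hat z_{2n_2}=X^{I_0}_\tau(\mu z)$ with $|\tau|=O(\mu)$, and the rest goes through as you wrote it.

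The paper argues more structurally. It observes that each $H_i$ is a conjugate of a single fixed map $H^{\pm 1}$ (where $H(v,z)=(v,h_{\zeta(v)}(z))$) by the linear map $E_\mu\,\bar A_{i-1}\cdots\bar A_0$, $\bar A_k$ denoting the block-diagonal part of $A_k$. One can then write
\[
E_\mu^{-1}f^{-2(j+1)n_2}g^{2(j+1)n_2}E_\mu
= Q_l\,(P_{2n_2-1}H\cdots P_{n_2}H)(P_{n_2-1}H^{-1}\cdots P_0H^{-1})\,Q_r,
\]
where the linear maps $P_i,Q_l,Q_r$ are products of factors $E_\mu^{-1}A_kE_\mu$ and $\bar A_k^{\pm1}$. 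Since $E_\mu^{-1}A_kE_\mu\to\bar A_k$ as $\mu\to 0$ (this is precisely the suppression of the off-diagonal entries $\alpha^k$ by the thin $z$-direction), all the $P_i,Q_l,Q_r$ tend to the identity, and the limit product is $H^{n_2}H^{-n_2}=\id$. Your Gronwall bound on the $v$-drift and your flow-time estimate $|\tau|=O(\mu)$ are a coordinate-by-coordinate verification of exactly this limit; the paper's route avoids tracking $v$ and $z$ separately and makes the cancellation $H^{n_2}H^{-n_2}=\id$ visible in one line, at the price of a short algebraic identification of the $P_i,Q_l,Q_r$.
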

\begin{demo}
First of all, notice that: 
\begin{itemize}
\item $H_{i}=\id$ if $i<2jn_2$ or $i\geq 2(j+1)n_2-1$,
\item $H_i=(E_\mu \bar A_{i-1}\dots \bar A_0)H^{-1}(E_\mu \bar A_{i-1}\dots \bar A_0)^{-1}$
for $2jn_2\leq i <(2j+1)n_2$,
\item $H_i=(E_\mu \bar A_{i-1}\dots \bar A_0)H(E_\mu \bar A_{i-1}\dots \bar A_0)^{-1}$
for $(2j+1)n_2\leq i <2(j+1)n_2$,
\end{itemize}
where $H$ is a diffeomorphism of $\{0\}\times \RR^d$, and the $\bar A_k$ are matrices, defined by
$$H(v,z)=(v, h_{\zeta(v)}(z)),$$
$$
\bar A_k=\left(\begin{array}{cccl}&& &0\\
                           &B_k& &\vdots\\
                           &&    &0\\
                           0&\dots&0&b_k
\end{array}\right).
$$

In this notation, we can write
\begin{eqnarray*}
{E_\mu^{-1}f^{-2(j+1)n_2}g^{2(j+1)n_2}E_\mu}_{|_{\{0\}\times \RR^d}}=
E_\mu^{-1}\left(\prod_{i=0}^{2(j+1)n_2-1}f^{-i}H_i f^i \right)E_\mu\\
=Q_l(P_{2n_2-1}H\dots P_{n_2}H)(P_{n_2-1}H^{-1}\dots P_0H^{-1}) Q_r,
\end{eqnarray*}
where
$$P_i=(\bar A_{2jn_2+i-1}\dots\bar A_0)^{-1}(\bar A_{2jn_2+i}^{-1}E_\mu^{-1}A_{2jn_2+i}
E_\mu)(\bar A_{2jn_2+i-1}\dots\bar A_0),$$
$$Q_r=(\bar A_{2jn_2-1}\dots\bar A_0)^{-1}E_\mu^{-1}(A_{2jn_2-1}\dots A_0)E_\mu,\quad\hbox{ and}$$
$$Q_l=E_\mu^{-1}(A_{2(j+1)n_2-1}\dots A_0)^{-1}E_\mu(\bar A_{2(j+1)n_2-1}\dots \bar A_0).$$
Note that
$$E_\mu^{-1}A_k E_\mu\underset{\mu\to 0}\longrightarrow \bar A_k.$$
This implies that $P_i$, $Q_r$ and $Q_l$ tend to $I_d$ when $\mu$ goes to $0$.
As a consequence $E_\mu^{-1}f^{-2(j+1)n_2}g^{2(j+1)n_2}E_\mu$
tends uniformly to the identity on $\{0\}\times \RR^d$ as $\mu\to 0$.
This implies the conclusion of the lemma.
\end{demo}
\section*{Appendix: The (LD) property is not generic}
\addcontentsline{toc}{section}{Appendix: The (LD) property is not generic}

The proof of our main theorem would have been much easier if the large derivative property were a generic property; 
the aim of this section is to show that, indeed, it is not a generic property. 

Consider the set $LD\subset \diff^1(M)$ of diffeomorphisms having the (LD)-property.  

\begin{rema} If $f$ is Axiom A, then $f\in LD$. More generally, if the chain recurrent set $CR(f)$ is a finite union of invariant compact sets, each of them admiting a dominated splitting, then $f\in LD$: the dominated splitting implies that the vectors in one bundle are exponentially more expanded than the vectors in the other bundle, implying that $\sup\{\|Df^n(x)\|,\|Df^{-n}(x)\|\}$ increases exponentially with $n$ for $x\in CR(f)$. 

Consider the open set $\cT$ of \emph{tame diffeomorphisms}: these are the diffeomorphisms such that all the diffeomorphisms in a $C^1$-neighborhood have the same finite number of chain recurrence classes.  A consequence of \cite{BDP} and \cite{BC} is that any diffeomorphism in an open and dense subset  $\cO\subset \cT$ admits a dominated splitting on each of its chain recurrence classes.  This implies that  $\cO$ is contained in $LD$. 
\end{rema}

To find an open set in which $LD$ is not residual, we therefore must look
among the the so-called {\em wild} diffeomorphisms, whose chain recurrent
set has no dominated splitting.

\begin{prop} For any compact manifold $M$ with $dim(M)>2$, there exists a non-empty open subset $U_M\subset \diff^1(M)$ such that $LD\cap U_M$ is meager.
\end{prop}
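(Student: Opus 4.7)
The plan is to take $U_M$ to be a ``wild'' open set in $\diff^1(M)$, of the type constructed by Bonatti--D\'iaz (which exist in any manifold of dimension $\geq 3$). The feature of such a $U_M$ that I would use is that for $C^1$-generic $f\in U_M$, $f$ admits a chain recurrence class $\La_f$ which carries no dominated splitting for $Df$ and has dense periodic orbits. Within $U_M$ I would then establish the following genericity claim: there exist a constant $K>1$ and a sequence $(\cO_n)_{n\in\NN}$ of periodic orbits of $f$, with periods $\pi_n\to\infty$, such that
$$\sup_{p\in\cO_n,\;0\leq j\leq \pi_n}\max\{\|Df^j(p)\|,\|Df^{-j}(p)\|\}\leq K.$$
For each $m\geq 1$, the set of $f\in U_M$ admitting a periodic orbit of period $\geq m$ which satisfies the above bound is open; density in $U_M$ would be obtained by combining Franks' lemma with the Bonatti--D\'iaz--Pujals analysis of cocycles without dominated splitting, which permits a pointwise $C^1$-perturbation along a chosen periodic orbit making $Df$ uniformly close to an orthogonal matrix, without altering the period of the orbit. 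Intersecting these open dense sets over $m$ yields the desired residual subset.

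Granting the claim, fix a generic $f\in U_M$ and set $\La = \overline{\bigcup_n \cO_n}\subset \La_f$. Since $\pi_n\to\infty$, and one can arrange generically that the $\cO_n$ are not eventually trapped in a neighborhood of any fixed periodic orbit, $\La$ contains a non-periodic point $x$. The full $f$-orbit of $x$ lies in $\La$, and any $y\in\orb(x)$ can be written $y=\lim_i p_i$ with $p_i\in \cO_{n_i}$. For any fixed $n\geq 1$, whenever $\pi_{n_i}\geq n$ the hypothesis gives $\max\{\|Df^n(p_i)\|, \|Df^{-n}(p_i)\|\}\leq K$; continuity of $Df^{\pm n}$ then yields the same bound at $y$. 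Consequently $\sup_{y\in\orb(x)}\max\{\|Df^n(y)\|,\|Df^{-n}(y)\|\}\leq K$ for every $n\geq 1$, so the infimum appearing in the definition of (LD) remains bounded by $K$ for every $n$, and $f\notin LD$. Therefore $LD\cap U_M$ is disjoint from a residual subset of $U_M$, hence meager.

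The principal obstacle is producing, inside the wild open set, periodic orbits of arbitrarily large period whose entire linear cocycle along the orbit is uniformly close to an isometry, with a single constant $K$ independent of the period. This is where the lack of dominated splitting (and the associated Bonatti--D\'iaz--Pujals machinery for linear cocycles) is essential; the remainder of the argument is a Baire-type construction followed by passage to the Hausdorff limit.
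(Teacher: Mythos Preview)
Your overall architecture is reasonable and the passage-to-the-limit argument (from periodic orbits with uniformly bounded cocycle to a non-periodic point violating (LD)) is sound. The genuine gap is the density step. You assert that Franks' lemma together with the Bonatti--D\'iaz--Pujals analysis lets you perturb so that, along a periodic orbit of period $\pi$, each $Df(f^i(p))$ becomes uniformly close to an orthogonal matrix, hence $\|Df^j(p)\|,\|Df^{-j}(p)\|\leq K$ for \emph{all} $0\le j\le\pi$ with $K$ independent of $\pi$. This is not what those tools deliver. Franks' lemma only allows an $\varepsilon$-perturbation of each individual matrix $Df(f^i(p))$; if those matrices are far from orthogonal (say $\|Df(f^i(p))\|$ is close to $C$), no $\varepsilon$-perturbation can make them orthogonal. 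The BDP machinery exploits the absence of domination to arrange that the \emph{full} product $Df^{\pi}(p)$ has eigenvalues on the unit circle (or is a homothety), but it gives no control over the intermediate products $Df^j(p)$, $0<j<\pi$. A trivial two-periodic example ($A_0=\mathrm{diag}(2,1/2)$, $A_1=\mathrm{diag}(1/2,2)$, so $A_1A_0=\id$) already shows that $Df^{\pi}=\id$ is compatible with $\|Df\|$ of order $C$, and no $\varepsilon$-small Franks perturbation changes that. So your uniform-$K$ claim for all intermediate iterates is unjustified, and with it the density of your open sets.

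By contrast, the paper's argument avoids intermediate iterates entirely. It also works inside the Bonatti--D\'iaz wild region $U_M$ (the interior of the closure of the set of $f$ admitting a periodic point with $Df^{\pi}=\id$), but for each $n$ it only asks for a \emph{single} $m>n$ with $\|Df^m\|,\|Df^{-m}\|<K$ along certain non-periodic orbits. This is obtained directly: approximate by $f_1$ with a periodic point $x$ of period $m>n$ and $Df_1^m(x)=\id$, thicken to $f_2$ with a small disk $D$ on which $f_2^m=\id$ (so $Df_2^m=\id$ on $D$), and then perturb to create nested balls $B_1\subset B_0\subset D$ with $f^m(B_0)\subset \mathrm{int}(B_0)$ and $f^{-m}(B_1)\subset \mathrm{int}(B_1)$. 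Points in $B_1\setminus f^{-m}(B_1)$ are non-periodic, their full orbits stay in $\bigcup_{i=0}^{m-1}f^i(B_0)$, and along these orbits $\|Df^m\|,\|Df^{-m}\|<K$. Intersecting over $n$ yields a residual set in $U_M$ disjoint from $LD$. The key economy is that (LD) fails as soon as for a fixed $K$ and for arbitrarily large $m$ there is some non-periodic orbit with $\sup_y\max\{\|Df^m(y)\|,\|Df^{-m}(y)\|\}\le K$; one never needs to bound $\|Df^j\|$ for all $j$ simultaneously.
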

 
Let $V_M$ be the set of diffeomorphisms $f$ possessing a periodic point $x=x_f$ such that $Df^{\pi(x)}(x)=id$, where $\pi(x)$ is the period of $x$. 
We denote by $U_M$ the interior of the closure of $V_M$. 
In \cite{BD}, it is shown that  $U_M$ is nonempty, for every compact manifold $M$ of 
dimension at least $3$.  
\begin{rema}
The least period of $x_f$ is not locally bounded in $U_M$.  This is because, for every $n>0$,  the set of diffeomorphisms whose periodic orbits of period less than $n$ are hyperbolic is open and dense. 
\end{rema}

To prove the proposition, it suffices to show that $LD\cap U_M$ is meager.

For every $K>1$ and $n\in\NN$, we consider the set $W(K,n)\subset \diff^1(M)$ of diffeomorphisms $f$ such that there exist $m>n$ and  two compact balls $B_0$, $B_1$ with the following properties.

\begin{itemize}
\item $B_1$ is contained in the interior of $B_0$;
\item  $f^m(B_0)$ is contained in the interior of $B_0$, and $B_1$ is contained in the interior of  $f^{m}(B_1)$;
\item for every $x\in B_0$:  
$$\sup\{\|Df^m(f^j(x))\|, \|Df^{-m}(f^{j}x)\|, x\in \La_f \mbox{ and } j\geq 0\}<K.$$
\end{itemize}

Note that $W(K,n)$ is an open set, for every $K$ and $n$. Furthermore, for a given $K>1$, the diffeomorphisms in $\cR=\cap_{n\in\NN} W(K,n)$ do not satisfy the (LD) property.
To see this,  let $f\in \cR$, and fix an arbitrary integer $n>0$.  Consider 
an integer $m>n$ and  two balls $B_0,B_1$ given  by the definition of $W(K,n)$.
Then the  points in the open set $\interior(B_1)\setminus f^{-m}(B_1)$ are not periodic, since
their entire orbits lie in $B_0$, and along these orbits, the quantities $\|Df^m\|$ and $\|Df^{-m}\|$ are 
bounded by $K$.  Hence $f$ does not satisfy the (LD) property.

\begin{lemm} For every $K>1$ and every $n\in \NN$, $W(K,n)\cap U_M$ is (open and) dense in $U_M$. 
\end{lemm}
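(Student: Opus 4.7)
The plan is to show that for any $f_0 \in U_M$ and any $C^1$-neighborhood $\cN$ of $f_0$, there exists $g \in \cN \cap W(K,n)$. Since $V_M$ is dense in $U_M$ and the set of diffeomorphisms whose periodic orbits of period at most $n$ are hyperbolic (Kupka-Smale up to period $n$) is open and dense in $\diff^1(M)$, the intersection is dense in $\cN$, so we can pick $f \in \cN \cap V_M$ whose totally neutral periodic point $x$ necessarily has period $m := \pi(x) > n$: a periodic point of period $\leq n$ cannot satisfy $Df^{\pi(x)}(x) = \id$ under the Kupka-Smale assumption. Choose a small closed ball $B_0$ centered at $x$ with $B_0, f(B_0), \ldots, f^{m-1}(B_0)$ pairwise disjoint. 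Because $Df^m(x) = \id$, shrinking $B_0$ makes $f^m|_{B_0}$ arbitrarily $C^1$-close to the inclusion of $B_0$ into $M$.

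Next, construct an ambient diffeomorphism $\Phi\colon M \to M$, compactly supported in a small open neighborhood $B_0^+$ of $B_0$ disjoint from $f(B_0), \ldots, f^{m-1}(B_0)$, with two properties: $\Phi(B_0) \subset \interior(B_0)$, and there exists a closed ball $B_1 \subset \interior(B_0)$ with $B_1 \subset \interior(\Phi(B_1))$. Realize $\Phi$ as the time-one map of a smooth vector field $Y$ of arbitrarily small $C^1$ norm, taken to point strictly inward on $\partial B_0$, strictly outward on $\partial B_1$, and smoothly interpolated in the annulus between and inside $B_1$. Such $\Phi$ can be made $C^1$-close to $\id_M$ to any prescribed degree. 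Now define $g := \Phi \circ f$. Since $\Phi$ is supported in $B_0^+$, which is disjoint from $f(B_0), \ldots, f^{m-1}(B_0)$, every point $y \in B_0$ satisfies $g^i(y) = f^i(y)$ for $1 \leq i \leq m-1$ and $g^m(y) = \Phi(f^m(y))$. Hence $g^m|_{B_0}$ is $C^1$-close to $\Phi|_{B_0}$, and the open conditions $g^m(B_0) \subset \interior(B_0)$ and $B_1 \subset \interior(g^m(B_1))$ persist. Also $d_{C^1}(g, f) \leq \|D\Phi - \id\|_\infty \cdot \|Df\|_\infty$, which is small, so $g \in \cN$.

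It remains to verify the derivative bound. On $B_0$, both $\|Dg^m\|$ and $\|Dg^{-m}\|$ are close to $1$ since $g^m|_{B_0} = \Phi \circ f^m|_{B_0}$ is a composition of two factors $C^1$-close to the identity. At an iterated point $z = g^j(y)$ with $y \in B_0$ and $0 \leq j < m$, the identity $g^m(z) = g^j(g^m(y))$ and the chain rule yield
\[
Dg^m(z) = Dg^j(g^m(y)) \cdot Dg^m(y) \cdot [Dg^j(y)]^{-1},
\]
so, writing $Dg^m(y) = \id + E$ with $\|E\|$ small, we get $Dg^m(z) = \id + Dg^j(g^m(y)) \cdot E \cdot [Dg^j(y)]^{-1}$. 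The outer factors are bounded by the (fixed) eccentricities of $Df^j(x)$ for $0 \leq j < m$, so taking $\|Y\|_{C^1}$ sufficiently small (after $f$ and $m$ are fixed) makes $\|Dg^m(z)\| < K$ and similarly $\|Dg^{-m}(z)\| < K$. The principal obstacle is that a $C^1$-small perturbation of the composite $f^m$ does not automatically arise from a $C^1$-small perturbation of $f$; we bypass this by localizing the modification to a single return to $x$ and placing it on the left, so the $C^1$-cost of $g = \Phi\circ f$ relative to $f$ is controlled directly by $d_{C^1}(\Phi,\id)$ rather than being amplified by intermediate derivatives along the orbit. This gives $g \in W(K,n) \cap \cN$, proving the density claim.
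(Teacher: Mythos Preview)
Your overall strategy is sound, but the derivative estimate contains an algebraic error that leaves a real gap. From
\[
Dg^m(z) = Dg^j(g^m(y)) \cdot Dg^m(y) \cdot [Dg^j(y)]^{-1}
\]
and $Dg^m(y) = \id + E$ you conclude $Dg^m(z) = \id + Dg^j(g^m(y)) \cdot E \cdot [Dg^j(y)]^{-1}$. This would require $Dg^j(g^m(y)) = Dg^j(y)$, which fails since $g^m(y) \neq y$. The correct expansion is
\[
Dg^m(z) \;=\; Dg^j(g^m(y))\,[Dg^j(y)]^{-1} \;+\; Dg^j(g^m(y)) \cdot E \cdot [Dg^j(y)]^{-1},
\]
and the first summand is not $\id$. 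The gap can be closed: since $\Phi$ is supported away from $B_0, f(B_0), \ldots, f^{m-2}(B_0)$, one has $Dg^j = Df^j$ on $B_0$ for $0 \le j < m$; both $y$ and $g^m(y)$ lie in $B_0$, so $Df^j(g^m(y)) - Df^j(y)$ is controlled by the modulus of continuity of the fixed map $Df^j$ on a set of diameter $\diam(B_0)$. Shrinking $B_0$ (after $f$ and $m$ are fixed) therefore forces $Dg^j(g^m(y))[Dg^j(y)]^{-1}$ close to $\id$, and then your bound on the $E$-term finishes the job. You should also make the order of choices explicit---first pick the $C^1$-size of $\Phi$, then shrink $B_0$ so that both the $o(r)$-error of $f^m$ is dominated by the displacement of $\Phi$ (guaranteeing the containments for $B_0$ and $B_1$) and the missing term above is small.

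The paper's argument sidesteps this bookkeeping by proceeding in two steps rather than one. It first perturbs $f_1$ to an $f_2$ for which $f_2^m$ is \emph{exactly} the identity on a small ball $D$ (a standard local linearization, available because $Df_1^m(x)=\id$). Then $Df_2^m = \id$ holds identically on $\Delta = \bigcup_{i=0}^{m-1} f_2^i(D)$, so by continuity of $h \mapsto Dh^m$ the bound $\|Dh^m\|, \|Dh^{-m}\| < K$ on $\Delta$ holds for every $h$ in some $C^1$-neighborhood $\cU_1$ of $f_2$. A final small perturbation inside $\cU_1$ then creates the balls $B_0 \supset B_1$ with the required attracting/repelling behaviour. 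Your route does everything in a single perturbation, which is more economical but forces you to track the conjugacy error above; the paper's route buys the derivative bound for free at the cost of an intermediate perturbation.
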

\begin{demo}Consider $f_0\in U_M$, and let $\cU\subset U_M$ be a neighborhood of $f_0$. There exists $f_1\in \cU$ and $m>n$ such that $f_1$ has a periodic point $x$ of period $m$ with $Df^m(x)=Id$. Then there exist $f_2\in \cU$, arbitrarily close to $f_1$, and a small ball $D$ contained in  an arbitrarily small neighborhood of $x$ such that $f_2^m(D)=D$, the restriction of $f_2^m$ to $D$ is the identity map, and $f_2^i(D)\cap D=\emptyset$ for $i\in\{1,\dots,m-1\}$. Let  $\De=\bigcup_0^m f_2^i(D)$.

Observe that there is a neighborhood $\cU_1\subset \cU$ of $f_2$ such that $\|Df^m\|$ and $\|Df^{-m}\|$ are  bounded by $\frac 12 K$ on $\De$.
We conclude the proof by noting that there exist $f_3\in \cU_1$ and two compact balls $B_1\subset \interior (B_0)\subset B_0 \subset \interior D$
with the desired properties.
\end{demo}

B. Fayad has observed that, for the proof of the Main Theorem, it suffices to obtain the (LD)-property
merely on a dense subset of $M$.  This weaker (LD) condition is not residual either: 
the proof of the proposition above shows that for the diffeomorphisms 
in the residual subset $\cR$, the (LD)-property is not satisfied on any dense subset of $M$.

We conclude this section by discussing another strange feature of the (LD) property: we have proved the density of the set of diffeomorphisms satisfying the large derivative property and whose periodic orbits are all hyperbolic. On the one hand, the large derivative property is a uniform property on the non-periodic orbits (in the definition, the integer $n(K)$ does not depend on the point $x$). On the other hand, for the hyperbolic periodic orbits, the norm of the derivative tends exponentially to infinity. So it is natural to ask if we could also include the periodic orbits in the definition of (LD)-property. Let us say that $f$ satisfies the \emph{strong (LD) property} if, for every $K>1$ there exists $n_K$ such that for every $n\geq n_K$ and every $x\in M$ one has $\sup\{\|Df^n(f^i(x))\|,\|Df^{-n}(f^i(x)\|, i\in\ZZ\}>K$. 
\begin{ques} Does the strong (LD) property hold on a dense subset of $\diff^1(M)$?
\end{ques}

\vspace{10pt}

\noindent \textbf{Christian Bonatti (bonatti@u-bourgogne.fr)}\\
\noindent  CNRS - Institut de Math\'ematiques de Bourgogne, UMR 5584\\
\noindent  BP 47 870\\
\noindent  21078 Dijon Cedex, France\\
\vspace{10pt}

\noindent \textbf{Sylvain Crovisier (crovisie@math.univ-paris13.fr)}\\
\noindent CNRS - Laboratoire Analyse, G\'eom\'etrie et Applications, UMR 
7539,\\
\noindent Institut Galil\'ee, Universit\'e Paris 13, Avenue J.-B. 
Cl\'ement,\\
\noindent 93430 Villetaneuse, France\\
\vspace{10pt}

\noindent \textbf{Amie Wilkinson (wilkinso@math.northwestern.edu)}\\
\noindent Department of Mathematics, Northwestern University\\
\noindent 2033 Sheridan Road \\
\noindent Evanston, IL 60208-2730,  USA

\end{document}